\newtheorem{theorem}{Theorem}
\newtheorem{proposition}[theorem]{Proposition}
\newtheorem{lemma}[theorem]{Lemma}
\theoremstyle{definition}
\newtheorem{remark}{Remark}
\newcommand{\cref}[1]{Corollary~\ref{c.#1}}
\numberwithin{equation}{section}
\numberwithin{theorem}{section}
\newcommand{\R}{\mathbb{R}}
\newcommand{\N}{\mathbb{N}}
\newcommand{\Z}{\mathbb{Z}}
\newcommand{\ol}{\overline}
\newcommand{\eps}{\varepsilon}
\newcommand{\womega}{\widetilde{\omega}}
\title[Optimal convergence rate with high-contrast coefficients]{Convergece rate and uniform Lipschitz estimate in periodic homogenization of high-contrast elliptic systems}
\author{Xin Fu}
\address[X. Fu]{Yau Mathematical Sciences Center, Tsinghua University, Beijing 100084, P.R. China}
\email{fux20@mails.tsinghua.edu.cn}
\author{Wenjia Jing}
\address[W. Jing]{Yau Mathematical Sciences Center, Tsinghua University, Beijing 100084 and Beijing Institute of Mathematical Sciences and Applications, Beijing 101408, P.R. China}
\email{wjjing@tsinghua.edu.cn}
\date{\today}
\begin{document}
	\maketitle

	\begin{abstract}
		We consider the Dirichlet problem for elliptic systems with periodically distributed inclusions whose conduction parameter exhibits a significant contrast compared to the background media. We develop a unified method to quantify the convergence rates both as the periodicity of inclusions tends to zero and as the parameter approaches either zero or infinity. Based on the obtained convergence rates and a Campanato-type scheme, we also derive the regularity estimates that are uniform both in the periodicity and the contrast.
		
		\noindent{\bf Key words}: Periodic homogenization, layer potential, high-contrast media, regularity estimates, perforated domains.
		
		\smallskip
		
		\noindent{\bf Mathematics subject classification (MSC 2020)}: 35B27, 35J70, 35P20
	\end{abstract}
	
	
	\section{Introduction}
	In this paper we study quantitative homogenization of elliptic systems with rapidly oscillating and high-contrast periodic coefficients. 

To state the problem in precisely, we need to impose two assumptions: the coefficient condition ($\mathbf{ H1}$)  and the geometric condition ($\mathbf{{H2}}$). Throughout this paper, we will assume that $d\geq 2$ and $m\geq 1$ are integers.

	\noindent{({\bf H1}) \emph{Assumptions on the coefficients.} We assume the coefficient 
 $$A(y) = \big( a_{ij}^{\alpha \beta}(y)\big):\mathbb{R}^d \rightarrow \mathbb{R}^{(dm)^2}, \qquad 1\leq i,j\leq d,\   1\leq \alpha ,\beta \leq m$$
satisfies
	\begin{itemize}
\item[(a)] (Symmetry) For any $y \in \mathbb{R}^d$,
\begin{equation}\label{symmetry}
    a_{ij}^{\alpha \beta} (y) = a_{ji}^{ \beta \alpha} (y) ,\qquad 1\leq i,j\leq d,\   1\leq \alpha ,\beta \leq m.
\end{equation}
\item[(b)] (Strong ellipticity) There exists $\mu>0$ such that 
		\begin{equation}\label{ellipticc}
			\mu |\xi|^2 \leq a_{ij}^{\alpha \beta}(y )\xi_i^{\alpha}\xi_j^{\beta} \leq \mu^{-1} |\xi|^2, \qquad \forall y\in \mathbb{R}^d, \ \forall \xi  \in \mathbb{R}^{dm}.
		\end{equation} 
		\item[(c)] (Periodicity) 
		\begin{equation}\label{periodicc}
			A(y+\mathbf{n}) = A(y), \qquad \forall y\in \mathbb{R}^d , \ \forall \mathbf{n}\in \mathbb{Z}^d.
		\end{equation} 
		\item[(d)] (H\"{o}lder continuity)  There exists $\lambda \in (0,1)$ and $\tau>0$ such that
  \begin{equation}\label{smoothc}
			|A(y)-A(w)|\leq \tau |y-w|^{\lambda},\qquad \forall y,w\in \mathbb{R}^d.
		\end{equation}
	\end{itemize}

 \noindent{({\bf H2}) \emph{Assumptions on the Geometric set-up.} Let $Y=(-1/2,1/2)^d$ denote the unit cell,
	and let $\omega \subset Y$ be a simply connected domain with Lipschitz boundary such that $\mathrm{dist}(\omega,\partial Y)>0$. Consider $\Omega$ as a bounded Lipschitz domain in $\mathbb{R}^d$ (sometimes we may impose more regularity on the boundary $\partial \Omega $ such that $\Omega$ is a $ C^{1,\alpha}$ domain). For each $\varepsilon \in (0,1)$ and $\mathbf{n}\in \mathbb{Z}^d$, let $\omega_\eps^{\mathbf{n}} := \eps(\mathbf{n}+\omega)$ be a rescaled and translated version of $\omega$, for a fixed $\kappa >0$, we then define the indices set
 $$
\mathcal{I}_\eps = \{\mathbf{n} \in \Z^d \,:\, \omega^{\mathbf{n}}_\eps \subset \Omega \,\text{ and } \, \mathrm{dist}(\omega^{\mathbf{n}}_\eps,\partial\Omega) >\kappa \eps\},
$$
and let
	\begin{equation}
	\label{eq:DOeps}
		D_{\varepsilon} := \bigcup\limits_{\mathbf{n}\in \mathcal{I}_\eps} \omega^{\mathbf{n}}_\eps.
	\end{equation}
 The set $D_{\eps}$ consists of periodic inclusions of size $\varepsilon$ within $\Omega$ which are well-separated from $\partial \Omega$. The complementary domain $\Omega_{\varepsilon} := \Omega \setminus \overline{D_{\varepsilon}}$ then represents the background matrix surrounding these inclusions.

 The pair $(\Omega, D_{\varepsilon} )$ satisfying the geometric condition ($\mathbf{GC}$) is referred to as the type II perforated domain, as defined in \cite{oleinik_mathematical_1992}. In comparison, the type I perforated domain refers to the following construction:
 \begin{equation}\label{type I}
     D_{\varepsilon} : = \Omega \cap \bigcup_{\mathbf{n} \in \mathbb{Z}^d} \omega^{\mathbf{n}}_{\varepsilon}, \qquad \Omega_{\varepsilon} := \Omega \setminus \overline{D_{\varepsilon}} ,
 \end{equation}
 see Figure \ref{figure 1}.

  \begin{figure}\label{figure 1}
		\centering
\subfigure[Type I domain]{
			\begin{minipage}[t]{0.45\textwidth}
				\centering
				\includegraphics[scale=0.4]{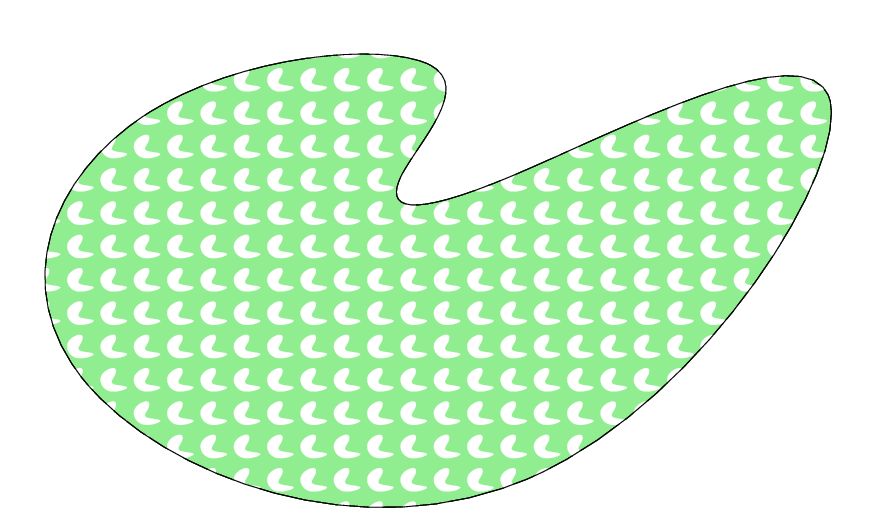}
			\end{minipage}
		}
		\hfill
		\subfigure[Type II domain]{
			\begin{minipage}[t]{0.45\textwidth}
				\centering
				\includegraphics[scale=0.4]{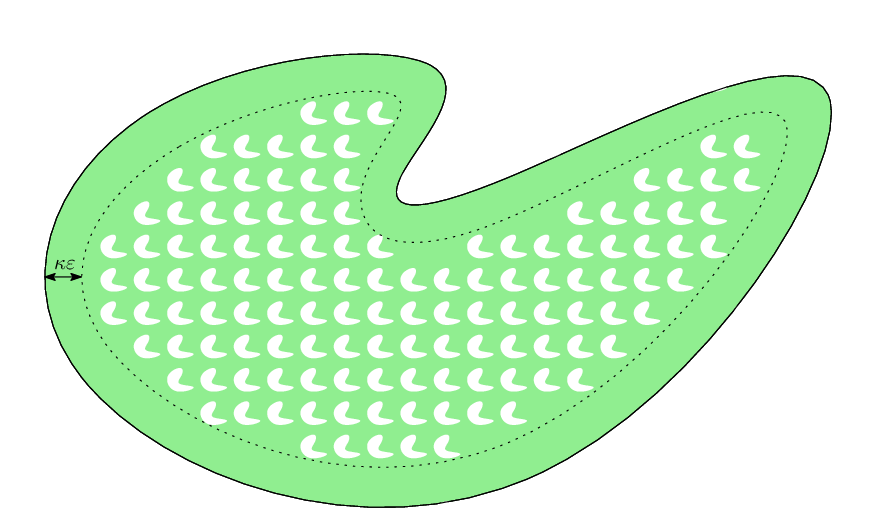}
			\end{minipage}
		}
  \caption{Comparison between Type I and Type II domain}
		\centering
\end{figure}

 We are now in the position of describing our problem. For any $\delta \in (0,\infty)$, let 
 $$\Lambda_{\eps,\delta}(x) := \delta \mathbbm{1}_{D_\eps}(x) + \mathbbm{1}_{\Omega_\eps}(x)$$ be the contrast function, where $\delta$ denotes the contrast of physical parameters between inclusions and background (in practice, $\delta$ could be compressibility, permeability, conductivity and etc.). We study the quantitative homogenization of the Dirichlet problem
	\begin{equation}\label{model}
		\left\{
		\begin{aligned}
			& \mathcal{L}_{\varepsilon,\delta}( u_{\varepsilon,\delta} ) = f && \mathrm{in} \ \Omega, \\
			& u_{\varepsilon,\delta}   = g && \mathrm{on}\ \partial \Omega.
		\end{aligned}
		\right.
	\end{equation}
where $\mathcal{L}_{\eps,\delta}$ is given by
\begin{equation*}
 \mathcal{L}_{\eps,\delta} (u) := -\mathrm{div} \left[ \Lambda_{\varepsilon,\delta} (x) A\left( \frac{x}{\varepsilon}\right) \nabla u \right] = -\frac{\partial}{\partial x_i} \left(\Lambda_{\eps,\delta}(x)a^{\alpha\beta}_{ij}\left(x/\eps\right)\frac{\partial}{\partial x_j} u^\beta\right) ,
\end{equation*}
	which acts on functions defined on $\Omega \subset \R^d$ with values in $\R^m$. 
Note that since we allow $\delta$ approaches extremal value ($0$ or $\infty$), the coefficient $\Lambda_{\varepsilon ,\delta} (x)A(x/\varepsilon)$ exhibits high contrast between inclusions and background, and thus $\mathcal{L}_{\varepsilon ,\delta}$ models the media with high-contrast small inclusions.

For each fixed $\delta \in (0,\infty)$, it is well known that the periodic homogenization is valid: as $\varepsilon\rightarrow 0$, the solution $u_{\varepsilon,\delta}$ converges (e.g., in $L^2(\Omega)$) to $\widehat{u}_{\delta}$, the solution of the \textit{homogenized} problem 
	\begin{equation}\label{homogenized problem}
		\left\{
		\begin{aligned}
			& \widehat{\mathcal{L}}_{\delta}( \widehat{u}_{\delta} ) = - \mathrm{div}\,( \widehat{A}_{\delta} \nabla \widehat{u}_{\delta} )= f && \mathrm{in} \ \Omega, \\
			& \widehat{u}_{\delta}   = g && \mathrm{on}\ \partial \Omega,  
		\end{aligned} 
		\right. 
	\end{equation}
where $\widehat{\mathcal{L}}_{\delta} = - \mathrm{div}\,( \widehat{A}_{\delta} \nabla ) $ is the homogenized operator with coefficient
 $\widehat{A}_{\delta} = \big(\widehat{a}_{\delta,ij}^{\alpha \beta}\big)$ defined by
 \begin{equation}
 \label{eq:Abardel}
 	\widehat{a}_{\delta,ij}^{\alpha \beta} = \int_Y \Lambda_{\delta}(y) \Big[ a_{ij}^{\alpha \beta}(y) + a_{ik}^{\alpha \gamma}(y)   \frac{\partial}{\partial y_k} \chi_{\delta,j}^{\gamma \beta}\Big]\,dy. 
 \end{equation}
 Here $\Lambda_{\delta}(y) = \delta \mathbbm{1}_{\omega}(y) + \mathbbm{1}_{Y\setminus \overline{\omega}}(y)$ is the contrast function in the unit cell, and for each $i$ and $\alpha$, $\chi_{\delta,i}^{\alpha }= \big(\chi_{\delta,i}^{\alpha 1}, \cdots, \chi_{\delta,i}^{\alpha m}\big) \in H^1(\mathbb{T}^d;\R^m)$ is the solution of the \textit{cell problem}
	\begin{equation}
		\label{eq:cellproblem}
		\left\{
		\begin{aligned}
			& \mathcal{L}_{1,\delta} \big(\chi_{\delta,i}^{\alpha} + y_ie^{\alpha}\big)  =0 \qquad \mathrm{in}\ Y, \\
			& \chi_{\delta,i}^{\alpha} \ \mathrm{is \ } Y \mathrm{- periodic\ and \ mean \ zero}, 
		\end{aligned}
		\right.
	\end{equation}
	where $e^{\alpha} = (0,\cdots, 1 ,\cdots, 0) \in \mathbb{R}^m  $ with $1$ in the $\alpha^{\mathrm{th}}$ position. We denote the tensor $\big(\chi_{\delta,i}^{\alpha \beta} \big)$ by $\chi_{\delta}  $. Note that, for $\delta=1$, $\chi_1$ is the solution to the classical cell problem in standard periodic homogenization. The optimal $H^1$ convergence rate $O(\varepsilon)$ with some proper correctors and the regularity estimates for $u_{\varepsilon,\delta}$ are also obtained, see the monograph \cite{shen_periodic_2018} and references therein.
	
	It is natural to ask whether the above quantitative results are uniform for $\delta \in (0,\infty)$. Such problems are basic for understanding physical and engineering models with rapidly oscillating and high-contrast coefficients. For instance,
	\begin{itemize}
		\item[(a)] In biomedical imaging, there is a need to probe anomalous tissues, such as tumors or thrombus. The elastic modulus of these anomalous tissues is often significantly greater than that of the surrounding normal tissues, as observed in \cite{manduca2001magnetic}.
		\item[(b)] Designing meta-materials exhibiting novel electromagnetic or elastic properties. Meta-materials are composite materials typically composed of arrays of small resonators structured on the microscale, the resonance occurs when the electromagnetic or elastic modulus of resonators are high-contrast compared to that of the background, see \cite{sakoda2004optical, MR3769919}.
		\item[(c)] The double porosity problem, studied by Arbogast, Douglas, and Hornung \cite{doi:10.1137/0521046}, which models single phase flow in fractured porous media. In their context, $u_{\varepsilon,\delta}$ is the fluid pressure, and $\delta = \varepsilon^2$ is the permeability that is much smaller in the porous rocks $D_{\varepsilon}$ than in the network of fractures $\Omega_{\varepsilon}$.
	\end{itemize}

\subsection*{Main results.} The first main result of this article concerns the uniform convergence rate of $u_{\varepsilon,\delta}$ in $0<\delta<\infty$. As far as we know, this is the first result in the full inhomogeneous boundary value problem (that is, with nonzero source term and boundary data) in the high contrast setting.
	
	\begin{theorem}\label{thm:suboptimal convergence rate}
		Assume \eqref{symmetry}-\eqref{periodicc} and  $\mathbf{(H1)}$, let $u_{\varepsilon,\delta}$ be the solution of \eqref{model} and $\widehat{\mathcal{L}}_\delta^{-1}(f)$ be the solution of \eqref{homogenized problem}. Let $v=\mathcal{L}_{D_{\varepsilon}}^{-1}(f) \in H^1_0(\Omega)$ be the unique solution of 
	\begin{equation}\label{auxioperator}
		\left\{
		\begin{aligned}
			& \mathcal{L}_{\eps,1}(v)= f & \mathrm{in} \ D_{\varepsilon} , \\
			& v=0& \mathrm{on}\ \Omega \setminus D_{\varepsilon}.
		\end{aligned}\right.
	\end{equation} 
Then we have 
		\begin{equation}
		\label{eq:thm1-error}
			\begin{aligned}
				\Big\|   u_{\varepsilon,\delta} - \widehat{u}_{\delta} -  \varepsilon \chi_{\delta}\left(\frac{x}{\varepsilon}\right) S_{\varepsilon} \Big(\eta_{\varepsilon}\nabla \widehat{u}_{\delta}\Big) &- \delta^{-1} \mathcal{L}_{D_{\varepsilon}}^{-1}(f) \Big\|_{H^1(\Omega)} \\
				&\leq C\varepsilon^{1/2} \big( \|f \|_{L^2(\Omega) } +\|\nabla_{\mathrm{tan}}g\|_{L^2(\partial \Omega)} \big).
			\end{aligned}
		\end{equation}
		Here $C$ depends only on $d, m, \kappa, \mu,\Omega$ and $\omega$, $S_{\varepsilon}$ is a smoothing operator defined in \eqref{epsi_smoother}, and $\eta_{\varepsilon}$ is a cut-off function supported away from $\partial \Omega$ defined in \eqref{cut-off_def}.
	\end{theorem}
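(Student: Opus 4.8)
The plan is to decompose the error into pieces adapted to the three scales at play: the boundary layer near $\partial\Omega$, the high-contrast inclusion effect, and the classical periodic corrector. First I would write $w_{\eps,\delta} := u_{\eps,\delta} - \widehat{u}_\delta - \eps\chi_\delta(x/\eps)S_\eps(\eta_\eps\nabla\widehat u_\delta) - \delta^{-1}\mathcal{L}_{D_\eps}^{-1}(f)$ and compute $\mathcal{L}_{\eps,\delta}(w_{\eps,\delta})$ directly, using the cell problem \eqref{eq:cellproblem} and the formula \eqref{eq:Abardel} for $\widehat A_\delta$. The key algebraic identity is the usual ``flux correction'': $\Lambda_{\eps,\delta}A(x/\eps)\big[\nabla\widehat u_\delta + \nabla_y\chi_\delta(x/\eps)S_\eps(\eta_\eps\nabla\widehat u_\delta)\big] - \widehat A_\delta\nabla\widehat u_\delta$ is, modulo terms supported near $\partial\Omega$ (controlled by $\eta_\eps$) and modulo the smoothing error $S_\eps(\eta_\eps\nabla\widehat u_\delta) - \nabla\widehat u_\delta$, a divergence-free-corrected quantity of the form $\partial_k\big(\mathfrak{b}_{kj}(x/\eps)(\cdots)\big)$ where $\mathfrak{b}$ is the (periodic, bounded) dual corrector/flux tensor associated to $\mathcal{L}_{1,\delta}$. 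The role of the extra term $\delta^{-1}\mathcal{L}_{D_\eps}^{-1}(f)$ is precisely to cancel the leading inclusion contribution: inside $D_\eps$ the operator is $-\delta\,\mathrm{div}(A(x/\eps)\nabla\cdot)$, so $\mathcal{L}_{\eps,\delta}(\delta^{-1}\mathcal{L}_{D_\eps}^{-1}f) = f$ on $D_\eps$, matching the source there; one must check the jump/flux contributions this creates across $\partial D_\eps$ are lower order.

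Second, I would estimate each resulting error term in $H^{-1}(\Omega)$ (or as a sum of an $L^2$ term and a divergence of an $L^2$ term), and then invoke the uniform-in-$\delta$ energy estimate for $\mathcal{L}_{\eps,\delta}$ — which follows from the ellipticity \eqref{ellipticc} since $\Lambda_{\eps,\delta}\geq\min(\delta,1)>0$ pointwise but, more usefully, the quadratic form controls $\|\nabla\cdot\|_{L^2(\Omega_\eps)}^2 + \delta\|\nabla\cdot\|_{L^2(D_\eps)}^2$, and this is exactly the norm in which the correctors have been chosen. The terms to control are: (i) the smoothing error, bounded by $C\eps^{1/2}\|\nabla^2\widehat u_\delta\|$-type quantities via the standard estimate for $S_\eps$ combined with interior Lipschitz/$W^{2,2}$ bounds on $\widehat u_\delta$ that are uniform in $\delta$ (here one needs that $\widehat A_\delta$ stays elliptic with constants independent of $\delta$ — this should be part of the cell-problem analysis done earlier, using that $\chi_\delta$ is bounded in the $\delta$-weighted energy); (ii) the cutoff error, a boundary-layer term supported in a $C\eps$-neighborhood of $\partial\Omega$, contributing $C\eps^{1/2}(\|f\|_{L^2}+\|\nabla_{\mathrm{tan}}g\|_{L^2(\partial\Omega)})$ by a trace/Cacciopoli argument together with the regularity of $\widehat u_\delta$ up to the boundary; (iii) terms of the form $\eps\,\partial_k(\mathfrak b_{kj}(x/\eps)\,\partial_j S_\eps(\eta_\eps\nabla\widehat u_\delta))$, which are $O(\eps)$ in $H^{-1}$ using periodicity of $\mathfrak b$ and boundedness of $S_\eps$ on appropriate spaces; (iv) the cross terms involving $\delta^{-1}\mathcal{L}_{D_\eps}^{-1}(f)$ and its boundary flux on $\partial D_\eps$, where one uses that $\mathcal{L}_{D_\eps}^{-1}(f)$ has energy $O(\eps^2\|f\|)$ on each inclusion (Poincaré on $\omega_\eps^{\mathbf n}$, scaling), so $\delta^{-1}\mathcal{L}_{D_\eps}^{-1}(f)$ is small in the $\delta$-weighted norm and its contribution to the flux is lower order uniformly.

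The main obstacle I anticipate is item (iv) together with making (i) genuinely uniform in $\delta$: one must argue that the whole construction degenerates gracefully as $\delta\to 0$ and $\delta\to\infty$. For $\delta\to\infty$ the term $\delta^{-1}\mathcal{L}_{D_\eps}^{-1}(f)\to 0$ and $\chi_\delta$ behaves like an incompressible/rigid-inclusion corrector, so the analysis should reduce to a perforated-domain estimate; for $\delta\to 0$ the inclusions decouple and $\widehat A_\delta$ approaches the homogenized matrix of the perforated (Neumann) problem, and the term $\delta^{-1}\mathcal{L}_{D_\eps}^{-1}(f)$ captures the (non-vanishing, $O(1)$ in $L^2$ but $O(\eps^2\delta^{-1})$ ``small'' only relative to the weighted norm) internal pressure — here the delicate point is that this term is \emph{not} small in $H^1(\Omega)$, which is exactly why it is left on the left-hand side of \eqref{eq:thm1-error} rather than being absorbed into the error. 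So the real work is to verify that after subtracting it, the remaining equation for $w_{\eps,\delta}$ has right-hand side bounded by $C\eps^{1/2}$ times the data norm with $C$ independent of $\delta$, which forces one to track the $\delta$-dependence through every constant — in particular through the uniform ellipticity of $\widehat A_\delta$, the uniform energy bound on $\chi_\delta$, and the uniform regularity of $\widehat u_\delta$. I would handle this by proving the needed $\delta$-uniform bounds on $\chi_\delta$ and $\widehat A_\delta$ as preliminary lemmas (presumably available from the layer-potential analysis referenced in the abstract), and then the convergence-rate argument itself is a (careful) variant of the classical two-scale expansion with smoothing.
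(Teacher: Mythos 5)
Your plan correctly identifies the overall shape of the argument: form the discrepancy $w_{\eps,\delta}$, compute $\mathcal{L}_{\eps,\delta}(w_{\eps,\delta})$ using the cell problem and the flux corrector, show that the $\delta^{-1}\mathcal{L}^{-1}_{D_\eps}(f)$ term cancels the source inside $D_\eps$, and establish $\delta$-uniform preliminaries (ellipticity of $\widehat A_\delta$, $H^1$-bound on $\chi_\delta$, bound on the flux corrector $\Psi_\delta$). All of these appear in the paper, in essentially the form you describe. But there is a genuine gap at the core of the estimate.

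You propose to close the argument with the native energy estimate for $\mathcal{L}_{\eps,\delta}$, whose quadratic form controls $\|\nabla\cdot\|^2_{L^2(\Omega_\eps)} + \delta\|\nabla\cdot\|^2_{L^2(D_\eps)}$, and you remark that ``this is exactly the norm in which the correctors have been chosen.'' But the theorem asserts an \emph{unweighted} $H^1(\Omega)$ bound that is uniform in $\delta$, and the weighted energy does not imply it: when $\delta<1$ you only get $\sqrt{\delta}\,\|\nabla w\|_{L^2(D_\eps)}\lesssim \eps^{1/2}$, i.e.\ $\|\nabla w\|_{L^2(D_\eps)}\lesssim\delta^{-1/2}\eps^{1/2}$, which blows up as $\delta\to 0$; when $\delta>1$, the same weighted energy estimate produces a $\delta^{1/2}$ prefactor on the contribution to $\|\nabla w\|_{L^2(\Omega_\eps)}$ coming from the $P_{\eps,\delta}$ part of the right-hand side (see the paper's \eqref{1inftyfirst} and the computation preceding it). So in each regime one of the two pieces $\Omega_\eps$, $D_\eps$ is left uncontrolled by the energy estimate alone, and your outline does not supply the missing mechanism.

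The paper's missing ingredient is the splitting $w_{\eps,\delta}=w^{\mathrm R}_{\eps,\delta}+w^{\mathrm S}_{\eps,\delta}$, defined in \eqref{eq:wedR}--\eqref{eq:wedS}. The regular part $w^{\mathrm R}$ is built from auxiliary problems \eqref{w1}--\eqref{w2} posed for the \emph{non-degenerate} operator $\mathcal{L}_{\eps,1}$, so it is bounded in $H^1(\Omega)$ uniformly (after subtracting $\delta^{-1}\mathcal{L}^{-1}_{D_\eps}(f)$, which is precisely why that term appears in the theorem). The singular part $w^{\mathrm S}$ satisfies $\mathcal{L}_{\eps,1}(w^{\mathrm S})=0$ in $\Omega\setminus\partial D_\eps$ together with the zero-mean-flux identity $\int_{\partial\omega_\eps^{\mathbf n}}\partial_\nu w^{\mathrm S}|_+\,d\sigma =0$, and the key Lemma \ref{lem:singlularbelong} shows (via the two extension operators $\mathcal P_\eps$, $\mathcal Q_\eps$) that its energy inside $D_\eps$ is \emph{comparable} to its energy outside, with constants independent of $\eps$ and $\delta$. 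This balance is what lets one transfer the good bound from whichever region the energy estimate does cover to the other region, closing the argument uniformly. Without this decomposition and balance lemma, you would be stuck exactly at the point where the weighted norm degenerates, and I would expect your approach to produce only a $\delta$-dependent rate in one of the two regimes.
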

	\begin{remark}\label{rem:fterm}
		By a simple rescaling argument, it is clear that
		\begin{equation*}
			\varepsilon^{-2}\|  \mathcal{L}^{-1}_{D_{\varepsilon}} (f) \|_{L^2(\Omega)} +\varepsilon^{-1}\|  \nabla \mathcal{L}^{-1}_{D_{\varepsilon}} (f) \|_{L^2(\Omega)}   \leq C \|f\|_{L^2(\Omega)} ,
		\end{equation*}
		so if $\delta \gg \varepsilon$, the term $\delta^{-1} \mathcal{L}_{D_{\varepsilon}}^{-1}(f)$ is negligible as $\varepsilon\rightarrow 0$. Similarly, if we modify $f$ in \eqref{model} by 
\begin{equation}
\label{eq:modf}
f_{\eps,\delta}(x) = \begin{cases} f(x), \qquad &\text{for } \, \delta\ge 1,\\
\Lambda_{\eps,\delta}(x) f(x), \qquad &\text{for }\, \delta\in (0,1),
\end{cases}
\end{equation}
and replace $\widehat{u}_\delta$ by $\widehat{\mathcal{L}}_\delta^{-1}(f_{\eps,\delta})$ in \eqref{eq:thm1-error}, we can drop the term $\delta^{-1}\mathcal{L}_{D_\eps}^{-1}(f)$ there as well.
	\end{remark}


The second main objective of this paper is to establish uniform regularity estimates for $u_{\varepsilon,\delta}$ both in $0<\varepsilon<1$ and $0<\delta<\infty$. From the aforementioned convergence rate result, it is reasonable that we replace $f$ by $f_{\eps,\delta}$ for $\delta \in (0,1)$.



\begin{theorem}
\label{thm:globalLip} Assume $(\mathbf{H1})$ and $(\mathbf{H2})$, further assume $\Omega, \omega$ are $C^{1,\alpha}$ domains for some $\alpha \in (0,1)$, $f\in L^p(\Omega)$ for some $p\ge d$ and $g\in C^{1,\alpha}(\partial \Omega)$. 
For each $\eps \in (0,1)$ and $\delta\in (0,\infty)$, let $f_{\eps,\delta}$ be defined by \eqref{eq:modf}, in other words, for relatively soft inclusions, we assume that the source term inside $D_\eps$ is of order $O(\delta)$. Then there is a constant $C>0$ that depends only on $\kappa,\mu,\lambda, \tau,d,m,p,\alpha,\Omega$ and $\omega$ such that if $u_{\eps,\delta}$ solves \eqref{model} with $f$ replaced by $f_{\eps,\delta}$, then we have 
	\begin{equation}\label{eq:globalLip}
		\|\nabla u_{\eps,\delta}\|_{L^\infty(\Omega)} \le C\left( \|f\|_{L^p(\Omega)} + \|g\|_{C^{1,\alpha}(\partial \Omega)}\right).
	\end{equation}
\end{theorem}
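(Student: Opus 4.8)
\smallskip
\noindent\textit{Proof strategy.}
The plan is to run a Campanato-type (Avellaneda--Lin) iteration in which the convergence rate of Theorem~\ref{thm:suboptimal convergence rate} plays the role of the \emph{approximation} ingredient and the uniform $C^{1,\alpha}$ regularity of the homogenized operator $\widehat{\mathcal L}_\delta$ plays the role of the \emph{regularity of the limit} ingredient, carrying all constants through uniformly in $\eps$ and $\delta$. As a preliminary reduction, the $L^\infty$ bound on $\nabla u_{\eps,\delta}$ is localized to an interior estimate on balls $B(x_0,r)\subset\Omega$ and a boundary estimate on half-balls $B(x_0,r)\cap\Omega$ with $x_0\in\partial\Omega$; the boundary case is treated analogously to the interior one, using in addition that $D_\eps$ does not meet the $\kappa\eps$-neighbourhood of $\partial\Omega$, so that at the smallest scales near $\partial\Omega$ the estimate reduces to the classical H\"older-coefficient boundary Lipschitz estimate on the $C^{1,\alpha}$ domain $\Omega$. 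We also record two facts that are uniform in $\delta\in(0,\infty)$: first, analysing the cell problem \eqref{eq:cellproblem}--\eqref{eq:Abardel} shows that $\widehat A_\delta$ is elliptic and bounded with constants depending only on $\mu,d,m,\omega$ (no degeneration as $\delta\to 0$ or $\delta\to\infty$, since the cell problems converge to well-posed limiting ones), so $\widehat{\mathcal L}_\delta$ enjoys the usual interior and $C^{1,\alpha}$-boundary Schauder estimates uniformly in $\delta$; second, $\nabla\chi_\delta\in L^\infty(\T^d)$ with a bound uniform in $\delta$.

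The technical core is the large-scale iteration at scales $\eps\le r\le r_0$, for a radius $r_0$ depending only on the data. For such $r$, let $H(x_0,r)$ be the scale-invariant Campanato excess of $u_{\eps,\delta}$ measured against affine functions corrected by the correctors appearing in \eqref{eq:thm1-error} (with the flux correctors incorporated so that the excess is stable under the iteration). Applying a rescaled, localized version of Theorem~\ref{thm:suboptimal convergence rate} on $B(x_0,r)$ shows that, in the rescaled $H^1$ norm, $u_{\eps,\delta}$ is within $C(\eps/r)^{1/2}\Phi(r)$ of $\widehat u_\delta$ plus correctors, where $\widehat u_\delta$ solves $\widehat{\mathcal L}_\delta$ on $B(x_0,r)$ (resp.\ its intersection with $\Omega$) and $\Phi$ is an appropriate data functional. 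Combining this with the uniform $C^{1,\alpha}$ excess-decay for $\widehat u_\delta$ from the first paragraph, and controlling the corrector contributions by $\|\nabla\chi_\delta\|_{L^\infty(\T^d)}\le C$ together with Remark~\ref{rem:fterm} for the term $\delta^{-1}\mathcal L^{-1}_{D_\eps}(f)$ (which is where the replacement of $f$ by $f_{\eps,\delta}$ is used, so that no negative power of $\delta$ appears), one gets a one-step improvement $H(x_0,\theta r)\le \tfrac12 H(x_0,r)+C(\eps/r)^{1/2}\Phi(r)$ for a fixed $\theta\in(0,1)$. Iterating from $r_0$ down to scale $\eps$ yields the large-scale Lipschitz estimate $\big(\fint_{B(x_0,r)\cap\Omega}|\nabla u_{\eps,\delta}|^2\big)^{1/2}\le C\big(\fint_{B(x_0,r_0)\cap\Omega}|\nabla u_{\eps,\delta}|^2\big)^{1/2}+C(\|f\|_{L^p(\Omega)}+\|g\|_{C^{1,\alpha}(\partial\Omega)})$ for all $\eps\le r\le r_0$.

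For scales $r<\eps$ the equation does not homogenize, and one instead blows up by the period: with $v(y):=u_{\eps,\delta}(\eps y)$, $v$ solves $-\mathrm{div}\big(\Lambda_{1,\delta}(y)A(y)\nabla v\big)=\eps^{2}f_{\eps,\delta}(\eps y)$ on a ball of unit radius, whose coefficient is a two-phase coefficient with $C^{1,\alpha}$ interface $\partial\omega$ and whose right-hand side is bounded in $L^p$ by $C\eps^{2-d/p}\|f\|_{L^p(\Omega)}$, again thanks to the modification $f\mapsto f_{\eps,\delta}$. The piecewise-$C^{1,\mu}$ (transmission-type) regularity for elliptic systems with such coefficients gives $\|\nabla v\|_{L^\infty}$ on a half-size ball controlled by $\|\nabla v\|_{L^2}$ on the unit ball plus the rescaled source, uniformly in $\delta$; undoing the scaling and inserting $r=\eps$ into the large-scale estimate of the previous paragraph, then absorbing the remaining $L^2$ average of $\nabla u_{\eps,\delta}$ by the $\delta$-uniform global energy estimate $\|\nabla u_{\eps,\delta}\|_{L^2(\Omega)}\le C(\|f\|_{L^p(\Omega)}+\|g\|_{C^{1,\alpha}(\partial\Omega)})$, yields \eqref{eq:globalLip}.

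The main obstacle is the uniformity in $\delta$ in the two extreme regimes $\delta\to 0$ and $\delta\to\infty$: both the unit-scale two-phase Lipschitz estimate used in the third paragraph and the companion bound $\|\nabla\chi_\delta\|_{L^\infty(\T^d)}\le C$ are classical only for $\delta$ in a fixed compact subset of $(0,\infty)$. For the extreme regimes these are obtained by a blow-up/compactness argument: a putative sequence $\delta_k\to 0$ (resp.\ $\delta_k\to\infty$) violating the estimate converges, after normalization, to a solution of the limiting problem --- the Neumann problem on the perforated domain when $\delta_k\to 0$, and the rigid-inclusion problem when $\delta_k\to\infty$ --- for which the desired regularity is available, a contradiction. (Alternatively, these bounds can be obtained from layer-potential representations with $\delta$-uniform operator norms, which is the route taken in the analysis underlying Theorem~\ref{thm:suboptimal convergence rate}.) This uniform control of the small-scale building blocks, rather than the iteration itself, is where the essential work lies.
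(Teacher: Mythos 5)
Your overall blueprint matches the paper's: a Campanato-type approximation scheme with the $H^1$ convergence rate of Theorem~\ref{thm:suboptimal convergence rate} as the approximation ingredient, the uniform-in-$\delta$ ellipticity of $\widehat A_\delta$ as the decay mechanism, and a blow-up to the $\eps$-scale followed by a local two-phase Lipschitz estimate. The treatment of the $\delta^{-1}\mathcal L^{-1}_{D_\eps}(f)$ term via the substitution $f\mapsto f_{\eps,\delta}$ is exactly what the paper does.

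However, there is a genuine gap in the iteration step. To apply a localized convergence rate on a cylinder $\mathbf{D}_r$ (or a ball), the boundary data for the homogenized comparison problem is the trace of $u_{\eps,\delta}$ on some slice $\partial\mathbf{D}_t$, and one needs to control $\|\nabla u_{\eps,\delta}\|_{L^2(\partial\mathbf{D}_t)}$. This requires a Caccioppoli inequality on $\mathbf{D}_{2r}$ for the operator $\mathcal L_{\eps,\delta}$. For $\delta\in(0,1]$ this is standard, but for $\delta\gg 1$ it is \emph{not}: testing with $\varphi^2 u$ puts $\delta\int_{D_\eps}|\nabla u|^2$ on the left, and naive Young cannot absorb the resulting $\delta$ factor. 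The paper (following and extending Shen's \cite{shen_large-scale_2021}) proves a Caccioppoli inequality for large $\delta$ (Lemmas~\ref{lem:energyinlarged},~\ref{lem:cacciold+}, Theorems~\ref{lem:cacciold+i},~\ref{lem:boundaryCaccioppoli}) that carries an extra $(\eps/R)^{2\ell}\int|\nabla u|^2$ error term. This propagates into the approximation step as an additional $C(\eps/r)^{3/2}\|\nabla u_{\eps,\delta}\|_{\underline{L}^2(\mathbf{D}_{2r})}$ term in Lemma~\ref{interlem1}, which then appears in the one-step improvement of Lemma~\ref{lem:inter}. Your proposed improvement $H(\theta r)\le\tfrac12 H(r)+C(\eps/r)^{1/2}\Phi(r)$ is therefore \emph{not} what one obtains for $\delta>1$; the extra gradient term must be iterated away using the Caccioppoli inequality itself (Step~1 of the paper's proof of Theorem~\ref{lslipshitz}). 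Your proposal omits the large-$\delta$ Caccioppoli inequalities entirely, and without them the Campanato iteration does not close.

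Two further differences from the paper worth noting, neither fatal but both adding work you would need to justify. First, you measure excess against corrector-corrected affine functions and hence invoke a uniform $L^\infty$ bound on $\nabla\chi_\delta$; the paper's excess $H[w](t)$ in \eqref{eq:Hdef} is measured against plain affine functions, and only $\|\chi_\delta\|_{H^1(Y)}\le C$ (Theorem~\ref{thm:cell_limit}) is needed in the comparison step, so the $L^\infty$ corrector bound never has to be established separately. Second, for the unit-scale two-phase Lipschitz estimate uniform in $\delta$, you propose a blow-up/compactness argument; the paper instead cites a direct estimate of Shen (\cite[Lemma 5.1]{shen_large-scale_2021}), which avoids the soft argument and identifies the constant's dependence explicitly. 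Your compactness route is plausible in spirit but more fragile (one must pin down the limiting problem and its regularity in both degenerate limits, and quantitative constants do not come for free), and you should at least note that it does not itself supply the Caccioppoli inequalities needed in the iteration.
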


This is a global Lipschitz regularity estimate for $u_{\eps,\delta}$ that is uniform both in $\eps$ and $\delta$. The interior version was essentially established in Shen's work. We base our work on Shen's and establish the boundary regularity by carrying out the approximation scheme. 
	
\begin{remark}\label{rem:extreme} Several remarks are in order.

(1) It is possible to include the two extreme settings, namely $\delta = 0$ and $\delta = \infty$ in our analysis. For $\delta = 0$, we consider the heterogeneous elliptic system posed on the perforated domain $\Omega_\eps$ with zero Neumann boundary condition on the interior boundary $\Omega\cap \partial \Omega_\eps$:
\begin{equation}\label{eq:het0}
		\left\{
		\begin{aligned}
			& -\partial_i\left(a^{\alpha\beta}_{ij}(\tfrac{x}{\eps})\partial_j u^\beta_\eps\right) = f^\alpha && \mathrm{in} \ \Omega_\eps, \\
			& \nu^i a^{\alpha\beta}_{ij}(\tfrac{x}{\eps})\partial_j u^\beta_\eps(x) = 0  && \mathrm{on} \ \Omega \cap \partial \Omega_\eps,\\
			& u_{\varepsilon,\delta}   = g && \mathrm{on}\ \partial \Omega.
		\end{aligned}
		\right.
	\end{equation}
For $\delta = \infty$, we consider the so-called stiff inclusion problem: find $u_\eps = (u_\eps^\beta)$ in $H^1_0(\Omega)$ so that $u_\eps$ is a constant vector in each of the inclusion $\omega_\eps^{\mathbf{n}}$ of $D_\eps$, and $u_\eps$ satisfies
\begin{equation}\label{eq:hetinfty}
		\left\{
		\begin{aligned}
			& -\partial_i\left(a^{\alpha\beta}_{ij}(\tfrac{x}{\eps})\partial_j u^\beta_\eps\right) = f^\alpha && \mathrm{in} \ \Omega_\eps, \\
			& \int_{\partial \omega_\eps^{\mathbf{n}}} e^\alpha \cdot \left.\left(\frac{\partial u_\eps}{\partial \nu_\eps}\right)\right\rvert_{+}\,d\sigma  = \int_{\omega_\eps^{\mathbf{n}}} e^\alpha \cdot f  && \text{for each } \omega_\eps^{\mathbf{n}} \subset D_\eps,\\
			& u_{\varepsilon,\delta}   = g && \mathrm{on}\ \partial \Omega.
		\end{aligned}
		\right.
	\end{equation}
The quantitative homogenization of \eqref{eq:het0} is similar to the analysis in \cite{MR4567768,russell_quantitative_2018}; that for the stiff inclusion problem is similar to \cite{MR548785}. In particular, the homogenized tensors are $\widehat A_0$ and $\widehat A_\infty$ defined by \eqref{homotensor0} and \eqref{homotensorinfy}, respectively. 

(2) The ellipticity condition \eqref{ellipticc} is often referred to as \emph{strong} ellipticity, to be distinguished from the \emph{elasticity} setting.
It is possible to combine the method of this paper and those of Shen to analyze the same problem for the elasticity system. The main difference is, in the latter setting $m=d$, and the ellipticity is checked only for symmetric $d\times d$ matrices  $\xi$ (this is usually called weak ellipticity or Legendre–Hadamard ellipticity). 
By using Korn's inequalities, one may carry out most of the analysis here in a similar way.
\end{remark}

	\subsection*{History and strategy of the proof.}
	This is a problem in monograph by Jikov, Kozlov and Oleinik \cite{MR1329546}. 
In the scalar case with $0\leq \delta <1$, $A=I$ and $\omega$ being sufficiently smooth, using the compactness method in \cite{https://doi.org/10.1002/cpa.3160400607}, the $W^{1,p}$ and Lipschitz estimates were obtained by Yeh \cite{https://doi.org/10.1002/mma.1163, YEH20111828, MR3401586, MR3494385}. Also see earlier work by Schweizer \cite{https://doi.org/10.1002/1097-0312(200009)} and Masmoudi \cite{57222a4e435842a084866fe8defdbab1} for related uniform estimates in the case $\delta =0$. Russell \cite{russell_homogenization_2017} established the large-scale interior Lipschitz estimate for the system of elasticity with bounded measurable coefficients in the case $\delta =0$, using an approximation method originated by Armstrong and Smart \cite{MR3481355}. The case $0<\delta <1$ was treated by Russell \cite{russell_quantitative_2018}. Shen \cite{shen_homogenization_2021} obtained the boundary regularity estimates and nontangential-maximal-function estimates for H\"{o}lder continuous $A$ with $0<\delta <1$.
	In the stochastic setting with $\delta =0$, Armstrong and Dario \cite{MR3847767} obtained quantitative homogenization and large-scale regularity results for the random conductance model on a supercritical percolation.
	
	For the case $1<\delta <\infty$, Shen \cite{shen_large-scale_2021} established the interior Lipschitz estimates by adapting a recent method proposed by Armstrong et al. \cite{MR4544795}, which is based on a Caccioppoli type inequality and the idea that transfer the higher-order regularity of $u$ in terms of the difference operator to higher-order regularity of $u$ at a large scale through Caccioppoli and Poincar\'{e}'s inequalities.
	
	We briefly sketch our approach to the proof. In this paper, we adapt the approximation method originated by Armstrong and Smart \cite{MR3481355}, which was used by Russell \cite{russell_homogenization_2017, russell_quantitative_2018} for treating the case $0\leq \delta <1$. The main difficulty of this method in the case $1<\delta<\infty $ is the establishment of uniform $H^1$ convergence rate. To this end, we use the periodic layer potentials to obtain the uniform controll for quantities associated to the cell problem, i.e., the homogenized tensor $\widehat{A}$, the flux corrector and etc. This combine with a standard elliptic estimate yield the $H^1$ convergence rate in $D_{\varepsilon}$. Then a key observation is that the discrepancy function could be decomposed to a regular one and a singular one, the regular part is easy to be bounded, and $H^1(\Omega_{\varepsilon})$ norm of the singular one is bounded by its $H^1(D_{\varepsilon})$ norm. These complete the proof of $H^1(\Omega)$ convergence rate. Via the results of convergence rates, a Caccippoli-type inequality by Shen \cite{shen_large-scale_2021} and its variants, and an approximation method (also known as a Campanato-type scheme) by Armstrong and Smart \cite{MR3481355}, we establish the regularity estimates.

	\textbf{Notations.} We collect some notations used throughout this paper. 
\begin{itemize}
    \item 	The boundary $t$-layer of $\Omega$ is denoted by $\mathcal{O}_t = \{ x\in \Omega: \mathrm{dist}(x,\partial \Omega) <t\}$. 
    \item We use the following notation for the product of $A$ with $\nabla u$:
\begin{equation*}
A\nabla u = (a^{\alpha\beta}_{ij}\partial_j u^\beta)_i^\alpha, \quad A\nabla u \nabla v=a^{\alpha\beta}_{ij}\partial_j u^\beta \partial_i v^\alpha.
\end{equation*}
\item The open cube centered at $x_0$ with radius $R$ is denoted by $Q_R(x_0)=\{x \in \mathbb{R}^d:\|x-x_0\|_{\infty} < R\}$. Mention of the center $x_0$ is omitted whenever this does not cause confusion.
\item  We use $\mathcal{L}$ to denote the operator $ \mathcal{L}_{1,1} = - \mathrm{div}\,(A\nabla)$. 
\item 	We use 
	\begin{equation}
		\|f \|_{\underline{L}^p(E)}  = \Big(\fint_E |f|^p\,dx\Big)^{1/p} = \Big(|E|^{-1} \int_E |f|^p\,dx\Big)^{1/p} 
	\end{equation}
	for $1<p<\infty$, to denote the $L^p$ mean of $f$ over a set $E$.
 \item  We use
	\begin{equation}
		J^E(u,v) = \int_E a_{ij}^{\alpha \beta}(x) \frac{\partial u^{\beta}}{\partial x_j}(x)\frac{\partial v^{\alpha}}{\partial x_i}(x)\,dx
	\end{equation}
	to denote the bilinear form of $u$ and $v$ over a set $E$, we write $J^E(u)=J^E(u,u)$. By energy of $u$ in a set $E$ we usually mean the $L^2(E)$ norm of $\nabla u$. Due to uniform ellipticity, it is roughly $J^E(u)$. 
	\item  We denote $\varepsilon(\omega +\mathbf{n})$ by $\omega_{\varepsilon}^{\mathbf{n}}$ and $\varepsilon(Y +\mathbf{n})$ by $Y_{\varepsilon}^{\mathbf{n}}$ for $\mathbf{n} \in \Z^d$.
 \item $\widetilde{\omega}$ denotes an enlarging dilation of $\omega$ so that $\overline{\omega}\subset \widetilde{\omega} \subset  Q_{7/8}(0)$. Similarly, $\widetilde{\omega}^{\mathbf{n}}_\eps$ denotes $\eps(\mathbf{n}+\widetilde{\omega})$. 
 \item We use the flat-torus $\mathbb{T}^d$ to represent the unit cell $Y$ if the periodic boundary condition is emphasised. 
 \item  We use $C$ to denote a positive constant that may depends on $d, m,\kappa, \mu,\lambda,\tau,\omega,\widetilde{\omega}$ and $\Omega$, but independents on $\varepsilon$ and $\delta$. If $C$ depends also on other parameters, it will be stated explicitly. The exact value of $C$ may change from line to line.
\end{itemize}

	\textbf{Organization of the paper.} The rest of the paper is organized as follows. In section \ref{preliminaries} we collect some results on periodic layer potentials and extension operators that are useful to deal with functions on perforated domains, together with basic elliptic estimates. In section \ref{uniformcontrollcell} we obtain uniform estimates for the cell problems in the high contrast setting, and establish convergence rates in the high contrast limit. We then give the proof of Theorem \ref{thm:suboptimal convergence rate} in section \ref{proofof theorem1}. The Caccioppoli estimates in the high contrast setting are established in section \ref{sec:caccioppoli}, and we prove in section \ref{sec:prooflipschitz} the uniform regularity of the elliptic system in the high contrast setting.

	\section{Layer potentials, Extension operators and basic elliptic estimates}
	\label{preliminaries}

	\subsection{Periodic layer potential}
	\label{sec:periodiclayerpotential}
	
	Let the coefficient $A $ satisfies \eqref{ellipticc}-\eqref{smoothc}, in view of the interior Lipschitz regularity for $\mathcal{L}$, proved by Avellaneda and Lin \cite{https://doi.org/10.1002/cpa.3160400607}, one has the periodic fundamental solution $\Gamma(x,y) = (\Gamma^{\alpha \beta}(x,y)): \mathbb{T}^d \times \mathbb{T}^d \rightarrow \mathbb{R}^{m\times m}$, i.e., 
	\begin{equation}\label{existenceperiodicfs}
		\left\{
		\begin{aligned}
			& \mathcal{L}(x) \big(\Gamma^{\alpha}(x,y)\big)= (\delta_y(x) -1) e^{\alpha} ,& (x, y) \in Y \times Y, \\
			& \int_Y \Gamma^{\alpha} (x,y)\,dx = 0 , & y\in Y,
		\end{aligned}\right.
	\end{equation}
	where $\delta_y(x)$ is the Dirac measure massed at $y$, and the estimates
	\begin{equation}\label{periodicfunctionestimate}
		\left\{
		\begin{aligned}
			& |\Gamma(x,y)| \leq C|x-y|^{2-d}, \\
			& |\nabla_x \Gamma(x,y)| + |\nabla_y \Gamma(x,y)|\leq C|x-y|^{1-d}.
   \end{aligned}\right.
	\end{equation}
The function $\Gamma(x,y) $ is called the periodic fundamental solution for the operator $\mathcal{L}$ with pole at $y$. 
	For construction of the periodic fundamental solution and estimates \eqref{periodicfunctionestimate}, we refer to Hofmann and Kim \cite{hofmann2007green}, where they treat the case of zero Dirichlet boundary condition, but their methods apply also in the periodic setting.
	
	For $f \in L^2(\partial \omega)$, based on the estimates \eqref{periodicfunctionestimate}, one can define the single-layer potential $\mathcal{S}f$ by
	\begin{equation}
		\big( \mathcal{S}f  \big)^{\alpha}(x) = \int_{\partial \omega} \Gamma^{\alpha \beta}(x,y) f^{\beta}(y)\,d\sigma(y)
	\end{equation}
	while the Neumann-Poincar\'{e} (NP) operator $\mathcal{K}^*$ is defined by
	\begin{equation}
	\label{eq:NP}
		\big( \mathcal{K}^*f \big)^{\alpha} (x)= \mathrm{P.V.} \int_{\partial \omega}  \left( \frac{\partial \Gamma^{\beta}}{\partial \nu_x}\right)^{\alpha } (x,y) f^{\beta}(y) \,d\sigma(y),
	\end{equation}
	where P.V.\,stands for the Cauchy principal value integral. For more regular $\omega$, say $\partial \omega \in C^{1,\alpha}$ for some $\alpha \in (0,1)$, the above can be replaced by the regular integral.
	
	Denote by $L^2_0(\partial \omega)$ the space $\{\phi \in L^2(\partial\omega) \,:\, \int_{\partial \omega} \phi=0\}$. The following properties are standard: 
	\begin{itemize}
		\item[(a)] For $\psi \in L^2(\partial \omega)$, $\mathcal{S}\psi \in H^1(\mathbb{T}^d)$.
		\item[(b)] For $\psi \in L^2(\partial \omega)$, jump relation holds, i.e.,
			\begin{equation}
			\frac{\partial}{\partial \nu} \big(\mathcal{S}\psi \big) \Big|_{\omega\pm} = \Big(\pm \frac{1}{2}I + \mathcal{K}^* \Big)\psi.
		\end{equation}
	\item[(c)] For $\psi \in L^2_0(\partial \omega)$, $\mathcal{L}(\mathcal{S}\psi) =0$ in $\mathbb{T}^d \setminus \partial \omega$.
	\item[(d)] The spectrum of $\mathcal{K}^*$ on $L^2_0(\partial \omega)$ is contained in $(-1/2,1/2)$.
	\end{itemize}
We refer to \cite{kenig_layer_2011,MR4075336} and \cite[Section 2.8]{MR2327884} for more results on layer potential theory with varying coefficients (i.e., for general elliptic operators). Note that above and in the rest of the paper,
\begin{equation*}
 \frac{\partial}{\partial \nu} w = \left((\partial_\nu w)^\alpha\right), \quad \text{with}\  \ (\partial_\nu w)^\alpha:= \sum_{i=1}^d \nu_i A^{\alpha\beta}_{ij} \partial_j w^\beta.
\end{equation*} 

\subsection{Extension operators}

The following extension operator is useful in the treatment of perforated domains. The problem setting for high contrast inclusions is in a similar situation.
Recall that 
$\widetilde{\omega}$ denotes an enlarging dilation of $\omega$ so that $\overline{\omega}\subset \widetilde{\omega} \subset  Q_{7/8}(0)$.

\begin{lemma} \label{lem:extension} Assume $(\mathbf{H2})$, there exists a linear extension operator $\mathcal{P}_{\varepsilon}:H^1(\Omega_{\varepsilon}) \rightarrow H^1(\Omega)$ such that for any $v \in H^1(\Omega_{\varepsilon})$,
	\begin{equation*}
		\mathcal{P}_{\varepsilon} v= v \quad  \mathrm{in}\  \Omega_{\varepsilon} \qquad \mathrm{and} \qquad\| \nabla \mathcal{P}_{\varepsilon} v \|_{L^2(\Omega)}\leq C \| \nabla v\|_{L^2(\Omega_{\varepsilon})}.
	\end{equation*}
\end{lemma}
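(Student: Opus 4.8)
The plan is to build $\mathcal{P}_\eps$ cell-by-cell from a single fixed extension operator on the reference configuration, and then patch the local pieces together. First I would invoke the classical extension theorem for a Lipschitz perforated cell: since $\ol{\omega}\subset\widetilde\omega\subset Q_{7/8}(0)$ with $\mathrm{dist}(\omega,\partial Y)>0$, there is a bounded linear operator
\[
\mathcal{P}\colon H^1(Y\setminus\ol{\omega})\to H^1(Y),\qquad \mathcal{P}w=w \ \text{in}\ Y\setminus\ol\omega,\qquad \|\mathcal{P}w\|_{H^1(Y)}\le C\|w\|_{H^1(Y\setminus\ol\omega)},
\]
which one gets from the standard Sobolev extension for the Lipschitz domain $Y\setminus\ol\omega$ composed with restriction to $Y$ (alternatively one reflects across $\partial\omega$; the precise construction is irrelevant). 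By subtracting the mean over the annular region $\widetilde\omega\setminus\ol\omega$ one can moreover arrange the \emph{gradient-only} bound $\|\nabla\mathcal{P}w\|_{L^2(Y)}\le C\|\nabla w\|_{L^2(Y\setminus\ol\omega)}$: apply $\mathcal{P}$ to $w-c$ for the appropriate constant $c$, use Poincar\'e on the connected set $Y\setminus\ol\omega$, and note $\mathcal{P}(w-c)+c$ agrees with $w$ on $Y\setminus\ol\omega$. Rescaling to the cell $Y_\eps^{\mathbf n}=\eps(\mathbf n+Y)$ preserves this gradient estimate with the same constant, since both sides scale like $\eps^{d/2-1}$; call the rescaled operator $\mathcal{P}_\eps^{\mathbf n}\colon H^1(Y_\eps^{\mathbf n}\setminus\ol{\omega_\eps^{\mathbf n}})\to H^1(Y_\eps^{\mathbf n})$.

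Next I would assemble the global operator. For $\mathbf n\in\mathcal{I}_\eps$, the inclusion $\omega_\eps^{\mathbf n}$ sits inside $Y_\eps^{\mathbf n}\subset\Omega$ (because $\omega_\eps^{\mathbf n}\subset\Omega$ with $\mathrm{dist}(\omega_\eps^{\mathbf n},\partial\Omega)>\kappa\eps$ forces the surrounding cell into $\Omega$, after shrinking $\kappa$ if necessary, or one works with the enlarged cell $\widetilde\omega_\eps^{\mathbf n}$ which is what actually matters). On each such cell define $\mathcal{P}_\eps v := \mathcal{P}_\eps^{\mathbf n}(v|_{Y_\eps^{\mathbf n}\setminus\ol{\omega_\eps^{\mathbf n}}})$, and set $\mathcal{P}_\eps v:=v$ on $\Omega\setminus\bigcup_{\mathbf n}Y_\eps^{\mathbf n}$. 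Since the cells $Y_\eps^{\mathbf n}$ are pairwise disjoint and $\mathcal{P}_\eps^{\mathbf n}w=w$ on $\partial Y_\eps^{\mathbf n}$ in the trace sense, the pieces glue to a function in $H^1(\Omega)$ with no distributional jump across the interfaces $\partial Y_\eps^{\mathbf n}$; it equals $v$ on $\Omega_\eps$ by construction. Summing the local gradient bounds over $\mathbf n\in\mathcal{I}_\eps$ gives
\[
\|\nabla\mathcal{P}_\eps v\|_{L^2(\Omega)}^2 = \sum_{\mathbf n}\|\nabla\mathcal{P}_\eps^{\mathbf n}v\|_{L^2(Y_\eps^{\mathbf n})}^2 + \|\nabla v\|_{L^2(\Omega\setminus\bigcup Y_\eps^{\mathbf n})}^2 \le C\sum_{\mathbf n}\|\nabla v\|_{L^2(Y_\eps^{\mathbf n}\setminus\ol{\omega_\eps^{\mathbf n}})}^2 + \|\nabla v\|_{L^2(\Omega_\eps)}^2 \le C\|\nabla v\|_{L^2(\Omega_\eps)}^2,
\]
the last step because the sets $Y_\eps^{\mathbf n}\setminus\ol{\omega_\eps^{\mathbf n}}$ are disjoint subsets of $\Omega_\eps$. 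Linearity is immediate from linearity of each $\mathcal{P}_\eps^{\mathbf n}$.

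I expect the only genuinely delicate point to be the $\eps$-uniformity of the constant, which is exactly what the cell-by-cell construction buys us: because every inclusion is an exact rescaled copy of the \emph{same} reference $\omega$ and is compactly contained in its own period cell (the hypothesis $\mathrm{dist}(\omega,\partial Y)>0$ in $(\mathbf{H2})$, together with the $\kappa\eps$-separation from $\partial\Omega$ that keeps each relevant cell inside $\Omega$), a single reference extension suffices and scaling is clean. A minor technical nuisance is handling cells near $\partial\Omega$ that are only partially inside $\Omega$: there one simply does not extend — those inclusions are not part of $D_\eps$ by the definition of $\mathcal{I}_\eps$, so $v$ is already defined on that whole region and $\mathcal{P}_\eps v=v$ there costs nothing. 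With these observations the proof is routine; the substance is entirely in the reference-cell extension and the disjointness bookkeeping.
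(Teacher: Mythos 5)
Your proposal is correct and follows essentially the same route as the paper: a single reference-cell extension with the mean-subtraction trick (apply the standard extension to $v-c$, invoke Poincar\'e--Wirtinger on the reference annulus to get a gradient-only bound, then add back $c$), followed by rescaling and disjoint gluing. The only cosmetic difference is that the paper carries out the reference construction on the enlarged annulus $\widetilde\omega\setminus\overline\omega$ rather than on the full punctured cell $Y\setminus\overline\omega$, which is precisely the ``$\widetilde\omega$ is what actually matters'' alternative you flag at the end; and the remark about ``shrinking $\kappa$'' is backwards (a smaller $\kappa$ admits more cells and makes the geometry worse, not better), but your fallback to $\widetilde\omega_\eps^{\mathbf n}$ resolves that correctly.
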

\begin{proof}
We refer to \cite{MR548785} or \cite[Lemma 3.3]{MR1329546}; see also \cite[Appendix B]{MR3479187}. The basic idea is, at the unit scale around a typical obsctale $\omega$ and for a function $v\in H^1(\womega\setminus\ol \omega)$ with mean value $c=\fint_{\widetilde\omega\setminus \ol \omega} v$, 
we can apply the usual extension operator to $v-c$ and obtain a $v_1 \in H^1(\womega)$ so that 
$$
\|v_1\|_{H^1(\womega)} \le \|v-c\|_{H^1(\womega\setminus\ol \omega)} \le C\|\nabla v\|_{L^(\womega\setminus \ol\omega)},
$$
where the last step results from Poincar\'e-Wirtinger inequality. Then $\tilde v := c+v_1$ is an extension of $v$ from $\womega\setminus \ol\omega$ to $\womega$ and satisfies 
$$
\|\tilde v\|_{L^2(\widetilde\omega)}\le C\|v\|_{L^2(\widetilde\omega\setminus \ol \omega)},\quad\text{and}\quad 
\|\nabla \tilde v\|_{L^2(\widetilde\omega)}\le C\|\nabla v\|_{L^2(\widetilde\omega\setminus \ol \omega)}.
$$
Under $(\mathbf{H2})$, the desired extension $\mathcal{P}_\eps$ in the whole domain $\Omega_\eps$ is by gluing the rescaled local extensions together.
%
\end{proof}

As we will see, the extension above is useful for small $\delta$ in the high contrast setting of this paper. For large $\delta$, we need the following operator that extends functions defined in the inclusions with zero mean in each inclusion to the outside.

\begin{lemma}\label{lem:extendout} 
	Assume $(\mathbf{H2})$, there exists a linear map $\mathcal{Q}_{\varepsilon}:H^1(D_\eps) \rightarrow H^1_0(\Omega)$
 such that  $\mathcal{Q}_\eps(v)$ is supported in $\cup_{\mathbf{n}} \widetilde \omega^{\mathbf{n}}_\eps$, and 
 \begin{equation*}
     \mathcal{Q}_\eps(v) = v - q^{\mathbf{n}}_{\varepsilon}, \qquad q^{\mathbf{n}}_{\varepsilon} = \int_{\omega^{\mathbf{n}}_{\varepsilon}} v , \qquad \mathrm{in\ each\ }  \omega^{\mathbf{n}}_{\varepsilon} \subset D_{\varepsilon}.
 \end{equation*}
 Moreover,
	\begin{equation*}
	\|\mathcal{Q}_\eps(v)\|_{H^1(\Omega)} \le C\|\nabla v\|_{L^2(D_\eps)}.
	\end{equation*}
\end{lemma}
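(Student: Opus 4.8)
The plan is to build $\mathcal{Q}_\eps$ inclusion by inclusion, exactly paralleling the construction in Lemma~\ref{lem:extension} but ``in reverse'': instead of extending from the matrix into the holes, we extend from each hole $\omega_\eps^{\mathbf n}$ outward into the collar $\widetilde\omega_\eps^{\mathbf n}\setminus\overline{\omega_\eps^{\mathbf n}}$ and then set the result to zero outside $\widetilde\omega_\eps^{\mathbf n}$. First I would work at the unit scale around a single reference obstacle $\omega\subset\widetilde\omega\subset Q_{7/8}(0)$. Given $v\in H^1(\omega)$, subtract its mean $q=\fint_\omega v$; then $v-q\in H^1(\omega)$ has zero mean, so by the Poincar\'e--Wirtinger inequality $\|v-q\|_{L^2(\omega)}\le C\|\nabla v\|_{L^2(\omega)}$, hence $\|v-q\|_{H^1(\omega)}\le C\|\nabla v\|_{L^2(\omega)}$. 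Apply a bounded Sobolev extension operator $E\colon H^1(\omega)\to H^1(\mathbb R^d)$ (available since $\omega$ is Lipschitz, indeed $C^{1,\alpha}$) to $v-q$ to get $w:=E(v-q)\in H^1(\mathbb R^d)$ with $\|w\|_{H^1(\mathbb R^d)}\le C\|v-q\|_{H^1(\omega)}\le C\|\nabla v\|_{L^2(\omega)}$ and $w=v-q$ on $\omega$. Finally multiply by a fixed smooth cutoff $\theta\in C_c^\infty(\widetilde\omega)$ with $\theta\equiv 1$ on a neighborhood of $\overline\omega$; the function $\theta w$ lies in $H^1_0(\widetilde\omega)$, agrees with $v-q$ on $\omega$ (since $\theta=1$ there), and satisfies $\|\theta w\|_{H^1(\widetilde\omega)}\le C\|\nabla v\|_{L^2(\omega)}$, where $C$ depends only on $d$, $\omega$, and the chosen cutoff.

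Next I would rescale and glue. For each $\mathbf n\in\mathcal{I}_\eps$, define on $\widetilde\omega_\eps^{\mathbf n}=\eps(\mathbf n+\widetilde\omega)$ the function $(\mathcal{Q}_\eps v)(x):=(\theta w_{\mathbf n})\!\left(\tfrac x\eps-\mathbf n\right)$, where $w_{\mathbf n}$ is the unit-scale object constructed above from $v_{\mathbf n}(y):=v(\eps(\mathbf n+y))$, $y\in\omega$; and set $\mathcal{Q}_\eps v:=0$ on $\Omega\setminus\bigcup_{\mathbf n}\widetilde\omega_\eps^{\mathbf n}$. Because the collars $\widetilde\omega_\eps^{\mathbf n}$ are pairwise disjoint (as $\widetilde\omega\subset Q_{7/8}(0)\subset Y$) and each local piece lies in $H^1_0(\widetilde\omega_\eps^{\mathbf n})$, these pieces glue across the interfaces with no jump, so $\mathcal{Q}_\eps v\in H^1_0(\Omega)$ and is supported in $\bigcup_{\mathbf n}\widetilde\omega_\eps^{\mathbf n}$; moreover $\mathcal{Q}_\eps v=v-q_\eps^{\mathbf n}$ in each $\omega_\eps^{\mathbf n}$ with $q_\eps^{\mathbf n}=\fint_{\omega_\eps^{\mathbf n}}v$, since $\theta\equiv1$ on $\omega$ and the rescaling preserves the mean. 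Linearity of $\mathcal{Q}_\eps$ is inherited from linearity of $E$ and of the mean.

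For the norm bound, scaling is the only point to watch. Under $x=\eps(\mathbf n+y)$ one has $\|\nabla_x(\mathcal{Q}_\eps v)\|_{L^2(\widetilde\omega_\eps^{\mathbf n})}=\eps^{d/2-1}\|\nabla_y(\theta w_{\mathbf n})\|_{L^2(\widetilde\omega)}$ and likewise $\|\mathcal{Q}_\eps v\|_{L^2(\widetilde\omega_\eps^{\mathbf n})}=\eps^{d/2}\|\theta w_{\mathbf n}\|_{L^2(\widetilde\omega)}$, while $\|\nabla_x v\|_{L^2(\omega_\eps^{\mathbf n})}=\eps^{d/2-1}\|\nabla_y v_{\mathbf n}\|_{L^2(\omega)}$. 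The unit-scale estimate gives $\|\theta w_{\mathbf n}\|_{H^1(\widetilde\omega)}\le C\|\nabla_y v_{\mathbf n}\|_{L^2(\omega)}$, hence $\|\nabla_x(\mathcal{Q}_\eps v)\|_{L^2(\widetilde\omega_\eps^{\mathbf n})}\le C\|\nabla_x v\|_{L^2(\omega_\eps^{\mathbf n})}$ and, using $\eps<1$, also $\|\mathcal{Q}_\eps v\|_{L^2(\widetilde\omega_\eps^{\mathbf n})}\le C\eps\|\nabla_x v\|_{L^2(\omega_\eps^{\mathbf n})}\le C\|\nabla_x v\|_{L^2(\omega_\eps^{\mathbf n})}$. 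Summing the squares over the pairwise disjoint collars $\widetilde\omega_\eps^{\mathbf n}$ and over the disjoint holes $\omega_\eps^{\mathbf n}\subset D_\eps$ yields $\|\mathcal{Q}_\eps v\|_{H^1(\Omega)}\le C\|\nabla v\|_{L^2(D_\eps)}$ with $C$ independent of $\eps$.

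I do not expect a serious obstacle here; the construction is essentially the mirror image of Lemma~\ref{lem:extension}. The one place requiring a little care is the gluing: one must make sure the local pieces genuinely have zero trace on $\partial\widetilde\omega_\eps^{\mathbf n}$ so that the glued function is in $H^1_0(\Omega)$ and has no spurious interface contributions to the gradient — this is why the fixed cutoff $\theta$ compactly supported in $\widetilde\omega$ is inserted rather than extending all the way to $\partial\widetilde\omega$. A secondary point is that the constant $C$ in the unit-scale extension must be uniform over $\mathbf n$, which is automatic because every $\omega_\eps^{\mathbf n}$ is an exact rescaled translate of the single fixed reference obstacle $\omega$, so there is only one unit-cell problem to solve.
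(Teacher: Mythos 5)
Your construction is essentially the same as the paper's: subtract the mean to activate Poincar\'e--Wirtinger at unit scale, apply the standard linear extension $H^1(\omega)\to H^1_0(\widetilde\omega)$ (which you make explicit via a Sobolev extension followed by a cutoff, while the paper cites it as "standard"), then rescale and glue using disjointness of the collars $\widetilde\omega_\eps^{\mathbf n}$. The scaling computations and the observation that the rescaled pieces have zero trace on $\partial\widetilde\omega_\eps^{\mathbf n}$ are exactly as in the paper; note only that $q_\eps^{\mathbf n}$ should be the \emph{average} $\fint_{\omega_\eps^{\mathbf n}}v$ as you wrote, the paper's $\int$ being a typo.
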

\begin{proof} As usual we first consider an extension operator in the unit scale. Suppose $\omega$ is the model inclusion of unit length, and $\widetilde \omega$ is the enlarged inclusion. Then there is a standard linear extension operator $\mathcal{Q}$ from $H^1(\omega)$ to $H^1_0(\widetilde \omega)$ so that 
$$
\|\mathcal{Q}(v)\|_{H^1(\widetilde\omega)} \le C\|v\|_{H^1(\omega)}.
$$
Since the Poincar\'e-Wirtinger inequality holds for $v- q$, where $q = \int_{\omega} v$, we get
$$
\|\mathcal{Q}(v-q)\|_{H^1(\widetilde\omega)} \le C\|v-q\|_{H^1(\omega)} \le C\|\nabla v\|_{L^2(\omega)}.
$$

In each small inclusion $\omega_\eps^{\mathbf{n}}$ that is contained in $\Omega$, we let
\begin{equation*}
\mathcal{Q}^{\mathbf{n}}_\eps(v)(x) := \mathcal{Q}\Big(v(\eps (\cdot + \mathbf{n})) -q^{\mathbf{n}}_{\varepsilon} \Big)\Big(\frac{x-\eps \mathbf{n}}{\eps}\Big), \qquad x\in \widetilde\omega^{\mathbf{n}}_\eps.
\end{equation*}
Then we see that $\mathcal{Q}^{\mathbf{n}}_\eps(v)$ belongs to $H^1_0(\widetilde\omega_\eps^{\mathbf{n}})$ and satisfies $\mathcal{Q}^{\mathbf{n}}_\eps(v) = v - q^{\mathbf{n}}_{\varepsilon}$, and
\begin{equation*}
\|\nabla \mathcal{Q}^{\mathbf{n}}_\eps(v)\|_{L^2(\widetilde\omega_\eps^{\mathbf{n}})} \le C\|\nabla v\|_{L^2(\omega_\eps^{\mathbf{n}})},
\end{equation*}
where the constant $C$ does not change due to the scaling-invariant feature of the estimate. For $v\in H^1(D_\eps)$, we use the map piecewise for each component $\omega^{\mathbf{n}}_\eps$ of $D_\eps$ and glue them together, 
we then get a linear map $\mathcal{Q}_{\varepsilon}$ with the desired properties.
\end{proof}

\subsection{Other useful tools and estimates}  
\label{sec:tools}

We will use the now standard smoothing operator: fix a function $\xi \in C_0^{\infty}(B(0,\frac{1}{2}))$ such that $0\leq \xi (x)\leq 1$ and $\int_{\mathbb{R}^d}\xi (x)\,dx=1$, for any  $v \in L^1_{\rm loc}(\mathbb{R}^d)$, $S_\eps(v)$ is the smooth function defined by
\begin{equation}\label{epsi_smoother}
	S_{\varepsilon}(v)(x):=\varepsilon^{-d} \int_{\mathbb{R}^d} v(x-y)\xi(y/\varepsilon)\,dy.
\end{equation}
The operator is very useful in quantitative homogenization theory; it allows one to use two-scale expansions because the regularization error can be well controlled. In particular, we have the following.
\begin{lemma}\label{appen_auxi_0}
	Let $f \in L^2(\mathbb{T}^d)$, then for any $g \in L^2(\mathbb{R}^d)$, we have
	\begin{equation*}
		\| f(x/\varepsilon) S_{\varepsilon}(g ) \|_{L^2(\mathbb{R}^d)}\leq C\| f \|_{L^2(Y)} \|g \|_{L^2(\mathbb{R}^d)}.
	\end{equation*}
\end{lemma}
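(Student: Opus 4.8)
\textbf{Proof proposal for Lemma~\ref{appen_auxi_0}.} The plan is to reduce the weighted estimate to a pointwise bound on the local $L^2$ mass of $S_\eps(g)$ over $\eps$-cubes, and then sum. First I would fix the lattice decomposition $\R^d = \bigcup_{\mathbf{n}\in\Z^d} Y_\eps^{\mathbf{n}}$ and write
\begin{equation*}
\int_{\R^d} |f(x/\eps)|^2 |S_\eps(g)(x)|^2 \,dx = \sum_{\mathbf{n}\in\Z^d} \int_{Y_\eps^{\mathbf{n}}} |f(x/\eps)|^2 |S_\eps(g)(x)|^2\,dx.
\end{equation*}
The key structural observation is that $S_\eps(g)$, being a mollification at scale $\eps/2$, does not oscillate appreciably inside a single cell $Y_\eps^{\mathbf{n}}$ relative to its average there; more precisely, its value at $x\in Y_\eps^{\mathbf{n}}$ depends only on the values of $g$ on $B(x,\eps/2)$, which is contained in a fixed dilate (say $3Y_\eps^{\mathbf{n}}$, the union of $Y_\eps^{\mathbf{n}}$ and its neighbours) of the cell. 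By Jensen's inequality applied to the probability measure $\eps^{-d}\xi(y/\eps)\,dy$,
\begin{equation*}
|S_\eps(g)(x)|^2 \le \eps^{-d}\int_{\R^d} |g(x-y)|^2 \xi(y/\eps)\,dy \le \eps^{-d} \int_{B(x,\eps/2)} |g(z)|^2\,dz,
\end{equation*}
so $\sup_{x\in Y_\eps^{\mathbf{n}}} |S_\eps(g)(x)|^2 \le C\eps^{-d}\int_{3Y_\eps^{\mathbf{n}}} |g|^2$.

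Next I would insert this bound cell by cell. On each cell the factor $|f(x/\eps)|^2$ integrates, after the change of variables $x = \eps(y+\mathbf{n})$ and periodicity of $f$, to $\eps^d \|f\|_{L^2(Y)}^2$. Hence
\begin{equation*}
\int_{Y_\eps^{\mathbf{n}}} |f(x/\eps)|^2 |S_\eps(g)(x)|^2\,dx \le \Big(\sup_{Y_\eps^{\mathbf{n}}} |S_\eps(g)|^2\Big)\int_{Y_\eps^{\mathbf{n}}} |f(x/\eps)|^2\,dx \le C\|f\|_{L^2(Y)}^2 \int_{3Y_\eps^{\mathbf{n}}} |g|^2\,dz.
\end{equation*}
Summing over $\mathbf{n}\in\Z^d$ and using the bounded-overlap property of the enlarged cells $\{3Y_\eps^{\mathbf{n}}\}$ (each point of $\R^d$ lies in at most $3^d$ of them) gives $\sum_{\mathbf{n}} \int_{3Y_\eps^{\mathbf{n}}} |g|^2 \le 3^d \|g\|_{L^2(\R^d)}^2$, which yields the claimed inequality with $C$ depending only on $d$ and $\|\xi\|_{L^\infty}$ (or just $d$, absorbing the normalization of $\xi$).

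I do not anticipate a genuine obstacle here; this is a routine ``localize, use Jensen, sum with bounded overlap'' argument. The only point requiring a little care is bookkeeping the supports: one must check that $B(x,\eps/2)$ for $x\in Y_\eps^{\mathbf{n}}$ indeed stays within a fixed finite union of neighbouring cells (which holds because $\xi$ is supported in $B(0,1/2)$, so the mollification radius is $\eps/2 < \eps$), and that the overlap multiplicity of those enlarged cells is bounded independently of $\eps$. An alternative, essentially equivalent route is to expand $f$ in Fourier series on $\T^d$ and use that translation by $y$ with $|y|\lesssim\eps$ perturbs $f(x/\eps)$ in a controlled way, but the cell-by-cell argument above is cleaner and avoids any regularity assumption on $f$ beyond $f\in L^2(\T^d)$.
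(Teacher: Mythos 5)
Your proof is correct, and the paper itself does not give one: it simply cites Shen \cite[Proposition 3.1.5]{shen_periodic_2018}. So there is no textual proof to compare against, but it is worth noting that your route differs slightly from the standard one in that reference, which I believe is the cleanest version. After the same Jensen step
\begin{equation*}
|S_\eps(g)(x)|^2 \le \eps^{-d}\int_{\R^d}\xi(y/\eps)\,|g(x-y)|^2\,dy,
\end{equation*}
the usual argument multiplies through by $|f(x/\eps)|^2$, integrates in $x$, and applies Fubini and the substitution $z=x-y$ to obtain
\begin{equation*}
\int_{\R^d}|f(x/\eps)|^2|S_\eps(g)(x)|^2\,dx \le \int_{\R^d}|g(z)|^2\Big(\int_{\R^d}\xi(w)\,|f(z/\eps+w)|^2\,dw\Big)\,dz,
\end{equation*}
and then bounds the inner integral uniformly in $z$ by $C\|f\|_{L^2(Y)}^2$ using $\|\xi\|_\infty\le 1$, $\operatorname{supp}\xi\subset B(0,1/2)$ and periodicity of $f$. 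Your version replaces this Fubini step with a cell-by-cell supremum bound on $|S_\eps(g)|^2$ over $Y_\eps^{\mathbf n}$ followed by a bounded-overlap summation. Both are elementary and give the same constant structure; the Fubini version avoids introducing a pointwise sup of the mollified function (it works entirely at the $L^1$ level in $x$), which some readers prefer, but your argument is equally rigorous since $S_\eps(g)$ is continuous and the pointwise Jensen bound holds everywhere. One minor bookkeeping point you should make explicit: the inclusion $B(x,\eps/2)\subset 3Y_\eps^{\mathbf n}$ for $x\in Y_\eps^{\mathbf n}$ actually lands already in the cube of side $2\eps$ centered at $\eps\mathbf n$, so the $3^d$ overlap constant is safe (it could even be sharpened), and the bound $\sup_{Y_\eps^{\mathbf n}}|S_\eps(g)|^2\le \eps^{-d}\int_{3Y_\eps^{\mathbf n}}|g|^2$ requires only $\|\xi\|_\infty\le 1$, which the paper assumes. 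No gaps.
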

We refer to Shen \cite[Proposition 3.1.5]{shen_periodic_2018} for the proof, together with more useful properties of $S_\eps$. We end the section by the following useful but standard estimate for elliptic systems.
\begin{lemma}\label{elliptic_regularity_estimate}
	Let $A= \big(a_{ij}^{\alpha \beta} \big)$ be a constant tensor satisfying  \eqref{symmetry}-\eqref{ellipticc}, let $\Omega$ be a bounded Lipschitz domain with connected boundary in $\mathbb{R}^d$, and let $u$ be the solution of 
	\begin{equation*}\left\{
		\begin{aligned}
			& -\mathrm{div}\,(A\nabla u) = f & \mathrm{in}\ \Omega, \\
			& u=g & \mathrm{on}\ \partial \Omega.
		\end{aligned}\right.
	\end{equation*}
	then
	\begin{equation*}\| \nabla u \|_{L^2(\mathcal{O}_t)} + t\| \nabla^2 u \|_{L^2(\Omega \setminus \mathcal{O}_t)} \leq C t^{1/2} \big(\|f \|_{L^2(\Omega)} + \|\nabla_{\mathrm{tan}}g\|_{L^2(\partial \Omega)} \big), 
	\end{equation*}
	where $C$ depends only on $\mu$ and the Lipschitz character of $\Omega$.
\end{lemma}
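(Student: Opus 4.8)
The plan is to separate the estimate into an interior part and a boundary-layer part, using that $A$ is a \emph{constant} coefficient tensor so that standard Schauder/$L^2$-type estimates are available. First I would reduce to the case $g=0$ by subtracting a suitable extension. More precisely, since only the tangential derivative $\nabla_{\mathrm{tan}}g$ enters the right-hand side, I would use a harmonic-type extension or a trace-lifting operator to produce $G\in H^1(\Omega)$ with $G=g$ on $\partial\Omega$ and $\|\nabla G\|_{L^2(\mathcal O_t)} + t\|\nabla^2 G\|_{L^2(\Omega\setminus\mathcal O_t)} \le Ct^{1/2}\|\nabla_{\mathrm{tan}}g\|_{L^2(\partial\Omega)}$; this is the usual layer-type bound for a lifting concentrated near $\partial\Omega$. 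Then $w = u - G$ solves a Dirichlet problem with zero boundary data and modified right-hand side $f + \mathrm{div}(A\nabla G)$, and it suffices to prove the estimate for $w$.

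For $w$ with $w=0$ on $\partial\Omega$, the key is the global energy bound $\|\nabla w\|_{L^2(\Omega)} \le C(\|f\|_{L^2(\Omega)} + \|\nabla_{\mathrm{tan}}g\|_{L^2(\partial\Omega)})$, which follows from the Lax–Milgram theorem together with \eqref{ellipticc} and the trace/Poincaré inequalities on the Lipschitz domain $\Omega$. To upgrade this to the layer estimate $\|\nabla u\|_{L^2(\mathcal O_t)} \le Ct^{1/2}(\cdots)$, I would combine the global $H^1$ bound with an interior $W^{2,2}$ estimate: for any $x_0\in\Omega$ with $\mathrm{dist}(x_0,\partial\Omega)=r$, Caccioppoli's inequality for the constant-coefficient system on the ball $B_{r/2}(x_0)$ gives $\|\nabla^2 u\|_{L^2(B_{r/4}(x_0))} \le Cr^{-1}\|\nabla u\|_{L^2(B_{r/2}(x_0))} + C\|f\|_{L^2(B_{r/2}(x_0))}$ (differentiating the equation, using that $A$ is constant). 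A covering argument with balls of radius comparable to $\mathrm{dist}(\cdot,\partial\Omega)$, summed dyadically in the distance to the boundary, yields $\|t\,\mathrm{dist}(x,\partial\Omega)^{-1}\nabla u\|_{L^2(\Omega\setminus\mathcal O_t)}$-type control and in particular $t\|\nabla^2 u\|_{L^2(\Omega\setminus\mathcal O_t)} \le C(\|f\|_{L^2(\Omega)} + \|\nabla u\|_{L^2(\Omega)})$. For the boundary-layer term $\|\nabla u\|_{L^2(\mathcal O_t)}$, I would use the co-area formula together with a Rellich–Nečas (Pohozaev) identity for the constant-coefficient system on the Lipschitz domain: the Rellich identity controls $\int_{\partial\Omega}|\nabla u|^2$ by $\|\nabla_{\mathrm{tan}}u\|_{L^2(\partial\Omega)}^2$ plus lower-order terms, and integrating outward a distance $t$ from $\partial\Omega$ (again via co-area and a one-dimensional Hardy-type bound) produces the factor $t^{1/2}$.

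The step I expect to be the main obstacle is the boundary-layer estimate $\|\nabla u\|_{L^2(\mathcal O_t)}\le Ct^{1/2}(\cdots)$ on a merely Lipschitz domain: one cannot directly use boundary $W^{2,2}$ regularity there, so the Rellich identity (or, alternatively, a reflection/flattening argument combined with the known $L^2$ boundary estimates of Jerison–Kenig for constant-coefficient systems on Lipschitz domains) must be invoked, and the bookkeeping of how the $t^{1/2}$ gain emerges from integrating the boundary flux over the family of level sets $\{\mathrm{dist}(\cdot,\partial\Omega)=s\}$, $0<s<t$, requires care. Once that is in hand, the interior piece is routine: it is just Caccioppoli plus a dyadic decomposition of $\Omega\setminus\mathcal O_t$ into shells, as indicated above. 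Finally I would reassemble $u = w+G$ and combine the bounds for $w$ with the layer bound for the lifting $G$ to conclude.
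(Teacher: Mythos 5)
Your proposal takes essentially the same route as the paper, which contents itself with a two-line citation to \cite{kenig1994harmonic}: the ingredients (nontangential-maximal-function / Rellich--Ne\v{c}as estimates for the $L^2$ Dirichlet problem on Lipschitz domains for the boundary layer, Caccioppoli with dyadic shells for the interior) are exactly what that reference provides. The boundary-layer half of your argument is sound: once $\|M(\nabla u)\|_{L^2(\partial\Omega)}\le C\bigl(\|f\|_{L^2(\Omega)}+\|\nabla_{\rm tan}g\|_{L^2(\partial\Omega)}\bigr)$ is known, integrating $|\nabla u|^2$ over $\mathcal O_t$ via the nontangential cones and the co-area formula gives exactly the $t^{1/2}$ gain.

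There is, however, a genuine gap in the interior piece. As stated, your dyadic-shell Caccioppoli argument yields
\begin{equation*}
t\,\|\nabla^2 u\|_{L^2(\Omega\setminus\mathcal O_t)} \le C\bigl(\|f\|_{L^2(\Omega)}+\|\nabla u\|_{L^2(\Omega)}\bigr),
\end{equation*}
with no factor of $t^{1/2}$ on the right, because you feed the global $H^1$ bound $\|\nabla u\|_{L^2(\Omega)}\le C(\cdots)$ into each shell. This is weaker than the claimed estimate $t\,\|\nabla^2 u\|_{L^2(\Omega\setminus\mathcal O_t)}\le Ct^{1/2}(\cdots)$ by a factor of $t^{1/2}$, and that loss matters downstream: the paper uses the full strength in the proof of \eqref{verifymainconclusion}, where the $\eps^{1/2}$ comes precisely from this gain. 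The fix is to use the boundary-layer estimate you have already established on $\nabla u$, rather than the global bound, inside each dyadic shell. Concretely, for the shell $\{2^k t < \mathrm{dist}(x,\partial\Omega)\le 2^{k+1}t\}$, Caccioppoli on balls of radius $\sim 2^k t$ gives
\begin{equation*}
\|\nabla^2 u\|_{L^2(\text{shell}_k)} \le C(2^k t)^{-1}\|\nabla u\|_{L^2(\mathcal O_{2^{k+2}t})} + C\|f\|_{L^2},
\end{equation*}
and then the boundary-layer bound $\|\nabla u\|_{L^2(\mathcal O_s)}\le Cs^{1/2}(\cdots)$ turns the first term into $C(2^k t)^{-1/2}(\cdots)$. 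Summing squares over $k$ produces $\|\nabla^2 u\|_{L^2(\Omega\setminus\mathcal O_t)}^2\le Ct^{-1}(\cdots)^2 + C\|f\|_{L^2}^2$, which is the desired estimate. In other words, the two halves of the lemma are not independent: the interior $\nabla^2$ bound must be proved \emph{after} and \emph{from} the boundary-layer bound on $\nabla u$. Finally, your reduction to $g=0$ via a lifting $G$ is somewhat opaque as written: the layer-type bound you assert for $G$ is exactly the statement of the lemma for the homogeneous Dirichlet problem, so it is cleaner (and not circular) to split $u=u_1+u_2$ with $-\mathrm{div}(A\nabla u_1)=f$, $u_1=0$ on $\partial\Omega$ and $-\mathrm{div}(A\nabla u_2)=0$, $u_2=g$ on $\partial\Omega$, and then apply the Jerison--Kenig theory to each piece separately.
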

\begin{proof}
	This follows from the interior estimates and the nontangential-maximal-function estimate
	for second-order elliptic equations with constant coefficients. See \cite{kenig1994harmonic} for reference.
\end{proof}
\begin{remark}
	Note that $\Omega$ is a Lipschitz domain, it is impossible to control $\| \nabla^2 u \|_{L^2(\Omega)}$, since the solution may have a singular behavior near the irregular points of $\partial \Omega$, even in the simplest case of the Laplace equation, if $\Omega$ is a non-convex polygon in $\mathbb{R}^2$, it is well known \cite{doi:10.1137/1.9781611972030} that $u\notin H^2(\Omega)$ in general, even if $f\in C^{\infty}(\overline{\Omega})$ and $g=0$.	
\end{remark}

\section{Limiting behavior associated to cell problem}\label{uniformcontrollcell}

Our overall strategy in this work is to treat $\eps$ and $\delta$ as two independent parameters. This approach was mentioned already in Chap 3 of \cite{MR1329546}. In general, we can consider the diagram
\begin{equation}
\label{eq:diagram}
\begin{CD}
u_{\eps,\delta} @>{\delta \to 0 \text{ or } \infty}>> u_{\eps}\\
@V{\eps\to 0}VV  @VV{\eps\to 0}V\\
\widehat{u}_{\delta} @>{\delta \to 0 \text{ or }  \infty}>> \widehat{u}
\end{CD}
\end{equation}
We show in this section that $\widehat{u}_\delta$, or the equation for it, behaves quite well uniformly in $\delta$. In the scalar setting, such uniform in $\delta$ results were established in Jikov using variational techniques. Here, we obtain the results for the elliptic system using layer-potential techniques.

Note that to use layer potentials, we require the coefficient $A$ satisfies \eqref{ellipticc}-\eqref{smoothc} (see Section \ref{sec:periodiclayerpotential}), nevertheless, by using a 'weak' form of layer potentials, one can obtain all of results in this section, see Appendix \ref{appen: weak layer potential} for the construction of 'weak' form of layer potentials. For simplicity, we assume \eqref{ellipticc}-\eqref{smoothc} for $A$ in this section, but as we have mentioned, the readers should keep in mind that the results in this section also holds for $A$ satisfying \eqref{ellipticc}-\eqref{periodicc}.

\subsection{Uniform bounds for the cell problems} 

Recall that 
$$
\chi_{\delta} = (\chi_{\delta,j}^\alpha)_{1\le j\le d,1\le \alpha\le m},
$$
denotes the solution to the cell problem that arises in the standard homogenization of \eqref{model} for each fixed $\delta \in (0,1)$. What is most important is to obtain estimates that are either uniform in $\delta$ or have explicit dependence in $\delta$. We achieve this goal through the layer potential representation of the cell problem.

\begin{lemma}
	For fixed $0<\delta <\infty$, let $\chi_\delta = (\chi_{\delta,i}^\alpha)$, $1\le i\le d$ and $1\le \alpha \le m$, be the solution of the cell problem \eqref{eq:cellproblem}. Then for $\delta \ne 1$, $\chi_\delta$ has the folowing layer potential representation:
	\begin{equation}
		\chi_{\delta,i}^{\alpha} =  \chi^{\alpha}_{1,i} + \mathcal{S}\left[\big( \lambda(\delta) I -\mathcal{K}^*  \big)^{-1}  \phi_i^{\alpha }\right] -q_{\delta,i}^{\alpha}, \quad \lambda(\delta) = \frac{1}{2} +\frac{1}{\delta -1},
	\end{equation}
	where 
	\begin{equation}\label{densitydef}
		\phi_i^{\alpha } =\frac{\partial }{\partial \nu}\Big|_+  \big( y_i e^{\alpha}+ \chi_{1,i}^{\alpha} \big)=\frac{\partial }{\partial \nu}\Big|_-  \big( y_i e^{\alpha}+ \chi_{1,i}^{\alpha} \big) \in H_0^{-1/2}(\partial \omega),
	\end{equation}
	and 
	\begin{equation}
		q_{\delta,i}^{\alpha} = \int_Y \mathcal{S}\left[\big( \lambda(\delta) I -\mathcal{K}^*  \big)^{-1}  \phi_i^{\alpha }\right] (y)\,dy.
	\end{equation}
	Here, the periodic single-layer potential operator $\mathcal{S}$ and the associated Neumann-Poincar\'e operator $\mathcal{K}^*$ are defined by \eqref{eq:NP}.
\end{lemma}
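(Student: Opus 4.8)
\section*{Proof plan}

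The plan is to build a function of the asserted form and check that it solves the cell problem \eqref{eq:cellproblem}, then conclude by uniqueness. Set $w := \chi_{\delta,i}^{\alpha} + y_i e^{\alpha}$ and $w_1 := \chi_{1,i}^{\alpha} + y_i e^{\alpha}$. Unwinding the definition of $\mathcal{L}_{1,\delta}$, problem \eqref{eq:cellproblem} is equivalent to the transmission problem: $\mathcal{L}w = 0$ in $\omega$ and in $Y\setminus\overline{\omega}$, $w$ continuous across $\partial\omega$, the co-normal fluxes satisfy $\partial_\nu w|_{+} = \delta\,\partial_\nu w|_{-}$ on $\partial\omega$ (with $\nu$ the outer normal of $\omega$, and $+,-$ the exterior and interior traces), and $w - y_ie^{\alpha}$ is $Y$-periodic with mean zero. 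Since $\chi_{1}$ solves the classical cell problem, $\mathcal{L}w_1 = 0$ in all of $Y$, so the co-normal derivative of $w_1$ has no jump across $\partial\omega$; that common trace is the density $\phi_i^\alpha$ of \eqref{densitydef}, and $\phi_i^\alpha \in H_0^{-1/2}(\partial\omega)$ because $\int_{\partial\omega}\partial_\nu w_1\,d\sigma = -\int_{\omega}\mathcal{L}w_1 = 0$ componentwise.

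Next I would make the ansatz $\chi_{\delta,i}^{\alpha} = \chi_{1,i}^{\alpha} + \mathcal{S}\psi - q_{\delta,i}^{\alpha}$, with $q_{\delta,i}^\alpha = \int_Y \mathcal{S}\psi$ and $\psi \in H_0^{-1/2}(\partial\omega)$ a density to be found. Most of the required conditions hold for free: by property (c) one has $\mathcal{L}(\mathcal{S}\psi) = 0$ off $\partial\omega$ (this is where mean-zero of $\psi$ is used, cf.\ the compatibility term in \eqref{existenceperiodicfs}), hence $\mathcal{L}w = \mathcal{L}w_1 + \mathcal{L}(\mathcal{S}\psi) = 0$ in $\omega$ and in $Y\setminus\overline{\omega}$; by property (a), $\mathcal{S}\psi \in H^1(\mathbb{T}^d)$, so $w$ is continuous across $\partial\omega$; periodicity is automatic; and the constant $q_{\delta,i}^\alpha$ is chosen precisely so that $\chi_{\delta,i}^\alpha$ has mean zero (recall $\chi_{1,i}^\alpha$ already does). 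Only the flux transmission remains. Using $\partial_\nu w_1|_\pm = \phi_i^\alpha$ together with the jump relation (b), $\partial_\nu w|_\pm = \phi_i^\alpha + (\pm\tfrac12 I + \mathcal{K}^*)\psi$, so $\partial_\nu w|_+ = \delta\,\partial_\nu w|_-$ rearranges, after collecting terms and dividing by $1-\delta \neq 0$, to the integral equation $(\lambda(\delta) I - \mathcal{K}^*)\psi = \phi_i^\alpha$; here the identity $\tfrac{1+\delta}{2(\delta-1)} = \tfrac12 + \tfrac1{\delta-1}$ is exactly what yields the stated $\lambda(\delta)$.

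It then remains to solve this equation in $H_0^{-1/2}(\partial\omega)$. For $\delta\in(0,1)$ one computes $\lambda(\delta) < -\tfrac12$, and for $\delta\in(1,\infty)$ one has $\lambda(\delta) > \tfrac12$; in either case $\lambda(\delta)$ lies strictly outside $[-\tfrac12,\tfrac12]$, which contains the spectrum of $\mathcal{K}^*$ on $H_0^{-1/2}(\partial\omega)$ by property (d). Hence $\lambda(\delta) I - \mathcal{K}^*$ is boundedly invertible there, and since $\phi_i^\alpha \in H_0^{-1/2}(\partial\omega)$, the density $\psi := (\lambda(\delta) I - \mathcal{K}^*)^{-1}\phi_i^\alpha$ is well defined and mean zero, as needed. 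The resulting $\chi_{\delta,i}^\alpha$ solves \eqref{eq:cellproblem}; uniqueness of the cell-problem solution identifies it with the one in the statement, and $q_{\delta,i}^\alpha$ takes the displayed form.

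The main difficulty is bookkeeping rather than depth: one must fix the orientation conventions (outer normal, and which trace is labelled $+$ vs.\ $-$) consistently across the definition of the co-normal derivative, the jump relation (b), and the transmission condition, so that the algebra produces precisely $\lambda(\delta) = \tfrac12 + \tfrac1{\delta-1}$ and not some sign- or reciprocal-variant. A secondary point to acknowledge is that identities (a)--(d), stated for $L^2(\partial\omega)$ and $L^2_0(\partial\omega)$, must be used on $H^{-1/2}(\partial\omega)$ and $H_0^{-1/2}(\partial\omega)$, since that is where $\phi_i^\alpha$ naturally lives for Lipschitz $\partial\omega$; this is standard (the $H^{-1/2}$ mapping properties of $\mathcal{S}$ and the self-adjointness of $\mathcal{K}^*$ with respect to the single-layer pairing), and it is also the setting that the ``weak'' layer potentials of the appendix are built to handle when $A$ is merely bounded measurable.
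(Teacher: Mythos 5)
Your proposal is correct and follows essentially the same route as the paper: both pass to a transmission problem for the difference $\chi_\delta-\chi_1$ (you phrase it via $w=\chi_\delta+y_ie^\alpha$ and the flux condition $\partial_\nu w|_+=\delta\,\partial_\nu w|_-$, the paper via $u=\chi_\delta-\chi_1$ and $\delta\,\partial_\nu u|_--\partial_\nu u|_+=(1-\delta)\phi_i^\alpha$, which are the same condition), make the ansatz $\mathcal{S}\psi-q$, use the jump relation to reduce to $(\lambda(\delta)I-\mathcal{K}^*)\psi=\phi_i^\alpha$, and invert via the spectral bound $|\lambda(\delta)|>\tfrac12$. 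Your closing remark about upgrading properties (a)--(d) from $L^2_0(\partial\omega)$ to $H^{-1/2}_0(\partial\omega)$ is a fair observation — the paper's proof sentence invokes solvability in $L^2_0(\partial\omega)$ while the lemma statement places $\phi_i^\alpha$ in $H^{-1/2}_0(\partial\omega)$ — though under the Hölder hypothesis \eqref{smoothc} one in fact has $\nabla\chi_1\in L^\infty$, so $\phi_i^\alpha\in L^2(\partial\omega)$ and the distinction is only essential in the weak-layer-potential appendix.
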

\begin{proof}
	Let $u = \chi^{\alpha}_{\delta,i} - \chi^{\alpha}_{1,i}$, then
	$u$ is the solution of the transmission problem 
	\begin{equation*}\left\{
		\begin{aligned}
			& \mathcal{L} u = 0 & \mathrm{in}\ \mathbb{T}^d \setminus \partial \omega, \\
			& \delta \frac{\partial u}{\partial \nu}\Big|_-- \frac{\partial u}{\partial \nu}\Big|_+= (1-\delta) \phi_i^{\alpha} & \mathrm{on}\ \partial \omega, \\
			& \int_Y u (y)\,dy =0.
		\end{aligned}\right.
	\end{equation*}
We aim to represent the solution by $u= \mathcal{S}\psi -q$, where $q \in \mathbb{R}^m$. In view of the properties of layer potentials (See Section \ref{sec:periodiclayerpotential}), $\psi$ and $q$ must satisfy
\begin{equation*}
	\big( \lambda(\delta) I - \mathcal{K}^* \big) \psi = \phi_i^{\alpha}, \qquad q = \int_Y \mathcal{S}\psi (y)\,dy.
\end{equation*}
The integral system above is solvble for $\psi \in L_0^2(\partial \omega)$, since $|\lambda(\delta)|>1/2$ when $0<\delta <\infty$ and the spectrum of $\mathcal{K}^*$ on $L^2_0(\partial \omega)$ is contained in $(-1/2,1/2)$. This completes the proof.
\end{proof}

Note that $\lambda(\delta)$ converges to $-1/2$ as $\delta\to 0+$, and converges to $1/2$ as $\delta \to \infty$. We define the limit tensors $\chi_0 = (\chi^{\alpha}_{0,i})$ and $\chi_{\infty} = (\chi^{\alpha}_{\infty,i})$, $1\le i\le d$, $1\le \alpha\le m$, by
\begin{equation}\label{cell_limit}
	\left\{
	\begin{aligned}
		& 	\chi^{\alpha}_{0,i}:=\chi^{\alpha}_{1,i}+\mathcal{S}\big( -I/2-\mathcal{ K}^* \big)^{-1} \phi_i^{\alpha}-q^{\alpha}_{0,i} , \\
		& 	\chi_{\infty,i}^{\alpha} :=\chi^{\alpha}_{1,i}+\mathcal{S} \big( I/2- \mathcal{K}^* \big)^{-1} \phi_i^{\alpha}-q^{\alpha}_{\infty,i} ,
	\end{aligned}
	\right.
\end{equation}
where $\phi_i^{\alpha}$ is defined in \eqref{densitydef} and 
\begin{equation}
	q_{0,i}^{\alpha} = \int_Y \mathcal{S} \left[\big( -I/2 -\mathcal{ K}^* \big)^{-1} \phi_i^{\alpha} \right](y)\,dy ,  \qquad 
	q_{\infty,i}^{\alpha} = \int_Y \mathcal{S} \left[\big( I/2-\mathcal{ K}^* \big)^{-1} \phi_i^{\alpha}\right] (y)\,dy.
\end{equation}

Indeed, as we show next, $\chi_0$ and $\chi_{\infty}$ are the limits of $\chi_{\delta}$ as $\delta $ tends to $0$ and to $\infty$, respectively. In fact, they correspond to the cell problem of the soft ($\delta=0$) and hard ($\delta=\infty$) inclusion problems, respectively.

\begin{theorem}\label{thm:cell_limit}
	For $1\leq i\leq d$ and $1\leq \alpha \leq m$, the function $\chi_{0,i}^{\alpha} \in H^1(\mathbb{T}^d)$ is the solution of the  soft problem
	\begin{equation}\label{eq:cell_problem_insulated}
		\left\{
		\begin{aligned}
			& \mathcal{L} (\chi_{0,i}^{\alpha} )= - \mathcal{L} (y_i e^{\alpha}) & \mathrm{in}\  \mathbb{T}^d \setminus \partial \omega, \\
			&\frac{\partial \chi_{0,i}^{\alpha}}{\partial \nu}\Big|_+ = -\frac{\partial (y_ie^{\alpha})}{\partial \nu}\Big|_+  & \mathrm{on}\ \partial \omega, \\
			& \int_Y \chi_{0,i}^{\alpha}(y)\,dy =0.
		\end{aligned}
		\right.
	\end{equation}
	The function $\chi^{\alpha}_{\infty,i} \in H^1(\mathbb{T}^d)$ is the solution of the stiff problem
	\begin{equation}\label{eq:cell_problem_perfect}
		\left\{
		\begin{aligned}
			& \mathcal{L}( \chi_{\infty,i}^{\alpha} )= - \mathcal{L} (y_i e^{\alpha})  & \mathrm{in}\  \mathbb{T}^d \setminus \partial \omega, \\
			&\frac{\partial \chi_{\infty,i}^{\alpha}}{\partial \nu}\Big|_- = -\frac{\partial (y_ie^{\alpha})}{\partial \nu}\Big|_-  & \mathrm{on}\ \partial \omega, \\
			& \int_Y \chi_{\infty,i}^{\alpha}(y)\,dy =0.
		\end{aligned}
		\right.
	\end{equation}
	Moreover, there is a universal constant $C<\infty$ so that
	\begin{equation}\label{convrate_periodic}
		\| \chi_{\delta} - \chi_0 \|_{H^1(Y)} \leq C \delta  \quad \mathrm{and}\quad \| \chi_{\delta}- \chi_{\infty} \|_{H^1(Y)} \leq C \delta^{-1},
	\end{equation}
	and hence also $\|\chi_\delta\|_{H^1}\le C$.
\end{theorem}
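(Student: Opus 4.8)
The plan is to prove the two representations for $\chi_0$ and $\chi_\infty$ first by matching boundary data in the transmission/boundary-value formulations, then to derive the quantitative rates in \eqref{convrate_periodic} directly from the layer-potential formulas using the spectral gap of $\mathcal{K}^*$. For the representation of $\chi_{0,i}^\alpha$: starting from $u := \chi_{0,i}^\alpha - \chi_{1,i}^\alpha = \mathcal{S}\bigl[(-I/2 - \mathcal{K}^*)^{-1}\phi_i^\alpha\bigr] - q_{0,i}^\alpha$, property (c) of Section \ref{sec:periodiclayerpotential} gives $\mathcal{L}u = 0$ in $\mathbb{T}^d\setminus\partial\omega$ (the density lies in $L^2_0(\partial\omega)$ since $\phi_i^\alpha\in H^{-1/2}_0(\partial\omega)$ and $-1/2$ is outside the spectrum, so $(-I/2-\mathcal{K}^*)^{-1}$ preserves mean-zero), and the jump relation (b) gives $\partial_\nu u|_+ = (-\tfrac12 I + \mathcal{K}^*)(-I/2-\mathcal{K}^*)^{-1}\phi_i^\alpha = -\phi_i^\alpha$. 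Since $\phi_i^\alpha = \partial_\nu(y_ie^\alpha + \chi_{1,i}^\alpha)|_+$ and $\mathcal{L}\chi_{1,i}^\alpha = -\mathcal{L}(y_ie^\alpha)$ everywhere (no jump for $\delta = 1$), we get $\partial_\nu\chi_{0,i}^\alpha|_+ = \partial_\nu\chi_{1,i}^\alpha|_+ - \phi_i^\alpha = -\partial_\nu(y_ie^\alpha)|_+$ and $\mathcal{L}\chi_{0,i}^\alpha = -\mathcal{L}(y_ie^\alpha)$ in $\mathbb{T}^d\setminus\partial\omega$; the mean-zero condition is built into $q_{0,i}^\alpha$. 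This is exactly \eqref{eq:cell_problem_insulated}. The argument for $\chi_{\infty,i}^\alpha$ is identical, using $(I/2 - \mathcal{K}^*)^{-1}$ and the interior jump $\partial_\nu\mathcal{S}\psi|_- = (-\tfrac12 I + \mathcal{K}^*)\psi$, yielding $\partial_\nu\chi_{\infty,i}^\alpha|_- = -\partial_\nu(y_ie^\alpha)|_-$, which is \eqref{eq:cell_problem_perfect}. Well-posedness of these boundary-value problems (hence uniqueness, justifying that the representation really gives the cell-problem limit) follows from Lax–Milgram in the appropriate quotient space.

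For the convergence rates, subtract the representations:
\begin{equation*}
\chi_{\delta,i}^\alpha - \chi_{0,i}^\alpha = \mathcal{S}\Bigl[\bigl((\lambda(\delta)I - \mathcal{K}^*)^{-1} - (-I/2 - \mathcal{K}^*)^{-1}\bigr)\phi_i^\alpha\Bigr] - (q_{\delta,i}^\alpha - q_{0,i}^\alpha).
\end{equation*}
Using the resolvent identity $(\lambda I - \mathcal{K}^*)^{-1} - (\lambda' I - \mathcal{K}^*)^{-1} = (\lambda' - \lambda)(\lambda I - \mathcal{K}^*)^{-1}(\lambda' I - \mathcal{K}^*)^{-1}$ with $\lambda = \lambda(\delta)$, $\lambda' = -1/2$, and noting $\lambda(\delta) - (-1/2) = \tfrac{1}{\delta - 1} + 1 = \tfrac{\delta}{\delta-1} = O(\delta)$ as $\delta\to 0+$, we bound the density difference in $H^{-1/2}(\partial\omega)$ by $C\delta\|\phi_i^\alpha\|_{H^{-1/2}}$, because both resolvents are uniformly bounded on $L^2_0(\partial\omega)$ for $\delta$ near $0$ (the relevant $\lambda$'s stay bounded away from the spectral interval $[-1/2,1/2]$ of $\mathcal{K}^*$, whose endpoints are not eigenvalues on the mean-zero subspace). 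Then boundedness of $\mathcal{S}: H^{-1/2}(\partial\omega)\to H^1(\mathbb{T}^d)$ (a consequence of \eqref{periodicfunctionestimate}) gives $\|\chi_{\delta,i}^\alpha - \chi_{0,i}^\alpha\|_{H^1} \le C\delta$; the constant terms $q_{\delta,i}^\alpha - q_{0,i}^\alpha$ are controlled by the same density estimate since $\mathcal{S}$ maps into $L^2(Y)$. The estimate $\|\chi_\delta - \chi_\infty\|_{H^1}\le C\delta^{-1}$ is symmetric: here $\lambda(\delta) - 1/2 = \tfrac{1}{\delta-1} = O(\delta^{-1})$ as $\delta\to\infty$, and the resolvents stay uniformly bounded for $\delta$ large. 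Since $\chi_0,\chi_\infty$ are fixed functions in $H^1$, the triangle inequality gives $\|\chi_\delta\|_{H^1}\le C$ uniformly — for $\delta$ near $1$ one uses instead the original Lax–Milgram bound from \eqref{eq:cellproblem} directly.

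The main obstacle is establishing that the resolvents $(\lambda(\delta)I - \mathcal{K}^*)^{-1}$ remain uniformly bounded on $L^2_0(\partial\omega)$ as $\delta\to 0^+$ and $\delta\to\infty$, i.e., up to the endpoints $\pm 1/2$ of the spectrum. Property (d) only asserts $\mathrm{spec}(\mathcal{K}^*)\subset(-1/2,1/2)$ as an open interval, which is not by itself enough for a uniform bound as $\lambda(\delta)\to\pm 1/2$ — one needs that $\pm 1/2$ are not in the (approximate point) spectrum, equivalently that $(\pm I/2 - \mathcal{K}^*)$ is boundedly invertible on $L^2_0(\partial\omega)$. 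This is precisely the statement that the soft and stiff transmission problems \eqref{eq:cell_problem_insulated}, \eqref{eq:cell_problem_perfect} are well-posed (solvable with estimates), which holds because for $\mathcal{L}$-harmonic functions the Neumann problem on $\omega$ and on $Y\setminus\overline\omega$ are coercive on the mean-zero / appropriate subspaces via Korn-type or Poincaré–Wirtinger inequalities together with the ellipticity \eqref{ellipticc}. I would either invoke this well-posedness directly (it is classical; cf. the references for layer potentials in Section \ref{sec:periodiclayerpotential}) or, if self-containedness is desired, prove the uniform bound by a compactness/contradiction argument: were it false, there would be densities $\psi_n\in L^2_0$, $\|\psi_n\| = 1$, with $(\lambda_n I - \mathcal{K}^*)\psi_n\to 0$ and $\lambda_n\to\pm1/2$, and extracting a weak limit and using the jump relations one produces a nonzero $\mathcal{L}$-harmonic function with vanishing one-sided conormal derivative and vanishing one-sided trace jump, forcing it to be constant, contradicting $\psi_n\in L^2_0$. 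Once this uniform invertibility is in hand, the rest is the routine resolvent-identity computation sketched above.
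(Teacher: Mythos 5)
Your proof takes essentially the same route as the paper: both verify the limiting boundary-value problems for $\chi_0,\chi_\infty$ directly from the jump relations of the single-layer potential, and both obtain the rates in \eqref{convrate_periodic} from the resolvent identity combined with uniform boundedness of $(\lambda(\delta)I-\mathcal{K}^*)^{-1}$ as $\lambda(\delta)\to\mp\tfrac12$. One sign slip to correct: you write $\partial_\nu u\big|_+ = \bigl(-\tfrac12 I + \mathcal{K}^*\bigr)\bigl(-\tfrac12 I-\mathcal{K}^*\bigr)^{-1}\phi_i^\alpha$, but the \emph{exterior} jump is $\partial_\nu\mathcal{S}\psi\big|_+ = \bigl(\tfrac12 I+\mathcal{K}^*\bigr)\psi$; with the correct sign the operator $\bigl(\tfrac12 I+\mathcal{K}^*\bigr)\bigl(-\tfrac12 I-\mathcal{K}^*\bigr)^{-1}$ equals $-I$, giving the claimed $-\phi_i^\alpha$, whereas the operator as you wrote it is not $-I$ in general. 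As for the caveat you raise about property (d): containment of $\mathrm{spec}(\mathcal{K}^*)$ in the \emph{open} interval $(-\tfrac12,\tfrac12)$ is in fact already sufficient, because the spectrum of a bounded operator is compact and $\mathcal{K}^*$ is self-adjoint in the appropriate inner product (cf.\ Appendix \ref{appen: weak layer potential}), so the resolvent norm is controlled by the distance to the spectrum — this is what the paper means by ``the basic theory of functional analysis.'' You are right, though, that the real content is the bounded invertibility of $\pm\tfrac12 I - \mathcal{K}^*$ on the mean-zero subspace; the paper supplies this via the coercivity argument of Proposition \ref{spectral K} rather than your proposed compactness/contradiction argument, but either would do.
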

\begin{proof}
	In view of Section \ref{sec:periodiclayerpotential}, especially the jump relation, it is clear that $\chi^{\alpha}_{0,i}$ and $\chi^{\alpha}_{\infty,i}$ is the solution of problem \eqref{eq:cell_problem_insulated} and \eqref{eq:cell_problem_perfect}, respectively. We now prove \eqref{convrate_periodic}. 
	
	Since  $\lambda(\delta) I - \mathcal{K}^* \rightarrow \mp \frac{1}{2}I- \mathcal{K}^*$ as $\delta $ tends to $0$ or $\infty$ and $\mp \frac{1}{2}I - \mathcal{K}^*$ are bijections on $L^2_0(\partial \omega)$, by the basic theory of functional analysis, we obtain that $\| (  \lambda(\delta)I- \mathcal{K}^* )^{-1} \|$ is uniformly bounded in $0<\delta <\infty$.
	Therefore,
	\begin{equation*}
		\begin{aligned}
			\big\| \chi_{\delta,i}^{\alpha} - \chi_{0,i}^{\alpha} \big\|_{H^1(Y)} \leq\, &\big\|  \mathcal{S}\big[( \lambda(\delta) I- \mathcal{K}^* )^{-1} -( -I/2- \mathcal{K}^* )^{-1}  \big] \phi_i^{\alpha} \big\|_{H^1(Y)} + \big|q_{\delta,i}^{\alpha} - q_{0,i}^{\alpha}\big| \\
			\leq\,&2 \big\|  \mathcal{S} \big[( \lambda(\delta) I- \mathcal{K}^* )^{-1} -( -I/2- \mathcal{K}^* )^{-1}  \big] \phi_i^{\alpha} \big\|_{H^1(Y)} \\
			\leq\,& C\big(\lambda(\delta)+1/2\big) \big\| \big(  \lambda(\delta )I- \mathcal{K}^* \big)^{-1} \big\|  \cdot
			\big\| (I/2+ \mathcal{K}^* )^{-1} \big\| \\
			\leq\,& C\delta.
		\end{aligned}
	\end{equation*}
	This proves the convergence rate when $\delta \rightarrow 0$. The proof for rate of $\delta \rightarrow\infty$ follows the same lines and is hence omitted. The uniform estimates for $\|\chi_\delta\|_{H^1}$ follows also because $\|\chi_1\|_{H^1}$ is easily bounded, and by \eqref{cell_limit} the bounds for $\chi_0,\chi_\infty$ also hold.
\end{proof}

\begin{remark}
In Appendix \ref{appen: weak layer potential},
we define a 
'weak' form of single layer potentials via Lax-Milgram theorem, which maps from $H^{-1/2}$ to $H^{1/2}$, and the corresponding NP operator without smoothness condition \eqref{smoothc}. Using the 'weak' form of layer potentials, all above results can be recovered by the same analysis.
\end{remark}

\subsection{Uniform bound for the homogenized tensor}

Recall that, for each fixed $\delta > 0$, the homogenized tensor for \eqref{model} is $\hat{A}_\delta$ defined by \eqref{eq:Abardel}. We show that $\widehat{A}_\delta$ enjoys uniform (in dependent of $\delta$) ellipticity. 

\begin{lemma}\label{lem_esti_two}
	For any $0<\delta<\infty$, $1\leq i ,j \leq d$, $1\leq \alpha ,\beta \leq m$, we have
	\begin{equation}\label{estimatetwo}
		\Big\| \Lambda_{\delta}  \Big( a_{ij}^{\alpha \beta} +a_{ik}^{\alpha \gamma}  \frac{\partial }{\partial y_k} \chi_{\delta,j}^{\gamma \beta} \Big)  \Big\|_{L^2(Y )} \leq C .
	\end{equation}
\end{lemma}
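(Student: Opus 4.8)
The plan is to recognize that the quantity in \eqref{estimatetwo} is, for each fixed pair of indices, a component of the flux of a cell‑problem solution, and then to bound that flux separately on the inclusion $\omega$ and on the surrounding matrix $Y\setminus\overline\omega$ using the uniform control furnished by Theorem~\ref{thm:cell_limit}. More precisely, fix $j$ and $\beta$; as $(i,\alpha)$ vary, the expression inside the norm in \eqref{estimatetwo} is, up to the relabelling of indices dictated by the conventions in \eqref{eq:Abardel} and \eqref{eq:cellproblem}, the $(i,\alpha)$-entry of $\Lambda_\delta A\nabla w_\delta$, where $w_\delta:=\chi_{\delta,j}^{\beta}+y_j e^{\beta}$ solves the cell problem \eqref{eq:cellproblem}. (The bookkeeping is harmless: since $A=A^*$ by \eqref{symmetry}, any transposed corrector that may appear satisfies a cell problem of the same type and is controlled by Theorem~\ref{thm:cell_limit} all the same.) It therefore suffices to bound $\|\Lambda_\delta A\nabla w_\delta\|_{L^2(\omega)}$ and $\|\Lambda_\delta A\nabla w_\delta\|_{L^2(Y\setminus\overline\omega)}$ uniformly in $\delta\in(0,\infty)$.

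On $Y\setminus\overline\omega$ we have $\Lambda_\delta\equiv 1$, so by ellipticity and symmetry of $A$ the flux is bounded pointwise by $\mu^{-1}|\nabla w_\delta|$, whence $\|\Lambda_\delta A\nabla w_\delta\|_{L^2(Y\setminus\overline\omega)}\le \mu^{-1}\big(|Y|^{1/2}+\|\chi_\delta\|_{H^1(Y)}\big)\le C$, the last inequality being the uniform bound $\|\chi_\delta\|_{H^1(Y)}\le C$ from Theorem~\ref{thm:cell_limit}. On $\omega$ we have $\Lambda_\delta\equiv\delta$, and a naive estimate would cost an unwanted factor $\delta$; the key observation is that the stiff‑limit corrector is trivial inside the inclusion. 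Indeed, by \eqref{eq:cell_problem_perfect} the function $w_\infty:=\chi_{\infty,j}^{\beta}+y_j e^{\beta}$ satisfies $\mathcal{L}w_\infty=0$ in $\omega$ together with $\tfrac{\partial w_\infty}{\partial\nu}\big|_{-}=0$ on $\partial\omega$; integrating by parts over $\omega$ and using \eqref{ellipticc} gives $\int_\omega|\nabla w_\infty|^2=0$, so $w_\infty$ is constant on $\omega$ (which is connected by $(\mathbf{H2})$). Consequently $\nabla w_\delta=\nabla\big(\chi_{\delta,j}^{\beta}-\chi_{\infty,j}^{\beta}\big)$ on $\omega$, and the rate $\|\chi_\delta-\chi_\infty\|_{H^1(Y)}\le C\delta^{-1}$ from \eqref{convrate_periodic} yields
\[
\|\Lambda_\delta A\nabla w_\delta\|_{L^2(\omega)}\le \delta\,\mu^{-1}\,\big\|\nabla\big(\chi_{\delta,j}^{\beta}-\chi_{\infty,j}^{\beta}\big)\big\|_{L^2(\omega)}\le \delta\,\mu^{-1}\,C\delta^{-1}=C .
\]
Adding the two contributions proves \eqref{estimatetwo}.

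Given Theorem~\ref{thm:cell_limit}, there is no genuine obstacle here; the only step that requires care is the one just described — the observation that $w_\infty$ is locally constant inside $\omega$, combined with the fact that the offending factor $\delta$ is cancelled \emph{exactly} by the $O(\delta^{-1})$ convergence rate, which is precisely what makes the bound uniform up to $\delta=\infty$. (For $\delta\le 1$ the estimate on $\omega$ is in any case immediate, since then $\Lambda_\delta\le 1$ and one may use $\|\chi_\delta\|_{H^1(Y)}\le C$ directly, so one could alternatively split the argument into $\delta\le1$ and $\delta>1$.)
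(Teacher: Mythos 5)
Your proof is correct and follows essentially the same route as the paper: split the integral into $\omega$ and $Y\setminus\overline\omega$, bound the matrix part by the uniform $H^1$ bound on $\chi_\delta$, and on $\omega$ exploit the fact that $\chi_{\infty,j}^\beta + y_j e^\beta$ is constant there (from the stiff Neumann cell problem \eqref{eq:cell_problem_perfect}) so that the $O(\delta)$ weight is exactly cancelled by the $O(\delta^{-1})$ rate in \eqref{convrate_periodic}. The only cosmetic difference is that the paper treats $\delta\in(0,1]$ and $\delta\in(1,\infty)$ separately (using the rate toward $\chi_0$ for the former), whereas your main calculation on $\omega$ works uniformly in $\delta$, with the small-$\delta$ case also handled by the simpler parenthetical alternative you mention.
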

\begin{proof}
	The aim is to bound the $L^2(Y)$ norm of 
	$$
	\sum_{k=1}^d\sum_{\gamma=1}^m \Lambda_\delta(y) a_{ik}^{\alpha\gamma}(y) \frac{\partial}{\partial y_k} \left(\chi^{\gamma\beta}_{\delta,j}(y) + y_j e^\beta\right),
	$$
	for each fixed $i,j,\alpha$ and $\beta$. 
By Theorem \ref{thm:cell_limit}, we have
	\begin{equation}\label{Tomega}
		\Big\|\frac{\partial}{\partial y_i}(\chi_{\delta} -\chi_0 ) \Big\|_{L^2(Y )} \leq C\delta \quad\mathrm{and}\quad \Big\| \frac{\partial}{\partial y_i}(\chi_{\delta} -\chi_{\infty} ) \Big\|_{L^2(Y )} \leq C\delta^{-1}.
	\end{equation}
	For $\delta \in (0,1]$, the desired estimates follow from the boundedness of $a$, $\|\chi_0\|_{H^1}$ and $\delta\le 1$. For $\delta \in (1,\infty)$, after replacing $\chi_\delta$ by $\chi_\infty$ and applying the above inequality, it remains to control  
	$$
	\|\sum_{k=1}^d\sum_{\gamma=1}^m \Lambda_\delta(y) a_{ik}^{\alpha\gamma}(y) \frac{\partial}{\partial y_k} \left(\chi^{\gamma\beta}_{\infty,j}(y) + y_j e^\beta\right)\|_{L^2(\omega)},
	$$
	because $\Lambda_\delta =1$ outside $\omega$ and $\|\nabla \chi_\infty\|_{L^2}$ is bounded. Now in view of the stiff cell problem \eqref{eq:cell_problem_perfect} (for the part in $\omega$), we must have $\chi_{\infty,j}^\beta + y_je^\beta = c$ for some constant vector $c\in \R^d$. Hence, the above quantity vanishes. This completes the proof.
\end{proof}

\begin{lemma}\label{continuous_homogenized_coefficient}
	The homogenized tensor $\widehat{A}_{\delta}$ is continuous as a function of $\delta \in (0,\infty)$, we denote $\widehat{A}_0$ by $\lim_{\delta \rightarrow 0 } \widehat{A}_{\delta}$ and $\widehat{A}_{\infty}$ by $\lim_{\delta \rightarrow \infty} \widehat{A}_{\delta}$, then
	\begin{equation}\label{homotensor0}
		\widehat{a}_{0,ij}^{\alpha \beta}= \int_{Y\setminus \overline{\omega}}  \Big[ a_{ij}^{\alpha \beta}+ a_{ik}^{\alpha \gamma}   \frac{\partial}{\partial y_k} \chi_{0,j}^{\gamma \beta}\Big]\,dy,
	\end{equation}
	and
	\begin{equation}\label{homotensorinfy}
		\widehat{a}_{\infty,ij}^{\alpha \beta}= \int_{\omega} a_{ik}^{\alpha \gamma}\frac{\partial w_j^{\gamma \beta} }{\partial y_k} \,dy+ \int_{Y\setminus \overline{\omega}} \Big[ a_{ij}^{\alpha \beta} + a_{ik}^{\alpha \gamma}   \frac{\partial}{\partial y_k} \chi_{\infty,j}^{\gamma \beta}\Big]\,dy,
	\end{equation}
	where for each fixed $j=1$ and $\alpha$, the function $w^{\alpha}_j \in H^1(\omega;\R^m)$ is the unique (up to an additive constant) solution of 
	\begin{equation}\label{secondorderexpansion}
		\left\{
		\begin{aligned}
			& \mathcal{L}(w_j^{\alpha})=0 & \mathrm{in}\ \omega,\\
			&\frac{\partial w_j^{\alpha}}{\partial \nu}\Big|_-= \frac{\partial }{\partial \nu} ( \chi_{\infty,j}^{\alpha}+y_je^{\alpha})\Big|_+ &\mathrm{on}\ \partial \omega.
		\end{aligned}\right.
	\end{equation}
\end{lemma}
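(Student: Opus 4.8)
The plan is to work directly from the definition \eqref{eq:Abardel}, splitting $Y$ into the inclusion $\omega$, where $\Lambda_{\delta}\equiv\delta$, and $Y\setminus\overline{\omega}$, where $\Lambda_{\delta}\equiv 1$. Writing $\zeta_{\delta,j}^{\beta}:=\chi_{\delta,j}^{\beta}+y_{j}e^{\beta}$, so that $a_{ij}^{\alpha\beta}+a_{ik}^{\alpha\gamma}\partial_{k}\chi_{\delta,j}^{\gamma\beta}=a_{ik}^{\alpha\gamma}\partial_{k}\zeta_{\delta,j}^{\gamma\beta}$, one gets
\begin{equation*}
\widehat{a}_{\delta,ij}^{\alpha\beta}=\delta\int_{\omega}a_{ik}^{\alpha\gamma}\,\partial_{k}\zeta_{\delta,j}^{\gamma\beta}\,dy+\int_{Y\setminus\overline{\omega}}a_{ik}^{\alpha\gamma}\,\partial_{k}\zeta_{\delta,j}^{\gamma\beta}\,dy,
\end{equation*}
and all three assertions are read off from the behaviour of $\nabla\chi_{\delta}$ in $L^{2}(Y)$, together with the uniform bound $\|\chi_{\delta}\|_{H^{1}}\le C$ and the rates from Theorem~\ref{thm:cell_limit}.

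For continuity on $(0,\infty)$, I would first show $\delta\mapsto\chi_{\delta}$ is locally Lipschitz from $(0,\infty)$ into $H^{1}(\mathbb{T}^{d};\R^{m})$: subtracting the weak forms of \eqref{eq:cellproblem} at $\delta$ and $\delta'$, with $u:=\chi_{\delta,i}^{\alpha}-\chi_{\delta',i}^{\alpha}$ and $\Lambda_{\delta}-\Lambda_{\delta'}=(\delta-\delta')\mathbbm{1}_{\omega}$, gives $\int_{Y}\Lambda_{\delta}A\nabla u\cdot\nabla v=-(\delta-\delta')\int_{\omega}A\nabla\zeta_{\delta',i}^{\alpha}\cdot\nabla v$ for periodic $v$; testing with $v=u$, using $\Lambda_{\delta}\ge\min(1,\delta)$, ellipticity, and $\|\chi_{\delta'}\|_{H^{1}}\le C$, yields $\|\nabla u\|_{L^{2}(Y)}\le C\min(1,\delta)^{-1}|\delta-\delta'|$, and the $L^{2}$ bound follows by Poincar\'e. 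Continuity of $\widehat{A}_{\delta}$ is then immediate from the displayed splitting, as both the prefactor $\delta$ and the two integrals depend continuously on $\delta$. For $\delta\to 0$, Theorem~\ref{thm:cell_limit} gives $\chi_{\delta}\to\chi_{0}$ in $H^{1}(Y)$ with $\|\chi_{\delta}\|_{H^{1}}\le C$ uniformly, so the first integral is $O(\delta)\to 0$ while the second tends to $\int_{Y\setminus\overline{\omega}}a_{ik}^{\alpha\gamma}\partial_{k}\zeta_{0,j}^{\gamma\beta}=\int_{Y\setminus\overline{\omega}}\big[a_{ij}^{\alpha\beta}+a_{ik}^{\alpha\gamma}\partial_{k}\chi_{0,j}^{\gamma\beta}\big]$, which is \eqref{homotensor0}.

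The substantial point is $\delta\to\infty$. By the stiff cell problem \eqref{eq:cell_problem_perfect}, $\zeta_{\infty,j}^{\beta}$ is $\mathcal{L}$-harmonic in $\omega$ with vanishing interior conormal derivative, hence, testing with itself as in Lemma~\ref{lem_esti_two}, equals a constant vector $c$ on $\omega$; thus on $\omega$ we may write $\delta\zeta_{\delta,j}^{\beta}=\delta c+W_{\delta}$ with $W_{\delta}:=\delta(\chi_{\delta,j}^{\beta}-\chi_{\infty,j}^{\beta})$, and since $\partial_{k}(\delta c)=0$ the first integral equals $\int_{\omega}a_{ik}^{\alpha\gamma}\partial_{k}W_{\delta}^{\gamma}$. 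The rate $\|\chi_{\delta}-\chi_{\infty}\|_{H^{1}(Y)}\le C\delta^{-1}$ gives $\|W_{\delta}\|_{H^{1}(\mathbb{T}^{d})}\le C$, so along any sequence $\delta\to\infty$ a subsequence converges weakly in $H^{1}(\mathbb{T}^{d};\R^{m})$ to some $W_{*}$ with mean zero. Passing to the limit in $\mathcal{L}W_{\delta}=0$ in $\omega$ gives $\mathcal{L}W_{*}=0$ in $\omega$; passing to the limit in the transmission relation $\partial_{\nu}W_{\delta}|_{-}=\delta\,\partial_{\nu}\zeta_{\delta,j}^{\beta}|_{-}=\partial_{\nu}\zeta_{\delta,j}^{\beta}|_{+}$ — using that $\chi_{\delta}\to\chi_{\infty}$ strongly in $H^{1}(Y\setminus\overline{\omega})$ makes the exterior conormal derivative converge in $H^{-1/2}(\partial\omega)$, while the interior conormal derivative is weakly continuous along $W_{\delta}\rightharpoonup W_{*}$ in $H^{1}(\omega)$ — gives $\partial_{\nu}W_{*}|_{-}=\partial_{\nu}\zeta_{\infty,j}^{\beta}|_{+}$ on $\partial\omega$; that is, $W_{*}|_{\omega}$ solves \eqref{secondorderexpansion}, so $\nabla W_{*}|_{\omega}=\nabla w_{j}^{\beta}$. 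Since $a_{ik}^{\alpha\gamma}\in L^{2}(\omega)$, weak $H^{1}$-convergence is enough to pass $\int_{\omega}a_{ik}^{\alpha\gamma}\partial_{k}W_{\delta}^{\gamma}\to\int_{\omega}a_{ik}^{\alpha\gamma}\partial_{k}w_{j}^{\gamma\beta}$ along the subsequence, and as the limit is subsequence-independent the whole family converges. Combined with the convergence of the second integral to $\int_{Y\setminus\overline{\omega}}\big[a_{ij}^{\alpha\beta}+a_{ik}^{\alpha\gamma}\partial_{k}\chi_{\infty,j}^{\gamma\beta}\big]$, this gives \eqref{homotensorinfy}.

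I expect the $\delta\to\infty$ case to be the only genuine obstacle: the inner contribution $\delta\int_{\omega}a_{ik}^{\alpha\gamma}\partial_{k}\zeta_{\delta,j}^{\gamma\beta}$ is an indeterminate $\infty\cdot 0$ form, and resolving it requires recognising that the rescaled discrepancy $\delta(\chi_{\delta}-\chi_{\infty})$ stays bounded in $H^{1}$ and converges — weakly, which is all that is needed — to the secondary inclusion corrector $w_{j}^{\beta}$ of \eqref{secondorderexpansion}. If an explicit description is preferred, the layer-potential representation of $\chi_{\delta}$ together with the resolvent identity $(\lambda(\delta)I-\mathcal{K}^{*})^{-1}-(I/2-\mathcal{K}^{*})^{-1}=-\tfrac{1}{\delta-1}(I/2-\mathcal{K}^{*})^{-1}(\lambda(\delta)I-\mathcal{K}^{*})^{-1}$ show directly that $W_{\delta}$ converges in $H^{1}(\mathbb{T}^{d})$ to $-\mathcal{S}\big[(I/2-\mathcal{K}^{*})^{-2}\phi_{j}^{\beta}\big]$ modulo an additive constant, and the jump relations for $\mathcal{S}$ confirm that this limit solves \eqref{secondorderexpansion} on $\omega$; everything else (continuity on $(0,\infty)$ and the $\delta\to 0$ limit) is routine once the uniform bounds of Theorem~\ref{thm:cell_limit} are available.
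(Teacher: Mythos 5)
Your proposal is correct and takes a genuinely different route from the paper, most visibly in the $\delta\to\infty$ case.

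The paper resolves the $\infty\cdot 0$ indeterminacy in the inclusion integral by pushing the layer-potential representation of $\chi_\delta$ one order further: writing $(I/2-\mathcal{K}^*)^{-1}(\lambda(\delta)I-\mathcal{K}^*)^{-1}$ as a Neumann series to obtain a quantitative second-order expansion $\chi_{\delta,j}^\alpha = \chi_{\infty,j}^\alpha + \tfrac{1}{\delta-1}w_j^\alpha + O(\delta^{-2})$ in $H^1(\omega)$, then substituting this expansion into the algebraic identity for $\widehat{a}_{\delta,ij}^{\alpha\beta}-\widehat{a}_{\infty,ij}^{\alpha\beta}$. This yields the explicit rate $|\widehat{A}_\delta-\widehat{A}_\infty|\le C\delta^{-1}$ for large $\delta$ (noted at the end of the paper's proof). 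You instead work at the PDE level: you rescale the discrepancy to $W_\delta=\delta(\chi_{\delta,j}^\beta-\chi_{\infty,j}^\beta)$, observe it is uniformly bounded in $H^1(\mathbb{T}^d)$ by the $O(\delta^{-1})$ rate of \tref{cell_limit}, extract a weak limit, and identify it with $w_j^\beta$ (up to constant) by passing to the limit in the interior equation $\mathcal{L}W_\delta=0$ and in the transmission relation $\partial_\nu W_\delta|_-=\partial_\nu \zeta_{\delta,j}^\beta|_+$, using weak lower-semicontinuity of the interior conormal trace and strong $H^{-1/2}$ convergence of the exterior trace. Uniqueness of the limit then upgrades subsequential to full convergence. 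Your argument is more self-contained (once \tref{cell_limit} is granted, it makes no further use of the Neumann--Poincar\'e resolvent and avoids the smoothness hypothesis on $A$ altogether), but it is purely qualitative: it gives convergence of $\widehat A_\delta\to\widehat A_\infty$ without a rate, whereas the paper's Neumann-series expansion also delivers $|\widehat A_\delta-\widehat A_\infty|=O(\delta^{-1})$. Both treatments rely on the same two preliminary facts -- the rate $\|\chi_\delta-\chi_\infty\|_{H^1(Y)}\le C\delta^{-1}$ and the observation that $\chi_{\infty,j}^\beta+y_je^\beta$ is constant on $\omega$ -- and both arrive at \eqref{homotensorinfy}. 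Your continuity and $\delta\to0$ steps (local Lipschitz continuity of $\delta\mapsto\chi_\delta$ in $H^1$ from subtracting the weak forms, then dominated convergence in the split integral) are also valid; the paper dispatches continuity with a one-line appeal to the layer-potential representation and the $\delta\to0$ limit directly from the $O(\delta)$ rate.
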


It is worth to mention that, the tensor $\widehat A_0$ (respectively, $\widehat A_\infty$) is the homogenized tensor for the soft inclusion problem (respectively, stiff inclusion problem); see Remark \ref{rem:extreme}.

\begin{proof}
	By the definition \eqref{eq:Abardel} of $\widehat{A}_{\delta}$ and the single-layer potential representations of the solutions to cell problem, it is clear that $\widehat{A}_{\delta}$ is continuous as a function of $\delta \in (0,\infty)$, and \eqref{homotensor0} follows directly from the convergence $\| \chi_{\delta} - \chi_0\|_{H^1(Y)}\leq C\delta$. It remains to verify
	\eqref{homotensorinfy}.
	
	First we note that the problem for $w_j^\alpha$ is well posed. We need to check that the Neumann data integrates to zero over $\partial \omega$. By the definition \eqref{cell_limit} and the jump relation of layer potentials, 
\begin{equation*}
\begin{aligned}
\frac{\partial }{\partial \nu} ( \chi_{\infty,j}^{\alpha}+y_je^{\alpha})\Big|_+ &= \frac{\partial }{\partial \nu} ( \chi_{\infty,j}^{\alpha}+y_je^{\alpha})\Big|_-  + \big( I/2- \mathcal{K}^* \big)^{-1} \phi_j^{\alpha}\\
&= \big( I/2- \mathcal{K}^* \big)^{-1} \phi_j^{\alpha}.
\end{aligned}
\end{equation*}
The solvability of $w_j^\delta$ is verified. Moreover, we check that
\begin{equation*}
\begin{aligned}
&\mathcal{L}(\chi_{\delta,j}^\alpha-\chi_{\infty,j}^\alpha-\frac1{\delta-1} w_j^\alpha) = 0 \;\text{ in } \omega,\\
&\frac{\partial}{\partial \nu}(\chi_{\delta,j}^\alpha- \chi_{\infty,j}^\alpha - \frac{1}{\delta-1} w_j^\alpha)\Big\rvert_- = \left[I + (-I/2+\mathcal{K}^*)(\lambda(\delta)I - \mathcal{K}^*)^{-1} - \frac1\delta(I/2-\mathcal{K}^*)^{-1}\right]\phi_j^\alpha.
\end{aligned}
\end{equation*}
The operators $(I/2-\mathcal{K}^*)$ and its inverse act on $L^2_0(\partial \omega)$. For $\delta>1$, the inverse operator can be written as Neumann series, and the second line becomes
\begin{equation*}
\frac{\partial}{\partial \nu}(\chi_{\delta,j}^\alpha- \chi_{\infty,j}^\alpha - \frac{1}{\delta-1} w_j^\alpha)\Big\rvert_- = \sum_{\ell=2}^\infty \frac{(-1)^\ell}{(\delta-1)^\ell}(I/2-\mathcal{K}^*)^{-\ell}. 
\end{equation*}
	By basic estimates of Neumann problem of elliptic system we get
	\begin{equation}\label{second_order}
		\|\nabla(\chi_{\delta,j}^{\alpha} -\chi^{\alpha}_{\infty,j}  - \frac{1}{\delta-1}w^{\alpha}_j)\|_{L^2(\omega)} \leq C\delta^{-2},
	\end{equation}
	for all $\delta>1$ sufficiently large, and $C<\infty$ depends only on the norm of $\|(I/2-\mathcal{K}^*)^{-1}\|$. Using the fact that $\chi_{\infty,j}^\alpha+y_je^\alpha$ is a constant vector in $\omega$, we rewrite
	\begin{equation*}
		\begin{aligned}
			\widehat{a}_{\delta,ij}^{\alpha \beta} - \widehat{a}_{\infty,ij}^{\alpha \beta}&= \delta \int_{\omega} a_{ik}^{\alpha \gamma} \frac{\partial}{\partial y_k}\big( \chi_{\delta,j}^{\gamma \beta} -\chi^{\gamma \beta}_{\infty,j} - \frac{1}{\delta-1} w_j^\beta\big) \,dy+ \int_{Y \setminus \overline{\omega}} a_{ik}^{\alpha \gamma}   \frac{\partial}{\partial y_k} \big(\chi_{\delta,j}^{\gamma \beta}-\chi_{\infty,j}^{\gamma \beta}\Big) \,dy \\
			&\qquad\qquad + \frac{1}{\delta-1} \int_{\omega} a_{ik}^{\alpha \gamma}\frac{\partial u_j^{\gamma \beta} }{\partial y_k} \,dy.
		\end{aligned}
	\end{equation*}
Send $\delta\to \infty$, due to the bounds \eqref{second_order} and \eqref{estimatetwo}, we obtain the desired result. In fact, we get $|\widehat{A}_\delta-\widehat{A}_\infty|\le C\delta^{-1}$ for sufficiently large $\delta$.
\end{proof}

As a corollary of Lemma \ref{continuous_homogenized_coefficient}, we show that $\widehat{A}_{\delta}$ is uniform elliptic in $0<\delta <\infty$.

\begin{theorem} 
	\label{uniform elliptic homogenized tensor}
	Assume $A$ satisfies \eqref{ellipticc}\eqref{periodicc}, then $\widehat{A}_{\delta}$ is uniformly elliptic in $\delta$, i.e. there exists a universal positive number $\mu_1$, so that for any $\delta \in (0,\infty)$, we have $|\widehat{A}_\delta| \le \mu_1^{-1}$ and
	\begin{equation}
	\label{eq:Adelgmu1}
		\widehat{a}_{\delta,ij}^{\alpha \beta} \xi_i^{\alpha}\xi_j^{\beta} \geq \mu_1 |\xi|^2\quad \mathrm{for}\ \mathrm{any} \ \xi \in \mathbb{R}^{dm} .
	\end{equation}
\end{theorem}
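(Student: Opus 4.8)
The plan is to write the quadratic form $\xi\mapsto\widehat{a}_{\delta,ij}^{\alpha\beta}\xi_i^\alpha\xi_j^\beta$ as a corrector energy and then treat the two regimes $\delta\ge 1$ and $0\le\delta\le 1$ separately, the second being the only delicate one. Fix $\xi=(\xi_i^\alpha)\in\R^{dm}$ and set
\[
P_\xi(y):=\xi_i^\alpha y_ie^\alpha,\qquad N_\delta^\xi(y):=\xi_i^\alpha\big(y_ie^\alpha+\chi_{\delta,i}^\alpha(y)\big)=P_\xi(y)+\chi_\delta(y)\xi\in H^1_{\mathrm{loc}}(\R^d;\R^m),
\]
so that $\mathcal{L}_{1,\delta}(N_\delta^\xi)=0$ in $Y$ by linearity of \eqref{eq:cellproblem}, the matrix field $\nabla N_\delta^\xi$ is $\Z^d$-periodic, and $\nabla\chi_\delta\xi$ has mean zero over $Y$. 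Testing the cell equation against the periodic function $\chi_\delta\xi=N_\delta^\xi-P_\xi$ and comparing with \eqref{eq:Abardel} gives the standard energy identity
\[
\widehat{a}_{\delta,ij}^{\alpha\beta}\xi_i^\alpha\xi_j^\beta=\int_Y\Lambda_\delta(y)\,A(y)\nabla N_\delta^\xi\nabla N_\delta^\xi\,dy .
\]
The upper bound $|\widehat{A}_\delta|\le C$ follows directly from the defining formula \eqref{eq:Abardel}, the $L^2$ estimate \eqref{estimatetwo} of Lemma~\ref{lem_esti_two}, and Cauchy--Schwarz (alternatively, from continuity of $\delta\mapsto\widehat{A}_\delta$ on a compact parameter range, Lemma~\ref{continuous_homogenized_coefficient}).

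For $\delta\ge 1$ we have $\Lambda_\delta\ge 1$ on all of $Y$, so \eqref{ellipticc} gives
\[
\widehat{a}_{\delta,ij}^{\alpha\beta}\xi_i^\alpha\xi_j^\beta\ \ge\ \mu\int_Y|\nabla N_\delta^\xi|^2\,dy\ =\ \mu\Big(|\xi|^2+\int_Y|\nabla\chi_\delta\xi|^2\Big)\ \ge\ \mu|\xi|^2 .
\]
Letting $\delta\to\infty$ and using $\widehat{A}_\delta\to\widehat{A}_\infty$ (Lemma~\ref{continuous_homogenized_coefficient}) yields the same bound for $\widehat{A}_\infty$, so \eqref{eq:Adelgmu1} holds with constant $\mu$ for all $\delta\in[1,\infty]$.

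The regime $\delta\le 1$ is the crux, since $\Lambda_\delta$ degenerates to $0$ inside $\omega$ as $\delta\to 0$ and the naive estimate only gives $\widehat{a}_\delta\xi\xi\ge\mu\delta|\xi|^2$. The remedy is to keep only the energy stored in the matrix: discarding the nonnegative part of the integral over $\omega$ and using $\Lambda_\delta\equiv 1$ on $Y\setminus\overline{\omega}$,
\[
\widehat{a}_{\delta,ij}^{\alpha\beta}\xi_i^\alpha\xi_j^\beta\ \ge\ \mu\int_{Y\setminus\overline{\omega}}|\nabla N_\delta^\xi|^2\,dy,\qquad 0<\delta\le 1,
\]
while the analogous computation based on the soft cell problem \eqref{eq:cell_problem_insulated} (whose Neumann data vanishes on $\partial\omega$ from the matrix side) and the representation \eqref{homotensor0} gives $\widehat{a}_{0,ij}^{\alpha\beta}\xi_i^\alpha\xi_j^\beta=\int_{Y\setminus\overline{\omega}}A\nabla N_0^\xi\nabla N_0^\xi\ge\mu\int_{Y\setminus\overline{\omega}}|\nabla N_0^\xi|^2$, where $N_0^\xi:=P_\xi+\chi_0\xi$. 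I claim $\widehat{a}_{\delta,ij}^{\alpha\beta}\xi_i^\alpha\xi_j^\beta>0$ for every $\delta\in[0,1]$ and every $\xi\ne 0$. Indeed, if the last lower bound vanished, then $\nabla N_\delta^\xi=0$ a.e.\ on $Y\setminus\overline{\omega}$, hence, by $\Z^d$-periodicity of $\nabla N_\delta^\xi$, also a.e.\ on the periodically perforated matrix $\R^d\setminus\overline{D}$ with $D=\bigcup_{\mathbf{n}}(\omega+\mathbf{n})$, an open connected set under $(\mathbf{H2})$ (recall $d\ge 2$ and $\omega$ is simply connected with $\overline{\omega}\subset Y$). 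Thus $N_\delta^\xi=P_\xi+\chi_\delta\xi$ is constant on $\R^d\setminus\overline{D}$; since that set is $\Z^d$-invariant and $\chi_\delta\xi$ is $\Z^d$-periodic, subtracting the identity at $y$ and at $y+\mathbf{n}$ forces $\xi_i^\alpha n_ie^\alpha=0$ for all $\mathbf{n}\in\Z^d$, i.e.\ $\xi=0$, a contradiction. Now $(\delta,\xi)\mapsto\widehat{a}_{\delta,ij}^{\alpha\beta}\xi_i^\alpha\xi_j^\beta$ is continuous on the compact set $[0,1]\times\{|\xi|=1\}$ (continuity in $\delta$ by Lemma~\ref{continuous_homogenized_coefficient}) and, by the claim, strictly positive there; hence $\mu_1':=\min_{[0,1]\times\{|\xi|=1\}}\widehat{a}_\delta\xi\xi>0$, and by homogeneity $\widehat{a}_\delta\xi\xi\ge\mu_1'|\xi|^2$ for all $\delta\in[0,1]$. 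Combining with the bound on $[1,\infty]$ and setting $\mu_1:=\min(\mu,\mu_1',C^{-1})$, where $C$ is the upper bound on $|\widehat{A}_\delta|$, proves the theorem.

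The main obstacle is precisely this small-$\delta$ degeneracy: one cannot use $\Lambda_\delta\ge\delta$ and must instead localize the corrector energy to the perforated cell $Y\setminus\overline{\omega}$, where the coefficient stays uniformly elliptic, and then rule out a degenerating minimizing direction by geometry --- connectedness of the periodic matrix $\R^d\setminus\overline{D}$ together with the impossibility of a nonzero affine map being periodic. Once pointwise-in-$\delta$ ellipticity is secured on the closed interval $[0,1]$, uniformity of the constant is a soft consequence of the $\delta$-continuity of the homogenized tensor (Lemma~\ref{continuous_homogenized_coefficient}) and compactness.
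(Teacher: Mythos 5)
Your proof is correct and follows the paper's overall strategy: rewrite $\widehat{a}_\delta\xi\xi$ as a weighted corrector energy, handle $\delta\ge 1$ by the Jensen/orthogonality argument on the full cell, and handle $\delta\le 1$ by continuity of $\delta\mapsto\widehat{A}_\delta$ plus compactness. The place where you go beyond the paper is precisely the point where the paper is terse to the point of being incomplete: it asserts that ellipticity of $\widehat{A}_0$ ``follows by the same reasoning as above,'' but the Jensen step used for $\delta\ge 1$ relies on integrating over \emph{all} of $Y$, where $\int_Y\nabla\chi_\delta=0$ forces $\int_Y\nabla N_\delta^\xi=\xi$; for $\widehat{A}_0$ one only controls the energy over $Y\setminus\overline\omega$, and the mean of $\nabla N_0^\xi$ over that set is \emph{not} $\xi$. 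Your connectedness argument — that $\nabla N_\delta^\xi\equiv 0$ on $Y\setminus\overline\omega$ would, by periodicity, force $N_\delta^\xi=P_\xi+\chi_\delta\xi$ to be constant on the connected $\Z^d$-invariant set $\R^d\setminus\overline{D}$, hence $P_\xi$ periodic, hence $\xi=0$ — is a clean and correct way to close this gap. An equally standard alternative, more in the spirit of the paper's toolbox (Lemma~\ref{lem:extension}), is to extend $\chi_0\xi$ from $Y\setminus\overline\omega$ to all of $Y$ with controlled energy and then apply Jensen to the extended function, yielding a quantitative $\widehat{a}_0\xi\xi\ge c\mu|\xi|^2$ directly without invoking compactness; either route is legitimate. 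Your treatment of the upper bound and of the $\delta\ge 1$ case matches the paper's.
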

\begin{proof}
	
We have established a uniform in $\delta$ bound of $|\widehat{A}_\delta|$, so we concentrate here on proving \eqref{eq:Adelgmu1}. Given $\xi =(\xi_i^{\alpha})\in \mathbb{R}^{dm}$, let $w_{\delta}=\xi^{\alpha}_i (y_i e^{\alpha} +\chi_{\delta,i}^{\alpha})$ be the combination of the correctors $\chi_\delta$'s. We have
	\begin{equation*}
		\begin{aligned}
			\widehat{a}_{\delta,ij}^{\alpha \beta} \xi_i^{\alpha}\xi_j^{\beta}&= ( J^{Y\setminus \overline{\omega} } + \delta   J^{\omega}) (w_{\delta})  \\
			&\geq \min\{\delta,1\} \mu\int_Y |\nabla w_\delta |^2 \,dy\\
			& \geq \min\{\delta,1\} \mu\left| \int_Y \nabla w_{\delta} \right|^2\,dy \\
			&=   \min\{\delta,1\} \mu|\xi|^2.
		\end{aligned}
	\end{equation*}
	The last identity holds because the integral of $\nabla \chi_{\delta,i}^{\alpha}$ on $\mathbb{T}^d$ vanishes. Therefore, for $\delta\ge 1$, the lower ellipticity constant for $\widehat{A}_\delta$ is uniformly bounded by $\mu$ from below. For $\delta\in (0,1)$, in view of the continuity of $\widehat{A}_{\delta}$ with respect to $\delta$, it sufficies to prove that $\widehat{A}_0$ is elliptic, but this follows by the same reasoning as above.
\end{proof}
\subsection{Uniform bound of the flux corrector}

Given a tensor $F=(F^{\alpha \beta}_{ij})$ on $Y$, for $1\leq i,j \leq d$, $1\leq \alpha,\beta \leq m$, such that
\begin{equation}\label{requireflux}
	\frac{\partial }{\partial y_i} F^{\alpha \beta}_{ij} =0   \quad \mathrm{and} \quad \int_Y F^{\alpha \beta}_{ij}(y)\,dy =0.
\end{equation}
It is proved, for example see \cite{shen_periodic_2018}, there exists a 
tensor $\Psi = (\Psi^{\alpha \beta}_{kij})$ on $\mathbb{T}^d$, for $1\leq k, i,j \leq d$ and  $1\leq \alpha,\beta \leq m$, such that
\begin{equation}
\label{eq:FtoPsi}
	\frac{\partial }{\partial y_k} \Psi^{\alpha \beta}_{kij} = F_{ij}^{\alpha \beta} \quad \mathrm{and} \quad  \Psi_{kij}^{\alpha \beta} =- \Psi_{ikj}^{\alpha \beta},
\end{equation}
and there exists a constant $C>0$ such that $
	\| \Psi \|_{L^2(Y)} \leq C  \|F\|_{L^2(Y)}
$.
$\Psi$ is called the \textit{flux corrector} associated to $F$. 

In this paper, we need to use the flux corrector $\Psi_{\delta} = (\Psi_{\delta,kij}^{\alpha \beta})$ associated to $F_{\delta} = (F_{\delta,ij}^{\alpha \beta})$, where
\begin{equation}\label{def:Fdelta}
	F_{\delta,ij}^{\alpha \beta}(y)=\Lambda_{\delta} (y) \Big( a_{ij}^{\alpha \beta}(y) +a_{ik}^{\alpha \gamma}(y)  \frac{\partial }{\partial y_k} \chi_{\delta,j}^{\gamma \beta}(y)\Big) - \widehat{a}^{\alpha \beta}_{\delta,ij} ,\qquad 0<\delta<\infty.
\end{equation}
It is easy to check condition \eqref{requireflux} for $F_{\delta}$, so $\Psi_{\delta}$ exists and $\| \Psi_{\delta} \|_{L^2(Y)}\leq C \| F_{\delta} \|_{L^2(Y)}$. 

\begin{theorem}\label{thm:boundflux}
	Let $F_{\delta}$ be defined by \eqref{def:Fdelta}, and let $\Psi$ be the skew-symmetric flux-tensor related to $F_\delta$ by \eqref{eq:FtoPsi}. Then
	\begin{equation*}
		\| F_{\delta} \|_{L^2(Y)} + \| \Psi_{\delta} \|_{L^2(Y)} \leq C.
	\end{equation*}
\end{theorem}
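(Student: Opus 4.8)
The plan is to reduce the bound on $\Psi_\delta$ to the bound on $F_\delta$ via the already-cited estimate $\|\Psi_\delta\|_{L^2(Y)}\le C\|F_\delta\|_{L^2(Y)}$, so the whole task is to prove $\|F_\delta\|_{L^2(Y)}\le C$ with $C$ independent of $\delta$. Recalling the definition~\eqref{def:Fdelta},
\begin{equation*}
F_{\delta,ij}^{\alpha\beta} = \Lambda_\delta\Big(a_{ij}^{\alpha\beta} + a_{ik}^{\alpha\gamma}\frac{\partial}{\partial y_k}\chi_{\delta,j}^{\gamma\beta}\Big) - \widehat a_{\delta,ij}^{\alpha\beta},
\end{equation*}
the triangle inequality splits this into the ``flux'' term $\Lambda_\delta(a_{ij}^{\alpha\beta}+a_{ik}^{\alpha\gamma}\partial_k\chi_{\delta,j}^{\gamma\beta})$ and the constant $\widehat a_{\delta,ij}^{\alpha\beta}$. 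The first term is exactly what Lemma~\ref{lem_esti_two} controls: its $L^2(Y)$ norm is bounded by a universal $C$. The second term is bounded because $|\widehat A_\delta|\le \mu_1^{-1}$ by Theorem~\ref{uniform elliptic homogenized tensor} (and a constant over $Y$ has $L^2(Y)$ norm equal to its absolute value since $|Y|=1$). Adding the two gives $\|F_\delta\|_{L^2(Y)}\le C$.

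Concretely, I would write: by Lemma~\ref{lem_esti_two},
\begin{equation*}
\Big\|\Lambda_\delta\Big(a_{ij}^{\alpha\beta}+a_{ik}^{\alpha\gamma}\frac{\partial}{\partial y_k}\chi_{\delta,j}^{\gamma\beta}\Big)\Big\|_{L^2(Y)}\le C,
\end{equation*}
and by Theorem~\ref{uniform elliptic homogenized tensor}, $|\widehat a_{\delta,ij}^{\alpha\beta}|\le \mu_1^{-1}$, hence $\|\widehat a_{\delta,ij}^{\alpha\beta}\|_{L^2(Y)}=|\widehat a_{\delta,ij}^{\alpha\beta}|\le \mu_1^{-1}$ since $|Y|=1$. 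Therefore $\|F_{\delta,ij}^{\alpha\beta}\|_{L^2(Y)}\le C+\mu_1^{-1}$, uniformly in $\delta\in(0,\infty)$ and in the indices. Finally, the construction of the flux corrector recalled around~\eqref{eq:FtoPsi} gives $\|\Psi_\delta\|_{L^2(Y)}\le C\|F_\delta\|_{L^2(Y)}\le C$, which completes the proof.

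There is essentially no obstacle here: the theorem is a bookkeeping corollary of the two genuinely substantive results proved just before it (Lemma~\ref{lem_esti_two} and Theorem~\ref{uniform elliptic homogenized tensor}), combined with the standard existence-and-bound statement for flux correctors. The only point requiring a moment's care is to verify that $F_\delta$ actually satisfies the structural conditions~\eqref{requireflux} — divergence-free in $y_i$ and mean zero — so that the flux corrector $\Psi_\delta$ exists in the first place; the mean-zero condition is built into the definition via subtracting $\widehat a_{\delta,ij}^{\alpha\beta}$, and the divergence-free condition follows from the cell equation~\eqref{eq:cellproblem} for $\chi_\delta$. This was already asserted in the paragraph preceding the theorem, so I would simply reference it rather than re-derive it.
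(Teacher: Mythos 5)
Your proof is correct and matches the paper's own reasoning: the paper states the theorem "follows directly from the uniform in $\delta$ bounds \eqref{estimatetwo}," which is precisely your invocation of Lemma~\ref{lem_esti_two} plus the triangle inequality using the $\widehat A_\delta$ bound from Theorem~\ref{uniform elliptic homogenized tensor} and the standard flux-corrector estimate. Your remark about verifying \eqref{requireflux} is a sensible extra sanity check, and the paper indeed asserts this in the paragraph preceding the theorem.
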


This theorem follows directly from the uniform in $\delta$ bounds \eqref{estimatetwo}. The uniform estimates for $F_\delta$ and $\Psi_\delta$ above will be the key to obtain uniform in $\delta$ convergence rate.

\section{Proof of Theorem \ref{thm:suboptimal convergence rate}} \label{proofof theorem1}

In this section, we prove the uniform (in $\eps$ and $\delta$) convergence rate in $H^1(\Omega)$. 
\subsection{Uniform energy estimates}

We first derive uniform $H^1$ energy estimates for the solution to the heterogeneous equation in the high contrast setting. With the extension operators of the previous section, the estimates become routine.

\begin{theorem}\label{thm:unifenergy} Assume \eqref{ellipticc}-\eqref{periodicc} and $(\mathbf{H2})$, for $\delta \in (0,1]$ we have
\begin{equation}
\label{eq:energy}
\|u_\eps\|_{H^1(\Omega)}\le C\left(\|f\|_{L^2(\Omega)} + \eps\delta^{-1}\|f\|_{L^2(D_\eps)} + \|g\|_{H^{\frac12}(\partial \Omega)}\right).
\end{equation}
For $\delta > 1$, the above still holds with the $\eps\delta^{-1} \|f\|_{L^2(D_\eps)}$ term removed from the right hand side. In particular, for $\delta \in (0,1]$, if we replace $f$ on the right hand side of \eqref{model} by $f_{\eps,\delta}$ defined by \eqref{eq:modf} we may ignore the second term on the right hand side above. 
\end{theorem}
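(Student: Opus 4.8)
The plan is to obtain \eqref{eq:energy} by testing the weak formulation of \eqref{model} against a suitable test function and then invoking the extension operators of Section \ref{preliminaries} to convert the natural estimate on the perforated pieces into a full $H^1(\Omega)$ estimate. First I would reduce to zero boundary data: write $u_{\eps,\delta} = \widetilde g + \bar u_{\eps,\delta}$ where $\widetilde g \in H^1(\Omega)$ is a fixed extension of $g$ with $\|\widetilde g\|_{H^1(\Omega)}\le C\|g\|_{H^{1/2}(\partial\Omega)}$, so that $\bar u_{\eps,\delta}\in H^1_0(\Omega)$ solves $\mathcal{L}_{\eps,\delta}(\bar u_{\eps,\delta}) = f + \mathrm{div}(\Lambda_{\eps,\delta}A(x/\eps)\nabla \widetilde g)$. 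Testing against $\bar u_{\eps,\delta}$ itself and using the ellipticity \eqref{ellipticc} gives
\begin{equation*}
\min\{\delta,1\}\,\mu\,\|\nabla \bar u_{\eps,\delta}\|_{L^2(\Omega)}^2 \le \int_\Omega \Lambda_{\eps,\delta}A(x/\eps)\nabla\bar u_{\eps,\delta}\nabla\bar u_{\eps,\delta} = \int_\Omega f\cdot \bar u_{\eps,\delta} - \int_\Omega \Lambda_{\eps,\delta}A(x/\eps)\nabla\widetilde g\,\nabla\bar u_{\eps,\delta}.
\end{equation*}
The second term on the right is bounded by $C\max\{\delta,1\}\|\nabla\widetilde g\|_{L^2}\|\nabla\bar u_{\eps,\delta}\|_{L^2}$, which already shows the naive approach loses a factor of $\delta$ as $\delta\to 0$; so the energy estimate cannot be proved by this brute-force test function alone, and this is where the extension operators and the structure of $f_{\eps,\delta}$ enter.

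The main obstacle, therefore, is the degeneracy of the coercivity constant $\min\{\delta,1\}$ when $\delta$ is small. The fix is to split the estimate into the background and the inclusions. On the background $\Omega_\eps$ the coefficient $\Lambda_{\eps,\delta}A(x/\eps)=A(x/\eps)$ is uniformly elliptic, so testing gives $\|\nabla \bar u_{\eps,\delta}\|_{L^2(\Omega_\eps)}$ controlled without any $\delta$ loss, provided the right-hand side pairings are handled carefully. To promote this to a bound on $\|u_{\eps,\delta}\|_{H^1(\Omega)}$, I would apply the extension operator $\mathcal P_\eps$ of \lref{lem:extension} to $u_{\eps,\delta}|_{\Omega_\eps}$: then $\|\nabla \mathcal P_\eps(u_{\eps,\delta})\|_{L^2(\Omega)}\le C\|\nabla u_{\eps,\delta}\|_{L^2(\Omega_\eps)}$, and since $\mathcal P_\eps(u_{\eps,\delta})$ agrees with $u_{\eps,\delta}$ on $\Omega_\eps$ and has the same boundary trace $g$, a Poincaré inequality on $\Omega$ (using the Dirichlet data) gives a full $L^2(\Omega)$ bound. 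It then remains only to control $\|\nabla u_{\eps,\delta}\|_{L^2(D_\eps)}$: on $D_\eps$, $u_{\eps,\delta}$ solves $-\mathrm{div}(\delta A(x/\eps)\nabla u_{\eps,\delta}) = f$ with Dirichlet data equal to the trace of $u_{\eps,\delta}$ from $\Omega_\eps$; by a Caccioppoli/energy estimate on each rescaled inclusion $\omega_\eps^{\mathbf n}$ (rescaling to the unit cell, where the estimate is scale-invariant) one gets $\|\nabla u_{\eps,\delta}\|_{L^2(\omega_\eps^{\mathbf n})}\le C\big(\eps\delta^{-1}\|f\|_{L^2(\omega_\eps^{\mathbf n})} + \|\nabla u_{\eps,\delta}\|_{L^2(\widetilde\omega_\eps^{\mathbf n}\setminus\omega_\eps^{\mathbf n})}\big)$ — the first term being the source contribution (with the factor $\eps\delta^{-1}$ coming from dividing by $\delta$ and rescaling, cf.\ the rescaling in \rref{rem:fterm}) and the second already controlled by the background energy. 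Summing over $\mathbf n\in\mathcal I_\eps$ yields \eqref{eq:energy}.

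For $\delta > 1$ the coefficient is everywhere elliptic with constant $\ge\mu$, so the straightforward test-function argument combined with the Poincaré inequality gives the estimate directly with no inclusion term, and the $\eps\delta^{-1}\|f\|_{L^2(D_\eps)}$ term is unnecessary. Finally, the last sentence of the theorem is immediate from \eqref{eq:modf}: for $\delta\in(0,1)$, replacing $f$ by $f_{\eps,\delta}=\Lambda_{\eps,\delta}f$ means the source inside $D_\eps$ becomes $\delta f$, so the term $\eps\delta^{-1}\|f_{\eps,\delta}\|_{L^2(D_\eps)} = \eps\delta^{-1}\cdot\delta\|f\|_{L^2(D_\eps)} = \eps\|f\|_{L^2(D_\eps)}\le \eps\|f\|_{L^2(\Omega)}$, which is absorbed into the first term $C\|f\|_{L^2(\Omega)}$ on the right-hand side of \eqref{eq:energy}. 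I expect the delicate points to be bookkeeping the dependence on $\min\{\delta,1\}$ versus $\max\{\delta,1\}$ through the pairings and making the per-inclusion rescaling argument precise, but no new ideas beyond the extension lemmas and elementary elliptic estimates should be required.
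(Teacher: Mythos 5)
For $\delta\le 1$ your plan is essentially the paper's proof: the two needed ingredients are (a) a per-inclusion estimate $\|\nabla u\|_{L^2(\omega_\eps^{\mathbf n})}\le C\bigl(\eps\delta^{-1}\|f\|_{L^2(\omega_\eps^{\mathbf n})}+\|\nabla u\|_{L^2(\widetilde\omega_\eps^{\mathbf n}\setminus\overline{\omega}_\eps^{\mathbf n})}\bigr)$ obtained by extension plus Poincar\'e, and (b) testing the equation against $u_{\eps,\delta}-G$ (with $G$ a lifting of $g$). The paper derives (a) first (Step 1, phrased globally via $\mathcal P_\eps$), then plugs it into the quadratic inequality coming from (b) to close the estimate, while you present (b) first and (a) afterwards; that ordering as written is circular, since the pairings in (b) produce $\|\nabla u\|_{L^2(\Omega)}$ which already contains the $D_\eps$ piece. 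This is a presentation issue you appear to anticipate (``provided the right-hand side pairings are handled carefully''), and once the order is flipped your $\delta\le1$ argument is correct. The reduction via $f_{\eps,\delta}$ at the end is also exactly right.

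The genuine gap is the case $\delta>1$. You write that since the coefficient is ``everywhere elliptic with constant $\ge\mu$, the straightforward test-function argument combined with the Poincar\'e inequality gives the estimate directly with no inclusion term.'' This is false: lower ellipticity is uniform, but the \emph{upper} bound of $\Lambda_{\eps,\delta}(x)A(x/\eps)$ is of order $\delta$ on $D_\eps$. When you test against $u_{\eps,\delta}-G$, the term
\begin{equation*}
\int_{D_\eps}\delta\,A(x/\eps)\nabla u_{\eps,\delta}\cdot\nabla G
\end{equation*}
is only controlled by $C\delta\|\nabla u_{\eps,\delta}\|_{L^2(D_\eps)}\|\nabla G\|_{L^2}$, and absorbing via Young against the coercivity $\mu\delta\|\nabla u_{\eps,\delta}\|_{L^2(D_\eps)}^2$ still leaves you with $\|\nabla u_{\eps,\delta}\|_{L^2(\Omega)}\le C\bigl(\|f\|_{L^2}+\delta^{1/2}\|\nabla G\|_{L^2}\bigr)$, which blows up as $\delta\to\infty$. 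The missing ingredient is the enhanced per-inclusion estimate of Lemma~\ref{lem:energyinlarged} (Shen's Lemma 3.1): for $\delta>1$ one in fact has $\delta\|\nabla u_{\eps,\delta}\|_{L^2(D_\eps)}\le C\bigl(\|f\|_{L^2(D_\eps)}+\|\nabla u_{\eps,\delta}\|_{L^2(\Omega_\eps)}\bigr)$, with the crucial extra factor $\delta$ on the \emph{left}. This comes from extending $u_{\eps,\delta}$ from inside the inclusion to a function in $H_0^1(\widetilde\omega_\eps^{\mathbf n})$ and using it as test function, exploiting that large $\delta$ forces the gradient inside the inclusion to be small. It is this bound --- not plain ellipticity --- that lets the $\delta\int_{D_\eps}A\nabla u\nabla G$ term be controlled with a constant independent of $\delta$, which is how the paper closes the $\delta>1$ case.
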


\begin{proof} Note that for fixed $\eps,\delta$, the uniquely solvability of $u_\eps \in H^1(\Omega)$ is routine. Our goal is to establish the above uniform energy estimates. 

\emph{Step 1: Control of the energy inside the inclusions}. Let $\widetilde u_\eps$ be the extension $\mathcal{P}_\eps (u_\eps\rvert_{\Omega_\eps})$ given by Lemma \ref{lem:extension}. Then $u_\eps -\widetilde u_\eps \in H^1_0(\Omega)$, from the equation \eqref{model} and the fact that $u_\eps = \widetilde u_\eps$ in $\Omega_\eps$, we get
\begin{equation*}
\delta \int_{D_\eps} A(x/\eps)\nabla u_\eps \cdot (\nabla u_\eps-\nabla \widetilde u_\eps) = \int_{D_\eps} f\cdot(u_\eps-\widetilde u_\eps).
\end{equation*}
The ellipticity of $A$ and Poincar\'{e} inequality $\|u_\eps-\widetilde u_\eps\|_{L^2(D_\eps)} \leq C\varepsilon\|\nabla u_\eps-\nabla \widetilde u_\eps\|_{L^2(D_\eps)}$ yields
\begin{equation*}
 \|\nabla u_\eps\|^2_{L^2(D_\eps)} \le C\|\nabla u_\eps\|_{L^2(D_\eps)}\|\nabla \widetilde u_\eps\|_{L^2(D_\eps)} + C\varepsilon\delta^{-1}\|f\|_{L^2(D_\eps)}\|\nabla u_\eps-\nabla \widetilde u_\eps\|_{L^2(D_\eps)},
\end{equation*}
this together with Young's inequality and $\|\nabla \widetilde u_\eps\|_{L^2(D_{\varepsilon})} \le C\|\nabla u_\eps\|_{L^2(\Omega_\eps)}$ leads to
\begin{equation}
\label{eq:energy-in}
\|\nabla u_\eps\|_{L^2(D_\eps)} \le C\Big(\varepsilon \delta^{-1} \|f\|_{L^2(D_\eps)} + \|\nabla u_\eps\|_{L^2(\Omega_\eps)}\Big)
\end{equation}
with some bounding constant $C$ that is independent of $\eps,\delta$ and $f$. This estimate will be useful for $\delta \in (0,1]$. 

For $\delta > 1$, we use Lemma \ref{lem:energyinlarged} below (which is from Lemma 3.1 of \cite{shen_large-scale_2021}) to get
\begin{equation}
\label{eq:energy-in-ld}
\delta \|\nabla u_\eps\|_{L^2(D_\eps)} \le C(\|f\|_{L^2(D_\eps)} + \|\nabla u_\eps\|_{L^2(\Omega_\eps)}).
\end{equation}

\emph{Step 2: Control of the total energy}. Let $G \in H^1(\Omega)$ be a function such that $G=g$ on $\partial \Omega$ and $\|G\|_{H^1(\Omega)}\le C\|g\|_{H^{1/2}(\partial \Omega)}$. Test $u_\eps -G \in H^1_0(\partial \Omega)$ in \eqref{model}, the energy estimate yields
\begin{equation*}
\|\nabla u_\eps \|_{L^2(\Omega_\eps)}^2 \le C\|f\|_{L^2(\Omega)}\|\nabla u_\eps -\nabla G\|_{L^2(\Omega)} + C\big( \delta \|\nabla u_\eps\|_{L^2(D_\eps)} +  \|\nabla u_\eps\|_{L^2(\Omega_\eps)} \big) \|\nabla G\|_{L^2(\Omega)}.
\end{equation*}

First consider the case of $\delta \le 1$.  Using the fact that $\delta$ is not large, we get
\begin{equation*}
\|\nabla u_\eps\|^2_{L^2(\Omega_\eps)} \le  C\|f\|_{L^2(\Omega)}\left(\|\nabla u_\eps\|_{L^2(\Omega)} +\|\nabla G\|_{L^2(\Omega)}\right) +  C\|\nabla u_\eps\|_{L^2(\Omega)} \|\nabla G\|_{L^2(\Omega)} .
\end{equation*}
Using \eqref{eq:energy-in} we then get
\begin{equation*}
\|\nabla u_\eps\|_{L^2(\Omega_\eps)} \le C\left(\|f\|_{L^2(\Omega)} + \eps\delta^{-1}\|f\|_{L^2(D_\eps)} + \|\nabla G\|_{L^2(\Omega)}\right).
\end{equation*}
The above combined with \eqref{eq:energy-in} again yields the desired estimate \eqref{eq:energy}. 

\smallskip

For the case of $\delta > 1$, we only need to use the bound \eqref{eq:energy-in-ld} in the energy estimate and the argument above, the desired inequality for $\delta > 1$ then follows.
\end{proof}

\subsection{Convergence rates in homogenization}

We consider the discrepancy function $w_{\varepsilon,\delta} $ defined by
\begin{equation}
\label{eq:discrepancy}
	w_{\varepsilon,\delta} := u_{\varepsilon,\delta} - \widehat{u}_{\delta} -  \varepsilon \chi^{\alpha}_{\delta,i}(x/\varepsilon) S_{\varepsilon} \Big(\eta_{\varepsilon} \frac{\partial \widehat{u}_{\delta}^{\alpha}}{\partial x_i} \Big) ,
\end{equation}
	where $S_{\varepsilon}$ is the standard smoothing operator defined in section \ref{sec:tools}, and $\eta_{\varepsilon}$ is a smooth cut-off function supported away from $\partial \Omega$. More precisely, $\eta_\eps$ is chosen so that
\begin{equation}\label{cut-off_def}
	\left\{
	\begin{aligned}
		& 0\leq \eta_{\varepsilon} (x)\leq 1 \quad \mathrm{for}\ x\in \Omega, \\
		& \mathrm{supp}\,\eta_{\varepsilon}\subset \Omega \setminus \mathcal{O}_{3\varepsilon}, \\
		&\eta_{\varepsilon} =1 \quad \mathrm{in}\  \Omega \setminus \mathcal{O}_{4\varepsilon}, \\
		& \|\nabla \eta_{\varepsilon}\|_{\infty} \leq C\varepsilon^{-1}.
	\end{aligned}\right.
\end{equation}

The following lemma is the start point of our proof.

\begin{lemma}\label{equation_r_epsi}
	Assume $A$ satisfies \eqref{ellipticc}-\eqref{periodicc}, then the discrepancy function $w_{\eps,\delta}$ defined by \eqref{eq:discrepancy} satisfies
	\begin{equation}\label{eq:equation_discrepancy}
		\big\{\mathcal{L}_{\varepsilon,\delta} (w_{\varepsilon,\delta}) \big\}^{\alpha}=  - \frac{\partial}{\partial x_i} \big( \Lambda_{\delta}(x/\varepsilon) P^{\alpha}_{\varepsilon,\delta,i} \big)- \frac{\partial}{\partial x_i}Q_{\varepsilon,\delta,i}^{\alpha} ,
	\end{equation}
	where $P_{\varepsilon,\delta} = ( P_{\varepsilon,\delta,i}^{\alpha} )$ is defined by
	\begin{equation}
	\label{eq:Ped}
		P_{\varepsilon,\delta,i}^{\alpha} =   a_{ij}^{\alpha \beta} \Big(\frac{x}{\varepsilon}\Big)\Big[S_{\varepsilon} \Big( \eta_{\varepsilon} \frac{\partial \widehat{u}_{\delta}^{\beta}}{\partial x_j} \Big)   - \frac{\partial \widehat{u}_{\delta}^{\beta} }{\partial x_j}   - \varepsilon   \chi_{\delta,k}^{\gamma \beta}  \Big(\frac{x}{\varepsilon}\Big)\frac{\partial }{\partial x_j}S_{\varepsilon} \Big(\eta_{\varepsilon} \frac{\partial \widehat{u}_{\delta}^{\gamma} }{\partial x_k} \Big)  \Big] 
	\end{equation}
	and $Q_{\varepsilon,\delta} = (Q_{\varepsilon,\delta,i}^{\alpha} )$ is defined by
	\begin{equation}
	\label{eq:Qed}
		Q_{\varepsilon,\delta,i}^{\alpha} =\varepsilon  \Psi_{\delta,kij}^{\alpha \beta} \Big(\frac{x}{\varepsilon} \Big) \frac{\partial}{\partial x_k}  S_{\varepsilon} \Big( \eta_{\varepsilon} \frac{\partial \widehat{u}_{\delta}^{\beta}}{\partial x_j} \Big) -  \widehat{a}_{\delta,ij}^{\alpha \beta} \Big[ S_{\varepsilon} \Big(\eta_{\varepsilon} \frac{\partial \widehat{u}_{\delta}^{\beta} }{\partial x_j} \Big)  -\frac{\partial \widehat{u}^{\beta}_{\delta} }{\partial x_j} \Big]  .
	\end{equation}
	Further assume $(\mathbf{H2})$, then they satisfy the following estimates
	\begin{equation}\label{01first}
		\| \nabla w_{\varepsilon,\delta} \|_{L^2(\Omega_{\varepsilon})}  \leq C\big(\| P_{\varepsilon,\delta} \|_{L^2(\Omega)} + \|Q_{\varepsilon,\delta} \|_{L^2(\Omega) }  \big) ,\qquad \mathrm{for}\ 0<\delta<1 ,
	\end{equation}
	and
	\begin{equation}\label{1inftyfirst}
		\| \nabla w_{\varepsilon,\delta} \|_{L^2(D_{\varepsilon})}  \leq C\big(\| P_{\varepsilon,\delta} \|_{L^2(\Omega)} + \delta^{-\frac12}\|Q_{\varepsilon,\delta} \|_{L^2(\Omega) }  \big)  ,\qquad \mathrm{for} \ 1<\delta<\infty.
	\end{equation}
\end{lemma}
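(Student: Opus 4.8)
The plan is to prove the identity \eqref{eq:equation_discrepancy} by the usual two-scale computation, carrying the weight $\Lambda_\delta$ throughout, and then to derive \eqref{01first}--\eqref{1inftyfirst} by testing the resulting equation against carefully chosen functions, the subtle case $0<\delta<1$ relying on the extension operator $\mathcal P_\eps$ of Lemma~\ref{lem:extension}. For the identity, write $g_j^\beta:=S_\eps(\eta_\eps\partial_j\widehat u_\delta^\beta)$ and use $\mathcal L_{\eps,\delta}(u_{\eps,\delta})=f$ together with $\{\widehat{\mathcal L}_\delta(\widehat u_\delta)\}^\alpha=-\partial_i(\widehat a_{\delta,ij}^{\alpha\beta}\partial_j\widehat u_\delta^\beta)=f^\alpha$ (from \eqref{homogenized problem}) to get $\{\mathcal L_{\eps,\delta}(w_{\eps,\delta})\}^\alpha=f^\alpha+\partial_i\big[\Lambda_\delta(x/\eps)a_{ij}^{\alpha\beta}(x/\eps)\partial_j(\widehat u_\delta^\beta+(\text{corrector})^\beta)\big]$; then expand the derivative of the corrector by the product and chain rules. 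The terms in which the derivative lands on $\chi_\delta(x/\eps)$ combine, by the cell problem \eqref{eq:cellproblem} and the definition \eqref{def:Fdelta} of $F_\delta$, into $\partial_i\big[(F_{\delta,ij}^{\alpha\beta}(x/\eps)+\widehat a_{\delta,ij}^{\alpha\beta})g_j^\beta\big]$, which together with $f^\alpha$ produces $-\partial_i\big[\widehat a_{\delta,ij}^{\alpha\beta}(g_j^\beta-\partial_j\widehat u_\delta^\beta)\big]+\partial_i\big[F_{\delta,ij}^{\alpha\beta}(x/\eps)g_j^\beta\big]$, while the remaining terms, which still carry the weight $\Lambda_\delta A(x/\eps)$ (they come from the mismatch $\partial_j\widehat u_\delta^\beta-g_j^\beta$ and from the $\eps\chi_\delta(x/\eps)\nabla g$ piece), assemble precisely into $-\partial_i(\Lambda_\delta(x/\eps)P_{\eps,\delta,i}^\alpha)$ with $P$ as in \eqref{eq:Ped}. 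It then remains to treat $\partial_i[F_{\delta,ij}^{\alpha\beta}(x/\eps)g_j^\beta]$ with the flux corrector: from $\partial_{y_k}\Psi_{\delta,kij}^{\alpha\beta}=F_{\delta,ij}^{\alpha\beta}$ and the skew-symmetry \eqref{eq:FtoPsi} one has $F_{\delta,ij}^{\alpha\beta}(x/\eps)=\eps\,\partial_{x_k}[\Psi_{\delta,kij}^{\alpha\beta}(x/\eps)]$, and the ensuing second-order term $\eps\,\Psi_{\delta,kij}^{\alpha\beta}(x/\eps)\partial_i\partial_k g_j^\beta$ vanishes on summing over $(i,k)$ by skew-symmetry, leaving exactly $-\eps\,\partial_i[\Psi_{\delta,kij}^{\alpha\beta}(x/\eps)\partial_k g_j^\beta]$; collecting everything gives \eqref{eq:equation_discrepancy} with $Q$ as in \eqref{eq:Qed}. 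The only departure from the classical expansion (see, e.g., \cite{shen_periodic_2018}) is that $\Lambda_\delta$ sits inside $P$ but not inside $Q$, the latter being built from the constant tensor $\widehat a_\delta$ and from $\Psi_\delta$.

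For the estimates, note first that $w_{\eps,\delta}\in H^1_0(\Omega)$: on $\partial\Omega$ one has $u_{\eps,\delta}=\widehat u_\delta=g$, and the corrector $\eps\chi_\delta(x/\eps)S_\eps(\eta_\eps\nabla\widehat u_\delta)$ vanishes in a neighborhood of $\partial\Omega$ because $\mathrm{supp}\,\eta_\eps\subset\Omega\setminus\mathcal O_{3\eps}$ and the kernel of $S_\eps$ is supported in $B(0,\eps/2)$, so $S_\eps(\eta_\eps\nabla\widehat u_\delta)\equiv 0$ on $\mathcal O_\eps$. For $1<\delta<\infty$ I would test the weak form of \eqref{eq:equation_discrepancy} against $w_{\eps,\delta}$ itself. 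Ellipticity of $\Lambda_\delta A$ gives $\mu\|\nabla w_{\eps,\delta}\|_{L^2(\Omega_\eps)}^2+\mu\delta\|\nabla w_{\eps,\delta}\|_{L^2(D_\eps)}^2$ on the left, while on the right $\int_\Omega\Lambda_\delta P_{\eps,\delta}\cdot\nabla w_{\eps,\delta}$ splits into a contribution over $\Omega_\eps$ plus $\delta$ times one over $D_\eps$; by Cauchy--Schwarz and Young's inequality the large factor $\delta$ in front of $\|\nabla w_{\eps,\delta}\|_{L^2(D_\eps)}^2$ absorbs the $D_\eps$ part and leaves a clean $\|P_{\eps,\delta}\|_{L^2(\Omega)}^2$, whereas $\int_{D_\eps}Q_{\eps,\delta}\cdot\nabla w_{\eps,\delta}$ leaves a $\delta^{-1}\|Q_{\eps,\delta}\|_{L^2(\Omega)}^2$, which yields the factor $\delta^{-1/2}$ in \eqref{1inftyfirst}; the $\Omega_\eps$-energy, which is not claimed to be controlled in this regime, plays no role here.

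The delicate case is $0<\delta<1$, where $\Lambda_\delta$ degenerates inside $D_\eps$. Put $\widetilde w:=\mathcal P_\eps(w_{\eps,\delta}|_{\Omega_\eps})$; since $w_{\eps,\delta}|_{\Omega_\eps}$ has zero trace on $\partial\Omega$ and the inclusions are well separated from $\partial\Omega$, we have $\widetilde w\in H^1_0(\Omega)$, $\widetilde w=w_{\eps,\delta}$ on $\Omega_\eps$, and $\|\nabla\widetilde w\|_{L^2(\Omega)}\le C\|\nabla w_{\eps,\delta}\|_{L^2(\Omega_\eps)}$. Testing \eqref{eq:equation_discrepancy} against $\widetilde w$ gives $\|\nabla w_{\eps,\delta}\|_{L^2(\Omega_\eps)}\le C\delta\|\nabla w_{\eps,\delta}\|_{L^2(D_\eps)}+C(\|P_{\eps,\delta}\|_{L^2(\Omega)}+\|Q_{\eps,\delta}\|_{L^2(\Omega)})$, and testing against $w_{\eps,\delta}-\widetilde w$, which is supported in $\overline{D_\eps}$, and using $\delta<1$ to convert each $\delta$-weighted right-hand side contribution into $\|P_{\eps,\delta}\|_{L^2(\Omega)}^2$ or $\|Q_{\eps,\delta}\|_{L^2(\Omega)}^2$, gives $\delta\|\nabla w_{\eps,\delta}\|_{L^2(D_\eps)}\le C\delta^{1/2}\|\nabla w_{\eps,\delta}\|_{L^2(\Omega_\eps)}+C(\|P_{\eps,\delta}\|_{L^2(\Omega)}+\|Q_{\eps,\delta}\|_{L^2(\Omega)})$. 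Substituting the second inequality into the first, the coefficient $C\delta^{1/2}$ of $\|\nabla w_{\eps,\delta}\|_{L^2(\Omega_\eps)}$ falls below $1/2$ as soon as $\delta$ is smaller than a fixed threshold $\delta_\ast$, and the estimate closes to give \eqref{01first}; for $\delta\in[\delta_\ast,1)$ the weight $\Lambda_\delta\ge\delta_\ast$ is bounded below, so testing directly against $w_{\eps,\delta}$ and using $\Lambda_\delta\le 1$ gives \eqref{01first} with a constant depending only on $\delta_\ast$. I expect the main obstacle to be exactly this $\delta\to 0^+$ uniformity — the degeneracy of $\Lambda_\delta$ inside the inclusions — and it is overcome by the extension operator $\mathcal P_\eps$ together with the structural fact that in the cross-terms the dangerous quantity $\|\nabla w_{\eps,\delta}\|_{L^2(D_\eps)}$ always occurs multiplied by $\delta$.
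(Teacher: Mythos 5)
Your identity derivation and the treatment of the case $1<\delta<\infty$ follow essentially the same path as the paper: the two-scale weak form with $w_{\eps,\delta}$ as test function, ellipticity to extract $\delta^{1/2}\|\nabla w_{\eps,\delta}\|_{L^2(D_\eps)}$, and the observation that the $\Lambda_\delta$-weighted $P$-term carries a compensating $\delta$ factor inside $D_\eps$ while the unweighted $Q$-term pays a $\delta^{-1/2}$ penalty. Both proofs are correct and, for this part, interchangeable.

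For $0<\delta<1$ you take a genuinely different route. You test against the two functions $\widetilde w=\mathcal P_\eps(w_{\eps,\delta}|_{\Omega_\eps})$ and $w_{\eps,\delta}-\widetilde w$ (supported in $\overline{D_\eps}$), derive a coupled system of inequalities for $\|\nabla w_{\eps,\delta}\|_{L^2(\Omega_\eps)}$ and $\delta\|\nabla w_{\eps,\delta}\|_{L^2(D_\eps)}$, substitute one into the other, and close by an absorption argument that requires $\delta<\delta_\ast$ for a universal threshold, with the remaining interval $[\delta_\ast,1)$ handled by the trivial non-degenerate test. This works: both inequalities you state are attainable by Cauchy--Schwarz and Young, and the substitution does close. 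The paper's argument is shorter and avoids the threshold. It first tests against $w_{\eps,\delta}$ itself, which by degenerate ellipticity gives only the coarse bound $\delta\|\nabla w_{\eps,\delta}\|_{L^2(\Omega)}\le C(\|P\|+\|Q\|)$; it then tests against $\mathcal P_\eps w_{\eps,\delta}$ and writes the cross-term $\delta\int_{D_\eps}A\nabla w_{\eps,\delta}\cdot\nabla\mathcal P_\eps w_{\eps,\delta}$ in a way that exhibits precisely the product $\delta\|\nabla w_{\eps,\delta}\|_{L^2(\Omega)}\cdot\|\nabla w_{\eps,\delta}\|_{L^2(\Omega_\eps)}$, so the coarse bound kills the $\delta^{-1}$ degeneracy in one stroke and the conclusion holds uniformly on the full interval $(0,1)$ with no case split. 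Your version reveals the same structural mechanism (the dangerous $D_\eps$-energy always appearing with a $\delta$ in front), but pays for it with the $\delta_\ast$ bookkeeping; the paper's ordering of the two tests (rough global bound first, extension second) eliminates that cost.
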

\begin{proof} \emph{Step 1: Verification of \eqref{eq:equation_discrepancy} in $H^{-1}(\Omega;\R^m)$}. 
	Let $\psi \in H_0^1(\Omega;\R^m)$, using
	\begin{equation*}
		\int_{\Omega} \Lambda_{\delta}\Big(\frac{x}{\varepsilon} \Big)a_{ij}^{\alpha \beta}\Big(\frac{x}{\varepsilon} \Big) \frac{\partial u^{\beta}_{\varepsilon,\delta} }{\partial x_j} \frac{\partial \psi^{\alpha}}{\partial x_i} \,dx = \int_{\Omega} \widehat{a}_{\delta,ij}^{\alpha \beta}  \frac{\partial \widehat{u}^{\beta}_{\delta} }{\partial x_j} \frac{\partial \psi^{\alpha}}{\partial x_i} \,dx  ,
	\end{equation*}
	and the definition of $w_{\eps,\delta}$, by direct computation we obtain
	\begin{equation}\label{eq:weak_dis}
		\begin{aligned}
			\int_{\Omega} & \Lambda_{\delta}\Big(\frac{x}{\varepsilon} \Big) a_{ij}^{\alpha \beta}\Big(\frac{x}{\varepsilon} \Big) \frac{\partial w^{\beta}_{\varepsilon,\delta} }{\partial x_j} \frac{\partial \psi^{\alpha}}{\partial x_i} \,dx \\
			&=  \int_{\Omega} \Big(  \widehat{a}_{\delta,ij}^{\alpha \beta}  - \Lambda_{\delta}\Big(\frac{x}{\varepsilon} \Big) a_{ij}^{\alpha \beta}\Big(\frac{x}{\varepsilon} \Big) \Big)
			\Big[	\frac{\partial \widehat{u}^{\beta}_{\delta} }{\partial x_j}-S_{\varepsilon} \Big(\eta_{\varepsilon} \frac{\partial \widehat{u}_{\delta}^{\beta} }{\partial x_j} \Big)  \Big] \frac{\partial \psi^{\alpha}}{\partial x_i} \,dx \\
			& \underbrace{ - \int_{\Omega} \Big[  \Lambda_{\delta}\Big(\frac{x}{\varepsilon} \Big) a_{ik}^{\alpha \gamma} \Big(\frac{x}{\varepsilon} \Big) + \Lambda_{\delta}\Big(\frac{x}{\varepsilon} \Big)a_{ij}^{\alpha \beta} \Big(\frac{x}{\varepsilon} \Big)\frac{\partial \chi_{\delta,k}^{\gamma \beta}}{\partial x_j} \Big(\frac{x}{\varepsilon}\Big)- \widehat{a}_{\delta,ik}^{\alpha \gamma}\Big] S_{\varepsilon} \Big(\eta_{\varepsilon} \frac{\partial \widehat{u}_{\delta}^{\gamma} }{\partial x_k} \Big)  \frac{\partial \psi^{\alpha}}{\partial x_i} \,dx}_{I}  \\
			&  -\varepsilon \int_{\Omega} \Lambda_{\delta}\Big(\frac{x}{\varepsilon} \Big)a_{ij}^{\alpha \beta} \Big(\frac{x}{\varepsilon} \Big) \chi_{\delta,k}^{\gamma \beta}  \Big(\frac{x}{\varepsilon}\Big)\frac{\partial }{\partial x_j}S_{\varepsilon} \Big(\eta_{\varepsilon} \frac{\partial \widehat{u}_{\delta}^{\gamma} }{\partial x_k} \Big)  \frac{\partial \psi^{\alpha}}{\partial x_i} \,dx .
		\end{aligned}
	\end{equation}
	For the terms grouped as $I$, we recall the definition of flux corrector $F$ in \eqref{def:Fdelta} and use \eqref{eq:FtoPsi} to compute
	\begin{equation}\label{eq:I2}
		\begin{aligned}
			I & = - \int_{\Omega} F^{\alpha \gamma}_{\delta,ik}\Big(\frac{x}{\varepsilon}\Big) S_{\varepsilon} \Big(\eta_{\varepsilon} \frac{\partial \widehat{u}_{\delta}^{\gamma} }{\partial x_k} \Big)  \frac{\partial \psi^{\alpha}}{\partial x_i} \,dx  \\
			& = - \varepsilon \int_{\Omega} \frac{\partial}{\partial x_k} \Big( \Psi_{\delta,kij}^{\alpha \beta}\Big(\frac{x}{\varepsilon} \Big) \Big) S_{\varepsilon} \Big( \eta_{\varepsilon} \frac{\partial \widehat{u}_{\delta}^{\beta}}{\partial x_j} \Big) \frac{\partial \psi^{\alpha}}{\partial x_i} \,dx\\
			& = \varepsilon \int_{\Omega} \Psi_{\delta,kij}^{\alpha \beta} \Big(\frac{x}{\varepsilon} \Big) \frac{\partial}{\partial x_k}  S_{\varepsilon} \Big( \eta_{\varepsilon} \frac{\partial \widehat{u}_{\delta}^{\beta}}{\partial x_j} \Big)\frac{\partial \psi^{\alpha}}{\partial x_i}\,dx .
		\end{aligned}
	\end{equation}
	Then \eqref{eq:equation_discrepancy} immediately follows from the computations above and the definitions of $P_{\eps,\delta},Q_{\eps,\delta}$ in \eqref{eq:Ped} and \eqref{eq:Qed}.
	
	\medskip

	\emph{Step 2: Energy estimates for $w_{\eps,\delta}$}. For any $\psi \in H_0^1(\Omega)$, we have
	\begin{equation}\label{weakdelta01}
		\int_{\Omega} \Lambda_{\delta}\Big(\frac{x}{\varepsilon} \Big) a_{ij}^{\alpha \beta}\Big(\frac{x}{\varepsilon} \Big) \frac{\partial w^{\beta}_{\varepsilon,\delta} }{\partial x_j} \frac{\partial \psi^{\alpha}}{\partial x_i} \,dx =  \int_{\Omega} \Lambda_{\delta}\Big(\frac{x}{\varepsilon} \Big) P^{\alpha}_{\varepsilon,\delta,i} \frac{\partial \psi^{\alpha}}{\partial x_i} \,dx +  \int_{\Omega}  Q^{\alpha}_{\varepsilon,\delta,i} \frac{\partial \psi^{\alpha}}{\partial x_i} \,dx .
	\end{equation}
	
	\emph{Case 1: $0<\delta <1$}. Let $\psi = w_{\varepsilon,\delta}$ in the formulation above, then by the ellipticity of $(a^{\alpha\beta}_{ij})$ we get
	\begin{equation}\label{weak011}
		\delta \int_{\Omega} | \nabla w_{\varepsilon,\delta} |^2 \,dx \leq C\big(\| P_{\varepsilon,\delta} \|_{L^2(\Omega)} + \|Q_{\varepsilon,\delta} \|_{L^2(\Omega) } \big) \| \nabla w_{\varepsilon,\delta} \|_{L^2(\Omega)}, 
	\end{equation}
	and hence, for some universal constant $C<\infty$ independent of $\eps,\delta$, 
	$$
	\| \nabla w_{\varepsilon,\delta} \|_{L^2(\Omega)} \leq C\delta^{-1}  \big(\| P_{\varepsilon,\delta} \|_{L^2(\Omega)} + \|Q_{\varepsilon,\delta} \|_{L^2(\Omega) } \big).
	$$ 
	
	Let $\mathcal{P}_{\varepsilon}$ be the extension operator given in Lemma \ref{lem:extension}, so that $\psi = \mathcal{P}_{\varepsilon} w_{\varepsilon,\delta}$ is an extension of $w_{\varepsilon,\delta}\rvert_{\Omega_\eps}$ to inside the obstacles. Use this extended function as $\psi$ in \eqref{weakdelta01} to get 
	\begin{equation*}
		\begin{aligned}
			\mu \int_{\Omega_{\varepsilon}}& |\nabla w_{\varepsilon,\delta}|^2\,dx  \le \int_{\Omega_\eps} A(x/\eps)\nabla w_{\eps,\delta} \cdot \nabla \mathcal{P}_\eps(w_{\eps,\delta}) \\
			&=
			\int_{\Omega} \Lambda_{\delta} \left(\frac{x}{\varepsilon}\right) A\left(\frac{x}{\varepsilon}\right)\nabla w_{\varepsilon,\delta} \cdot \nabla  \mathcal{P}_{\varepsilon} w_{\varepsilon,\delta}\,dx - \delta \int_{D_{\varepsilon}}  A\left(\frac{x}{\varepsilon}\right)\nabla w_{\varepsilon,\delta} \cdot \nabla  \mathcal{P}_{\varepsilon} w_{\varepsilon,\delta}\,dx \\
			&\leq \left|\int_\Omega \left(\Lambda_{\delta}(x/\eps)P^\alpha_{\eps,\delta} + Q^\alpha_{\eps,\delta}\right) \cdot \nabla \mathcal{P}_{\eps}(w_{\eps,\delta}^\alpha) \right| +\delta \left|\int_{D_{\varepsilon}}  A\left(\frac{x}{\varepsilon}\right)\nabla w_{\varepsilon,\delta} \cdot \nabla  \mathcal{P}_{\varepsilon} w_{\varepsilon,\delta}\,dx\right| \\
			& \leq C\big(\| P_{\varepsilon,\delta} \|_{L^2(\Omega)} + \|Q_{\varepsilon,\delta} \|_{L^2(\Omega) }  \big) \| \nabla w_{\varepsilon,\delta} \|_{L^2(\Omega_{\varepsilon})} + C \delta \| \nabla w_{\varepsilon,\delta} \|_{L^2(\Omega)} \| \nabla w_{\varepsilon,\delta} \|_{L^2(\Omega_{\varepsilon})} \\
			& \leq C\big(\| P_{\varepsilon,\delta} \|_{L^2(\Omega)} + \|Q_{\varepsilon,\delta} \|_{L^2(\Omega) }  \big) \| \nabla w_{\varepsilon,\delta} \|_{L^2(\Omega_{\varepsilon})}.
		\end{aligned}
	\end{equation*}
	Note the second inequality follows from \eqref{weakdelta01} and the boundedness of $\mathcal{P}_{\varepsilon}$, and also $\delta \le 1$. The last inequality follows from \eqref{weak011}. Estimate \eqref{01first} follows, for $\delta \in (0,1]$.
	\smallskip

	\emph{Case 2: $1<\delta <\infty$}. Let $\psi = w_{\varepsilon,\delta}$ in \eqref{weakdelta01}, by Cauchy inequality we get
	\begin{equation*}
		\begin{aligned}
			\big(\| \nabla &w_{\varepsilon,\delta} \|_{L^2(\Omega_{\varepsilon})}  + \delta^{1/2 } \| \nabla w_{\varepsilon,\delta} \|_{L^2(D_{\varepsilon})}\big)^2 \\
			& \leq C \| P_{\varepsilon,\delta} \|_{L^2(\Omega_{\varepsilon})} \| \nabla w_{\varepsilon,\delta} \|_{L^2(\Omega_{\varepsilon})}  +C\delta \| P_{\varepsilon,\delta} \|_{L^2(D_{\varepsilon})}  \| \nabla w_{\varepsilon,\delta} \|_{L^2(D_{\varepsilon})} \\
			&\ \ \ +C \|Q_{\varepsilon,\delta}\|_{L^2(\Omega)} \| \nabla w_{\varepsilon,\delta} \|_{L^2(\Omega)}  \\
			& \leq C\big(  \delta^{1/2}\|P_{\varepsilon,\delta}\|_{L^2(\Omega)} +\|Q_{\varepsilon,\delta}\|_{L^2(\Omega)}  \big)\big(\| \nabla w_{\varepsilon,\delta} \|_{L^2(\Omega_{\varepsilon})}  + \delta^{1/2 } \| \nabla w_{\varepsilon,\delta} \|_{L^2(D_{\varepsilon})}\big),
		\end{aligned}
	\end{equation*}
	so we get
	$$
	\| \nabla w_{\varepsilon,\delta} \|_{L^2(\Omega_{\varepsilon})}  + \delta^{1/2 } \| \nabla w_{\varepsilon,\delta} \|_{L^2(D_{\varepsilon})} \leq C\big(  \delta^{1/2}\|P_{\varepsilon,\delta}\|_{L^2(\Omega)} +\|Q_{\varepsilon,\delta}\|_{L^2(\Omega)}  \big),
	$$
	which implies \eqref{1inftyfirst}, for $\delta \in (1,\infty)$. 
\end{proof}

\subsection{A decomposition of the equations for the discrepancy function.} We rewrite the problem \eqref{eq:equation_discrepancy} for the discrepancy function $w_{\eps,\delta}$ as a transmission problem:
\begin{equation}\left\{
	\begin{aligned}
		&\big\{ \Lambda_{\delta}(x/\varepsilon) \mathcal{L}_{\varepsilon,1} (w_{\varepsilon,\delta}) \big\}^{\alpha}= -\Lambda_{\delta}(x/\varepsilon) \frac{\partial P^{\alpha}_{\varepsilon,\delta,i} }{\partial x_i}  - \frac{\partial Q_{\varepsilon,\delta,i}^{\alpha}}{\partial x_i} & \mathrm{in} \ \Omega \setminus \partial D_{\varepsilon}, \\
		& \Big\{ \delta \frac{\partial w_{\varepsilon,\delta}}{\partial \nu}\Big|_- - \frac{\partial w_{\varepsilon,\delta}}{\partial \nu}\Big|_+  \Big\}^{\alpha} =  \delta P^{\alpha}_{\varepsilon,\delta,i} n_i \big|_- - P^{\alpha}_{\varepsilon,\delta,i} n_i \big|_+  & \mathrm{on} \ \partial D_{\varepsilon} , \\
		& w_{\varepsilon,\delta} = 0 & \mathrm{on}\ \partial \Omega.
	\end{aligned}\right.
\end{equation}
Let $w_{\varepsilon,\delta}^1 \in H^1_0(\Omega;\R^m)$ be the solution of
\begin{equation}\label{w1}
	\left\{
	\begin{aligned}
		& \big\{  \mathcal{L}_{\varepsilon,1} (w^1_{\varepsilon,\delta}) \big\}^{\alpha}= - \frac{\partial P^{\alpha}_{\varepsilon,\delta,i} }{\partial x_i}   - \frac{\partial Q^{\alpha}_{\varepsilon,\delta,i} }{\partial x_i}   & \mathrm{in} \ \Omega, \\
		& w^1_{\varepsilon,\delta} = 0 & \mathrm{on} \ \partial \Omega.
	\end{aligned}\right.
\end{equation}
Let $w_{\varepsilon,\delta}^2 \in H^1(D_\eps;\R^m)$ be the solution of
\begin{equation}\label{w2}
	\left\{
	\begin{aligned}
		& \big\{  \mathcal{L}_{\varepsilon,1} (w^2_{\varepsilon,\delta}) \big\}^{\alpha}= - \frac{\partial P^{\alpha}_{\varepsilon,\delta,i} }{\partial x_i}   - \frac{1}{\delta} \frac{\partial Q^{\alpha}_{\varepsilon,\delta,i} }{\partial x_i}   & \mathrm{in} \ D_{\varepsilon}, \\
		& w^2_{\varepsilon,\delta} = w^1_{\varepsilon,\delta}  & \mathrm{on} \ \partial D_{\varepsilon}.
	\end{aligned}\right.
\end{equation}
Note that $w_{\varepsilon,\delta}^2$ is picewisely defined on each component of $D_\eps$. 

\begin{lemma}
	Let $w^1_{\varepsilon,\delta}$ and $w^2_{\varepsilon,\delta}$ be defined by \eqref{w1} and \eqref{w2}, respectively. Let $u=\mathcal{L}_{D_{\varepsilon}}^{-1}(f)$ be the unique solution (extended by zero outside $D_\eps$) of
	\begin{equation}\label{auxioperator}
		\left\{
		\begin{aligned}
			& \mathcal{L}_{\eps,1}(u)= f & \mathrm{in} \ D_{\varepsilon} , \\
			& u=0& \mathrm{on}\ \Omega \setminus D_{\varepsilon},
		\end{aligned}\right.
	\end{equation}
	Let $R_{\varepsilon,\delta} = (R_{\varepsilon,\delta}^{\alpha})$ be defined by
	\begin{equation}
	\label{eq:Rde}
		R_{\varepsilon,\delta}^{\alpha} = \Big[ a_{ij}^{\alpha \beta}\Big( \frac{x}{\varepsilon} \Big) +a_{ik}^{\alpha \gamma}\Big( \frac{x}{\varepsilon} \Big)  \frac{\partial }{\partial x_k} \chi_{\delta,j}^{\gamma \beta} \Big( \frac{x}{\varepsilon} \Big)  \Big] \frac{\partial}{\partial x_i}  S_{\varepsilon} \Big( \eta_{\varepsilon} \frac{\partial \widehat{u}_{\delta}^{\beta}}{\partial x_j} \Big). 
	\end{equation}
Then there exits a universal constant $C$ that is independent of $\eps$ and $\delta$ so that
	\begin{eqnarray}
	\label{estiw1}
		&&\| \nabla w^1_{\varepsilon,\delta} \|_{L^2(\Omega)} \leq C\big( \| P_{\varepsilon,\delta} \|_{L^2(\Omega) } +  \| Q_{\varepsilon,\delta} \|_{L^2(\Omega) }  \big),\\
	\label{estiw2modi}
		&&\big\| \nabla \big(w^2_{\varepsilon,\delta} - \delta^{-1} \mathcal{L}_{D_{\varepsilon}}^{-1} (f) \big) \big\|_{L^2(D_{\varepsilon})} \leq C\big( \| P_{\varepsilon,\delta} \big\|_{L^2(\Omega) } +  \| Q_{\varepsilon,\delta} \|_{L^2(\Omega) }   + \varepsilon \|R_{\varepsilon,\delta} \|_{L^2(\Omega)} \big).
	\end{eqnarray}
\end{lemma}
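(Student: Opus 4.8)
The plan is to establish the two estimates separately: \eqref{estiw1} is a one-line energy estimate, while \eqref{estiw2modi} hinges on an exact cancellation inside $D_\eps$ between the source term $f$ and the auxiliary quantities $R_{\eps,\delta}$ and $Q_{\eps,\delta}$. For \eqref{estiw1}, note that $w^1_{\eps,\delta}\in H^1_0(\Omega;\R^m)$ solves \eqref{w1} and that the coefficient of $\mathcal{L}_{\eps,1}$ is the contrast-free, hence uniformly elliptic, matrix $A(x/\eps)$; testing \eqref{w1} against $w^1_{\eps,\delta}$ and using \eqref{ellipticc} together with Cauchy--Schwarz gives $\mu\|\nabla w^1_{\eps,\delta}\|_{L^2(\Omega)}^2\le(\|P_{\eps,\delta}\|_{L^2(\Omega)}+\|Q_{\eps,\delta}\|_{L^2(\Omega)})\|\nabla w^1_{\eps,\delta}\|_{L^2(\Omega)}$, which is \eqref{estiw1} with $C=\mu^{-1}$.

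For \eqref{estiw2modi}, set $\Phi_{\eps,\delta}:=\widehat u_\delta+\eps\chi_\delta(x/\eps)S_\eps(\eta_\eps\nabla\widehat u_\delta)$, so that $w_{\eps,\delta}=u_{\eps,\delta}-\Phi_{\eps,\delta}$ by \eqref{eq:discrepancy}, and introduce $\widetilde w:=w^2_{\eps,\delta}-\delta^{-1}\mathcal{L}^{-1}_{D_\eps}(f)-w^1_{\eps,\delta}$. Since $\mathcal{L}^{-1}_{D_\eps}(f)$ vanishes on $\partial D_\eps$ and $w^2_{\eps,\delta}=w^1_{\eps,\delta}$ there by \eqref{w2}, the function $\widetilde w$ lies in $H^1_0$ of each connected component of $D_\eps$. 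Subtracting \eqref{w2}, the equation defining $\mathcal{L}^{-1}_{D_\eps}(f)$, and \eqref{w1} restricted to $D_\eps$ yields
\[
\mathcal{L}_{\eps,1}(\widetilde w)=(1-\delta^{-1})\,\mathrm{div}\,Q_{\eps,\delta}-\delta^{-1}f\qquad\text{in }D_\eps .
\]

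The heart of the argument is to remove the factors of $\delta$ from the right-hand side using two identities valid in $D_\eps$. First, combining $\widehat{\mathcal L}_\delta(\widehat u_\delta)=f$ (the homogenized equation) with the formula for $\mathcal{L}_{\eps,\delta}(w_{\eps,\delta})$ in Lemma~\ref{equation_r_epsi}, and using that $\Lambda_\delta(x/\eps)\equiv\delta$ and $\mathcal{L}_{\eps,\delta}=\delta\mathcal{L}_{\eps,1}$ on $D_\eps$, one obtains $\mathcal{L}_{\eps,1}(\Phi_{\eps,\delta})=\delta^{-1}f+\mathrm{div}\,P_{\eps,\delta}+\delta^{-1}\mathrm{div}\,Q_{\eps,\delta}$ in $D_\eps$. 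Second, expanding $A(x/\eps)\nabla\Phi_{\eps,\delta}$ directly, the corrected coefficient $a^{\alpha\beta}_{ij}(y)+a^{\alpha\gamma}_{ik}(y)\partial_k\chi^{\gamma\beta}_{\delta,j}(y)$ equals $\delta^{-1}(F^{\alpha\beta}_{\delta,ij}(y)+\widehat a^{\alpha\beta}_{\delta,ij})$ for $y\in\omega$ by \eqref{def:Fdelta}; since $\partial_iF^{\alpha\beta}_{\delta,ij}=0$ by \eqref{requireflux} and $\widehat A_\delta$ is a constant tensor, this corrected coefficient is divergence-free inside $\omega$, so its divergence against $S_\eps(\eta_\eps\nabla\widehat u_\delta)$ reproduces exactly $R_{\eps,\delta}$ as in \eqref{eq:Rde} up to a term that, together with the smoothing and corrector errors, assembles into $\mathrm{div}\,P_{\eps,\delta}$; thus $\mathcal{L}_{\eps,1}(\Phi_{\eps,\delta})=-R_{\eps,\delta}+\mathrm{div}\,P_{\eps,\delta}$ in $D_\eps$. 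Comparing the two expressions gives $\delta^{-1}(f+\mathrm{div}\,Q_{\eps,\delta})=-R_{\eps,\delta}$ in $D_\eps$, and substituting into the equation for $\widetilde w$ produces the $\delta$-free identity $\mathcal{L}_{\eps,1}(\widetilde w)=\mathrm{div}\,Q_{\eps,\delta}+R_{\eps,\delta}$ in $D_\eps$.

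Finally I would test this against $\widetilde w\in H^1_0(D_\eps)$; using ellipticity \eqref{ellipticc} and the Poincar\'e inequality $\|\widetilde w\|_{L^2(D_\eps)}\le C\eps\|\nabla\widetilde w\|_{L^2(D_\eps)}$ (each component of $D_\eps$ has diameter $O(\eps)$ and $\widetilde w$ vanishes on $\partial D_\eps$, with constant depending only on the shape of $\omega$), I get $\|\nabla\widetilde w\|_{L^2(D_\eps)}\le C(\|Q_{\eps,\delta}\|_{L^2(D_\eps)}+\eps\|R_{\eps,\delta}\|_{L^2(D_\eps)})$; adding $\|\nabla w^1_{\eps,\delta}\|_{L^2(D_\eps)}\le\|\nabla w^1_{\eps,\delta}\|_{L^2(\Omega)}$ and invoking \eqref{estiw1} gives \eqref{estiw2modi}. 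The step I expect to be the main obstacle is the second identity: the two-scale expansion of $\mathcal{L}_{\eps,1}(\Phi_{\eps,\delta})$ inside the inclusions requires careful index bookkeeping, and one must verify that it is precisely the divergence-freeness of $F_\delta$ (equivalently, of the corrected coefficient inside $\omega$, where $\widehat A_\delta$ is constant) that collapses the a priori $\mathrm{div}$ of the corrected coefficient into the clean term $R_{\eps,\delta}$. All constants depend only on $\mu$ and the geometry of $\omega$, hence are independent of $\eps$ and $\delta$.
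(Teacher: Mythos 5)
Your proof is correct and follows the same overall structure as the paper's: the estimate \eqref{estiw1} is the Lax--Milgram energy estimate for $w^1_{\eps,\delta}$ using the uniform ellipticity of $\mathcal{L}_{\eps,1}$, and \eqref{estiw2modi} hinges on the identity $-\mathrm{div}\,Q_{\eps,\delta}=\delta R_{\eps,\delta}+f$ in $D_\eps$, combined with Poincar\'e's inequality at scale $\eps$ on each component of $D_\eps$ and a triangle inequality against $\nabla w^1_{\eps,\delta}$. The one genuine difference is the route to the key identity. The paper obtains it by a direct computation of $\partial_i Q_{\eps,\delta,i}$ from the definition \eqref{eq:Qed}: one expands the derivative, uses the skew-symmetry $\Psi_{\delta,kij}=-\Psi_{\delta,ikj}$ to kill the second-order term, applies $\partial_k\Psi_{\delta,kij}=F_{\delta,ij}$, invokes the homogenized equation $\widehat a_{\delta,ij}\partial_i\partial_j\widehat u_\delta=-f$, and finally uses $F_\delta+\widehat A_\delta=\delta\big(A+A\nabla\chi_\delta\big)$ inside $\omega$ to produce $\delta R_{\eps,\delta}$. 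You instead compute $\mathcal{L}_{\eps,1}(\Phi_{\eps,\delta})$ two ways inside $D_\eps$ --- once by re-using Lemma~\ref{equation_r_epsi} on $w_{\eps,\delta}=u_{\eps,\delta}-\Phi_{\eps,\delta}$, and once by a direct two-scale expansion in which the divergence-freeness of the corrected coefficient $A+A\nabla\chi_\delta$ inside $\omega$ (itself a consequence of $\partial_i F_{\delta,ij}=0$ and constancy of $\widehat A_\delta$, or directly of the cell problem) collapses the extra terms --- and then read off the identity by subtraction. Both routes trace essentially the same algebra; yours has the advantage of reusing Lemma~\ref{equation_r_epsi} rather than repeating the flux-corrector manipulations, and of exhibiting the identity as a consistency statement for the two-scale expansion in $D_\eps$, while the paper's is slightly more self-contained. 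One small observation: your sign $\mathcal{L}_{\eps,1}(\widetilde w)=\mathrm{div}\,Q_{\eps,\delta}+R_{\eps,\delta}$ in $D_\eps$ is the correct one, whereas the paper's displayed equation \eqref{eq:w2modi} has a harmless sign slip on the $\mathrm{div}\,Q_{\eps,\delta}$ term; of course this does not affect the $L^2$ estimate.
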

\begin{proof}
	Note that the differential operator $\mathcal{L}_{\eps,1}$ has uniform elliptic coefficients $(a^{\alpha\beta}_{ij}(\tfrac{\cdot}{\eps}))$; as a result \eqref{estiw1} is due to the basic energy estimate for the problem \eqref{w1} of $w^1_{\varepsilon,\delta}$. 
	

	To obtain estimate for $w^2_{\eps,\delta}$, using the definitions \eqref{eq:Qed} and \eqref{def:Fdelta}, the relation \eqref{eq:FtoPsi} and the homogenized problem \eqref{homogenized problem}, we first check by direct computation that
	\begin{equation}
		- \frac{ \partial Q_{\varepsilon,\delta,i} }{\partial x_i}  = \delta R^{\alpha}_{\varepsilon,\delta} +f \qquad \mathrm{in}\ D_{\varepsilon},
	\end{equation}
	where $R_{\eps,\delta}$ is defined as in \eqref{eq:Rde}. Comparing the equations \eqref{w1}\eqref{eq:w2modi} and \eqref{auxioperator} and using the above formula, we get
	\begin{equation}\label{eq:w2modi}
		\left\{
		\begin{aligned}
			& \mathcal{L}_{\varepsilon,1} \big(w^2_{\varepsilon,\delta} - \delta^{-1} \mathcal{L}_{D_{\varepsilon}}^{-1}(F) - w^1_{\varepsilon,\delta}\big)  = - \frac{\partial Q_{\varepsilon,\delta,i} }{\partial x_i}   +R_{\varepsilon,\delta}    & \mathrm{in} \ D_{\varepsilon}, \\
			& w^2_{\varepsilon,\delta} - \delta^{-1} \mathcal{L}_{D_{\varepsilon}}^{-1}(f) - w^1_{\varepsilon,\delta} =0 & \mathrm{on} \ \partial D_{\varepsilon}.
		\end{aligned}\right.
	\end{equation}
Multiply on both sides by $v = w^2_{\varepsilon,\delta} - \delta^{-1} \mathcal{L}_{D_{\varepsilon}}^{-1}(F) - w^1_{\varepsilon,\delta}$, which belongs to $H^1_0(D_\eps)$, and integrate by parts. We obtain, using again the uniform ellipticity of the coefficients in $\mathcal{L}_{\eps,1}$, 
	\begin{equation}\label{w_2es}
	\begin{aligned}
	\mu\|\nabla v\|_{L^2(D_\eps)}^2 \le \|Q_{\eps,\delta}\|_{L^2(D_\eps)}\|\nabla v\|_{L^2(D_\eps)} + \|R_{\eps,\delta}\|_{L^2(D_\eps)} \|v\|_{L^2(D_\eps)}.
	\end{aligned}
	\end{equation}
	Note that $D_\eps$ consists of a collection of obstacles with diameter of order $\eps$. Using Poicar\'e inequality on each of the obstacle we have
\begin{equation*}
		\| v \|_{L^2(D_{\varepsilon})} \leq C \varepsilon \| \nabla  v \|_{L^2(D_{\varepsilon})} \qquad \mathrm{for\ any\ }u \in H_0^1(D_{\varepsilon}).
	\end{equation*}
	We hence get
	\begin{equation*}
	\| \nabla\left(w^2_{\varepsilon,\delta} - \delta^{-1} \mathcal{L}_{D_{\varepsilon}}^{-1}(f) -w^1_{\eps,\delta}\right)\|_{L^2(D_{\varepsilon})} \leq C\left(\|Q_{\eps,\delta}\|_{L^2(D_\eps)} + \eps \|R_{\eps,\delta}\|_{L^2(D_\eps)}\right).
	\end{equation*}
	The desired estimate \eqref{w_2es} follows then from \eqref{estiw1}.
\end{proof}

\begin{lemma} 
Assume \eqref{ellipticc}-\eqref{periodicc} and $(\mathbf{H2})$,
\begin{equation}\label{verifymainconclusion}
		\| P_{\varepsilon,\delta} \|_{L^2(\Omega) } +  \| Q_{\varepsilon,\delta} \|_{L^2(\Omega) }   + \varepsilon \|R_{\varepsilon,\delta} \|_{L^2(\Omega)}  \leq C \varepsilon^{1/2} \big(\|f \|_{L^2(\Omega) } + \| \nabla_{\mathrm{tan}} g \|_{L^2(\partial \Omega)} \big).
	\end{equation}
\end{lemma}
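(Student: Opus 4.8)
Write $\Theta := \|f\|_{L^2(\Omega)} + \|\nabla_{\mathrm{tan}}g\|_{L^2(\partial\Omega)}$ for brevity. The plan is to reduce the three quantities $P_{\eps,\delta}$, $Q_{\eps,\delta}$ and $\eps R_{\eps,\delta}$ to boundary-layer estimates for the homogenized solution $\widehat u_\delta$ that are uniform in $\delta$, and then to combine those with the standard properties of the mollifier $S_\eps$ and the uniform cell-problem bounds established in Section~\ref{uniformcontrollcell}.

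First I record the consequences of uniform ellipticity. Since $\widehat{\mathcal L}_\delta = -\mathrm{div}(\widehat A_\delta\nabla)$ has constant coefficients that, by Theorem~\ref{uniform elliptic homogenized tensor}, are elliptic and bounded with constants independent of $\delta$, Lemma~\ref{elliptic_regularity_estimate} applies to $\widehat u_\delta$ with a $\delta$-independent constant:
\[
\|\nabla\widehat u_\delta\|_{L^2(\mathcal O_t)} + t\|\nabla^2\widehat u_\delta\|_{L^2(\Omega\setminus\mathcal O_t)}\le Ct^{1/2}\Theta,\qquad t\in(0,1).
\]
Taking $t=4\eps$ and $t=3\eps$ and using the properties \eqref{cut-off_def} of $\eta_\eps$ (namely $\eta_\eps\equiv1$ off $\mathcal O_{4\eps}$, $\mathrm{supp}\,\eta_\eps\subset\Omega\setminus\mathcal O_{3\eps}$, and $\|\nabla\eta_\eps\|_\infty\le C\eps^{-1}$), I obtain the two workhorse bounds
\[
\|(1-\eta_\eps)\nabla\widehat u_\delta\|_{L^2(\Omega)}\le\|\nabla\widehat u_\delta\|_{L^2(\mathcal O_{4\eps})}\le C\eps^{1/2}\Theta,
\]
\[
\|\nabla(\eta_\eps\nabla\widehat u_\delta)\|_{L^2(\Omega)}\le C\eps^{-1}\|\nabla\widehat u_\delta\|_{L^2(\mathcal O_{4\eps}\setminus\mathcal O_{3\eps})}+\|\nabla^2\widehat u_\delta\|_{L^2(\Omega\setminus\mathcal O_{3\eps})}\le C\eps^{-1/2}\Theta,
\]
where $\widehat u_\delta\in H^2_{\mathrm{loc}}(\Omega)$ because $\widehat A_\delta$ is constant and uniformly elliptic and $f\in L^2(\Omega)$, so that $\eta_\eps\nabla\widehat u_\delta$, extended by zero, belongs to $H^1(\R^d)$.

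Then I bound $P_{\eps,\delta}$, $Q_{\eps,\delta}$ and $\eps R_{\eps,\delta}$ term by term, using only: boundedness of $A$; the $\delta$-uniform bounds $\|\chi_\delta\|_{H^1(Y)}\le C$ (Theorem~\ref{thm:cell_limit}), $\|\Psi_\delta\|_{L^2(Y)}\le C$ (Theorem~\ref{thm:boundflux}), $|\widehat A_\delta|\le C$ (Theorem~\ref{uniform elliptic homogenized tensor}); the mollifier facts $\|S_\eps(h)\|_{L^2(\R^d)}\le\|h\|_{L^2(\R^d)}$, $\|S_\eps(h)-h\|_{L^2(\R^d)}\le C\eps\|\nabla h\|_{L^2(\R^d)}$ and $\nabla S_\eps(h)=S_\eps(\nabla h)$ for $h\in H^1(\R^d)$; and the weighted estimate $\|f(x/\eps)S_\eps(g)\|_{L^2}\le C\|f\|_{L^2(Y)}\|g\|_{L^2}$ of Lemma~\ref{appen_auxi_0}. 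For $Q_{\eps,\delta}$ as in \eqref{eq:Qed}, the flux term is $\le C\eps\|\Psi_\delta\|_{L^2(Y)}\,\|S_\eps(\nabla(\eta_\eps\nabla\widehat u_\delta))\|_{L^2}\le C\eps\|\nabla(\eta_\eps\nabla\widehat u_\delta)\|_{L^2}\le C\eps^{1/2}\Theta$, while the remaining term is $\le|\widehat A_\delta|\big(\|S_\eps(\eta_\eps\nabla\widehat u_\delta)-\eta_\eps\nabla\widehat u_\delta\|_{L^2}+\|(1-\eta_\eps)\nabla\widehat u_\delta\|_{L^2}\big)\le C\eps^{1/2}\Theta$. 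For $P_{\eps,\delta}$ as in \eqref{eq:Ped}, the difference $S_\eps(\eta_\eps\nabla\widehat u_\delta)-\nabla\widehat u_\delta$ is handled exactly as the previous term, and the corrector contribution equals $\eps\|\chi_\delta(x/\eps)S_\eps(\nabla(\eta_\eps\nabla\widehat u_\delta))\|_{L^2}\le C\eps\|\chi_\delta\|_{L^2(Y)}\|\nabla(\eta_\eps\nabla\widehat u_\delta)\|_{L^2}\le C\eps^{1/2}\Theta$. For $\eps R_{\eps,\delta}$ as in \eqref{eq:Rde}, the periodic coefficient $[a_{ij}^{\alpha\beta}+a_{ik}^{\alpha\gamma}\partial_k\chi_{\delta,j}^{\gamma\beta}](x/\eps)$ has $L^2(Y)$-norm $\le C(1+\|\nabla\chi_\delta\|_{L^2(Y)})\le C$, hence $\eps\|R_{\eps,\delta}\|_{L^2}\le C\eps\|S_\eps(\nabla(\eta_\eps\nabla\widehat u_\delta))\|_{L^2}\le C\eps^{1/2}\Theta$. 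Adding the three estimates yields \eqref{verifymainconclusion}.

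I expect the only real subtlety to be the $\delta$-uniformity: every constant above must be traced back to the $\delta$-independence (proved in Section~\ref{uniformcontrollcell}) of the ellipticity of $\widehat A_\delta$ — hence of the $\widehat u_\delta$ estimates — of $\|\chi_\delta\|_{H^1(Y)}$, and of $\|\Psi_\delta\|_{L^2(Y)}$. Once these are granted the rest is bookkeeping: the boundary-layer and cut-off splitting sketched above, the verification that $\eta_\eps\nabla\widehat u_\delta$ (zero-extended) is genuinely in $H^1(\R^d)$ so that the mollifier identities hold on all of $\R^d$, and the observation that $S_\eps(\eta_\eps\nabla\widehat u_\delta)$ stays supported inside $\Omega$ for small $\eps$ since the kernel of $S_\eps$ has support in $B(0,\eps/2)$.
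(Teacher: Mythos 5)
Your proof is correct and follows essentially the same route as the paper: apply Lemma~\ref{appen_auxi_0} and the $\delta$-uniform bounds on $\chi_\delta$, $\Psi_\delta$, $\widehat A_\delta$ to reduce $\|P_{\eps,\delta}\|$, $\|Q_{\eps,\delta}\|$, $\eps\|R_{\eps,\delta}\|$ to $\|S_\eps(\eta_\eps\nabla\widehat u_\delta)-\nabla\widehat u_\delta\|_{L^2}$ and $\eps\|\nabla(\eta_\eps\nabla\widehat u_\delta)\|_{L^2}$, then control both by the boundary-layer estimate of Lemma~\ref{elliptic_regularity_estimate} together with the $\delta$-independent ellipticity of $\widehat A_\delta$. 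The only thing worth tightening is the intermediate line where you write the flux term as $\eps\|\Psi_\delta\|_{L^2(Y)}\,\|S_\eps(\cdot)\|_{L^2}$ as if $\Psi_\delta$ could be pulled out pointwise; the correct invocation is Lemma~\ref{appen_auxi_0} applied directly to the product $\Psi_\delta(x/\eps)\,S_\eps(\cdot)$, which gives the same end bound.
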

	\begin{proof}
	By the definition of $P_{\varepsilon,\delta} , Q_{\varepsilon,\delta}, R_{\varepsilon,\delta}$ and Lemma \ref{appen_auxi_0}  below, we have
	\begin{equation*}
		\begin{aligned}
			\| P_{\varepsilon,\delta} \|_{L^2(\Omega) } &+  \| Q_{\varepsilon,\delta} \|_{L^2(\Omega) }   + \varepsilon \|R_{\varepsilon,\delta} \|_{L^2(\Omega)}  \\
			&\leq C\| A \|_{L^{\infty}(Y)}\big(1+ \|\widehat{A}_{\delta} |_{L^{\infty}(\Omega)} \big)\Big(  \| S_{\varepsilon } ( \eta_{\varepsilon} \nabla \widehat{u}_{\delta}  )  - \nabla \widehat{u}_{\delta}  \|_{L^2(\Omega)} \\
			& \ \ \ + \varepsilon \| \chi_{\delta} \|_{H^1(Y)} \| \nabla (\eta_{\varepsilon} \nabla \widehat{u}_{\delta}) \|_{L^2(\Omega)}  + \varepsilon \| \Psi_{\delta} \|_{L^2(\Omega)} \| \nabla (\eta_{\varepsilon} \nabla \widehat{u}_{\delta}) \|_{L^2(\Omega)}      \Big).
		\end{aligned}
	\end{equation*}
	Since 
	\begin{equation*}
		\| S_{\varepsilon } ( \eta_{\varepsilon} \nabla \widehat{u}_{\delta}  )  - \nabla \widehat{u}_{\delta}  \|_{L^2(\Omega)} \leq C\varepsilon \| \nabla (\eta_{\varepsilon} \nabla \widehat{u}_{\delta}) \|_{L^2(\Omega)}    + C \| \nabla \widehat{u}_{\delta}  \|_{L^2(O_{4\varepsilon} )},
	\end{equation*}
	and 
	\begin{equation*}
		\| \nabla (\eta_{\varepsilon} \nabla \widehat{u}_{\delta}) \|_{L^2(\Omega)}     \leq \varepsilon^{-1} \| \nabla \widehat{u}_{\delta} \|_{L^2( O_{4\varepsilon} )} + \| \nabla^2 \widehat{u}_{\delta} \|_{L^2(\Omega \setminus O_{3\varepsilon})},
	\end{equation*}
	\eqref{verifymainconclusion} then follows from Lemma \ref{elliptic_regularity_estimate}, the uniform ellipticity of $\widehat{A}_{\delta}$, the uniform $L^2$ bound of $\Psi_{\delta}$ and the uniform $H^1$ bound of $\chi_{\delta}$. 
	\end{proof}

We extend $w_{\eps,\delta}^2$ to the outside $D_\eps$ by $w_{\eps,\delta}^1$. In other words, define 
\begin{equation}
\label{eq:wedR}
w^{\mathrm{R}}_{\varepsilon,\delta} := w^1_{\varepsilon,\delta} \mathbbm{1}_{\Omega_{\varepsilon}} + w_{\varepsilon,\delta}^2 \mathbbm{1}_{D_{\varepsilon}}.
\end{equation}
Then we check that $w^{\mathrm{R}}_{\eps,\delta} \in H^1_0(\Omega;\R^m)$ and below it is referred to as the ``regular'' part of $w_{\eps,\delta}$. In view of the estimates \eqref{estiw1} and \eqref{w_2es}, we have 
	\begin{equation}\label{estiwR}
		\big\| \nabla \big(w^\mathrm{R}_{\varepsilon,\delta} - \delta^{-1} \mathcal{L}_{D_{\varepsilon}}^{-1} (F) \big) \big\|_{L^2(\Omega)} \leq C\big( \| P_{\varepsilon,\delta} \|_{L^2(\Omega) } +  \| Q_{\varepsilon,\delta} \|_{L^2(\Omega) }   + \varepsilon \|R_{\varepsilon,\delta} \|_{L^2(\Omega)} \big) . 
	\end{equation}

Similarly, denote by the differece between $w^{\mathrm{S}}_{\eps,\delta}$ and $w_{\eps,\delta}$ by
\begin{equation}
\label{eq:wedS}
w^{\mathrm{S}}_{\varepsilon,\delta} : = w_{\varepsilon,\delta} -  w^{\mathrm{R}}_{\varepsilon,\delta}.
\end{equation} 
Then $w^{\mathrm{S}}_{\eps,\delta} \in H^1_0(\Omega;\R^m)$ as well and below it is referred to as the ``singular part''. Moreover, by definition we have 
\begin{equation}
\label{eq:wSeq}
		\mathcal{L}_{\varepsilon,1} (w^{\mathrm{S}}_{\varepsilon,\delta}) = 0 \qquad \mathrm{in} \ \Omega \setminus \partial D_{\varepsilon}.
\end{equation}

The next lemma plays an important role in our proof of Theorem \ref{thm:suboptimal convergence rate}. It says that the distributions of energy in $w^{\mathrm{S}}_{\eps,\delta}$ (in terms of the $L^2$ norm of its gradient) is balanced inside and outside of the obstacles $D_\eps$.

\begin{lemma}\label{lem:singlularbelong}
	Assume  \eqref{ellipticc}-\eqref{periodicc} and $(\mathbf{H2})$, then 
	\begin{equation}
		C \int_{D_{\varepsilon}} |\nabla w^{\mathrm{S}}_{\varepsilon,\delta}|^2\,dx \leq \int_{\Omega_{\varepsilon}} |\nabla w^{\mathrm{S}}_{\varepsilon,\delta}|^2\,dx  \leq C^{-1} \int_{D_{\varepsilon}} |\nabla w^{\mathrm{S}}_{\varepsilon,\delta}|^2\,dx .
	\end{equation}
\end{lemma}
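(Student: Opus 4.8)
The plan is to establish the two inequalities separately, using that $w^{\mathrm{S}}_{\eps,\delta}\in H^1_0(\Omega;\R^m)$, that by \eqref{eq:wSeq} it is $\mathcal{L}_{\eps,1}$-harmonic on each side of $\partial D_\eps$, that it is continuous across $\partial D_\eps$, and that it carries the transmission (conormal-jump) data on $\partial D_\eps$ that it inherits, via \eqref{eq:discrepancy}--\eqref{eq:equation_discrepancy}, from $w_{\eps,\delta}$ and from the decomposition $w^{\mathrm{R}}_{\eps,\delta}=w^1_{\eps,\delta}\mathbbm 1_{\Omega_\eps}+w^2_{\eps,\delta}\mathbbm 1_{D_\eps}$. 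Write $\varphi:=w^{\mathrm{S}}_{\eps,\delta}\big|_{\partial D_\eps}$ for the common trace. Because $A(\cdot/\eps)$ is symmetric, $J^{D_\eps}$ and $J^{\Omega_\eps}$ are symmetric bilinear forms, so $w^{\mathrm{S}}_{\eps,\delta}\big|_{D_\eps}$ minimizes $J^{D_\eps}$ among functions in $H^1(D_\eps;\R^m)$ with trace $\varphi$ on $\partial D_\eps$, and $w^{\mathrm{S}}_{\eps,\delta}\big|_{\Omega_\eps}$ minimizes $J^{\Omega_\eps}$ among functions in $H^1(\Omega_\eps;\R^m)$ with trace $\varphi$ on $\partial D_\eps$ and $0$ on $\partial\Omega$. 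This Dirichlet principle, tested against competitors built from the extension operators of Section~\ref{preliminaries}, is the engine of the proof; throughout we use the Poincar\'e inequality $\|v-\fint_{\omega^{\mathbf n}_\eps}v\|_{L^2(\omega^{\mathbf n}_\eps)}\le C\eps\|\nabla v\|_{L^2(\omega^{\mathbf n}_\eps)}$ and a scaled trace inequality on each $\partial\omega^{\mathbf n}_\eps$.

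For the inequality $\int_{D_\eps}|\nabla w^{\mathrm{S}}_{\eps,\delta}|^2\le C\int_{\Omega_\eps}|\nabla w^{\mathrm{S}}_{\eps,\delta}|^2$, I would extend $w^{\mathrm{S}}_{\eps,\delta}\big|_{\Omega_\eps}$ by $\widetilde w:=\mathcal{P}_\eps\big(w^{\mathrm{S}}_{\eps,\delta}\big|_{\Omega_\eps}\big)\in H^1(\Omega;\R^m)$ via Lemma~\ref{lem:extension}, so that $\widetilde w=w^{\mathrm{S}}_{\eps,\delta}$ in $\Omega_\eps$ and $\|\nabla\widetilde w\|_{L^2(\Omega)}\le C\|\nabla w^{\mathrm{S}}_{\eps,\delta}\|_{L^2(\Omega_\eps)}$. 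In particular $\widetilde w$ has trace $\varphi$ on $\partial D_\eps$, so it is an admissible competitor for $w^{\mathrm{S}}_{\eps,\delta}\big|_{D_\eps}$, and since $w^{\mathrm{S}}_{\eps,\delta}-\widetilde w\in H^1_0(D_\eps;\R^m)$ with $\mathcal{L}_{\eps,1}(w^{\mathrm{S}}_{\eps,\delta})=0$ in $D_\eps$, the Dirichlet principle and ellipticity give
\[
\mu\,\|\nabla w^{\mathrm{S}}_{\eps,\delta}\|_{L^2(D_\eps)}^2\le J^{D_\eps}(w^{\mathrm{S}}_{\eps,\delta})\le J^{D_\eps}(\widetilde w)\le \mu^{-1}\|\nabla\widetilde w\|_{L^2(D_\eps)}^2\le C\|\nabla w^{\mathrm{S}}_{\eps,\delta}\|_{L^2(\Omega_\eps)}^2 .
\]

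The reverse inequality $\int_{\Omega_\eps}|\nabla w^{\mathrm{S}}_{\eps,\delta}|^2\le C\int_{D_\eps}|\nabla w^{\mathrm{S}}_{\eps,\delta}|^2$ is the delicate direction and I expect it to be the main obstacle, because a function that is merely $\mathcal{L}_{\eps,1}$-harmonic on both sides can be constant on every inclusion—hence have vanishing energy in $D_\eps$—while carrying positive exterior energy; it is the transmission data on $\partial D_\eps$ that rules this out. I would set $q^{\mathbf n}_\eps:=\fint_{\omega^{\mathbf n}_\eps}w^{\mathrm{S}}_{\eps,\delta}$ and $\psi:=\mathcal{Q}_\eps\big(w^{\mathrm{S}}_{\eps,\delta}\big|_{D_\eps}\big)\in H^1_0(\Omega;\R^m)$ from Lemma~\ref{lem:extendout}, so $\psi$ is supported in $\bigcup_{\mathbf n}\widetilde\omega^{\mathbf n}_\eps$, equals $w^{\mathrm{S}}_{\eps,\delta}-q^{\mathbf n}_\eps$ in each $\omega^{\mathbf n}_\eps$, and $\|\nabla\psi\|_{L^2(\Omega)}\le C\|\nabla w^{\mathrm{S}}_{\eps,\delta}\|_{L^2(D_\eps)}$. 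One then seeks a competitor $v=\psi+\zeta$ for $w^{\mathrm{S}}_{\eps,\delta}\big|_{\Omega_\eps}$, where $\zeta\in H^1(\Omega_\eps;\R^m)$ has trace $q^{\mathbf n}_\eps$ on each $\partial\omega^{\mathbf n}_\eps$ and $0$ on $\partial\Omega$, so that $v$ has trace $\varphi$ on $\partial D_\eps$ and $0$ on $\partial\Omega$; by the Dirichlet principle $\|\nabla w^{\mathrm{S}}_{\eps,\delta}\|_{L^2(\Omega_\eps)}\le C(\|\nabla\psi\|_{L^2(\Omega)}+\|\nabla\zeta\|_{L^2(\Omega_\eps)})$, and only the second term remains. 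The crux is therefore to bound $\|\nabla\zeta\|_{L^2(\Omega_\eps)}$—equivalently, to control the piecewise-constant modes $\{q^{\mathbf n}_\eps\}$ (whose naive propagation to $\partial\Omega$ would cost $\sum_{\mathbf n}|q^{\mathbf n}_\eps|^2\eps^{d-2}$)—by $\|\nabla w^{\mathrm{S}}_{\eps,\delta}\|_{L^2(D_\eps)}$. This fails for a generic harmonic-on-both-sides function but should hold here because the conormal jump of $w^{\mathrm{S}}_{\eps,\delta}$ across each $\partial\omega^{\mathbf n}_\eps$ is of divergence form, inherited from the divergence structure of $P_{\eps,\delta},Q_{\eps,\delta}$ in \eqref{eq:Ped}--\eqref{eq:Qed} and from \eqref{eq:energy-in-ld}: testing the weak form of \eqref{eq:equation_discrepancy} for $w^{\mathrm{S}}_{\eps,\delta}$ against $\zeta$, or against a cutoff equal to $e^\alpha$ on a neighbourhood of a single inclusion, produces only the interior gradient of $w^{\mathrm{S}}_{\eps,\delta}$ on $D_\eps$ after a Poincar\'e estimate on each obstacle. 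Carrying out this bookkeeping and summing over $\mathbf n$ yields $\|\nabla\zeta\|_{L^2(\Omega_\eps)}\le C\|\nabla w^{\mathrm{S}}_{\eps,\delta}\|_{L^2(D_\eps)}$ and hence the claim; apart from this constant-mode control, the argument is routine energy estimates together with the extension bounds of Lemmas~\ref{lem:extension}--\ref{lem:extendout} and Poincar\'e on $\eps$-cells.
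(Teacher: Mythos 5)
Your treatment of the first inequality is correct and essentially the paper's: extend $w^{\mathrm{S}}_{\eps,\delta}\rvert_{\Omega_\eps}$ by $\mathcal{P}_\eps$, observe the difference lies in $H^1_0(D_\eps)$, and run the energy/Dirichlet argument; the two are the same thing.

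For the second, harder direction, however, the proposed mechanism has a genuine gap. You set $\psi=\mathcal{Q}_\eps(w^{\mathrm{S}}_{\eps,\delta}\rvert_{D_\eps})$, build a competitor $v=\psi+\zeta$ with $\zeta$ carrying the constant traces $q_\eps^{\mathbf n}$, and assert that ``carrying out this bookkeeping yields $\|\nabla\zeta\|_{L^2(\Omega_\eps)}\le C\|\nabla w^{\mathrm{S}}_{\eps,\delta}\|_{L^2(D_\eps)}$.'' That bound cannot hold. Any $\zeta\in H^1(\Omega_\eps;\R^m)$ with trace $q_\eps^{\mathbf n}$ on each $\partial\omega_\eps^{\mathbf n}$ and trace $0$ on $\partial\Omega$ must pay an energy of order $\sum_{\mathbf n}|q_\eps^{\mathbf n}|^2\eps^{d-2}$ (capacity of the inclusion array), whereas the right-hand side $\|\nabla w^{\mathrm{S}}_{\eps,\delta}\|_{L^2(D_\eps)}$ carries no information whatsoever about the averages $q_\eps^{\mathbf n}=\fint_{\omega_\eps^{\mathbf n}}w^{\mathrm{S}}_{\eps,\delta}$: taking $w^{\mathrm{S}}_{\eps,\delta}$ constant in $D_\eps$ makes the right-hand side zero while the left-hand side can be anything. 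Testing the weak form against a cutoff equal to $e^\alpha$ near one inclusion does not give you control on $q_\eps^{\mathbf n}$ either; it gives a flux identity, not a bound on the constant. So the route through ``control $\zeta$'' is a dead end.

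The correct device is precisely not to control $\zeta$ but to make it unnecessary. The paper proves (its Step~2) the vanishing of the exterior flux over each inclusion boundary,
\begin{equation*}
\int_{\partial\omega_\eps^{\mathbf n}}\frac{\partial w^{\mathrm{S}}_{\eps,\delta}}{\partial\nu}\Big\rvert_{+}\,d\sigma=0 ,
\end{equation*}
by reading off the conormal jumps of $w_{\eps,\delta}$ and $w_{\eps,\delta}^1$ from their transmission problems, subtracting to get $\partial_\nu w^{\mathrm{S}}_{\eps,\delta}\rvert_+=\delta\,\partial_\nu w_{\eps,\delta}\rvert_--\partial_\nu w^1_{\eps,\delta}\rvert_--(\delta-1)N^iP_{\eps,\delta,i}\rvert_-$, and then integrating the interior equations for $w_{\eps,\delta}$ and $w^1_{\eps,\delta}$ over $\omega_\eps^{\mathbf n}$. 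Once this mean-zero flux condition is in hand, one tests the weak formulation of $\mathcal{L}_{\eps,1}(w^{\mathrm{S}}_{\eps,\delta})=0$ in $\Omega_\eps$ with $w^{\mathrm{S}}_{\eps,\delta}-\psi$ directly (i.e.\ with $\zeta\equiv 0$): this difference is the constant $q_\eps^{\mathbf n}$ on each $\partial\omega_\eps^{\mathbf n}$, the boundary pairing $\int_{\partial\omega_\eps^{\mathbf n}}\partial_\nu w^{\mathrm{S}}_{\eps,\delta}\rvert_+\cdot q_\eps^{\mathbf n}$ vanishes, the $\partial\Omega$ term vanishes, and the energy identity reduces to $\int_{\Omega_\eps}A^\eps\nabla w^{\mathrm{S}}_{\eps,\delta}\cdot\nabla w^{\mathrm{S}}_{\eps,\delta}=\int_{\Omega_\eps}A^\eps\nabla w^{\mathrm{S}}_{\eps,\delta}\cdot\nabla\psi$, giving $\|\nabla w^{\mathrm{S}}_{\eps,\delta}\|_{L^2(\Omega_\eps)}\le C\|\nabla\psi\|_{L^2(\Omega)}\le C\|\nabla w^{\mathrm{S}}_{\eps,\delta}\|_{L^2(D_\eps)}$ by Lemma~\ref{lem:extendout}. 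You had the right moving parts ($\mathcal{Q}_\eps$, the transmission data being essential), but you need the flux cancellation, not an energy bound for the constant modes.
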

\begin{proof}
	For simplicity, we simply denote $w^{\mathrm{S}}_{\eps,\delta}$ by $w$ in this proof. We divide the proof to several steps.
	
	\textit{Step 1: We verify the first inequality by a standard extension argument}. Consider the extension operator $\mathcal{P}_\eps : H^1(\Omega_\eps) \to H^1(\Omega)$ defined in Lemma \ref{lem:extension}. Let $\widetilde w = \mathcal{P}_\eps(w\rvert_{\Omega_\eps})$. Then we have $\widetilde w-w$ vanishes at $\partial D_\eps$. Multiply on both sides of \eqref{eq:wSeq} and integrate in $D_\eps$, we find 
	\begin{equation*}
	\|\nabla w\|_{L^2(D_\eps)} \le C\|\widetilde w\|_{L^2(D_\eps)} \le C\|w\|_{L^2(\Omega_\eps)}.
	\end{equation*}
	where $C$ only depends on the ellipticity bounds and the model domain $\omega$.
	
	\textit{Step 2. We verify that for each inclusion $\omega^{\mathbf{n}}_{\varepsilon} \subset \Omega$,
	\begin{equation}
	\label{eq:wSorth1}
		\int_{\partial \omega_{\varepsilon}^{\mathbf{n}} } \frac{\partial w^{\mathrm{S}}_{\varepsilon,\delta}}{\partial \nu}\Big|_+ \,d\sigma =0.
	\end{equation}
	}
The above is understood as the pairing of $\partial_\nu w^{\mathrm{S}}_{\eps,\delta}\rvert_+$ with constant functions. By definition, along the boundary of each inclusion, we have
\begin{equation*}
\partial_\nu w^{\mathrm{S}}_{\eps,\delta}\rvert_+ = \partial_\nu w_{\eps,\delta}\rvert_+ - \partial_\nu w^1_{\eps,\delta}\rvert_+,
\end{equation*}
where we used the fact that $w^{\mathrm{R}}_{\eps,\delta} = w^1_{\eps,\delta}$ in $\Omega_\eps$, i.e., outside the inclusions. By an inspection of the equations satisfied by $w_{\eps,\delta}$ and $w^1_{\eps,\delta}$, we have
\begin{equation*}
\begin{aligned}
&\partial_\nu w_{\eps,\delta} \rvert_+ - N^i P_{\eps,\delta,i}\rvert_+ - N^i Q_{\eps,\delta,i}\rvert_+ = \delta \partial_\nu w_{\eps,\delta} \rvert_- - \delta N^i P_{\eps,\delta,i}\rvert_-  - N^i Q_{\eps,\delta,i}\rvert_-,\\
&\partial_\nu w^1_{\eps,\delta} \rvert_+ - N^i P_{\eps,\delta,i}\rvert_+ - N^i Q_{\eps,\delta,i}\rvert_+ = \partial_\nu w^1_{\eps,\delta} \rvert_- - N^i P_{\eps,\delta,i}\rvert_-  - N^i Q_{\eps,\delta,i}\rvert_-,
\end{aligned}
\end{equation*}
which leads to
\begin{equation*}
\partial_\nu w^{\mathrm{S}}_{\eps,\delta} \rvert_+ = \delta \partial_\nu w_{\eps,\delta} \rvert_- -  \partial_\nu w^1_{\eps,\delta}\rvert_- -(\delta -1)N^i P_{\eps,\delta,i}\rvert_-.
\end{equation*}
Integrate the equation of $w_{\eps,\delta}$ and the equation of $w^1_{\eps,\delta}$ in $\omega^{\mathrm{n}}_\eps$, we get
\begin{equation*}
\begin{aligned}
&\int_{\partial \omega^{\mathrm{n}}_\eps} \delta \partial_\nu w_{\eps,\delta} \rvert_- - \delta N^i P_{\eps,\delta,i} - N^i Q_{\eps,\delta,i} \,d\sigma = 0,\\
&\int_{\partial \omega^{\mathrm{n}}_\eps} \partial_\nu w^1_{\eps,\delta} \rvert_- - N^i P_{\eps,\delta,i} - N^i Q_{\eps,\delta,i} \,d\sigma = 0.
\end{aligned}
\end{equation*}
Combining the results above we get \eqref{eq:wSorth1}.

	\textit{Step 3. Proof of the second inequality}. Consider the map $\mathcal{Q}_\eps$ defined in Lemma \ref{lem:extendout}, and apply it to $w\rvert_{D_\eps}$ to define
	\begin{equation*}
	 \widetilde w = \mathcal{Q}_\eps(w\rvert_{D_\eps}).
	 \end{equation*} 
	 Then $\widetilde w\in H^1_0(\Omega)$ (in fact, $\tilde w$ is supported in the union of bubbles surrounding compoenents of $D_\eps$). Moreover,
	 \begin{equation*}
	 \widetilde w - w = q_\eps^{\mathbf{n}}, \qquad \text{in each }\, \omega_\eps^{\mathbf{n}}.
	 \end{equation*}
	 Multiply by $\widetilde w-w$ on both sides of \eqref{eq:wSeq} and integrate by parts. We obtain
	 \begin{equation*}
	 \int_{\Omega_\eps} A^\eps(x)\nabla w \cdot \nabla(w-\widetilde w)\,dx = \int_{\partial \Omega_\eps} \frac{\partial w}{\partial \nu} \cdot (w-\widetilde w) = -\sum_{\mathbf{n}} \int_{\partial \omega_\eps^{\mathbf{n}}} \frac{\partial w}{\partial \nu}\Big\rvert_{+}\cdot (w-\widetilde w) = 0.  
	 \end{equation*}
	 From this we get by ellipticity that
	 \begin{equation*}
	 \|\nabla w\|_{L^2(\Omega_\eps)} \le C\|\nabla \mathcal{Q}_\eps(w)\|_{L^2(\Omega_\eps)} \le C\|\nabla w\|_{L^2(D_\eps)}.
	 \end{equation*}
	 The last inequality follows from Lemma \ref{lem:extendout}. This completes the proof.
\end{proof}

We are in a position to complete the proof of Theorem \ref{thm:suboptimal convergence rate}.

\begin{proof}[Proof of Theorem \ref{thm:suboptimal convergence rate}] By the decomposition above (see \eqref{eq:wedR} ad \eqref{eq:wedS}), we have
$$
w_{\eps,\delta} = w_{\eps,\delta}^{\mathrm{R}} + w_{\eps,\delta}^{\mathrm{S}}.
$$
In view of \eqref{estiwR} we only need to estimate $\|\nabla w^{\mathrm{S}}_{\eps,\delta}\|_{L^2(\Omega)}$.
\medskip

	\emph{Case 1: $0<\delta <1$}. By Lemma \ref{lem:singlularbelong}, we have
	\begin{equation}\label{wSOmega}
		\begin{aligned}
			\| \nabla w^{\mathrm{S}}_{\varepsilon,\delta} \|_{L^2(\Omega)} &\leq C\| \nabla w^{\mathrm{S}}_{\varepsilon,\delta} \|_{L^2(\Omega_{\varepsilon})} \leq C\| \nabla w^{\mathrm{R}}_{\varepsilon,\delta} \|_{L^2(\Omega_{\varepsilon})}  +C\| \nabla w_{\varepsilon,\delta} \|_{L^2(\Omega_{\varepsilon})} \\
			& \leq C\big( \| P_{\varepsilon,\delta} \|_{L^2(\Omega) } +  \| Q_{\varepsilon,\delta} \|_{L^2(\Omega) }   + \varepsilon \|R_{\varepsilon,\delta} \|_{L^2(\Omega)} \big) ,
		\end{aligned}
	\end{equation}
	where the last equality follows from  
	\eqref{01first} and \eqref{estiwR} (noting that $\mathcal{L}_{D_\eps}^{-1}(f)$ is supported outside $\Omega_\eps$).
	
	\medskip

	\emph{Case 2: $1\le \delta <\infty$}. By Lemma \ref{lem:singlularbelong}, we have 
	\begin{equation*}
		\begin{aligned}
			\| \nabla w^{\mathrm{S}}_{\varepsilon,\delta} \|_{L^2(\Omega)} &\leq C\| \nabla w^{\mathrm{S}}_{\varepsilon,\delta} \|_{L^2(D_{\varepsilon})} \\
			&\leq C\| \nabla \big( w^{\mathrm{R}}_{\varepsilon,\delta}  -  \delta^{-1} \mathcal{L}_{D_{\varepsilon}}^{-1} (f) \big) \|_{L^2(D_{\varepsilon})}  +C \delta^{-1}   \| \nabla  \mathcal{L}_{D_{\varepsilon}}^{-1} (f) \|_{L^2(D_{\varepsilon})}  +C\| \nabla w_{\varepsilon,\delta} \|_{L^2(D_{\varepsilon})}\\
			& \leq C\big( \| P_{\varepsilon,\delta} \|_{L^2(\Omega) } +  \| Q_{\varepsilon,\delta} \|_{L^2(\Omega) }   + \varepsilon \|R_{\varepsilon,\delta} \|_{L^2(\Omega)} + \varepsilon \| f \|_{L^2(\Omega)}\big) ,
		\end{aligned}
	\end{equation*}
	where the last equality follows from \eqref{estiwR}, \eqref{01first}, a standard energy estimate for \eqref{auxioperator}, and the fact that $\delta \ge 1$. 

	\medskip

	Combine the above estimates with \eqref{estiwR}. We conclude that, for all $\delta\in (0,\infty)$, we have found a universal constant $C<\infty$ such that
	\begin{equation*}
		\begin{aligned}
			\| \nabla \big( w_{\varepsilon,\delta} &  -  \delta^{-1} \mathcal{L}_{D_{\varepsilon}}^{-1} (F)\big) \|_{L^2(\Omega)}  
			\leq C\big( \| P_{\varepsilon,\delta} \|_{L^2(\Omega) } +  \| Q_{\varepsilon,\delta} \|_{L^2(\Omega) }   + \varepsilon \|R_{\varepsilon,\delta} \|_{L^2(\Omega)} + \varepsilon \| F \|_{L^2(\Omega)}\big).
		\end{aligned}
	\end{equation*}
The desired estimate \eqref{eq:thm1-error} then follows from \eqref{verifymainconclusion}.
	\end{proof}

\begin{remark} \label{ratefortype 1}Note that using a boundary cut-off, for Type I domain, the same method of analysis still yields a similar $H^1$ convergence rate result away the boundary:
\begin{equation}
		\label{eq:thm1-error-modify}
			\begin{aligned}
				\Big\|   u_{\varepsilon,\delta} - \widehat{u}_{\delta} -  \varepsilon \chi_{\delta}\left(\frac{x}{\varepsilon}\right) S_{\varepsilon} \Big(\eta_{\varepsilon}\nabla \widehat{u}_{\delta}\Big) &- \delta^{-1} \mathcal{L}_{D_{\varepsilon}}^{-1}(f) \Big\|_{H^1(\Omega\setminus \mathcal{O}_{c})} \\
				&\leq C\varepsilon^{1/2} \big( \|f \|_{L^2(\Omega) } +\|\nabla_{\mathrm{tan}}g\|_{L^2(\partial \Omega)} \big)
			\end{aligned}
		\end{equation}
  where $c>0$ is a fixed number independent of $\eps$.
\end{remark}

\section{Caccioppoli-type inequalities}

We will prove the uniform Lipschitz estimate for
\begin{equation}
\label{eq:hpdemod}
\left\{
\begin{aligned}
&\mathcal{L}_{\eps,\delta}(u_\eps) = f_{\eps,\delta}, \qquad &&x\in \Omega,\\
&u_{\eps,\delta} = g, \qquad &&x\in \partial \Omega.
\end{aligned}
	\right.
\end{equation}
Here, $f_{\eps,\delta}=f$ for $\delta\in (1,\infty)$, and $f_{\eps,\delta}=\Lambda_{\varepsilon ,\delta} f$ if $\delta\in (0,1]$. We follow the now standard approach by Shen \cite{shen_periodic_2018}. A key step in the scheme is to establish (interior and boundary) Caccioppoli inequalities. In the high contrast setting, this turns out to be a non-trivial task especially for large $\delta$. The following rescaling property of $\mathcal{L}_{\eps,\delta}$ will be used throughout the rest of the paper: if we define $\tilde u(\cdot) = u(r \cdot)$ and $\tilde f(\cdot)= r^2 f(r\cdot)$, 
$$
\mathcal{L}_{\eps,\delta}(u) = f \quad \Leftrightarrow \quad \mathcal{L}_{\eps/r,\delta}(\tilde u) = \tilde f.
$$
In particular when $r=\eps$, we relate the operator $\mathcal{L}_{\eps,\delta}$ to $\mathcal{L}_{1,\delta}$. The latter operator was studied by Shen in \cite{shen_large-scale_2021}, where Caccioppoli inequalities and large scale interior Lipschitz regularity were proved. We modify Shen's proof to obtain Caccioppoli inequalities for large $\delta$ allowing the presence of source term $f$ and boundary data $g$. Those combined with the convergence rate result of the previous section allows us to obtain boundary Lipschitz regularity estimate in the high contrast setting. 

From here, we always assume assumptions $(\mathbf{H1})$ and $(\mathbf{H2})$ hold. We start from two basic energy estimates with cut-off functions.

\begin{lemma}\label{lem:energycutint} If $u\in H^1(\Omega;\R^m)$ is a solution to $\mathcal{L}_{\eps,\delta}(u) = f$ in $\Omega$. Then for any $\varphi \in C^1_0(\Omega)$,
\begin{equation}
\label{eq:energycutint}
\int_\Omega \Lambda_{\varepsilon,\delta} (x) \varphi^2 |\nabla u|^2 \le C\int_\Omega \Lambda_{\varepsilon,\delta} (x)  |u|^2 |\nabla \varphi|^2 + C\int_\Omega |f|\varphi^2 |u|.
\end{equation}
\end{lemma}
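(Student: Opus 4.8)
The plan is to run the classical Caccioppoli argument: test the weak formulation of $\mathcal{L}_{\eps,\delta}(u)=f$ in $\Omega$ against the function $\psi=\varphi^2 u$. Since $\varphi\in C^1_0(\Omega)$ and $u\in H^1(\Omega;\R^m)$, the product $\varphi^2 u$ lies in $H^1_0(\Omega;\R^m)$ and is therefore an admissible test function, giving
\begin{equation*}
\int_\Omega \Lambda_{\eps,\delta}(x)\,a^{\alpha\beta}_{ij}(x/\eps)\,\partial_j u^\beta\,\partial_i(\varphi^2 u^\alpha)\,dx = \int_\Omega f^\alpha\,\varphi^2 u^\alpha\,dx.
\end{equation*}
First I would expand $\partial_i(\varphi^2 u^\alpha)=\varphi^2\partial_i u^\alpha+2\varphi u^\alpha\partial_i\varphi$, which splits the left-hand side into a ``good'' term $\int_\Omega \Lambda_{\eps,\delta}\varphi^2\,A(x/\eps)\nabla u\nabla u$ and a cross term of the form $2\int_\Omega \Lambda_{\eps,\delta}\,a^{\alpha\beta}_{ij}(x/\eps)\,\partial_j u^\beta\,u^\alpha\varphi\,\partial_i\varphi$.

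Next, the strong ellipticity \eqref{ellipticc} bounds the good term below by $\mu\int_\Omega \Lambda_{\eps,\delta}\varphi^2|\nabla u|^2$, while the upper bound $|A|\le\mu^{-1}$ combined with the Cauchy--Schwarz and Young inequalities controls the cross term by $\theta\int_\Omega \Lambda_{\eps,\delta}\varphi^2|\nabla u|^2+C\theta^{-1}\int_\Omega \Lambda_{\eps,\delta}|u|^2|\nabla\varphi|^2$ for any $\theta>0$. Choosing $\theta=\mu/2$ and absorbing the resulting term on the left leaves $\tfrac{\mu}{2}\int_\Omega \Lambda_{\eps,\delta}\varphi^2|\nabla u|^2\le C\int_\Omega \Lambda_{\eps,\delta}|u|^2|\nabla\varphi|^2+\big|\int_\Omega f^\alpha\varphi^2 u^\alpha\big|$; bounding the last term crudely by $\int_\Omega|f|\varphi^2|u|$ and dividing through by $\mu/2$ yields \eqref{eq:energycutint}.

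There is no serious obstacle in this lemma — it is a routine energy estimate — but the one point worth keeping track of, and the reason the bound is uniform in the contrast, is that the \emph{same} weight $\Lambda_{\eps,\delta}$ multiplies both the good term and the cross term, so Young's inequality closes with a constant depending only on $\mu$ and not on $\delta$. Likewise, keeping the source term $f$ unweighted (rather than folding it under $\Lambda_{\eps,\delta}$) is what keeps the $f$-contribution harmless for all $\delta\in(0,\infty)$; the weighted source enters only later through $f_{\eps,\delta}$ and is handled separately there.
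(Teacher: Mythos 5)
Your proof is correct and follows exactly the approach the paper takes (testing against $\varphi^2 u$, expanding, and applying ellipticity together with Young's inequality); the paper merely compresses these steps into a citation. Your closing remark about why the common weight $\Lambda_{\eps,\delta}$ on both the good term and the cross term yields $\delta$-uniformity is an accurate and useful observation that the paper leaves implicit.
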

\begin{proof} Let $v = u\varphi^2 \in H^1_0(\Omega)$. Then we have
\begin{equation*}
\int_\Omega \Lambda_{\varepsilon,\delta} (x)  A(\tfrac{x}{\eps})\nabla u \nabla v = \int_\Omega f\cdot v,
\end{equation*}
the conclusion follows by integration by parts and the ellipticity of $A$, we refer \cite[Theorem 2.1.3]{shen_periodic_2018} to the readers for details.
\end{proof}

For a boundary version of Lemma \ref{lem:energycutint}, we use the following standard notation: let $x_0\in \partial \Omega$ and $r>0$, let $\mathbf{D}_r = \mathbf{D}_r(x_0,\psi)$ be the boundary cylinder
\begin{equation*}
\mathbf{D}_r = \Phi\{(x',z) \,:\, |x'-x_0|<r, 0<z - \psi(x')<M_0 r\},
\end{equation*}
where $(x',z) \in \R^{d-1}\times \mathbb{R}$ denotes local coordinate close to $x_0$, $\Phi = \Phi_{x_0}$ is an affine transform that maps $(0,0)$ in the new coordinate system to $x_0$, and allign the subspace $\{(x',0)\,:\,x'\in \R^{d-1}\}$ to the tangent space of $T_{x_0} \partial \Omega$, and $\psi: \R^{d-1}\to \R$ is a $C^{1,\alpha}$ (or Lipschitz) function in the new coordinate whose graph (after mapped by $\Phi$) corresponds to $\partial \Omega$, and $M_0$ is a regularity character of $\psi$. Let
\begin{equation*}
\Delta_r = \partial \mathbf{D}_r \cap \partial \Omega.
\end{equation*}
Note that $\Delta_r$ is part of $\partial \Omega$ and is a proper subset of $\partial \mathbf{D}_r$. It is clear that the boundary cylinders $\{\mathbf{D}_r(x_0)\}$ are comparable with $\{B_r(x_0)\cap \Omega\}$, in the sense that one class can cover the other. 

\begin{lemma}\label{lem:energycutbdr}
	If $u\in H^1(\Omega;\R^m)$ be a weak solution of $\mathcal{L}_{\varepsilon,\delta}( u) = f$ in $\Omega$ with $u= 0$ on $\partial\Omega$, then for any $\varphi \in C^1(\mathbf{D}_R)$ with support of $\varphi$ away from $\partial \mathbf{D}_R\cap \Omega$, we have
	\begin{equation}\label{eq:cacci1}
		\int_{\mathbf{D}_R} \Lambda_{\varepsilon,\delta} (x)\varphi^2 |\nabla u|^2 \le C \int_{\mathbf{D}_R} \Lambda_{\varepsilon,\delta} (x) |u|^2 |\nabla \varphi|^2 + C\int_{\mathbf{D}_R} |f|\varphi^2 |u|.
	\end{equation}
\end{lemma}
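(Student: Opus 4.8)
The plan is to mimic the proof of the interior Caccioppoli inequality in Lemma \ref{lem:energycutint}, using the homogeneous Dirichlet condition $u=0$ on $\partial\Omega$ to make the natural test function admissible up to the portion $\Delta_R$ of $\partial\mathbf{D}_R$ that lies on $\partial\Omega$.

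First I would fix $\varphi\in C^1(\mathbf{D}_R)$ with $\mathrm{supp}\,\varphi$ away from $\partial\mathbf{D}_R\cap\Omega$ and set $v:=u\varphi^2$. The key observation is that, after extension by zero, $v\in H^1_0(\Omega;\R^m)$: on the interior portion $\partial\mathbf{D}_R\cap\Omega$ the factor $\varphi$ vanishes, while on $\Delta_R=\partial\mathbf{D}_R\cap\partial\Omega$ the factor $u$ vanishes because $u\in H^1_0(\Omega;\R^m)$ (here $g=0$). Hence $v$ is a legitimate test function in the weak formulation of $\mathcal{L}_{\eps,\delta}(u)=f$, which gives
\begin{equation*}
\int_{\mathbf{D}_R}\Lambda_{\eps,\delta}(x)\,A\big(\tfrac{x}{\eps}\big)\nabla u\,\nabla v=\int_{\mathbf{D}_R}f\cdot v.
\end{equation*}

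Then I would expand $\partial_i v^\alpha=\varphi^2\partial_i u^\alpha+2\varphi\,u^\alpha\partial_i\varphi$, so the left-hand side splits as $\int_{\mathbf{D}_R}\Lambda_{\eps,\delta}\varphi^2\,A\nabla u\,\nabla u+2\int_{\mathbf{D}_R}\Lambda_{\eps,\delta}\,\varphi\,a^{\alpha\beta}_{ij}\partial_j u^\beta u^\alpha\partial_i\varphi$. The first integral is bounded below by $\mu\int_{\mathbf{D}_R}\Lambda_{\eps,\delta}\varphi^2|\nabla u|^2$ by strong ellipticity \eqref{ellipticc}; the cross term is controlled, using $|A|\le\mu^{-1}$, the nonnegativity of the weight $\Lambda_{\eps,\delta}$, and Young's inequality, by $\tfrac{\mu}{2}\int_{\mathbf{D}_R}\Lambda_{\eps,\delta}\varphi^2|\nabla u|^2+C\int_{\mathbf{D}_R}\Lambda_{\eps,\delta}|u|^2|\nabla\varphi|^2$. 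Absorbing the first of these into the left-hand side and estimating $\big|\int_{\mathbf{D}_R}f\cdot v\big|\le\int_{\mathbf{D}_R}|f|\varphi^2|u|$ yields \eqref{eq:cacci1} with $C=C(\mu)$; in particular the constant is independent of $\eps$ and $\delta$, since the weight $\Lambda_{\eps,\delta}$ enters identically on both sides of every inequality.

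The argument is essentially routine once the test function is recognized as admissible, so I do not expect a genuine obstacle here; the only point requiring a moment's care is the double vanishing of $v=u\varphi^2$ on $\partial\mathbf{D}_R$ — through $\varphi$ on the relative (interior) boundary and through the Dirichlet condition on $\Delta_R$ — together with the remark that, because $\Lambda_{\eps,\delta}$ is a fixed positive function appearing symmetrically on both sides, the weighted Cauchy–Schwarz/Young step produces a constant depending only on the ellipticity ratio $\mu$ and not on the contrast parameter $\delta$.
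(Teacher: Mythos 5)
Your proof is correct and matches the paper's intent exactly: the paper also observes that, thanks to $u=0$ on $\Delta_R$ and $\varphi$ vanishing near $\partial\mathbf{D}_R\cap\Omega$, the function $\varphi^2 u$ lies in $H^1_0(\mathbf{D}_R)$, so the interior argument of Lemma~\ref{lem:energycutint} carries over verbatim (the paper omits the details for this reason). Your expansion of the test function, the use of strong ellipticity, and the weighted Young step with $\Lambda_{\eps,\delta}$ entering identically on both sides are exactly the intended computation.
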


The proof is exactly as in the interior case, because, thanks to the zero Dirichlet data $u=0$ on $\Delta_R$, the function $\varphi^2 u$ belongs to $H^1_0(\mathbf{D}_R)$. We hence omit the details.

\subsection{Caccioppoli inequalities for small $\delta$}

For relatively soft inclusions, the Caccioppoli inequality can be obtained by the standard method provided that the source term $f$ is of order $\delta$ in the inclusions. Recall that for $\delta \in (0,1]$, $f_{\eps,\delta}$ is defined by \eqref{eq:modf}.

Recall that $\widetilde{\omega}$ denotes an enlarging dilation of $\omega$ so that $\overline{\omega}\subset \widetilde{\omega} \subset  Q_{7/8}(0)$, and $\widetilde{\omega}^{\mathbf{n}}_\eps=\eps(\mathbf{n}+\widetilde{\omega})$ (simplified by $\widetilde{\omega}_{\varepsilon}$) is the dilation of $\omega^{\mathbf{n}}_{\varepsilon}$ (simplified by $\omega_{\varepsilon}$). The next lemma provides a way to control the energy \emph{inside} $\omega_\eps$ of a solution to the elliptic system by its energy \emph{outside}, namely the energy in $\womega\setminus \overline{\omega}_\eps$. 

\begin{lemma}\label{lemma5.3} If $u\in H^1(\womega_\eps;\R^m)$ solves $\mathcal{L}_{\eps,1}(u) = f$ in $\omega_\eps$, then
\begin{equation*}
\|\nabla u\|_{L^2(\omega_\eps)} \le C\left(\eps\|f\|_{L^2(\omega_\eps)} + \|\nabla u\|_{L^2(\womega_\eps\setminus \ol \omega_\eps)}\right).
\end{equation*}
\end{lemma}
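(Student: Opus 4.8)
The plan is to reduce the statement to the unit scale by the rescaling property of $\mathcal{L}_{\eps,\delta}$ stated in Section~\ref{sec:caccioppoli}, and then prove a fixed-scale inequality by a standard energy (Caccioppoli-with-extension) argument. First I would set $v(y) := u(\eps y)$ for $y \in \widetilde\omega$ and $\tilde f(y) := \eps^2 f(\eps y)$, so that $\mathcal{L}_{1,1}(v) = \tilde f$ in $\omega$; the claimed bound is then equivalent, after undoing the scaling and using $\|\nabla v\|_{L^2(\omega)} = \eps^{1-d/2}\|\nabla u\|_{L^2(\omega_\eps)}$ (similarly on the annulus) and $\|\tilde f\|_{L^2(\omega)} = \eps^{2-d/2}\|f\|_{L^2(\omega_\eps)}$, to the scale-invariant statement
\begin{equation*}
\|\nabla v\|_{L^2(\omega)} \le C\left(\|\tilde f\|_{L^2(\omega)} + \|\nabla v\|_{L^2(\widetilde\omega \setminus \ol\omega)}\right),
\qquad \mathcal{L}(v) = \tilde f \text{ in } \omega.
\end{equation*}
Here the constant $C$ depends only on $d,m,\mu,\lambda,\tau,\omega,\widetilde\omega$, in accordance with the conventions of the paper.

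To prove the unit-scale inequality, I would let $c = \fint_{\widetilde\omega \setminus \ol\omega} v$ and apply the standard (bounded) extension operator to $v - c$ on the annulus $\widetilde\omega \setminus \ol\omega$, exactly as in the proof of Lemma~\ref{lem:extension}, to obtain $w \in H^1(\widetilde\omega)$ with $w = v - c$ on $\widetilde\omega \setminus \ol\omega$ and $\|\nabla w\|_{L^2(\widetilde\omega)} \le C\|\nabla v\|_{L^2(\widetilde\omega\setminus\ol\omega)}$. Then $v - c - w \in H^1_0(\widetilde\omega)$ and vanishes outside $\omega$, so it is a legitimate test function for the equation $\mathcal{L}(v - c) = \tilde f$ in $\omega$ (note $\mathcal{L}(c)=0$). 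Testing gives
\begin{equation*}
\int_\omega A\nabla v \cdot \nabla(v - c - w) = \int_\omega \tilde f \cdot (v - c - w),
\end{equation*}
hence by ellipticity and $\nabla(v-c) = \nabla v$,
\begin{equation*}
\mu \int_\omega |\nabla v|^2 \le \int_\omega A\nabla v \cdot \nabla w + \int_\omega \tilde f \cdot (v - c - w).
\end{equation*}
The first term is controlled by $\|\nabla v\|_{L^2(\omega)}\|\nabla w\|_{L^2(\omega)} \le C\|\nabla v\|_{L^2(\omega)}\|\nabla v\|_{L^2(\widetilde\omega\setminus\ol\omega)}$; for the second term I would bound $\|v - c - w\|_{L^2(\omega)} \le \|v-c-w\|_{H^1_0(\widetilde\omega)} \le C(\|\nabla v\|_{L^2(\omega)} + \|\nabla v\|_{L^2(\widetilde\omega\setminus\ol\omega)})$ using Poincaré on $H^1_0(\widetilde\omega)$ together with the extension bound. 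Absorbing the $\|\nabla v\|_{L^2(\omega)}$ contributions into the left-hand side via Young's inequality yields the desired unit-scale estimate.

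The only mild subtlety — the ``hard part'', such as it is — is making sure the test function argument is legitimate and scale-clean: one must check that $v-c-w$ really lies in $H^1_0(\widetilde\omega_\eps)$ after rescaling (so that it pairs correctly against $\mathcal{L}_{\eps,1}(u)$ with no boundary term on $\partial\widetilde\omega_\eps$), and that every constant produced is independent of $\eps$ because of the scaling invariance of each inequality used (extension bound, Poincaré–Wirtinger on the annulus, Poincaré on $H^1_0$). Since $\dist(\omega,\partial Y)>0$ and $\ol\omega \subset \widetilde\omega \Subset Q_{7/8}(0)$ by $(\mathbf{H2})$, the annular region $\widetilde\omega\setminus\ol\omega$ is a fixed Lipschitz domain with Lipschitz character independent of $\eps$, so the required scale-invariant inequalities hold with uniform constants, and the reduction goes through. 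Everything else is the routine Caccioppoli manipulation already used repeatedly in Section~\ref{proofof theorem1}.
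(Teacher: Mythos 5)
Your proposal is correct and follows essentially the same route as the paper's proof: rescale to $\eps = 1$, extend $u - \fint_{\womega\setminus\ol\omega} u$ from the annulus $\womega\setminus\ol\omega$ to $\womega$, and test the equation against the resulting compactly supported $H^1_0$ function. Your write-up just spells out the scaling bookkeeping and the constant subtraction (which the paper's terse one-line proof leaves implicit), so there is no substantive difference.
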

\begin{proof} This result is essentially Lemma 2.3 of \cite{shen_large-scale_2021} and it is proved by a simple extension argument. By rescaling we may assume $\eps =1$. Let $v$ be the extension of $u - \fint_{\womega\setminus \ol \omega}u$ from $\womega\setminus \ol\omega$ to $\womega$, then $\|v\|_{L^2(\womega)} \le C\|\nabla u\|_{L^2(\womega\setminus \ol\omega)}$. The desired result then follows by testing $u-v\in H^1_0(\omega)$ against the equation $\mathcal{L}_{1,1}(u)=f$. 
\end{proof}

\begin{theorem}[Caccioppoli inequalities for small $\delta$]
\label{lem:cacciosmalldel}
Suppose $\delta\in (0,1]$ and let $f_{\eps,\delta}$ be defined by \eqref{eq:modf}. 
\begin{itemize}
	\item[{\upshape(i)}] \emph{(Interior estimate)} Let $R\ge 4\eps$ and $Q_{2R}$ be a cube in $\Omega$. If $\mathcal{L}_{\eps,\delta}(u)=f_{\eps,\delta}$ in $Q_{2R}$, 
	then
\begin{equation}
\label{eq:cacciointsd}
\fint_{Q_R} |\nabla u|^2 \le C\left(\frac{1}{R^2}\fint_{Q_{2R}} |u|^2 + R^2\fint_{Q_{2R}}|f|^2\right).
\end{equation}
\item[{\upshape(ii)}] \emph{(Boundary estimate)} Let $R\ge 4\eps$   and $\mathbf{D}_R$ be a boundary cylinder. If $\mathcal{L}_{\eps,\delta}(u)=f_{\eps,\delta}$ in $\mathbf{D}_{2R}$ and $u=g$ in $\Delta_{2R}$, then
\begin{equation}
\label{eq:cacciobdrsd}
\fint_{\mathbf{D}_{3R/2}}|\nabla u|^2  \le C\left\{\frac{1}{R^2}\fint_{\mathbf{D}_{2R}} |u|^2 + R^2 \fint_{\mathbf{D}_{2R}}|f|^2 + \frac{1}{R^2}\|g\|^2_{L^\infty(\Delta_{2R})} + \|\nabla_{\rm tan} g\|^2_{L^\infty(\Delta_{2R})}\right\}.
\end{equation}
\end{itemize}
\end{theorem}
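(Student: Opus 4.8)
The plan is to reduce both inequalities to the cut-off energy estimates of Lemma~\ref{lem:energycutint} and Lemma~\ref{lem:energycutbdr}, which by themselves only control the \emph{weighted} energy $\int\Lambda_{\eps,\delta}\varphi^2|\nabla u|^2$, and then to upgrade this to the full (unweighted) energy by recovering the energy inside the inclusions via Lemma~\ref{lemma5.3}. The crucial observation is that $f_{\eps,\delta}=\Lambda_{\eps,\delta}f$, so that inside each inclusion $\omega_\eps^{\mathbf{n}}$ the equation $\mathcal{L}_{\eps,\delta}(u)=f_{\eps,\delta}$ reads $-\mathrm{div}(\delta A(\tfrac{x}{\eps})\nabla u)=\delta f$, i.e. $\mathcal{L}_{\eps,1}(u)=f$ in $\omega_\eps^{\mathbf{n}}$; hence Lemma~\ref{lemma5.3} applies on every $\omega_\eps^{\mathbf{n}}\subset\Omega$ (with $\eps$ replaced by the actual scale).

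For part (i), pick $\varphi\in C^1_0(Q_{2R})$ with $\varphi\equiv 1$ on $Q_{3R/2}$ and $|\nabla\varphi|\le C/R$. Feeding $\varphi$ into \eqref{eq:energycutint}, using $\Lambda_{\eps,\delta}\le 1$, $|f_{\eps,\delta}|\le|f|$, and Young's inequality $\int_{Q_{2R}}|f||u|\le\frac{R^2}{2}\int_{Q_{2R}}|f|^2+\frac1{2R^2}\int_{Q_{2R}}|u|^2$ on the last term yields
\[
\int_{\Omega_\eps\cap Q_{3R/2}}|\nabla u|^2\le\frac{C}{R^2}\int_{Q_{2R}}|u|^2+CR^2\int_{Q_{2R}}|f|^2.
\]
Since $R\ge 4\eps$, every inclusion $\omega_\eps^{\mathbf{n}}$ meeting $Q_R$, together with its enlargement $\widetilde\omega_\eps^{\mathbf{n}}$, lies in $Q_{3R/2}\subset Q_{2R}$; applying Lemma~\ref{lemma5.3} to each such inclusion and summing (the enlarged bubbles overlap with bounded multiplicity and $\widetilde\omega_\eps^{\mathbf{n}}\setminus\overline\omega_\eps^{\mathbf{n}}\subset\Omega_\eps$) gives
\[
\int_{D_\eps\cap Q_R}|\nabla u|^2\le C\eps^2\int_{Q_{2R}}|f|^2+C\int_{\Omega_\eps\cap Q_{3R/2}}|\nabla u|^2.
\]
Adding the two displays, absorbing $\eps^2\le R^2/16$ into the $f$-term, and dividing by $|Q_R|\sim|Q_{2R}|$ gives \eqref{eq:cacciointsd}.

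For part (ii), the only new issue is the inhomogeneous data $u=g$ on $\Delta_{2R}$. Let $\bar g$ be the extension of $g$ to $\mathbf{D}_{2R}$ that is constant in the normal direction in the flattening coordinates of the cylinder, so that $\|\bar g\|_{L^\infty(\mathbf{D}_{2R})}\le\|g\|_{L^\infty(\Delta_{2R})}$ and $\|\nabla\bar g\|_{L^\infty(\mathbf{D}_{2R})}\le C\|\nabla_{\mathrm{tan}}g\|_{L^\infty(\Delta_{2R})}$. Choose $\varphi\in C^1(\mathbf{D}_{2R})$ supported away from $\partial\mathbf{D}_{2R}\cap\Omega$, equal to $1$ on $\mathbf{D}_{3R/2}$ and on all enlarged bubbles meeting it (which, since $R\ge4\eps$, stay at distance $\gtrsim R$ from $\partial\mathbf{D}_{2R}\cap\Omega$, and, by $(\mathbf{H2})$, inside $\Omega$), with $|\nabla\varphi|\le C/R$. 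Then $\varphi^2(u-\bar g)\in H^1_0(\Omega)$, since it vanishes on $\Delta_{2R}$ (where $u=g=\bar g$) and outside $\mathrm{supp}\,\varphi$. Testing the equation for $u$ against $\varphi^2(u-\bar g)$, expanding $\nabla(\varphi^2(u-\bar g))=\varphi^2\nabla u-\varphi^2\nabla\bar g+2\varphi(u-\bar g)\nabla\varphi$, and using ellipticity together with Young's inequality to absorb the $\int\Lambda_{\eps,\delta}\varphi^2|\nabla u|^2$ terms, one obtains the analogue of \eqref{eq:cacci1} with the extra right-hand contributions coming from the cross terms $\Lambda_{\eps,\delta}\varphi^2A\nabla u\cdot\nabla\bar g$ and $\Lambda_{\eps,\delta}\varphi(u-\bar g)A\nabla u\cdot\nabla\varphi$ and from $|f|\varphi^2|\bar g|$, and with $|u-\bar g|^2\le2|u|^2+2|\bar g|^2$ in the $|\nabla\varphi|^2$ term. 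Bounding these by $\|\nabla\bar g\|^2_{L^\infty}$, $\|\bar g\|^2_{L^\infty}$ and $\|f\|\,\|\bar g\|_{L^\infty}$ over $\mathbf{D}_{2R}$, applying Young's inequality once more, and dividing by $|\mathbf{D}_{2R}|\sim R^d$ gives control of $\fint_{\Omega_\eps\cap\mathbf{D}_{3R/2}}|\nabla u|^2$ by $C\{\frac1{R^2}\fint_{\mathbf{D}_{2R}}|u|^2+R^2\fint_{\mathbf{D}_{2R}}|f|^2+\frac1{R^2}\|g\|^2_{L^\infty(\Delta_{2R})}+\|\nabla_{\mathrm{tan}}g\|^2_{L^\infty(\Delta_{2R})}\}$. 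Finally, repeating the inclusion step (Lemma~\ref{lemma5.3} on inclusions meeting $\mathbf{D}_{3R/2}$) exactly as in part (i) and adding yields \eqref{eq:cacciobdrsd}.

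The routine parts are the cut-off energy manipulations and the summation over $\{\omega_\eps^{\mathbf{n}}\}$; the main obstacle is the boundary estimate, where one must produce an admissible test function that correctly encodes $u=g$ on $\Delta_{2R}$ and then extract exactly the $\|g\|_{L^\infty}$ and $\|\nabla_{\mathrm{tan}}g\|_{L^\infty}$ dependence (rather than an $H^1$-type norm of $g$), which is what forces the normal-constant extension $\bar g$ and a careful accounting of the two cross terms above. The geometric bookkeeping near $\partial\Omega$ — that the inclusions meeting $\mathbf{D}_{3R/2}$, and the enlarged bubbles used in Lemma~\ref{lemma5.3}, stay inside the relevant cylinder and inside $\Omega$ — is handled by $\mathrm{dist}(\omega_\eps^{\mathbf{n}},\partial\Omega)>\kappa\eps$ from $(\mathbf{H2})$ together with $R\ge4\eps$.
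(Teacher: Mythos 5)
Your proof of part (i) follows the paper's own argument essentially verbatim: apply the weighted cut-off estimate (Lemma \ref{lem:energycutint}) to control the energy outside the inclusions, then recover the energy inside via Lemma \ref{lemma5.3}, using the observation that the factors of $\delta$ in $\Lambda_{\eps,\delta}A$ and in $f_{\eps,\delta}=\Lambda_{\eps,\delta}f$ cancel, so the interior equation is $\mathcal{L}_{\eps,1}(u)=f$ in each $\omega^{\mathbf{n}}_\eps$.

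For part (ii), you take a genuinely different route. The paper first constructs an auxiliary $v$ solving $\mathcal{L}_{\eps,\delta}(v)=f_{\eps,\delta}$ in a slightly enlarged boundary cylinder with Dirichlet data equal to a suitable cut-off extension of $g$ on $\Delta$ and zero elsewhere, controls $\|\nabla v\|_{L^2}$ by invoking the global uniform energy estimate (Theorem \ref{thm:unifenergy}), and then observes that $u-v$ has zero Dirichlet data on $\Delta_{3R/2}$ so the boundary case reduces to the zero-datum case, which in turn proceeds as the interior one via Lemma \ref{lem:energycutbdr}. You instead test directly against $\varphi^2(u-\bar g)$ with $\bar g$ the normal-constant extension of $g$ in the flattened coordinates, and absorb the two cross terms and the $|f|\varphi^2|\bar g|$ term by Young. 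This is more elementary (it bypasses Theorem \ref{thm:unifenergy} and the auxiliary boundary-value problem entirely) and produces the $L^\infty$ and $C^1$ norms of $g$ directly instead of passing through $\|g\|_{H^{1/2}}$. Both approaches are correct.

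One small overstatement to flag: at the borderline $R=4\eps$ an enlarged bubble $\widetilde\omega^{\mathbf{n}}_\eps$ meeting $\mathbf{D}_{3R/2}$ reaches out to $\mathbf{D}_{3R/2+2\eps}=\mathbf{D}_{2R}$, so it need not stay at distance $\gtrsim R$ from $\partial\mathbf{D}_{2R}\cap\Omega$ as you claim. Insisting that $\varphi\equiv 1$ on all such enlarged bubbles while $\mathrm{supp}\,\varphi$ stays away from $\partial\mathbf{D}_{2R}\cap\Omega$ would then force $\|\nabla\varphi\|_\infty$ to blow up. This is easily repaired by raising the threshold to $R\ge c\eps$ for a slightly larger universal $c$ (the theorem's constant $C$ can absorb this, and the paper itself is not rigid about the exact radii), or by applying Lemma \ref{lemma5.3} only to inclusions contained in $\mathbf{D}_{3R/2}$ and controlling the thin boundary shell separately. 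It does not invalidate the argument.
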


\begin{proof}[Proof of (i) -- the interior case] 
Let $\varphi \in C_0^1(Q_{2R};[0,1])$ such that $\varphi  \equiv 1$ in $Q_{3R/2}$ and $\| \nabla \varphi \|_{L^{\infty}} \leq C/R$, plug it in \eqref{eq:energycutint} and replacing $f$ there by $f_{\eps,\delta}$. Using Young's inequality, we obtain
 \begin{equation*}
 \begin{aligned}
	 	\int_{Q_{3R/2}} \Lambda_{\varepsilon,\delta}(x) |\nabla u|^2\leq \ &\frac{C}{R^2}\int_{Q_{2R}} \Lambda_{\varepsilon,\delta} (x)| u|^2 +C R^2 \int_{Q_{2R}} \Lambda_{\varepsilon,\delta}(x)|f|^2\\
	 	\leq  \ &\frac{C}{R^2}\int_{Q_{2R}} | u|^2 + CR^2 \int_{Q_{2R}} |f|^2.
\end{aligned}
	 \end{equation*}
	Now for each $\omega_\eps \subset D_\eps$ that has non-empty intersection with $Q_R$, its enlarged domain $\womega_\eps$ is contained in $Q_{R+2\eps} \subset Q_{3R/2}$. By Lemma \ref{lemma5.3}, we have
	\begin{equation*}
	\int_{Q_R \cap D_\eps} |\nabla u|^2 \le C\left(\int_{Q_{3R/2}\cap \Omega_\eps} |\nabla u|^2 + \eps^2 \int_{Q_R\cap D_\eps} \delta^{-2}|f_{\eps,\delta}|^2\right).
	\end{equation*}
	We combine the two estimates above and get
	\begin{equation*}
	\begin{aligned}
		\int_{Q_R} |\nabla u |^2\,dx \leq \ &C\left(\frac{1}{R^2} \int_{Q_{2R} } | u|^2  +R^2 \int_{Q_{2R}} |f|^2 + \varepsilon^2 \int_{Q_R\cap D_\eps} \delta^{-2}|f_{\eps,\delta}|^2\right)\\
		\leq \ & C\left(\frac{1}{R^2}\int_{Q_{2R}} |u|^2 + R^2\int_{Q_{2R}} |f|^2\right),
	\end{aligned}
	\end{equation*}
	here we used the fact that $\eps < R$ to deduce the second line. This completes the proof.
\end{proof}

\begin{proof}[Proof of (ii) -- the boundary case] By rescaling we assume $R=1$. We first solve the problem
\begin{equation*}
 \mathcal{L}_{\eps,\delta}(v) = h \ \  \text{in }\, \mathbf{D}_{2+4\eps}, \qquad v=\widetilde g \ \  \text{on }\, \partial \mathbf{D}_{2}.
 \end{equation*} 
Here, we let the inclusions in $\mathbf{D}_{2+4\eps}$ be exactly those of $D_\eps$ that had non-empty intersections with $\mathbf{D}_{2}$. Let $F$ denote the union of these inclusions, then the pair $(\mathbf{D}_{2+4\eps},F)$ satisfying Assumption $\mathbf{G}$ is a type-II perforated domain. The source term $h$ is still $\Lambda_{\eps,\delta}(x) f(x)$ except that the weight $\Lambda_{\eps,\delta}$ are determined by the locations of the inclusions just described. The boundary data $\widetilde g$ is an extension of $g\rvert_{\Delta_1}$ that is compactly supported in $\Delta_{3/2}$ and is zero on $\partial \mathbf{D}_2\cap \Omega$. We can apply Theorem \ref{thm:unifenergy} to get
\begin{equation*}
\|\nabla v\|_{L^2(\mathbf{D}_2)} \le C\left(\|f\|_{L^2(\mathbf{D}_2)} + \|g\|_{H^{1/2}(\Delta_2)}\right).
\end{equation*}
Note that $\mathcal{L}_{\eps,\delta}(u-v) = 0$ in $\mathbf{D}_{3/2}$ and $u-v=0$ in $\Delta_{3/2}$, so the general setting is reduced to the special setting of $g=0$ (in fact also $f=0$). Thanks to Lemma \ref{lem:energycutbdr}, the special setting is exactly the same as in the interior setting. 
\end{proof}

\subsection{Caccioppoli inequalities for large $\delta$}
\label{sec:caccioppoli}

For relatively hard obstacles, the Caccioppoli inequalities are harder to get. Shen \cite{shen_large-scale_2021} nevertheless established the interior version with zero source term $f$. We use his method to prove the boundary version and in a general setting.

\begin{theorem}[Caccioppoli inequality for large $\delta$ without source term]
\label{thm:caccioldshen}
Suppose $\delta \in (1,\infty)$. Let $R\ge 16\eps$, if $u\in H^1(Q_{2R};\R^m)$ that solves $\mathcal{L}_{\eps,\delta}(u)=0$ in $Q_{2R}$, and for any positive integer $\ell \in \N^*$, we have
\begin{equation}
\label{eq:caccioldshen}
\int_{Q_R} |\nabla u|^2 \le C_{\ell}\left(\frac{1}{R^2} \int_{Q_{2R}} |u|^2 + \left(\frac{\eps}{R}\right)^{2\ell} \int_{Q_{2R}}|\nabla u|^2 \right).
\end{equation}
\end{theorem}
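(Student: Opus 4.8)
The plan is to follow the method of Shen \cite{shen_large-scale_2021}, which establishes the interior Caccioppoli inequality for large $\delta$ with zero source term; I would reproduce that argument. Using the rescaling property of $\mathcal{L}_{\eps,\delta}$ recorded above, I would first normalize $R=1$, so that $\eps\in(0,1/16]$ and the claim becomes $\int_{Q_1}|\nabla u|^2\le C_\ell\big(\int_{Q_2}|u|^2+\eps^{2\ell}\int_{Q_2}|\nabla u|^2\big)$.

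The next step is to reduce everything to the energy of $u$ in the matrix part $\Omega_\eps$. For each inclusion $\omega_\eps^{\mathbf{n}}$ meeting $Q_1$, the enlarged inclusion $\widetilde\omega_\eps^{\mathbf{n}}$ lies in $Q_{1+2\eps}\subset Q_{9/8}$, and these enlarged inclusions are pairwise disjoint; since the weight is constant inside $\omega_\eps^{\mathbf{n}}$, $u$ solves $\mathcal{L}_{\eps,1}(u)=0$ there, and Lemma~\ref{lemma5.3} with $f=0$ gives $\|\nabla u\|_{L^2(\omega_\eps^{\mathbf{n}})}\le C\|\nabla u\|_{L^2(\widetilde\omega_\eps^{\mathbf{n}}\setminus\overline\omega_\eps^{\mathbf{n}})}$. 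Summing over $\mathbf{n}$ and using $\widetilde\omega_\eps^{\mathbf{n}}\setminus\overline\omega_\eps^{\mathbf{n}}\subset\Omega_\eps$ yields $\int_{Q_1\cap D_\eps}|\nabla u|^2\le C\int_{Q_{9/8}\cap\Omega_\eps}|\nabla u|^2$, hence $\int_{Q_1}|\nabla u|^2\le C\int_{Q_{9/8}\cap\Omega_\eps}|\nabla u|^2$. Thus the whole statement rests on a Caccioppoli-type bound for the matrix energy.

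The core ingredient is the case $\ell=1$: for $u$ solving $\mathcal{L}_{\eps,\delta}(u)=0$ in $Q_{2\rho}$ with $\rho\ge16\eps$ and $\rho\le r<s\le2\rho$,
\begin{equation}\label{eq:plancacci1}
\int_{Q_r}|\nabla u|^2\le\frac{C}{(s-r)^2}\int_{Q_s}|u|^2+C\Big(\frac{\eps}{s-r}\Big)^2\int_{Q_s}|\nabla u|^2,
\end{equation}
with $C$ \emph{independent of} $\delta$. The route is a Caccioppoli argument: test $\mathcal{L}_{\eps,\delta}(u)=0$ against $\varphi^2(u-c)$, with $\varphi$ a cutoff between $Q_r$ and $Q_s$ and $c$ a suitable constant. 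The contribution over $\Omega_\eps$ is controlled by Poincar\'e--Wirtinger on the perforated cube $Q_s\cap\Omega_\eps$; the genuinely delicate term is the $\delta$-weighted part $\delta\int_{D_\eps}|u-c|^2$, and handling it with a $\delta$-independent constant is the crux of Shen's argument. The mechanism is to exploit the near-constancy of $u$ inside each inclusion together with the inclusion-energy estimate $\delta\|\nabla u\|_{L^2(\omega_\eps^{\mathbf{n}})}\le C\|\nabla u\|_{L^2(\widetilde\omega_\eps^{\mathbf{n}}\setminus\overline\omega_\eps^{\mathbf{n}})}$ (Lemma 3.1 of \cite{shen_large-scale_2021}), which itself follows from the transmission condition $\partial_\nu u\rvert_+=\delta\,\partial_\nu u\rvert_-$ and the identity $\int_{\partial\omega_\eps^{\mathbf{n}}}\partial_\nu u\rvert_-=0$ obtained by integrating the equation over $\omega_\eps^{\mathbf{n}}$. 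I expect this $\delta$-uniform Caccioppoli on the matrix to be the main obstacle, since the naive weighted Caccioppoli is useless here: $\delta$ multiplies $|u-c|^2$ over $D_\eps$, while the inclusion-averages of $u$ need not be small.

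Finally, the general $\ell$ would follow by induction, transferring smallness through discrete difference operators. Because $\Lambda_{\eps,\delta}(\cdot)$ and $A(\cdot/\eps)$ are $\eps\Z^d$-periodic, for any $h\in\eps\Z^d$ the translate $u(\cdot+h)$ solves the same homogeneous equation, hence so does every finite difference $D^{\mathbf{k}}_\eps u$ of order $|\mathbf{k}|=k$. Applying \eqref{eq:plancacci1} to $D^{\mathbf{k}}_\eps u$ on suitably nested cubes, together with the elementary comparisons $\|D^{\mathbf{k}}_\eps u\|_{L^2}\lesssim\|\nabla D^{\mathbf{k}-1}_\eps u\|_{L^2}$ (on a slightly larger cube) and $\|\nabla D^{\mathbf{k}}_\eps u\|_{L^2}\lesssim\eps^{-1}\|\nabla D^{\mathbf{k}-1}_\eps u\|_{L^2}$, upgrades the exponent from $2k$ to $2(k+1)$ at the cost of a constant depending on $k$. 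Since passing from a $k$-fold to a $(k+1)$-fold difference costs only $O(\eps)$ of cube radius, after $\ell$ steps one recovers \eqref{eq:caccioldshen}; the remaining bookkeeping of cube radii is routine and is carried out in \cite{shen_large-scale_2021}.
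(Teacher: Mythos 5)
Your outline --- reduce to matrix energy via Lemma~\ref{lemma5.3}, establish an intermediate Caccioppoli estimate, iterate --- has the right shape and matches the paper's treatment (Lemmas~\ref{lem:energyinlarged},~\ref{lem:cacciold+}, Theorem~\ref{lem:cacciold+i}, which reprove Shen's Theorem~3.3 with a source term), but two of your key steps do not hold as written. The ``core ingredient'' you propose, with a $\delta$-independent constant and the factor $(\eps/(s-r))^2$ in front of the gradient term, is already the $\ell=1$ case of the theorem, not a primitive lemma; it cannot come from a single Caccioppoli step. What one step actually yields (the paper's Lemma~\ref{lem:cacciold+}, i.e.\ Shen's Lemma~3.2) is
\begin{equation*}
\int_{Q_r}|\nabla u|^2 \le \frac{C}{h(s-r)^2}\int_{Q_s}|u|^2 + \left(h+\frac{C}{\delta}\right)\int_{Q_s}|\nabla u|^2,
\end{equation*}
with a free parameter $h\in(0,1)$, and converting $(h+C\delta^{-1})$ into a power of $\eps/R$ requires roughly $\log_2(R/\eps)$ iterations over a dyadic ladder of nested cubes: taking $h=2^{-2\ell-2}$ and $\delta\gtrsim 2^{2\ell}$ (the bounded-$\delta$ regime being classical), the accumulated factor $(h+C\delta^{-1})^k$ is $\lesssim(\eps/R)^{2\ell}$. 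More importantly, the mechanism you describe for killing $\delta\int_{D_\eps}|u-c|^2$ --- ``near-constancy of $u$ inside each inclusion'' together with the inclusion-energy estimate --- does not work: $u$ may indeed be nearly constant on each $\omega_\eps^{\mathbf n}$, but those local constants vary from inclusion to inclusion and are far from a single global $c$, so $\delta\int_{D_\eps}|u-c|^2$ remains uncontrolled. The missing idea is to modify the \emph{test function}: take $\phi=u\varphi^2-\sum_{\mathbf n}w_{\mathbf n}$ where $w_{\mathbf n}\in H^1_0(\womega_\eps^{\mathbf n})$ extends $(u\varphi^2-c_{\mathbf n})\rvert_{\omega_\eps^{\mathbf n}}$ with $c_{\mathbf n}$ the inclusion average, so $\phi$ is exactly constant on each $\omega_\eps^{\mathbf n}$, $\nabla\phi\equiv0$ in $D_\eps$, and the $\delta$-weighted pairing disappears from the identity $\int\Lambda_{\eps,\delta}A(\tfrac{\cdot}{\eps})\nabla u\cdot\nabla\phi=0$ entirely. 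The inclusion-energy bound you cite (Lemma~\ref{lem:energyinlarged}) then controls the residual cross-term over $\womega_\eps^{\mathbf n}\setminus\ol{\omega}_\eps^{\mathbf n}$ and delivers the $C\delta^{-1}$ factor.

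Your route to general $\ell$ via discrete differences departs from the paper's dyadic iteration and, as written, does not close. The correct comparison inequalities are $\|D^{\mathbf k}_\eps u\|_{L^2}\lesssim\eps\|\nabla D^{\mathbf k-1}_\eps u\|_{L^2}$ and $\|\nabla D^{\mathbf k}_\eps u\|_{L^2}\lesssim\|\nabla D^{\mathbf k-1}_\eps u\|_{L^2}$, not what you stated; with your inequalities the powers of $\eps$ cancel rather than accumulate, and even with the corrections the scheme produces decay of $\|\nabla D^{\mathbf k}_\eps u\|$ in terms of $\|\nabla u\|$ rather than the desired Caccioppoli bound on $\|\nabla u\|$ itself. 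Difference operators do enter Shen's argument, but in the passage from Caccioppoli to large-scale $C^1$ regularity, not in the proof of the Caccioppoli inequality, which is obtained by the dyadic iteration described above.
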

This result is precisely Theorem 3.3 of Shen \cite{shen_large-scale_2021} after the rescaling $u \to \tilde u(\cdot) = u(\eps \cdot)$. Note that the source term is zero in the equation. Using the method in the proof of item (ii) of Theorem \ref{lem:cacciosmalldel}, we can get the general setting with a nonzero source $f$. 

On the other hand, we may repeat the argument of Shen while allowing the presence of the source term. The next three lemmas provide the detailed process.

\begin{lemma}\label{lem:energyinlarged} Let $\delta \in (1,\infty)$ and let $\omega_\eps \subset D_\eps$ be a typical obstacle. If $u\in H^1(\womega_\eps;\R^m)$ solves $\mathcal{L}_{\eps,\delta}(u) = f$ in $\womega_\eps$, we have
\begin{equation*}
\delta \|\nabla u\|_{L^2(\omega_\eps)} \le C\left\{\|f\|_{L^p(\womega_\eps)} + \|\nabla u\|_{L^2(\womega_\eps \setminus \ol\omega_\eps)} \right\},
\end{equation*}
for $p=\frac{2d}{d+2}$ if $d\ge 3$ and $p>1$ for $d=2$. 
\end{lemma}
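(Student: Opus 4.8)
The plan is to reduce to the unit scale and then run a single, carefully chosen energy estimate on the enlarged obstacle.

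First I would rescale: setting $\tilde u(\cdot)=u(\eps\,\cdot)$ and $\tilde f(\cdot)=\eps^2 f(\eps\,\cdot)$ and using periodicity of $A$, the hypothesis turns into $-\mathrm{div}\big(\Lambda_\delta A\nabla\tilde u\big)=\tilde f$ in $\womega$, with $\Lambda_\delta\equiv\delta$ on $\omega$ and $\Lambda_\delta\equiv1$ on $\womega\setminus\ol\omega$. Counting powers of $\eps$ shows the claimed inequality is scale invariant exactly when $p=\tfrac{2d}{d+2}$ (for $d=2$ the exponent of $\eps$ produced by the rescaling is $2-2/p>0$, hence harmless since $\eps<1$), so it suffices to treat $\eps=1$. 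For orientation, note that the equation then has the transmission structure
\[
-\mathrm{div}(A\nabla u)=\delta^{-1}f\ \text{ in }\omega,\qquad -\mathrm{div}(A\nabla u)=f\ \text{ in }\womega\setminus\ol\omega,\qquad \partial_\nu u\big|_-=\delta^{-1}\,\partial_\nu u\big|_+\ \text{ on }\partial\omega ,
\]
so the conormal data felt from inside $\omega$ is already of size $O(\delta^{-1})$; the goal is to convert this into a gain of a full power of $\delta$.

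Next I would build the test function. Put $\bar u_\omega=\fint_\omega u$, and let $\Psi\in H^1(\womega\setminus\ol\omega)$ be an extension of the trace of $u-\bar u_\omega$ on $\partial\omega$ that is supported away from $\partial\womega$, produced by a fixed bounded extension operator $H^{1/2}(\partial\omega)\to H^1(\womega\setminus\ol\omega)$ (e.g.\ an $H^1(\R^d)$ trace extension restricted to the exterior of $\omega$ and multiplied by a cutoff that is $1$ near $\partial\omega$ and vanishes near $\partial\womega$). By the trace bound and Poincar\'e--Wirtinger on $\omega$,
\[
\|\Psi\|_{H^1(\womega\setminus\ol\omega)}\le C\,\|u-\bar u_\omega\|_{H^{1/2}(\partial\omega)}\le C\,\|u-\bar u_\omega\|_{H^1(\omega)}\le C\,\|\nabla u\|_{L^2(\omega)} .
\]
Since $\Psi$ and $u-\bar u_\omega$ share the same trace on $\partial\omega$, the function $w:=(u-\bar u_\omega)\,\mathbbm{1}_{\omega}+\Psi\,\mathbbm{1}_{\womega\setminus\ol\omega}$ lies in $H^1_0(\womega;\R^m)$ and is admissible in the weak formulation of $-\mathrm{div}(\Lambda_\delta A\nabla u)=f$.

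Then I would test with $w$. On $\omega$ one has $\nabla w=\nabla u$, so by ellipticity the $\delta$-weighted part is $\delta\int_\omega A\nabla u\cdot\nabla u\ge\mu\,\delta\,\|\nabla u\|_{L^2(\omega)}^2$; on $\womega\setminus\ol\omega$ one has $\nabla w=\nabla\Psi$. Rearranging,
\[
\mu\,\delta\,\|\nabla u\|_{L^2(\omega)}^2\le\Big|\int_{\womega\setminus\ol\omega}\!A\nabla u\cdot\nabla\Psi\Big|+\Big|\int_\omega f\,(u-\bar u_\omega)\Big|+\Big|\int_{\womega\setminus\ol\omega}\!f\,\Psi\Big| .
\]
The first term is $\le C\|\nabla u\|_{L^2(\womega\setminus\ol\omega)}\|\nabla u\|_{L^2(\omega)}$ by the bound on $\Psi$. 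For the two source terms I would use H\"older with the conjugate pair $(p,p')$, $p'=\tfrac{2d}{d-2}$ ($p'$ any finite exponent when $d=2$), together with Sobolev--Poincar\'e, $\|u-\bar u_\omega\|_{L^{p'}(\omega)}\le C\|\nabla u\|_{L^2(\omega)}$, and $\|\Psi\|_{L^{p'}(\womega\setminus\ol\omega)}\le C\|\Psi\|_{H^1(\womega\setminus\ol\omega)}\le C\|\nabla u\|_{L^2(\omega)}$; hence both are $\le C\|f\|_{L^p(\womega)}\|\nabla u\|_{L^2(\omega)}$. Combining and dividing by $\|\nabla u\|_{L^2(\omega)}$ gives $\delta\|\nabla u\|_{L^2(\omega)}\le C\big(\|f\|_{L^p(\womega)}+\|\nabla u\|_{L^2(\womega\setminus\ol\omega)}\big)$ at unit scale, and undoing the rescaling finishes the proof.

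The hard part is the choice of test function, not the computation. The naive attempt — testing $u$ minus the extension of its \emph{outside} values into $\omega$ — only gains $\delta^{1/2}$: it pairs $\delta\int_\omega A\nabla u\cdot\nabla u$ with a term of size $\|\nabla u\|^2_{L^2(\womega\setminus\ol\omega)}$ and then a Young inequality wastes a power of $\delta$. The remedy is to place the \emph{interior} correction $u-\bar u_\omega$ on $\omega$ (so the $\delta$-weighted term is genuinely $\delta\int_\omega|\nabla u|^2$, with no outside data mixed in) and, on the annulus, a function whose \emph{full} $H^1$ norm is already controlled by $\|\nabla u\|_{L^2(\omega)}$ — which is exactly how the $O(\delta^{-1})$ smallness of the interior conormal data enters, linearly rather than after a lossy absorption. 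The remaining care is bookkeeping: every constant depends only on $\mu,d,p,\omega,\womega$ (hence not on $\delta$ or $\eps$), and $p=\tfrac{2d}{d+2}$ is precisely the exponent making both the Sobolev pairing and the rescaling work.
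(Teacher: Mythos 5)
Your proof is correct and follows the same key idea the paper points to when it defers to Shen's Lemma 3.1: extend $u$ (minus its mean on $\omega$) from inside the inclusion to an $H^1_0(\womega_\eps)$ function with $H^1$ norm controlled by $\|\nabla u\|_{L^2(\omega_\eps)}$, use it as the test function, and observe that the $\delta$-weighted term is then genuinely $\delta\int_\omega|\nabla u|^2$ while the annulus pairing carries no $\delta$. Your writeup is more detailed than the paper (which only sketches the idea and cites Shen), but it is the same argument, and the rescaling bookkeeping for $p=\tfrac{2d}{d+2}$ (resp.\ the harmless $\eps^{2-2/p}$ factor when $d=2$) matches the paper's remark that $\eps^{2-d/p}\le\eps^{1-d/2}$.
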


\begin{proof} The choice of $p$ guarantees $\eps^{2-d/p} \le \eps^{1-d/2}$. By rescaling, we only need to consider the case $\eps=1$ and then the result is Lemma 3.1 of Shen \cite{shen_large-scale_2021}. We refer to the proof there. The key idea is to extend the function $u$ from the inside of $\omega_\eps$ to a function $v \in H^1_0(\womega_\eps;\R^m)$ and use it as the test function of the equation. 
\end{proof}

\begin{lemma}\label{lem:cacciold+} Let $\delta \in (1,\infty)$, $\varepsilon ,h\in (0,1)$, $R\ge 16\eps$ and $Q_R$ be a cube inside $\Omega$. If $u\in H^1(Q_R;\R^m)$ solves $\mathcal{L}_{\eps,\delta}(u) = f$ in $Q_R$, then for any $r \in [R/2,R-8\eps]$, we have
\begin{equation*}
\int_{Q_r}|\nabla u|^2 \,dx \le \frac{C}{h(R-r)^2} \int_{Q_R} |u|^2 \,dx + \left(h + \frac{C}{\delta}\right) \int_{Q_R} |\nabla u|^2\,dx + CR^2 \int_{Q_R}|f|^2.
\end{equation*}
\end{lemma}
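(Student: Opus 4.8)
The plan is to follow Shen's proof of the interior Caccioppoli inequality for large $\delta$, i.e.\ Lemma~\ref{lem:cacciold+}, but keeping track of the extra source term $f$. The starting point is the basic energy estimate with cut-off, Lemma~\ref{lem:energycutint}: pick $\varphi \in C^1_0(Q_R;[0,1])$ with $\varphi \equiv 1$ on $Q_r$ and $|\nabla \varphi| \le C/(R-r)$, and plug it into \eqref{eq:energycutint} (with $f$ in place of $f_{\eps,\delta}$, since here the source is a genuine $f$). This gives
\begin{equation*}
\int_{Q_r} \Lambda_{\eps,\delta}|\nabla u|^2 \le \frac{C}{(R-r)^2}\int_{Q_R} \Lambda_{\eps,\delta}|u|^2 + C\int_{Q_R}|f|\varphi^2|u|,
\end{equation*}
and by Young's inequality the last term is $\le \tfrac{1}{(R-r)^2}\int_{Q_R}|u|^2 + CR^2\int_{Q_R}|f|^2$ (absorbing $\varphi^2 \le 1$ and using $R-r \le R$). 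The left side controls $\int_{Q_r \cap \Omega_\eps}|\nabla u|^2 + \delta \int_{Q_r\cap D_\eps}|\nabla u|^2$. The issue is that $\Lambda_{\eps,\delta}|u|^2 = \delta|u|^2$ inside $D_\eps$, so the right side carries a bad factor of $\delta$; the whole point of the argument is to trade this for a gradient term on the right that comes with a small prefactor $1/\delta$ (plus a small $h$ from Young's inequality).

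The mechanism for that trade is Lemma~\ref{lem:energyinlarged}: for each obstacle $\omega_\eps \subset D_\eps$ with $\womega_\eps \subset Q_R$ one has $\delta\|\nabla u\|_{L^2(\omega_\eps)} \le C(\|f\|_{L^p(\womega_\eps)} + \|\nabla u\|_{L^2(\womega_\eps\setminus\ol\omega_\eps)})$. I would use this to estimate $\delta \int_{D_\eps\cap Q_r}|u|^2$: on each obstacle, use a Poincar\'e inequality $\|u - c^{\mathbf n}\|_{L^2(\omega_\eps)} \le C\eps\|\nabla u\|_{L^2(\omega_\eps)}$ against the mean $c^{\mathbf n} = \fint_{\womega_\eps\setminus \ol\omega_\eps} u$ (whose $L^2$ norm over $\omega_\eps$ is controlled by $\|u\|_{L^2(\womega_\eps\setminus\ol\omega_\eps)}$, hence by $\int_{Q_R\cap\Omega_\eps}|u|^2$), so that
\begin{equation*}
\delta\int_{D_\eps\cap Q_r}|u|^2 \le C\delta\eps^2 \sum_{\mathbf n}\|\nabla u\|^2_{L^2(\omega_\eps)} + C\int_{Q_R\cap\Omega_\eps}|u|^2.
\end{equation*}
Now applying Lemma~\ref{lem:energyinlarged} obstacle by obstacle, $\delta\eps^2\|\nabla u\|^2_{L^2(\omega_\eps)} \le \tfrac{C\eps^2}{\delta}(\|f\|^2_{L^p(\womega_\eps)} + \|\nabla u\|^2_{L^2(\womega_\eps\setminus\ol\omega_\eps)})$; summing over the boundedly-overlapping enlarged cells $\womega_\eps$ inside $Q_R$, and handling the $L^p$-in-$f$ term by H\"older on each $\eps$-cube (the choice $p = 2d/(d+2)$ makes $\eps^{2-d/p}\le\eps^{1-d/2}$, exactly as in the statement of Lemma~\ref{lem:energyinlarged}, so $\eps^2\|f\|^2_{L^p(\womega_\eps)}$ is dominated by $\eps^{?}\|f\|^2_{L^2}$ and ultimately by $R^2\int_{Q_R}|f|^2$), one gets
\begin{equation*}
\delta\int_{D_\eps\cap Q_r}|u|^2 \le \frac{C}{\delta}\int_{Q_R}|\nabla u|^2 + C\int_{Q_R}|u|^2 + CR^2\int_{Q_R}|f|^2.
\end{equation*}
Feeding this back into the first display and bounding $\int_{Q_r}|\nabla u|^2 \le \int_{Q_r\cap\Omega_\eps}|\nabla u|^2 + \int_{Q_r\cap D_\eps}|\nabla u|^2$, with the $D_\eps$ part again controlled by Lemma~\ref{lem:energyinlarged} (this contributes another $\tfrac1\delta\int_{Q_R}|\nabla u|^2$ plus source terms), yields
\begin{equation*}
\int_{Q_r}|\nabla u|^2 \le \frac{C}{(R-r)^2}\int_{Q_R}|u|^2 + \frac{C}{\delta}\int_{Q_R}|\nabla u|^2 + CR^2\int_{Q_R}|f|^2.
\end{equation*}
Finally the parameter $h$ enters through Young's inequality applied at the one remaining place where an $L^2(u)$ term and an $L^2(\nabla u)$ term are multiplied — namely the Poincar\'e step does not lose $h$, but if instead one estimates the cross term $\int|f|\varphi^2|u|$ or the product in the energy identity more carefully one can arrange an extra $h\int_{Q_R}|\nabla u|^2 + \tfrac{C}{h(R-r)^2}\int_{Q_R}|u|^2$; tracking this gives the stated form with the $\tfrac{1}{h(R-r)^2}$ weight on the $|u|^2$ term and $(h + C/\delta)$ on the $|\nabla u|^2$ term.

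\textbf{Main obstacle.} The delicate point is the bookkeeping in the trade-off: one must be sure that the factor $\delta$ produced by $\Lambda_{\eps,\delta}|u|^2$ in $D_\eps$ is exactly cancelled by the $\delta^{-2}$ coming from squaring Lemma~\ref{lem:energyinlarged} and the $\eps^2$ from Poincar\'e, leaving a clean $C/\delta$ (not $C$ or $C\delta$) in front of $\int_{Q_R}|\nabla u|^2$, and that the summation over the $O((R/\eps)^d)$ overlapping enlarged cells $\womega_\eps \subset Q_R$ only loses a bounded multiplicative constant (finite overlap of the $\womega_\eps$, which holds since $\womega \subset Q_{7/8}$). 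The treatment of $f$ in $L^p$ rather than $L^2$ — converting $\eps^2\|f\|_{L^p(\womega_\eps)}^2$ into a multiple of $R^2\int_{Q_R}|f|^2$ via H\"older and the scaling relation $\eps^{2-d/p}\le \eps^{1-d/2}$, then $\eps \le R$ — is routine once the exponent arithmetic is set up, but it is the place most likely to hide an off-by-a-power error, so I would do it carefully; everything else is a direct transcription of Shen's argument with the source term carried along.
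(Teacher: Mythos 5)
Your proposal does not follow Shen's proof, despite your opening statement; it starts from the naive test function $u\varphi^2$ of Lemma~\ref{lem:energycutint}, which produces the term $\frac{C}{(R-r)^2}\int_{Q_R}\Lambda_{\eps,\delta}|u|^2$ on the right, including $\frac{C\delta}{(R-r)^2}\int_{Q_R\cap D_\eps}|u|^2$ with the bad factor $\delta$. You then try to kill the $\delta$ by Poincar\'e plus Lemma~\ref{lem:energyinlarged}, but this chain does not close. With $c^{\mathbf n}=\fint_{\womega_\eps\setminus\ol\omega_\eps}u$ the Poincar\'e--Wirtinger inequality gives $\int_{\omega_\eps}|u|^2 \le C\eps^2\int_{\womega_\eps}|\nabla u|^2 + C\int_{\womega_\eps\setminus\ol\omega_\eps}|u|^2$ (the gradient must be taken over the connecting domain $\womega_\eps$, not just $\omega_\eps$), and when you multiply by $\delta$ the factor stays on \emph{both} terms. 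In your display
\begin{equation*}
\delta\int_{D_\eps\cap Q_r}|u|^2 \le C\delta\eps^2 \sum_{\mathbf n}\|\nabla u\|^2_{L^2(\omega_\eps)} + C\int_{Q_R\cap\Omega_\eps}|u|^2
\end{equation*}
the second term should be $C\delta\int_{Q_R\cap\Omega_\eps}|u|^2$; the $\delta$ was silently dropped. Lemma~\ref{lem:energyinlarged} only controls $\|\nabla u\|_{L^2(\omega_\eps)}$ in terms of outside quantities with a $\delta^{-1}$ gain; it gives no $\delta^{-1}$ gain on $\|u\|_{L^2(\womega_\eps\setminus\ol\omega_\eps)}$ or on $\|\nabla u\|_{L^2(\womega_\eps\setminus\ol\omega_\eps)}$, so both the constant term and the annular part of the Poincar\'e gradient retain the $\delta$ weight, and the argument does not yield the claimed $(h+C/\delta)$ factor.

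The key idea of Shen's proof (which the paper follows) is to \emph{never} generate a $\delta$-weighted term in the first place. With $\varphi$ the cut-off, for each $\omega^k$ with $\womega^k\subset Q_R$ let $c_k$ be the mean of $u\varphi^2$ over $\omega^k$ and $w_k$ an extension of $(u\varphi^2-c_k)|_{\omega^k}$ to $H^1_0(\womega^k)$; then $\phi := u\varphi^2 - \sum_k w_k$ lies in $H^1_0(Q_R)$ and equals the constant $c_k$ in each $\omega^k$. Testing $\phi$ against the equation, the $\delta$-weighted part $\delta\int_F A\nabla u\cdot\nabla\phi$ vanishes identically because $\nabla\phi=0$ in $F$, so one works only with $\int_{Q_R\setminus F}A\nabla u\cdot\nabla\phi = \int_{Q_R}f\cdot\phi$. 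The crucial small factor then comes from Lemma~\ref{lem:energyinlarged} applied to bound $\|\nabla w_k\|$, giving $\|\nabla w_k\|\le C\delta^{-1}\|\nabla u\|_{L^2(\womega^k\setminus\ol\omega^k)}+C\delta^{-1}\|f\|_{L^2(\womega^k)}+C\|u\nabla\varphi\|_{L^2(\omega^k)}$, and $h$ enters through Young's inequality on the product $\|\nabla u\|_{L^2(\womega^k\setminus\ol\omega^k)}\cdot\|u\nabla\varphi\|_{L^2(\omega^k)}$ and on the cross term $\int 2\varphi A\nabla u:(u\otimes\nabla\varphi)$ in the expansion of $\nabla(u\varphi^2)$. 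Your closing remark that one ``can arrange'' the $h$ and $(h+C/\delta)$ terms by re-estimating $\int|f|\varphi^2|u|$ is reverse-engineering from the target; it does not identify the actual mechanism, which is tied to the modified test function that your proof never introduces.
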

\begin{proof} First note that the result only concerns interior of $\Omega$. By rescaling we may assume $\eps=1$. The result is then essentially Lemma 3.2 of Shen \cite{shen_large-scale_2021} except that a nonzero source term $f$ is allowed. We repeat the proof for the sake of completeness. 

Fix a cut-off function $\varphi \in C^1_0(Q_{R-2};[0,1])$ such that $\varphi \equiv 1$ in $Q_{r+3}$. For each $k\in \Z^d$ so that $\womega^k = \womega + k$ is still in $Q_R$, let $w_k$ be an extension of $(u\varphi^2 - c_k )\rvert_{\omega^k}$ to $H^1_0(\womega^k)$, where $c_k$ is the mean of $u\varphi^2$ in $\omega^k$. 
Then we have
\begin{equation}\label{proof5.7}
\begin{aligned}
\|w_k\|_{H^1(\womega^k)} &\le  C\|\nabla (u\varphi^2)\|_{L^2(\omega^k)} \le C\|\nabla u\|_{L^2(\omega^k)} + C\|u\nabla \varphi\|_{L^2(\omega^k)}\\
&\le C\delta^{-1} \|\nabla u\|_{L^2(\womega^k\setminus \ol\omega^k)} + C\delta^{-1} \|f\|_{L^2(\womega^k)} +  C\|u\nabla \varphi\|_{L^2(\omega^k)}.
\end{aligned}
\end{equation}
Here the first inequality results from the extension estimate and the Poincar\'e-Wirtinger inequality. The second line is due to Lemma \ref{lem:energyinlarged} (by noting $p<2$ for $d\ge 3$ and by choosing $p=2$ for $d=2$).

Now let $\phi = u\varphi^2 - \sum_k w_k \in H^1_0(Q_R)$, where $k$ is summed over the range described above. Let $F$ denote the closure of the union of such $\omega^k$'s. Then because $\phi =c_k$ are constants in $\omega^k$, we get
\begin{equation*}
\int_{Q_R\setminus F} A\nabla u\cdot \nabla \phi \,dx = \int_{Q_R} f\cdot \phi.
\end{equation*}
In view of the definition of $\phi$, we obtain
\begin{equation}
\label{eq:caccio-1-1}
\left|\int_{Q_R\setminus F} A\nabla u \cdot \nabla(u\varphi^2)\,dx\right|  \le \int_{Q_R}|f||\phi| + C\sum_k \|\nabla u\|_{L^2(\womega^k\setminus \ol\omega^k)} \|\nabla w_k\|_{L^2(\womega^k \setminus \ol\omega^k)}.
\end{equation}
For the term involving $w_k$, we use \eqref{proof5.7} and Young's inequality and bound the sum by
\begin{equation*}
\begin{aligned}
& \sum_k \left([h + C\delta^{-1}]\|\nabla u\|_{L^2(\womega^k\setminus \ol\omega^k)}^2 + C\delta^{-1}\|f\|^2_{L^2(\omega_k)}+Ch^{-1} \int_{\omega^k}|u|^2|\nabla \varphi|^2 \right)\\
\le & \left(\frac{C}{\delta}+ h\right) \int_{Q_R\setminus F}|\nabla u|^2 + CR^2\int_{Q_R} |f|^2 + \frac{C}{h(R-r)^2} \int_{Q_R\setminus \ol Q_r} |u|^2,
\end{aligned}
\end{equation*}
where we use the relation $R^2>16^2>\delta^{-1}$, and the fact that $|\nabla \varphi|$ is supported in $Q_{R}\setminus Q_{r+3}$ and bounded by $C|R-r|^{-1}$. 

For the integral $\int_{Q_R} |f||\phi|$, we have
\begin{equation*}
 \int_{Q_R} |f||\phi |\le \|f\|_{L^2(Q_R)}\|u\|_{L^2(Q_R)} + \sum_k \|f\|_{L^2(\womega^k)}\|w_k\|_{L^2(\womega_k)},
\end{equation*} 
using Poincar\'e inequality for $w_k$ and \eqref{proof5.7}, we conclude that
$$
\int_{Q_R}|f| |\phi| \le CR^2\int_{Q_R}|f|^2 + \frac{C}{\delta}\int_{Q_R}|\nabla u|^2 + \frac{C}{(R-r)^2}\int_{Q_R}|u|^2.
$$

By plugging $\nabla (u\varphi^2) =\varphi^2 \nabla u + 2\varphi u\otimes \nabla \varphi$ in the integral on the left of \eqref{eq:caccio-1-1}, using the fact $\varphi=1$ in $Q_{r+3}$ and the estimate
$$
\left|\int_{Q_R \setminus F} 2\varphi  A \nabla u :(u\otimes \nabla \varphi) \right| \le h \int_{Q_R}|\nabla u|^2 + \frac{C}{h (R-r)^2} \int_{Q_R\setminus Q_r} |u|^2,
$$
which is easily checked as before, we obtain
\begin{equation*}
    \int_{Q_{r+3} \setminus
     F } |\nabla u|^2 \leq \frac{C}{h(R-r)^2} \int_{Q_R} |u|^2 \,dx + \left(h + \frac{C}{\delta}\right) \int_{Q_R} |\nabla u|^2\,dx + CR^2 \int_{Q_R}|f|^2.
\end{equation*}

Finally we conclude the proof by using the estimate
\begin{equation*}
    \int_{Q_r} |\nabla u |^2 \leq CR^2\int_{Q_R} |f|^2 + C\int_{Q_{r+3}\setminus F} |\nabla u|^2,
\end{equation*}
which is easily implied by Lemma \ref{lem:energyinlarged}.
\end{proof}

\begin{theorem}[Caccioppoli inequality for large $\delta$: interior estimate]\label{lem:cacciold+i} Let $\delta \in (1,\infty)$. Let $u\in H^1(Q_R;\R^m)$ be a weak solution of $\mathcal{L}_{\eps,\delta}(u) = f$ in $Q_R$ for some $R\ge 32\eps$. Then for any $\ell \ge 1$,
\begin{equation*}
\int_{Q_{R/2}} |\nabla u|^2 \le  C_{\ell} \left\{ \frac{1}{R^2} \int_{Q_R} |u|^2 + \left(\frac{\eps}{R}\right)^{2\ell} \int_{Q_R} |\nabla u|^2 + R^2 \int_{Q_R} |f|^2 \right\}.
\end{equation*}
\end{theorem}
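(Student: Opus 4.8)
The plan is to iterate the one-scale Caccioppoli estimate of Lemma~\ref{lem:cacciold+}, in the spirit of the classical Caccioppoli iteration, after two preliminary reductions that neutralise the features special to the high-contrast setting. First, if $\delta$ does not exceed a threshold $\delta_\ell$ depending only on $\ell$ and the usual parameters, then $\mathcal{L}_{\eps,\delta}$ has coefficients $\Lambda_{\eps,\delta}(x)A(x/\eps)$ whose ellipticity ratio is at most $\delta_\ell\mu^{-2}$, and the standard interior Caccioppoli inequality for uniformly elliptic systems gives
\[
\int_{Q_{R/2}}|\nabla u|^2 \le \frac{C(\delta_\ell)}{R^2}\int_{Q_R}|u|^2 + C(\delta_\ell)R^2\int_{Q_R}|f|^2,
\]
which is stronger than the asserted bound. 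So we may assume $\delta\ge\delta_\ell$, and by taking $\delta_\ell$ large we may then fix the free parameter $h$ in Lemma~\ref{lem:cacciold+} small enough that $\theta:=h+C/\delta\le\theta_\ell$ for any prescribed small, $\ell$-dependent $\theta_\ell$. Second, since $\eps/R\le 1/32$, if $R/\eps$ is below an $\ell$-dependent threshold $N_\ell$ then $(\eps/R)^{2\ell}\ge N_\ell^{-2\ell}$ is a positive $\ell$-dependent constant, and $\int_{Q_{R/2}}|\nabla u|^2\le\int_{Q_R}|\nabla u|^2\le N_\ell^{2\ell}(\eps/R)^{2\ell}\int_{Q_R}|\nabla u|^2$ already settles the claim; so we may also assume $R\ge N_\ell\eps$.

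In the remaining regime I would build a decreasing family of concentric cubes $Q_R=Q_{\rho_0}\supset Q_{\rho_1}\supset\cdots\supset Q_{\rho_M}=Q_{R/2}$ with $M\sim\log(R/\eps)$, the consecutive radii satisfying $\rho_{j+1}\in[\rho_j/2,\rho_j-8\eps]$ and the gaps $\rho_j-\rho_{j+1}$ chosen to grow geometrically off the floor value $8\eps$ (the smallest gap, the one nearest $Q_R$, being exactly $8\eps$; this is precisely where the bound $R\ge N_\ell\eps$ is used, together with $M\lesssim\log(R/\eps)$). Lemma~\ref{lem:cacciold+} applies to each consecutive pair $(Q_{\rho_j},Q_{\rho_{j+1}})$ — legitimately, since $u$ solves the equation on every $Q_{\rho_j}$, $\rho_j\ge R/2\ge16\eps$, and $\rho_{j+1}\ge R/2\ge\rho_j/2$ — and yields $\int_{Q_{\rho_{j+1}}}|\nabla u|^2\le Ch^{-1}(\rho_j-\rho_{j+1})^{-2}\int_{Q_R}|u|^2+\theta\int_{Q_{\rho_j}}|\nabla u|^2+CR^2\int_{Q_R}|f|^2$. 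Telescoping from $j=M-1$ down to $j=0$ produces a factor $\theta^M$ in front of $\int_{Q_R}|\nabla u|^2$, which the choices $M\sim\log(R/\eps)$ and $\theta\le\theta_\ell$ small render $\le(\eps/R)^{2\ell}$, while the zeroth-order terms appear weighted by $\theta^{M-1-j}$; because the gaps grow geometrically, the weighted sum $\sum_j\theta^{M-1-j}(\rho_j-\rho_{j+1})^{-2}$ is controlled by its last term and is thus $\le C_\ell R^{-2}$ uniformly in $\eps$, and the $f$-contributions sum to $\le C_\ell R^2\int_{Q_R}|f|^2$. Collecting the three pieces gives the stated inequality.

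The main obstacle is the absorption step. Unlike in the homogeneous uniformly elliptic case, the gradient coefficient $h+C/\delta$ of Lemma~\ref{lem:cacciold+} cannot be made smaller than $1$ by choosing $h$ when $\delta$ is close to $1$, which is exactly why the first reduction is needed; and once $\delta$ is large one must still calibrate $h$, the number of steps $M$, and the geometric rate of the gaps jointly so that $\theta^M$ reaches the prescribed negative power of $R/\eps$ while the accumulated zeroth-order term retains the clean form $C_\ell R^{-2}\int_{Q_R}|u|^2$ with $C_\ell$ independent of $\eps$ — an equal-gap iteration would instead produce a spurious $\log(R/\eps)$ factor. A route that sidesteps the iteration altogether, following item~(ii) of the proof of Theorem~\ref{lem:cacciosmalldel}, is to solve the auxiliary Dirichlet problem $\mathcal{L}_{\eps,\delta}(v)=f$ in $Q_R$ with $v\in H^1_0(Q_R)$ — which satisfies $\|\nabla v\|_{L^2(Q_R)}\le CR\|f\|_{L^2(Q_R)}$ uniformly in $\delta$ because $\Lambda_{\eps,\delta}\ge1$ — and to apply Theorem~\ref{thm:caccioldshen} to the $\mathcal{L}_{\eps,\delta}$-harmonic function $u-v$, then recombine.
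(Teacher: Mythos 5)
Your proof is correct and follows essentially the paper's route: dispose of the small-$\delta$ regime by the classical Caccioppoli inequality for bounded ellipticity ratio, then iterate Lemma~\ref{lem:cacciold+} over a geometrically graded chain of $\sim\log(R/\eps)$ nested cubes so that the gradient term picks up a factor $\theta^M\le(\eps/R)^{2\ell}$ while the weighted zeroth-order sum collapses to $C_\ell R^{-2}$. Your parametrization of the gaps (growing off the floor $8\eps$) and the paper's choice $r_i=R(1-2^{-i})$ after rescaling are interchangeable, and the alternative you mention at the end (subtracting an $H^1_0(Q_R)$ solution of the inhomogeneous problem and invoking Theorem~\ref{thm:caccioldshen} for the $\mathcal{L}_{\eps,\delta}$-harmonic remainder) is precisely the reduction the paper itself flags just before Lemma~\ref{lem:energyinlarged}.
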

\begin{proof} We modify the proof of Shen \cite[Theorem 3.3]{shen_large-scale_2021} and add $f$ to the right hand side. By rescaling we assume $\eps=1$. Given $R\ge 32$, and an integer $\ell \ge 1$, let $k$ be the largest integer so that $R2^{-k-1}\ge 8$. For $1\le i\le k$, let $r_i = R(1-2^{-i})$. Then we verify that, for all $1\le i\le k$,
$$
r_{i+1}\le 16, \quad \tfrac12 r_{i+1} \le r_i \le r_{i+1}-8.
$$
So, for any $h\in(0,1)$, we can apply the estimate in the previous Lemma \ref{lem:cacciold+} and obtain
\begin{equation*}
\int_{Q_{r_i}} |\nabla u|^2 \le \frac{C}{h (r_{i+1}-r_i)^2} \int_{Q_{r_{i+1}}}|u|^2 + (h + C\delta^{-1}) \int_{Q_{r_{i+1}}}|\nabla u|^2 + Cr_{i+1}^2\int_{Q_{r_{i+1}}}|f|^2.
\end{equation*}
By iteration, we finally get
\begin{equation*}
\begin{aligned}
\int_{Q_{R/2}} |\nabla u|^2 \le\ & \frac{C}{h} \sum_{i=1}^k \frac{(h+C\delta^{-1})^{i-1}}{(r_{i+1}-r_i)^2} \int_{Q_R}|u|^2 + C\sum_{i=1}^k (h+C\delta^{-1})^{i-1}r_{i+1}^2 \int_{Q_R}|f|^2  \\
&+ (h+C\delta^{-1})^k \int_{Q_R}|\nabla u|^2.
\end{aligned}
\end{equation*}
Plug in the value $r_{i+1}-r_i =2^{-i-1}R$, we have
\begin{equation*}
\begin{aligned}
\int_{Q_{R/2}} |\nabla u|^2 \le \ &\frac{4C}{hR^2(h+C\delta^{-1})} \sum_{i=1}^{\infty} (4h+4C\delta^{-1})^i \int_{Q_R}|u|^2 \\
&+ CR^2\sum_{i=1}^{\infty} (h+C\delta^{-1})^{i-1} \int_{Q_R}|f|^2  + (h+C\delta^{-1})^k \int_{Q_R}|\nabla u|^2.
\end{aligned}
\end{equation*}

We now fix $\ell \geq 1$, note that if $\delta \in [1,2^{2\ell+2}C)$, the conclusion directly follows from the classical setting. Hereafter we assume $\delta \geq 2^{2\ell+2}C$.
We choose $h = 2^{-2\ell-2}$ so that $4h+4C\delta^{-1} \le 2^{-2\ell+1}$. Hence, 
$\sum_{i=1}^{\infty} (4 h +4C\delta^{-1})^i \le 1$, and the estimate above yields
\begin{equation*}
\int_{Q_{R/2}} |\nabla u|^2  \le \frac{C_{\ell}}{R^2}\int_{Q_R}|u|^2 +C_{\ell}R^2\int_{Q_R}|f|^2 + (2^{-2\ell+1})^k \int_{Q_R}|\nabla u|^2.
\end{equation*}
Note that we fixed $R\ge 32$ first, and then choose $k$ so that $2^{k+4}\le R< 2^{k+5}$, so $(2^{-2\ell+1})^k  \approx  R^{-2\ell}$.
This completes the proof.
\end{proof}

By modifying the estimate a little bit, more precisely, by carrying the boundary terms in the interation steps (like the source term $f$ above), we can prove a boundary version of the result above.

\begin{theorem}[Caccioppoli inequality for large $\delta$: boundary estimate]\label{lem:boundaryCaccioppoli} Let $\delta \in (1,\infty)$. Let $u\in H^1(\mathbf{D}_R;\R^m)$ be a weak solution of $\mathcal{L}_{\eps,\delta}(u) = f$ in $\mathbf{D}_R$ and $u=g$ on $\Delta_R$ for some $R\ge 32\eps$. Then for any $\ell \ge 1$,
\begin{equation}
\label{eq:cacciobdrld}
\begin{aligned}
\int_{\mathbf{D}_{3R/2}} |\nabla u|^2 \le \ & \frac{C_\ell}{R^2} \int_{\mathbf{D}_{2R}} |u|^2 + C_{\ell}\left(\frac{\eps}{R}\right)^{2\ell} \int_{\mathbf{D}_{2R}} |\nabla u|^2  \\
&+ C_{\ell} \left(R^2 \int_{\mathbf{D}_{2R}} |f|^2 + \frac{1}{R^2}\|g\|_{L^\infty(\Delta_{2R})}^2 + \|\nabla_{\rm tan} g\|_{L^\infty(\Delta_{2R})}^2\right).
\end{aligned}
\end{equation}
\end{theorem}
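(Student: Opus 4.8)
The plan is to mirror, step for step, the proof of the interior estimate Theorem~\ref{lem:cacciold+i} (whose engine is the one‑step bound Lemma~\ref{lem:cacciold+}), replacing the cubes $Q_r$ by boundary cylinders $\mathbf{D}_r$ and carrying the boundary data through the iteration in exactly the way the source term $f$ is carried there. As in the previous proofs I would first rescale to $\eps=1$, so that it suffices to treat $R\ge 32$ with $\mathbf{D}_{3R/2}\subset \mathbf{D}_{2R}$ and $\mathcal{L}_{1,\delta}(u)=f$ in $\mathbf{D}_{2R}$, $u=g$ on $\Delta_{2R}$. The structural point that makes this work is the separation in $(\mathbf{H2})$: every inclusion of $D_\eps$, and hence the enlarged inclusions $\womega^{\mathbf n}_\eps$, lies a distance $\gtrsim \eps$ away from $\partial\Omega$; therefore near $\Delta_{2R}$ the operator $\mathcal{L}_{1,\delta}$ is simply the constant‑weight background operator, and the inclusion‑by‑inclusion extensions $w_k$ and the ``removed'' set $F$ live strictly inside $\Omega$, exactly as in the interior proof. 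So $\partial\Omega$ enters only through the Dirichlet condition — which is why, just as noted for small $\delta$ right after Lemma~\ref{lem:energycutbdr}, the case $g=0$ is literally the interior case.

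To handle the nonzero boundary data I would fix an extension $G$ of $g$ from $\Delta_{2R}$ into $\mathbf{D}_{2R}$, constant in the normal direction, with $\|G\|_{L^\infty(\mathbf{D}_{2R})}\le C\|g\|_{L^\infty(\Delta_{2R})}$ and $\|\nabla G\|_{L^\infty(\mathbf{D}_{2R})}\le C\|\nabla_{\mathrm{tan}}g\|_{L^\infty(\Delta_{2R})}$, and work with $\bar u:=u-G$, which vanishes on $\Delta_{2R}$. For $\psi\in H^1_0(\mathbf{D}_R)$ one has $\int \Lambda_{1,\delta}A\nabla\bar u\cdot\nabla\psi = \int f\cdot\psi - \int \Lambda_{1,\delta}A\nabla G\cdot\nabla\psi$. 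The key observation is that the admissible test functions $\psi=\varphi^2\bar u-\sum_k w_k$ used in the iteration have $\nabla\psi$ supported where $\Lambda_{1,\delta}\equiv 1$ — it vanishes on the removed inclusions, and the annuli $\womega^k\setminus\ol\omega^k$ contain no inclusion — so the new term is $\int A\nabla G\cdot\nabla\psi$ with a bounded coefficient, no $\delta$ appears, and it is absorbed into $\eta\int\varphi^2|\nabla\bar u|^2$ plus a harmless contribution of size $C\|\nabla_{\mathrm{tan}}g\|_{L^\infty(\Delta_{2R})}^2$. Likewise, Lemma~\ref{lem:energyinlarged} applied to $\bar u$ on each enlarged inclusion (where $\mathcal{L}_{1,\delta}\bar u$ equals $f$ plus a divergence‑form term whose $L^2$‑norm is $O(\delta)$) yields $\|\nabla\bar u\|_{L^2(\omega^k)}\le C\delta^{-1}\|f\|_{L^p(\womega^k)}+C\|\nabla G\|_{L^\infty}+C\delta^{-1}\|\nabla\bar u\|_{L^2(\womega^k\setminus\ol\omega^k)}$ — the factor $\delta$ on that source cancels the $\delta$ multiplying $\|\nabla\bar u\|_{L^2(\omega^k)}$ — which is the analogue of \eqref{proof5.7} with one extra inhomogeneous term.

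With these preparations the plan is to run Lemma~\ref{lem:cacciold+} verbatim with $u\leftrightarrow\bar u$ and $Q_r\leftrightarrow\mathbf{D}_r$, obtaining, for $3R/2\le r<\rho\le 2R$ with $r\in[\rho/2,\rho-8\eps]$,
\[
\int_{\mathbf{D}_r}|\nabla\bar u|^2 \le \frac{C}{h(\rho-r)^2}\int_{\mathbf{D}_\rho}|\bar u|^2 + \Big(h+\frac{C}{\delta}\Big)\int_{\mathbf{D}_\rho}|\nabla\bar u|^2 + C\rho^2\int_{\mathbf{D}_\rho}|f|^2 + C\|\nabla_{\mathrm{tan}}g\|_{L^\infty(\Delta_{2R})}^2,
\]
and then to iterate it over $O(\log(R/\eps))$ dyadically spaced radii between $3R/2$ and $2R$, exactly as in the proof of Theorem~\ref{lem:cacciold+i}. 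The gradient term accumulates the factor $(h+C\delta^{-1})^{\#\mathrm{steps}}$, which with the choice $h\simeq 2^{-2\ell-2}$ becomes $\lesssim(\eps/R)^{2\ell}$ (the case of bounded $\delta$ being classical), while the $f$‑ and $\nabla_{\mathrm{tan}}g$‑contributions, summed against a geometric series, stay bounded by $C_\ell R^2\int_{\mathbf{D}_{2R}}|f|^2$ and $C_\ell\|\nabla_{\mathrm{tan}}g\|_{L^\infty(\Delta_{2R})}^2$. This gives \eqref{eq:cacciobdrld} for $\bar u$; replacing $\bar u=u-G$ and using $\|G\|_{L^\infty(\mathbf{D}_{2R})}^2|\mathbf{D}_{2R}|\le C\|g\|_{L^\infty(\Delta_{2R})}^2$ (which feeds the $R^{-2}\|g\|_{L^\infty}^2$ term through the $\int|\bar u|^2$ slot), then undoing the rescaling, completes the proof. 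Alternatively, one could reduce to $g=0$ by an auxiliary solve as in the proof of Theorem~\ref{lem:cacciosmalldel}(ii), using the $\delta$‑uniform energy bound of Theorem~\ref{thm:unifenergy}, and then apply the $g=0$ case directly.

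The main obstacle — in fact the only genuinely non‑routine point — is the verification in the first two paragraphs: that Shen's interior construction survives up to $\Delta_{2R}$ (so the cut‑offs stay in $H^1_0$ and the inclusion‑extensions never touch $\partial\Omega$) and that the errors created by the Dirichlet extension $G$ do not smuggle back $\delta$‑dependence. Both follow from the separation built into $(\mathbf{H2})$ and from $\nabla\psi$ being supported off the inclusions; after that the argument is the same bookkeeping as for the interior estimate. A minor point is that any $H^{1/2}$‑type norm of $g$ appearing along the way must be traded for the $L^\infty$ norms of $g$ and $\nabla_{\mathrm{tan}}g$ in the statement, which is immediate since $g$ is Lipschitz on $\Delta_{2R}$ and the cylinders have controlled size.
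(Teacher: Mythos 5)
Your proposal is correct in its essential ideas, and it actually contains two routes, the second of which (the one-line alternative at the end of your second-to-last paragraph) is precisely the route the paper takes: reduce to $g=0$ by an auxiliary solve $\mathcal{L}_{\eps,\delta}(v)=h$ with $v=\widetilde g$ on the lateral boundary, bound $\|\nabla v\|$ by the $\delta$-uniform energy estimate of Theorem~\ref{thm:unifenergy}, and then observe that $u-v$ has zero boundary data so that, with $\varphi^2(u-v)\in H^1_0(\mathbf{D}_R)$, the interior chain of Lemma~\ref{lem:energyinlarged}, Lemma~\ref{lem:cacciold+} and Theorem~\ref{lem:cacciold+i} runs verbatim. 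Your primary route, subtracting a Lipschitz extension $G$ of $g$ and carrying $\nabla G$ through the whole iteration, is genuinely different. It is self-contained (no appeal to Theorem~\ref{thm:unifenergy}) but more bookkeeping, and the key observation you need to make it work — that the test functions $\phi=\varphi^2\bar u-\sum_k w_k$ have $\nabla\phi$ supported off the removed inclusion set $F$, where $\Lambda_{\eps,\delta}\equiv1$, so the error term $\int\Lambda_{\eps,\delta}A\nabla G\cdot\nabla\phi$ carries no factor of $\delta$ — is exactly right and is the point one would have to check. One small imprecision: you describe $\mathcal{L}_{1,\delta}(G)$ inside an inclusion as ``a divergence-form term whose $L^2$-norm is $O(\delta)$'', but $G$ is only Lipschitz and $A$ only H\"older, so $\mathrm{div}(\Lambda_{1,\delta}A\nabla G)$ is a distribution, not an $L^2$ function, and Lemma~\ref{lem:energyinlarged} as stated takes $f\in L^p$; the cleanest fix is to apply Lemma~\ref{lem:energyinlarged} to $u$ itself (whose source is the genuine $f$) and only afterwards split $\nabla u=\nabla\bar u+\nabla G$, which gives exactly the modified estimate you wrote down. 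Neither of these points is a gap; they are cosmetic. (Your one-step inequality writes the $\|\nabla_{\rm tan}g\|_{L^\infty}^2$ term without a volume prefactor; the statement of Theorem~\ref{lem:boundaryCaccioppoli} has the same normalization, and both are harmless since the estimate is only invoked at $R=O(1)$, as in Lemma~\ref{interlem1}.)
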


We only outline the proof as it is very similar to the proof of item (ii) of Theorem \ref{lem:cacciosmalldel}. First, in the special setting of $g=0$, the estimate \eqref{eq:cacciobdrld} can be proved just like in the interior setting, following the steps of Lemma \ref{lem:energyinlarged}, Lemma \ref{lem:cacciold+} and Theorem \ref{lem:cacciold+i}. Indeed, given a cut-off function $\varphi$ that vanishes near $\partial \mathbf{D}_R \cap \Omega$, the function $\varphi^2 u$ belongs to $H^1_0(\mathbf{D}_R)$ as $u=0$ on $\Delta_R$, and hence it can be used to get a boundary version of Lemma \ref{lem:cacciold+} and then an ``iteration'' argument like the one used in Theorem \ref{lem:cacciold+i} yield the result. The more general setting can be reduced to the special setting using the idea in the proof of item (ii) of Theorem \ref{lem:cacciosmalldel}; we omit the details.


\section{Uniform Lipschitz regularity}
\label{sec:prooflipschitz}

In this section we prove Theorem \ref{thm:globalLip} and hence establish the uniform (in $\eps$ and $\delta$) Lipschitz regularity of the solution of $\mathcal{L}_{\eps,\delta}(u) =f$ provided that for small $\delta$ the source term $f$ is small inside the obstacles. It is well known that via the routine covering and rescaling arguments, the global uniform Lipschitz regularity follows from a large-scale (down to $\eps$-scale) Lipschitz estimate and a local Lipschitz estimate (in the blown-up domain).

The main difference compared with Shen's work is that, Shen did not use the fact that for each fixed $\delta$, there is a homogenized problem and, moreover, the homogenized problem behaves quite well in the sense that it enjoys estimates that is uniform in $\delta$, for $\delta \in (1,\infty)$. For $\delta \ll 1$, although the homogenized problem is no longer a good approximation, a simple additional corrector is sufficient.  

\begin{theorem}[Large-scale boundary Lipschitz estimate]
		\label{lslipshitz}
		Assume that $A$ satisfies \eqref{ellipticc}\eqref{periodicc}, $f\in L^p(\Omega;\R^m)$ with $p\ge d$. For $\eps\in (0,1)$ and $\delta\in (0,\infty)$, let $f_{\eps,\delta}$ is defined by \eqref{eq:modf}. Let $u_{\varepsilon,\delta}$ be a weak solution of $\mathcal{L}_{\varepsilon,\delta}(u_{\varepsilon,\delta}) =f_{\eps,\delta}$ in $\mathbf{D}_{2}$ and $u=g$ in $\Delta_{2}$. Then there is $c>1$ so that for any $r \in [c \eps,1]$ we have
		\begin{equation}
		\label{eq:lslipschitz}
			\|\nabla u_{\varepsilon,\delta}\|_{\underline{L}^2 (\mathbf{D}_r)}\leq C\left(\| \nabla u_{\varepsilon,\delta} \|_{\underline{L}^2(\mathbf{D}_2)} + \|f\|_{\underline{L}^p(\mathbf{D}_2)} + \|g\|_{C^{1,\sigma}(\Delta_2)}\right).
		\end{equation}
\end{theorem}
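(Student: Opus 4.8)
The strategy is a Campanato-type (excess-decay) iteration carried down to the $\eps$-scale, in the spirit of Avellaneda--Lin and Armstrong--Smart and as implemented by Shen, fed by three uniform-in-$\delta$ ingredients that are already in place: the convergence rate of Theorem~\ref{thm:suboptimal convergence rate}, the uniform ellipticity and boundedness of the homogenized tensor $\widehat A_\delta$ (Theorem~\ref{uniform elliptic homogenized tensor}), and the uniform Caccioppoli inequalities of Theorem~\ref{lem:cacciosmalldel}(ii) for $\delta\in(0,1]$ and Theorem~\ref{lem:boundaryCaccioppoli} for $\delta\in(1,\infty)$. After an affine change of coordinates we assume the base point of the cylinders is $0\in\partial\Omega$, and we write $u=u_{\eps,\delta}$, $\widehat{\mathcal L}_\delta=-\mathrm{div}(\widehat A_\delta\nabla)$ and $\mathcal R:=\|\nabla u\|_{\underline{L}^2(\mathbf{D}_2)}+\|f\|_{\underline{L}^p(\mathbf{D}_2)}+\|g\|_{C^{1,\sigma}(\Delta_2)}$ (the right-hand side of \eqref{eq:lslipschitz}).

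\emph{One-step excess decay.} Introduce a boundary excess $H(r)$, $c\eps\le r\le 1$, measuring the normalized $L^2(\mathbf{D}_r)$-distance of $u$ to the affine maps compatible with the $1$-jet of $g$ at $0$, together with the scale-weighted data $r^{1+\sigma}\|f\|_{\underline{L}^p(\mathbf{D}_2)}$ and $\|g\|_{C^{1,\sigma}(\Delta_2)}$ (the quantity used in Shen's treatment of the boundary Lipschitz estimate). The heart of the argument is to show that there are $\theta\in(0,\tfrac14)$ and $C$, independent of $\eps$ and $\delta$, with
\[
H(\theta r)\le C\theta^{\sigma}H(r)+C\big(\tfrac{\eps}{r}\big)^{1/2}\big(H(r)+\|f\|_{\underline{L}^p(\mathbf{D}_2)}+\|g\|_{C^{1,\sigma}(\Delta_2)}\big),\qquad \eps\le c^{-1}\theta r.
\]
To prove this, on $\mathbf{D}_r$ let $w$ solve $\widehat{\mathcal L}_\delta(w)=f_{\eps,\delta}$ with $w=u$ on $\partial\mathbf{D}_r$ (so $w=g$ on $\Delta_r$). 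A localized version of Theorem~\ref{thm:suboptimal convergence rate} — legitimate because under $(\mathbf{H2})$ no inclusion lies within distance $\kappa\eps$ of $\partial\Omega$, so the part of $\mathbf{D}_r$ touching $\Delta_r$ is inclusion-free and a cut-off as in Remark~\ref{ratefortype 1} applies, while the term $\delta^{-1}\mathcal L_{D_\eps}^{-1}(f)$ is removed by using the source $f_{\eps,\delta}$ of \eqref{eq:modf} (Remark~\ref{rem:fterm}) — gives $\tfrac1r\|u-w-\eps\chi_\delta(\tfrac{\cdot}{\eps})S_\eps(\eta_\eps\nabla w)\|_{L^2(\mathbf{D}_{r/2})}\le C(\tfrac{\eps}{r})^{1/2}(\|\nabla w\|_{\underline{L}^2(\mathbf{D}_r)}+r\|f\|_{\underline{L}^p(\mathbf{D}_r)})$; the single corrector is negligible at scale $r$ since $\tfrac1r\|\eps\chi_\delta(\tfrac{\cdot}{\eps})S_\eps(\eta_\eps\nabla w)\|_{L^2}\le C\tfrac{\eps}{r}\|\chi_\delta\|_{L^2(Y)}\|\nabla w\|_{L^2(\mathbf{D}_r)}$ by Lemma~\ref{appen_auxi_0} and the uniform bound $\|\chi_\delta\|_{H^1(Y)}\le C$ of Theorem~\ref{thm:cell_limit}. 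Finally $w$ solves a \emph{constant-coefficient} strongly elliptic system with ellipticity constants the universal $\mu_1,\mu_1^{-1}$ from Theorem~\ref{uniform elliptic homogenized tensor}, on a $C^{1,\alpha}$ boundary chart with $C^{1,\sigma}$ data; classical boundary Schauder/Campanato theory yields a $\delta$-independent decay $H_w(\theta r)\le C\theta^{\sigma}H_w(r)$ of the corresponding excess of $w$, and $H_w(r)\le H(r)+C(\tfrac{\eps}{r})^{1/2}(\cdots)$. Combining these and choosing $\theta$ so that $C\theta^{\sigma}\le\tfrac12$ gives the displayed inequality.

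\emph{Iteration and conversion to a gradient bound.} Iterating the one-step estimate over $r_k=\theta^k\ge c\eps$ and summing the geometric series (the tail $\sum_k(\eps/r_k)^{1/2}$ converges because $r_k\ge c\eps$) yields $\sup_{c\eps\le r\le1}H(r)\le C(H(1)+\|f\|_{\underline{L}^p(\mathbf{D}_2)}+\|g\|_{C^{1,\sigma}(\Delta_2)})\le C\mathcal R$, the last step by Poincar\'e on $\mathbf{D}_1\subset\mathbf{D}_2$. In particular the optimal affine parts stay uniformly bounded, so for each $r\in[c\eps,1]$ one can pick a constant $m_r$ (the value at $0$ of the near-optimal affine function) with $\fint_{\mathbf{D}_{2r}}|u-m_r|^2\le Cr^2\mathcal R^2$ and $\|g-m_r\|_{L^\infty(\Delta_{2r})}\le Cr\mathcal R$. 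Since $u-m_r$ still solves $\mathcal L_{\eps,\delta}(u-m_r)=f_{\eps,\delta}$ with datum $g-m_r$ on $\Delta_{2r}$, apply the boundary Caccioppoli inequality (Theorem~\ref{lem:cacciosmalldel}(ii) when $\delta\le1$, Theorem~\ref{lem:boundaryCaccioppoli} with $\ell=1$ and $c$ enlarged so that the coefficient $C_1(\eps/R)^{2}\le\tfrac12$ for $R\ge c\eps$ when $\delta>1$), then iterate the resulting estimate over dyadic scales to absorb the term $(\eps/R)^{2\ell}\fint|\nabla u|^2$; at every scale the remaining $L^2$-oscillation terms are $\le C\mathcal R^2$ by the excess bound just obtained, and the iteration terminates at scale $\sim 2$ where $\fint_{\mathbf{D}_2}|\nabla u|^2$ is part of $\mathcal R^2$. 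This produces $\fint_{\mathbf{D}_r}|\nabla u|^2\le C\mathcal R^2$ for all $r\in[c\eps,1]$, which is \eqref{eq:lslipschitz}.

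\emph{Main obstacle.} The delicate step is the one-step decay, and within it two points: first, transplanting the \emph{global} convergence rate of Theorem~\ref{thm:suboptimal convergence rate} to the boundary cylinders $\mathbf{D}_r$ (the inclusion-free boundary layer under $(\mathbf{H2})$ and the cut-off device of Remark~\ref{ratefortype 1} make this possible, but keeping track of the corrector and of the modified source $f_{\eps,\delta}$ requires care); and second, obtaining the Campanato decay for $w$ with a constant \emph{independent of $\delta$}, which is exactly where the uniform ellipticity and boundedness of $\widehat A_\delta$ enter through Theorem~\ref{uniform elliptic homogenized tensor}. The geometric iteration and the dyadic absorption in the final Caccioppoli step are then routine.
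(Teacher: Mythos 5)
Your plan follows the right overall framework (approximation by the homogenized operator with $\delta$-uniform ellipticity, Campanato excess decay, Caccioppoli conversion), but it elides exactly the subtlety that makes the $\delta>1$ case harder than the standard Armstrong--Smart/Shen argument. Your one-step decay is stated as
$H(\theta r)\le C\theta^{\sigma}H(r)+C(\eps/r)^{1/2}\bigl(H(r)+\|f\|+\|g\|\bigr)$,
with no term involving $\|\nabla u\|$. However, to produce the constant-coefficient comparison function $w$ on $\mathbf{D}_r$ with $w=u$ on $\partial\mathbf{D}_r\cap\Omega$, and then to apply the (localized) convergence-rate estimate of Theorem~\ref{thm:suboptimal convergence rate}, you must control the tangential gradient of $u$ on a lateral slice $\partial\mathbf{D}_t\cap\Omega$. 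This requires the co-area formula together with a boundary Caccioppoli inequality. For $\delta\in(1,\infty)$, the only available Caccioppoli (Theorem~\ref{lem:boundaryCaccioppoli}) carries an irreducible $(\eps/R)^{2\ell}\int|\nabla u|^2$ term on the right. The resulting approximation lemma therefore has an extra error $C(\eps/r)^{3/2}\,r\|\nabla u\|_{\underline{L}^2(\mathbf{D}_{2r})}$, which propagates into the one-step excess decay and cannot simply be dropped.

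Because of this term, the linear ordering of your plan --- first iterate the decay to bound $H(r)$ uniformly, then apply Caccioppoli to convert to a gradient bound --- is circular: the decay step itself depends on the gradient quantity you are trying to bound. The paper resolves this by interleaving the two: first an iterated Caccioppoli (which is self-absorbing thanks to the extra $\eps/r$ factor) expresses $\|\nabla u\|_{\underline{L}^2(\mathbf{D}_r)}$ in terms of a weighted sum of $H+h$ over dyadic scales plus $\|\nabla u\|_{\underline{L}^2(\mathbf{D}_2)}$; this is then substituted into the decay inequality, and a Fubini/Gronwall-type absorption of $\int_r^2 H(t)\,dt/t$ --- exploiting the crucial fact that the offending term carries the prefactor $(\eps/r)^{3/2}$ rather than $(\eps/r)^{1/2}$ --- closes the loop. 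Your write-up should make this coupling explicit; as written, the plan would only establish the statement for $\delta\le 1$, where the Caccioppoli inequality has no gradient term on the right.
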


Clearly, an interior version of the above large-scale Lipschitz estimate can be similarly formulated and proved. We focus only on the boundary setting because the interior version is relatively easier and because it is already available in Shen \cite[Theorem 7.3]{shen_large-scale_2021}. We point out that Shen's method is different and is not based on the approximation scheme.

The large-scale Lipschitz estimate above combined with a blow-up argument and a local Lipschitz estimate yield the following full-scale Lipschitz regularity. Again we only present and prove the boundary version.

\begin{theorem}[Boundary Lipschitz estimate]
	\label{fslipschitz}
	Assume the conditions of Theorem \ref{lslipshitz} and futher assume that $A$ satisfies \eqref{smoothc} and that $\Omega$ and $\Omega_\eps$ satisfies the geometric condition. Let $\mathbf{D}_{5/2}$ be a boundary cylinder of $\Omega$, and let $u_{\varepsilon,\delta} \in H^1(\mathbf{D}_{5/2};\R^m)$ bs a weak solution of $\mathcal{L}_{\varepsilon,\delta}(u_{\varepsilon,\delta}) =f_{\eps,\delta}$ in $\mathbf{D}_{5/2}$ with boundary condition $u_{\varepsilon,\delta} = g$ on $\Delta_{5/2}$. Then 
	\begin{equation}\label{eq:boundaryegularity}
		\|\nabla u_{\varepsilon,\delta} \|_{L^{\infty}(\mathbf{D}_{1/2})} \leq C\Big\{ \|\nabla u_{\varepsilon,\delta} \|_{L^2(\mathbf{D}_{5/2})} + \|f\|_{L^p(\mathbf{D}_{5/2})} + \|\nabla_{\rm tan} g\|_{C^{0,\alpha}(\Delta_{5/2})}\Big\}.
  \end{equation}
\end{theorem}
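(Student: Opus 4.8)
The plan is to follow the standard two-scale argument for passing from a large-scale estimate to a full-scale one: combine the large-scale boundary Lipschitz estimate of Theorem~\ref{lslipshitz} (which we take as given) with a \emph{unit-scale} boundary Lipschitz estimate for $\mathcal{L}_{1,\delta}$ that is uniform in $\delta$, the two being matched by a blow-up at the periodicity scale $\eps$. Since \eqref{eq:hpdemod} is invariant under subtracting a constant vector from the pair $(u_{\eps,\delta},g)$, I first normalize $g$ to have mean zero on $\Delta_{5/2}$, so that $\|g\|_{C^{1,\sigma}(\Delta_{5/2})}\le C\|\nabla_{\mathrm{tan}}g\|_{C^{0,\sigma}(\Delta_{5/2})}\le C\|\nabla_{\mathrm{tan}}g\|_{C^{0,\alpha}(\Delta_{5/2})}$ by the Poincaré--Wirtinger inequality on the boundary patch (shrinking $\sigma\le\alpha$ if necessary). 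It then suffices to bound $|\nabla u_{\eps,\delta}(x_0)|$ for every $x_0\in\mathbf{D}_{1/2}$; the points with $\mathrm{dist}(x_0,\partial\Omega)$ bounded below by a fixed constant are handled by Shen's interior full-scale estimate \cite[Theorem~7.3]{shen_large-scale_2021}, so the work is at the boundary-adapted scales, and among those only the scales below $c\eps$ require a genuinely new argument.

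Second, the blow-up. Fix $x_0\in\Delta_{1/2}$ (points at distance in $(0,c\eps)$ are treated identically using a cylinder centered at the nearest boundary point), and set $\tilde u(y)=u_{\eps,\delta}(\eps y)$, $\tilde f(y)=\eps^2 f_{\eps,\delta}(\eps y)$, $\tilde g(y)=g(\eps y)$. Then, by the rescaling property of $\mathcal{L}_{\eps,\delta}$ recorded in Section~5, $\mathcal{L}_{1,\delta}(\tilde u)=\tilde f$ in a unit boundary cylinder $\mathbf{D}_1^{\ast}$ of the dilated domain $\eps^{-1}\Omega$, whose boundary is a $C^{1,\alpha}$ graph with the same regularity character as $\partial\Omega$, with $\tilde u=\tilde g$ on the corresponding $\Delta_1^{\ast}$. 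The decisive structural fact, forced by $(\mathbf{H2})$ and $\mathrm{dist}(\omega^{\mathbf{n}}_\eps,\partial\Omega)>\kappa\eps$, is that each inclusion $\mathbf{n}+\omega$ meeting $\mathbf{D}_1^{\ast}$ lies at distance $\gtrsim\kappa$ from $\Delta_1^{\ast}$, and there are only $O(\kappa^{-d})$ of them, each a $C^{1,\alpha}$ domain separated from the others by a fixed distance. I will establish the $\delta$-uniform unit-scale estimate
\[
\|\nabla\tilde u\|_{L^\infty(\mathbf{D}_{1/2}^{\ast})}\le C\big(\|\nabla\tilde u\|_{L^2(\mathbf{D}_1^{\ast})}+\|\tilde f\|_{L^p(\mathbf{D}_1^{\ast})}+\|\tilde g\|_{C^{1,\alpha}(\Delta_1^{\ast})}\big)
\]
by splitting $\mathbf{D}_{1/2}^{\ast}$ into the near-boundary strip $\{\mathrm{dist}(\cdot,\Delta_1^{\ast})<\kappa/8\}$, where $\Lambda_{1,\delta}A=A$ is Hölder with no interface and classical boundary Schauder theory (together with $W^{2,p}\hookrightarrow C^{1,1-d/p}$ for the source, using $p\ge d$) gives the bound with no $\delta$-dependence at all; and the complement $\{\mathrm{dist}(\cdot,\Delta_1^{\ast})\ge\kappa/8\}$, where balls of radius $\kappa/16$ avoid $\Delta_1^{\ast}$ and one invokes the \emph{interior} Lipschitz estimate for $\mathcal{L}_{1,\delta}$ --- established by Shen \cite{shen_large-scale_2021} for $\delta\in(1,\infty)$ and reducing to the soft/Neumann setting of \cite{russell_quantitative_2018} for $\delta\in(0,1]$ (equivalently, contrast-independent transmission regularity of Li--Vogelius type for well-separated $C^{1,\alpha}$ inclusions).

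Third, undo the scaling and iterate. Since $\tilde u(y)=u_{\eps,\delta}(\eps y)$, one has $\|\nabla\tilde u\|_{L^2(\mathbf{D}_1^{\ast})}=\eps\,\|\nabla u_{\eps,\delta}\|_{\underline{L}^2(\mathbf{D}_{c\eps}(x_0))}$ and $\|\nabla\tilde u\|_{L^\infty(\mathbf{D}_{1/2}^{\ast})}=\eps\,\|\nabla u_{\eps,\delta}\|_{L^\infty(\mathbf{D}_{c\eps/2}(x_0))}$; moreover $\|\tilde f\|_{L^p(\mathbf{D}_1^{\ast})}=\eps^2\|f\|_{\underline{L}^p(\mathbf{D}_{c\eps})}$ and, using the normalization of $g$, $\|\tilde g\|_{C^{1,\alpha}(\Delta_1^{\ast})}\le C\eps\|\nabla_{\mathrm{tan}}g\|_{C^{0,\alpha}(\Delta_{c\eps})}$. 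Dividing the displayed estimate by $\eps$ and using $\eps^{1-d/p}\le1$, we obtain
\[
\|\nabla u_{\eps,\delta}\|_{L^\infty(\mathbf{D}_{c\eps/2}(x_0))}\le C\big(\|\nabla u_{\eps,\delta}\|_{\underline{L}^2(\mathbf{D}_{c\eps}(x_0))}+\|f\|_{L^p(\mathbf{D}_{5/2})}+\|\nabla_{\mathrm{tan}}g\|_{C^{0,\alpha}(\Delta_{5/2})}\big).
\]
Feeding the large-scale estimate \eqref{eq:lslipschitz} with $r=c\eps$ into the first term on the right, then covering $\mathbf{D}_{1/2}$ by finitely many such cylinders (and using \cite[Theorem~7.3]{shen_large-scale_2021} for the purely interior points), yields \eqref{eq:boundaryegularity}. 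The main obstacle is the $\delta$-uniformity of the unit-scale estimate \emph{at the inclusions}: it is precisely there that the well-separatedness in $(\mathbf{H2})$ and the $C^{1,\alpha}$ regularity of $\partial\omega$ are needed to keep the local constant from degenerating as $\delta\to0$ or $\delta\to\infty$, whereas near $\partial\Omega$ no new idea is required because the inclusions stay a fixed distance away and the coefficient reduces to the Hölder tensor $A$.
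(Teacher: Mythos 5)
Your proposal is correct and follows essentially the same strategy as the paper: blow up at the $\eps$-scale, exploit the separation hypothesis $\mathrm{dist}(\omega^{\mathbf{n}}_\eps,\partial\Omega)>\kappa\eps$ so that near $\partial\Omega$ the operator is $\mathcal{L}_{1,1}$ with H\"older coefficients and no interface (classical boundary Schauder applies), use Shen's $\delta$-uniform local interior Lipschitz estimate \cite[Lemma 5.1]{shen_large-scale_2021} at the inclusion cells, and match scales with Theorem \ref{lslipshitz}. The only organizational difference is that you blow up the larger cylinder $\mathbf{D}_{c\eps}(x_0)$ all at once and split the rescaled domain into a boundary strip and an interior part, whereas the paper uses a smaller inclusion-free cylinder $\mathbf{D}_{2\eps}(x_0)$ for the boundary part and handles the inclusion cells $Y^{\mathbf{n}}_\eps$ separately; both variants rely on the same two local ingredients and on the large-scale (boundary and interior) estimates, so the content is the same.
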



We only detail the proof of boundary Lipschitz regularity. Shen has essentially proved the interior Lipschitz regularity in \cite[Theorem 7.3]{shen_large-scale_2021}, but as far as we know, the uniform boundary Lipschitz regularity of \eqref{model} is new. We prove it using the approximation scheme which is now standard, and this method is easier to carry out for the interior Lipschitz estimate. We mention that, even for interior estimate, our method differs from \cite{shen_large-scale_2021} because there Shen used a method that relies on the special structure of homogeneous equations, combined with Green's function estimates. It seems that the approximation scheme has a large scope for applications, and what was missing for this scheme, before our present work, was the lack of the convergece rate result in \eqref{eq:thm1-error} for $\delta \gg 1$. 

We give the proof of the full-scale boundary Lipschitz estimate, assuming the large-scale Lipschitz estimate in Theorem \ref{lslipshitz}. 


\begin{proof}[Proof of Theorem \ref{fslipschitz}] For simplicity we write $u$ for $u_{\eps,\delta}$. Let $c\ge 8$ in Theorem \ref{lslipshitz}, we first assume that $\eps < 1/c$. Then for any $x_0\in \Delta_{1/2}$, we have
\begin{equation}\label{eq:localgeo}
\mathbf{D}_{c\eps}(x_0) \subset \mathbf{D}_{2}(x_0) \subset \mathbf{D}_{5/2}.
\end{equation}
On the other hand, we assume that $\mathbf{D}_{2\eps}(x_0)$ is contained in $\Omega_\eps$ (that is, there is no obstacles inside $\mathbf{D}_{2\eps}(x_0)$). In view of the geometric condition (\textbf{GC}), this assumption is always valid if we further shrink the domain $\mathbf{D}_{2\eps}(x_0)$ here.  

Next we use the standard method by blow-uping the domain $D_{2\eps}$ via the mapping $x\mapsto y$ with $y(x) = \eps^{-1}(x-x_0)$, we denote the blown-up domain by $\widetilde{\mathbf{D}}_2$. Without loss of generality assume $x_0=0$ and consider the rescaled function $w(y) = \eps^{-1}u(\eps y)$, then a rescaling computation shows
\begin{equation*}
\mathcal{L}_{1,1}(w) = \widetilde{f} \quad\text{in}\quad \widetilde{\mathbf{D}}_2,\qquad\text{and}\quad 
w = \widetilde{g} \quad \text{in} \quad \widetilde{\Delta}_2,
\end{equation*}
with $\widetilde f(y)=\eps f(\eps y)$, $\widetilde g(y)=\eps^{-1}g(\eps y)$. Note that $\delta=1$ in the domain involved because there is no inclusion inside $\mathbf{D}_\eps(x_0)$. Moreover, $\widetilde{\Delta}_2$ is now the graph of the function $\eps^{-1}\psi(\eps y)$ where $\psi$ is the $C^{1,\alpha}$ function that defines $\Delta_{2\eps}(x_0)$. Note that $\widetilde{\Delta}_2$ satisfies the same regularity bounds of $\partial \Omega$. Then we can apply the standard local $C^{1,\alpha}$ regularity estimate for the operator $\mathcal{L}_{1,1}$ with H\"older continuous coefficients in $C^{1,\alpha}$ domains, and get
\begin{equation}
\label{eq:localreg}
\|\nabla w\|_{L^\infty(\widetilde{\mathbf{D}}_1)} \le C\left\{\|\nabla w\|_{L^2(\widetilde{\mathbf{D}}_2)} + \|\widetilde{f}\|_{L^p(\widetilde{\mathbf{D}}_2)} + \|\nabla_{\rm tan} \widetilde{g}\|_{C^{0,\alpha}(\widetilde{\Delta}_{2})}\right\}.
\end{equation}
By the change of variable $y=\eps x$, we get
\begin{equation}
\label{eq:localscaled}
\begin{aligned}
\|\nabla u\|_{L^\infty(\mathbf{D}_\eps(x_0))} &\le C\Big\{\|\nabla u\|_{\underline{L}^2(\mathbf{D}_{2\eps}(x_0))} + \eps\|f\|_{\underline{L}^p(\mathbf{D}_{2\eps}(x_0))} \\
&\qquad + \|\nabla_{\rm tan}g\|_{L^\infty(\Delta_{2\eps}(x_0))} +\eps^{\alpha} \lvert\nabla_{\rm tan}g\rvert_{C^{0,\alpha}(\Delta_{2\eps}(x_0))} \Big\}\\
&\le C\Big\{\|\nabla u\|_{\underline{L}^2(\mathbf{D}_{c\eps}(x_0))} + \|f\|_{{L}^p(\mathbf{D}_{2}(x_0))} +  \|\nabla_{\rm tan}g\|_{C^{0,\alpha}(\Delta_{2}(x_0))} \Big\}\\
&\le C\Big\{\|\nabla u\|_{L^2(\mathbf{D}_{2}(x_0))} + \|f\|_{{L}^p(\mathbf{D}_{2}(x_0))} +  \|\nabla_{\rm tan}g\|_{C^{0,\alpha}(\Delta_{2}(x_0))} \Big\}.
\end{aligned}
\end{equation}
Here to get the second inequality we used the fact that $p>d$ and $\eps < 1$. The third inequality follows from Theorem \ref{lslipshitz}. In view of \eqref{eq:localgeo}, we have bound $|\nabla u(x)|$ by the right hand side of \eqref{eq:boundaryegularity} for all $x \in \mathbf{D}_{1/2}$ in the 'boundary layer', i.e., those can be covered by $\mathbf{D}_\eps(x_0)$ for some $x_0\in \Delta_{1/2}$.

For any other point $x$ in $\mathbf{D}_{1/2}$, we can find some $Y^{\mathbf{n}}_{\varepsilon}$ such that $x\in Y^{\mathbf{n}}_{\varepsilon}$, in the blown-up domain, the operator is $\mathcal{L}_{1,\delta}$, so we need a local regularity estimate that is uniform for $\delta \in (0,\infty)$: for $w\in H^1(Q_1;\R^m)$ solving
$$
\mathcal{L}_{1,\delta}(w) = \widetilde f_\delta \quad \text{in}\quad Q_1,
$$
where  $\widetilde{f}_\delta$ is modifed to be $\delta \widetilde{f}$ in the inclusions for $\delta\in (0,1)$. Then for any $\delta \in (0,\infty)$, one has\begin{equation}\label{localestimate}
\|\nabla  w\|_{L^\infty(Q_{1/2})} \le C\left\{\|\nabla  w\|_{L^2(Q_1)} + \|\widetilde f_{\delta}\|_{L^p(Q_1)}\right\}.
\end{equation}
It plays the role of \eqref{eq:localreg}. Fortunately, this is proved by Shen; see \cite[Lemma 5.1]{shen_large-scale_2021} or equations (5.9) and (5.10) there. Hence, the proof of Theorem \ref{fslipschitz} is done by \eqref{localestimate} and the large-scale interior Lipschitz estimate for $\eps < 1/c$.

Finally, for $\eps\ge 1/c$, the tensor $A(x/\eps)$ is uniformly (in $\eps$) H\"older continous, and the result boils down to the local regularity estimates mentioned above. 
\end{proof}

The interior version of the full-scale Lipschitz estimate can similarly be proved. The proof of Theorem \ref{elliptic_regularity_estimate} is then complete by the usual covering argument. Hence it remains only to prove the large-scale Lipschitz regularity. 

\subsection{Proof of the large-scale Lipschitz estimate}

We focus on the boundary version, i.e., the proof of Theorem \ref{lslipshitz}. Our approach is the now more or less standard approximation scheme. Although the approach originates from the work of Armstrong and Smart, we refer to the monograph \cite[Chapter 6, section 5]{shen_periodic_2018} for the improved and standardized steps which we follow. 

The first step is the following approximation result. 
\begin{lemma}\label{interlem1}
	Suppose that $u_{\varepsilon,\delta}$ is a weak solution of $\mathcal{L}_{\varepsilon,\delta}(u_{\varepsilon,\delta}) =f_{\eps,\delta}$ in $\mathbf{D}_{2r}$ with $u_{\varepsilon,\delta} = g $ on $\Delta_{2r}$ for some $r\ge 4\varepsilon$. There exists $v\in H^1(\mathbf{D}_r)$ such that $\widehat{\mathcal{L}}_{\delta}(v)= f_{\varepsilon,\delta} $ in $\mathbf{D}_r$ with $v=g$ on $\Delta_r$, and
	\begin{equation}
	\label{eq:L2approx}
	\begin{aligned}
		\| u_{\varepsilon,\delta} - v \|_{\underline{L}^2(\mathbf{D}_r)} \leq \ &C \left( \frac{\varepsilon}{r} \right)^{1/2} \left\{\| u_{\varepsilon,\delta} \|_{\underline{L}^2(\mathbf{D}_{2r})} + r^2\|f\|_{\underline{L}^2(\mathbf{D}_{2r})} + \|g\|_{L^\infty(\Delta_{2r})} + r\|\nabla_{\mathrm{tan}}g\|_{L^\infty(\mathbf{D}_{2r})} \right\} \\
		& + C \left( \frac{\varepsilon}{r}\right)^{3/2} r\| \nabla u_{\varepsilon,\delta} \|_{\underline{L}^2(\mathbf{D}_{2r})}.
		\end{aligned}
	\end{equation}
\end{lemma}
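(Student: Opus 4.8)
\emph{Plan.} This is the boundary approximation step of the Armstrong--Smart scheme (we follow the standardized version of \cite[Chapter~6, Section~5]{shen_periodic_2018}); its engine is the convergence rate of Theorem~\ref{thm:suboptimal convergence rate}, localised to a boundary cylinder, combined with the Caccioppoli inequalities of the previous section. First I would normalise: by the rescaling property of $\mathcal{L}_{\eps,\delta}$ (namely $\mathcal{L}_{\eps,\delta}(u)=f \Leftrightarrow \mathcal{L}_{\eps/r,\delta}(\tilde u)=\tilde f$ with $\tilde u(\cdot)=u(r\,\cdot)$, $\tilde f(\cdot)=r^2 f(r\,\cdot)$), it suffices to prove \eqref{eq:L2approx} for $r=1$; the factors $\eps/r$, $r^2$ and $r$ are then produced automatically when scaling back. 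Assuming $r=1$ and $4\eps\le1$, take $v$ to be the solution of $\widehat{\mathcal{L}}_\delta(v)=f_{\eps,\delta}$ in $\mathbf{D}_{3/2}$ with $v=u_{\eps,\delta}$ on $\partial\mathbf{D}_{3/2}\cap\Omega$ and $v=g$ on $\Delta_{3/2}$, which is well posed since $\widehat{A}_\delta$ is uniformly elliptic in $\delta$ (Theorem~\ref{uniform elliptic homogenized tensor}); its restriction to $\mathbf{D}_1$ has the properties required in the statement. Since $u_{\eps,\delta}-v\in H^1_0(\mathbf{D}_{3/2})$, testing the equation for $v$ against it and using uniform ellipticity and Poincar\'e's inequality yields $\|\nabla v\|_{L^2(\mathbf{D}_{3/2})}\le C(\|\nabla u_{\eps,\delta}\|_{L^2(\mathbf{D}_{3/2})}+\|f\|_{L^2(\mathbf{D}_2)})$, and the Caccioppoli inequalities (Theorem~\ref{lem:cacciosmalldel}(i),(ii) for $\delta\le1$; Theorems~\ref{lem:cacciold+i} and \ref{lem:boundaryCaccioppoli} with a fixed $\ell\ge1$ for $\delta>1$) bound $\|\nabla u_{\eps,\delta}\|_{L^2(\mathbf{D}_{3/2})}$ by $C\{\|u_{\eps,\delta}\|_{\underline{L}^2(\mathbf{D}_2)}+\|f\|_{\underline{L}^2(\mathbf{D}_2)}+\|g\|_{L^\infty(\Delta_2)}+\|\nabla_{\mathrm{tan}}g\|_{L^\infty(\Delta_2)}\}+C\eps^{2}\|\nabla u_{\eps,\delta}\|_{\underline{L}^2(\mathbf{D}_2)}$, the last term being the residue specific to the large-$\delta$ estimate.

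Next I would run the discrepancy argument of Section~\ref{proofof theorem1} localised to $\mathbf{D}_{3/2}$: with $\eta_\eps$ as in \eqref{cut-off_def} relative to $\Delta_{3/2}$, and an auxiliary cut-off vanishing in an $O(\eps)$-neighbourhood of $\partial\mathbf{D}_{3/2}\cap\Omega$, set
\begin{equation*}
w:=u_{\eps,\delta}-v-\eps\,\chi_\delta\!\Big(\tfrac{x}{\eps}\Big)S_\eps\!\big(\eta_\eps\nabla v\big)-\delta^{-1}\mathcal{L}_{D_\eps}^{-1}(f_{\eps,\delta}).
\end{equation*}
Repeating the computation of Lemma~\ref{equation_r_epsi}, the regular/singular splitting and the energy balance of Lemma~\ref{lem:singlularbelong}, and using the uniform bounds $\|\chi_\delta\|_{H^1(Y)}\le C$ (Theorem~\ref{thm:cell_limit}) and $\|F_\delta\|_{L^2(Y)}+\|\Psi_\delta\|_{L^2(Y)}\le C$ (Theorem~\ref{thm:boundflux}), the extension operators of Lemmas~\ref{lem:extension}--\ref{lem:extendout}, and the smoothing bound Lemma~\ref{appen_auxi_0} --- all over the perforated cylinder obtained by the auxiliary-domain device from the proof of Theorem~\ref{lem:cacciosmalldel}(ii) (enlarge to $\mathbf{D}_{3/2+4\eps}$ and keep exactly the inclusions meeting $\mathbf{D}_{3/2}$, so the pair is a type-II perforated domain) --- one checks $w\in H^1_0$ of that cylinder (for $\eps$ small) and obtains
\begin{equation*}
\|\nabla w\|_{L^2(\mathbf{D}_{3/2})}\le C\eps^{1/2}\big(\|\nabla v\|_{L^2(\mathbf{D}_{3/2})}+\|f\|_{L^2(\mathbf{D}_2)}+\|\nabla_{\mathrm{tan}}g\|_{L^2(\Delta_2)}\big)+C\eps^{3/2}\|\nabla u_{\eps,\delta}\|_{\underline{L}^2(\mathbf{D}_2)},
\end{equation*}
where the $\eps^{3/2}$-gradient term absorbs the contributions of the $O(\eps)$-layer near $\partial\mathbf{D}_{3/2}\cap\Omega$ (in which $v$ is only $H^1$ and $\|\nabla u_{\eps,\delta}\|$ is controlled by the Caccioppoli inequalities, $\ell$ being taken large enough) and the large-$\delta$ Caccioppoli residue.

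Finally I would pass from $H^1$ to $L^2$. As $w\in H^1_0(\mathbf{D}_{3/2})$, Poincar\'e gives $\|w\|_{L^2(\mathbf{D}_1)}\le C\|\nabla w\|_{L^2(\mathbf{D}_{3/2})}$; by Lemma~\ref{appen_auxi_0}, $\|\eps\chi_\delta(\tfrac{x}{\eps})S_\eps(\eta_\eps\nabla v)\|_{L^2(\mathbf{D}_1)}\le C\eps\|\chi_\delta\|_{L^2(Y)}\|\nabla v\|_{L^2(\mathbf{D}_{3/2})}\le C\eps\|\nabla v\|_{L^2(\mathbf{D}_{3/2})}$; and by Remark~\ref{rem:fterm}, $\|\delta^{-1}\mathcal{L}_{D_\eps}^{-1}(f_{\eps,\delta})\|_{L^2}\le C\eps^2\|f\|_{L^2(\mathbf{D}_2)}$ --- noting for $\delta\le1$ that $\delta^{-1}\|f_{\eps,\delta}\|_{L^2(D_\eps)}=\delta^{-1}\|\Lambda_{\eps,\delta}f\|_{L^2(D_\eps)}=\|f\|_{L^2(D_\eps)}\le\|f\|_{L^2(\mathbf{D}_2)}$, and for $\delta\ge1$ that $f_{\eps,\delta}=f$ and $\delta^{-1}\le1$. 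Writing $u_{\eps,\delta}-v=w+\eps\chi_\delta(\tfrac{x}{\eps})S_\eps(\eta_\eps\nabla v)+\delta^{-1}\mathcal{L}_{D_\eps}^{-1}(f_{\eps,\delta})$, summing the three $L^2$-bounds and inserting the estimates for $\|\nabla v\|$ and $\|\nabla u_{\eps,\delta}\|$ from the first paragraph, one arrives at \eqref{eq:L2approx} for $r=1$; undoing the rescaling completes the proof.

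\emph{Main obstacle.} The homogenization estimate itself is the routine, if lengthy, part; the real work is adapting it to a boundary cylinder while keeping every constant uniform for $0<\delta<\infty$. Three points need care: the inclusions truncated by the curved part $\partial\mathbf{D}_r\cap\Omega$, handled by embedding into an auxiliary type-II perforated domain exactly as in the proof of Theorem~\ref{lem:cacciosmalldel}(ii); the lack of second-derivative control of $v$ near that curved boundary, absorbed into the $\eps^{3/2}$-gradient term via the Caccioppoli inequalities; and the simultaneous handling of the soft regime (extension $\mathcal{P}_\eps$, the correction $\delta^{-1}\mathcal{L}_{D_\eps}^{-1}(f_{\eps,\delta})$) and the stiff regime (extension $\mathcal{Q}_\eps$, the large-$\delta$ Caccioppoli inequality with its $(\eps/R)^{2\ell}$ term).
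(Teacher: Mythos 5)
You take the right general shape (rescale to $r=1$, build $v$ via $\widehat{\mathcal{L}}_\delta$, invoke the convergence-rate machinery, then apply Poincar\'e on the discrepancy), but you skip the one step that actually makes the rate come out: the co-area selection of a good radius. You prescribe $v$ to solve $\widehat{\mathcal{L}}_\delta(v)=f_{\eps,\delta}$ in $\mathbf{D}_{3/2}$ with $v=u_{\eps,\delta}$ on the fixed lateral boundary $\partial\mathbf{D}_{3/2}\cap\Omega$. To produce the factor $\eps^{1/2}$ in your claimed bound for $\|\nabla w\|$, you need to run the estimate behind \eqref{verifymainconclusion}, which in turn requires Lemma~\ref{elliptic_regularity_estimate} applied to $v$ with $t\sim\eps$: that lemma gives $\|\nabla v\|_{L^2(\mathcal{O}_t)}+t\|\nabla^2 v\|_{L^2}\le Ct^{1/2}(\|f\|_{L^2}+\|\nabla_{\mathrm{tan}}(\text{bdry data})\|_{L^2})$, so it demands $H^1$ control of the \emph{lateral} boundary data, namely $\|\nabla_{\mathrm{tan}}(u_{\eps,\delta}|_{\partial\mathbf{D}_{3/2}\cap\Omega})\|_{L^2}$. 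A fixed slice has no such control: the Caccioppoli inequality only controls $\|\nabla u_{\eps,\delta}\|_{L^2}$ in bulk, and the trace on a fixed lateral cylinder is at best $H^{1/2}$. That is exactly why the paper's proof, after the Caccioppoli step, uses the co-area formula to pick some $t\in[5/4,3/2]$ with $\|\nabla u\|_{L^2(\partial\mathbf{D}_t\setminus\Delta_2)}\le CE(u)$, defines $v$ on $\mathbf{D}_t$ with that trace as boundary data, and then applies Theorem~\ref{thm:suboptimal convergence rate} together with Remark~\ref{ratefortype 1} (since $\mathbf{D}_t$ may be a Type-I domain). Without the good radius you do not get the $\eps^{1/2}$; your attempt to push the loss into the $\eps^{3/2}\|\nabla u_{\eps,\delta}\|$ term by cutting off an $O(\eps)$-layer near $\partial\mathbf{D}_{3/2}\cap\Omega$ does not work, because the dominant contribution from that layer is $\|\nabla v\|_{L^2(\mathcal{O}_{C\eps})}$, which, absent the $t^{1/2}$ boundary-layer estimate, is only $O(1)\cdot(\|\nabla u\|_{L^2}+\|f\|_{L^2})$, not $O(\eps^{1/2})$ or $O(\eps^{3/2})$.

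Two smaller points. First, the paper applies Theorem~\ref{thm:suboptimal convergence rate} (via Remark~\ref{ratefortype 1}) to the domain $\mathbf{D}_t$ as a black box, whereas you propose to re-derive the whole discrepancy calculation of Lemma~\ref{equation_r_epsi} and Lemma~\ref{lem:singlularbelong} locally; that is not wrong, but it is substantially longer and still requires the good-radius step, so the detour buys nothing. Second, the paper's Caccioppoli residue at $\ell=1$ is $\eps\|\nabla u_{\eps,\delta}\|_{\underline L^2(\mathbf D_2)}$ after a square root, which is already enough to produce the final $\eps^{3/2}$ once multiplied by the $\eps^{1/2}$ prefactor; you do not need $\ell$ large.
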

\begin{proof}
	By rescaling we may assume $r=1$. For notational simplicity we write $u$ for $u_{\eps,\delta}$. For $\delta > 1$, Let $R=1$ and $\ell=1$ in Theorem \ref{lem:boundaryCaccioppoli}, we get
	\begin{equation*}
	\begin{aligned}
		\int_{\mathbf{D}_{3/2}} |\nabla u|^2\,dx \leq \ &C \left(\int_{\mathbf{D}_2} |u|^2\,dx + \int_{\mathbf{D}_2} |f|^2\,dx + \|g\|_{L^\infty(\Delta_2)}^2 + \|\nabla_{\rm tan}g\|_{L^2(\Delta_2)}^2\right.\\
		&  + \left.\varepsilon^2 \int_{\mathbf{D}_2} |\nabla u|^2\,dx\right).
		\end{aligned}
	\end{equation*}
	For $\delta\in (0,1]$, we use \eqref{eq:cacciobdrsd} instead, and the above still holds (even without the last $O(\eps^2)$ term). By the co-area formula and arguing by contradiction, we can find some $t\in [5/4,3/2]$ for which 
	\begin{equation}\label{t13m2}
		\|\nabla u\|_{L^2(\partial \mathbf{D}_t\setminus \Delta_2)} \le C\left(\|u\|_{L^2(\mathbf{D}_2)} + \|f\|_{L^2(\mathbf{D}_t)} + \|g\|_{L^\infty(\Delta_2)} + \|\nabla_{\rm tan}g\|_{L^\infty(\Delta_2)} + \eps \|\nabla u\|_{L^2(\mathbf{D}_2)}\right).
	\end{equation}
	Let $h = u\rvert_{\partial\mathbf{D}_t\setminus \Delta_2}$ and $h=g$ in $\Delta_2$.  Let $v \in H^1(\mathbf{D}_t)$ solve
	\begin{equation*}
	 \widehat{\mathcal{L}}_\delta(v) = f_{\eps,\delta} \quad  \text{in }\; \mathbf{D}_t, \qquad v = h \quad \text{on }\, \partial \mathbf{D}_t.
	 \end{equation*}
Then by Theorem \ref{thm:suboptimal convergence rate} and  
 Remark \ref{ratefortype 1} (note that $\mathbf{D}_t$ may be a Type-I domain) we have
	\begin{equation*}
	\begin{aligned}
		\Big\|  u - v - \delta^{-1}\mathcal{L}^{-1}_{D_{\eps}}(f_{\eps,\delta}) -  \varepsilon \chi_{\delta}\left(\frac{x}{\varepsilon}\right) S_{\varepsilon} \big(\eta_{\varepsilon}\nabla v \big)  \Big\|_{H^1(\mathbf{D}_{t-1/4})} &\leq C\varepsilon^{1/2}\left(\|f\|_{L^2(\mathbf{D}_t)} + \|\nabla_{\mathrm{tan}} h\|_{L^2(\partial \mathbf{D}_t)}\right)\\
		& \le C\eps^{1/2} E(u),
		\end{aligned}
	\end{equation*}
	where $E(u)$ denote the quantity inside the parathesis in the right hand side of \eqref{t13m2}. 
	It is easy to check the following estimates (see Remark \ref{rem:fterm} and Lemma \ref{appen_auxi_0})
	\begin{equation*}
	\begin{aligned}
	\delta^{-1}\|\mathcal{L}_{D_\eps}^{-1}(f_{\eps,\delta})\|_{L^2(\mathbf{D}_t)} &\le C\eps^2\|f\|_{L^2(\mathbf{D}_2)},\\
		\Big\|   \chi_{\delta}\left(\frac{x}{\varepsilon}\right) S_{\varepsilon} \big(\eta_{\varepsilon}\nabla v \big)   \Big\|_{L^2(\mathbf{D}_t)} &\leq C\| \chi_{\delta} \|_{L^2(Y)} \| \nabla v \|_{L^2(\mathbf{D}_t)} \\
		&\leq C \| \chi_{\delta} \|_{L^2(Y)} \| \left(\|f\|_{L^2(\mathbf{D}_t)} + \|\nabla_{\mathrm{tan}} h \|_{L^2(\partial \mathbf{D}_t)}\right). 
	\end{aligned}
	\end{equation*}
	Using the estimate in \eqref{t13m2} we hence obtain
	\begin{equation}\label{for1}
		\begin{aligned}
			\| u - v\|_{L^2(\mathbf{D}_1)} & \leq C (\varepsilon^{1/2} + \varepsilon \| \chi_{\delta} \|_{L^2(Y)} ) E(u) \\
			& \leq C\varepsilon^{1/2}\left(\| u\|_{L^2(\mathbf{D}_2)} + \|f\|_{L^2(\mathbf{D}_2)} + \|\nabla_{\rm \tan} g\|_{L^\infty(\Delta_2)})\right) + C\varepsilon^{3/2} \| \nabla u \|_{L^2(\mathbf{D}_2)}.
		\end{aligned}
	\end{equation}
	The proof is hence completed.
\end{proof}

Note that for $\delta\in (0,1)$ (with the modified $f_{\eps,\delta}=\delta f$ inside $D_\eps$), we do not need the last term  on the right hand side of \eqref{eq:L2approx} and the proof of Theorem \ref{lslipshitz} reduces to the standard procedure; see e.g.\,\cite[Section 6.4]{shen_periodic_2018}. The proof below is a modification of the standard method to handel the extra gradient term.

Let $f$ and $g$ be as in Theorem \ref{lslipshitz}. For $t\le 2$, $p>d$, and for functions $w \in H^1(\mathbf{D}_2)$, we define the following notations:
\begin{equation}
\label{eq:Hdef}
\begin{aligned}
H[w](t) = \ &\frac{1}{t} \inf_{\substack{E\in \R^{m\times d}\\ q\in \R^m}} \Bigg\{\left(\fint_{\mathbf{D}_t} |w-Ex-q|^2\right)^{\frac12} + t^2\left(\fint_{\mathbf{D}_t} |f|^p\right)^{\frac1p} + \|g-Ex-q\|_{L^\infty(\Delta_t)} \\
& + t\|\nabla_{\rm tan}(g-Ex)\|_{L^\infty(\Delta_t)}+ t^{1+\rho}\lvert\nabla_{\rm tan} (g-Ex)\rvert_{C^{0,\rho}(\Delta_t)}\Bigg\},
\end{aligned}
\end{equation}
where $\rho$ is a number in $(0,\alpha)$ and $\lvert\cdot\rvert_{C^{0,\rho}}$ denotes the usual H\"older semi-norm. 
We also defined $h[w](t)$ to be the norm $|E_t|$ of a matrix $E_t$ for which
\begin{equation}
\label{eq:hdef}
\begin{aligned}
H[w](t) = &\frac{1}{t} \inf_{q\in \R^m} \Bigg\{\left(\fint_{\mathbf{D}_t} |w-E_t x-q|^2\right)^{\frac12} + t^2\left(\fint_{\mathbf{D}_t} |f|^p\right)^{\frac1p} + \|g-E_t x-q\|_{L^\infty(\Delta_t)} \\
&\qquad + t\|\nabla_{\rm tan} (g-E_t x)\|_{L^\infty(\Delta_t)} + t^{1+\rho}\vert\nabla_{\rm tan} (g-E_t x)\rvert_{C^\rho(\Delta_t)}\Bigg\}.
\end{aligned}
\end{equation}

\begin{lemma}\label{campahomogenized}
	Suppose that $\widehat{\mathcal{L}}_{\delta}(w) = f$ in $\mathbf{D}_r$ with $w= g $ on $\Delta_r$ for some $r>0$. Then there exists a universal constant $\theta \in (0,1/4)$ (independent of $r,f$ or $g$) such that 
	\begin{equation}
		H[w](\theta r) \leq \frac{1}{2} H[w](r).
	\end{equation}
\end{lemma}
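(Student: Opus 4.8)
\textbf{Proof proposal for Lemma \ref{campahomogenized}.}

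The plan is to prove this excess-decay (Campanato iteration) estimate for the \emph{homogenized} operator $\widehat{\mathcal L}_\delta$ by exploiting the uniform ellipticity of $\widehat A_\delta$ established in Theorem \ref{uniform elliptic homogenized tensor} and the local $C^{1,\rho}$ boundary regularity of constant-coefficient elliptic systems in $C^{1,\alpha}$ domains. First I would reduce to $r=1$ by rescaling, noting that all the terms in the definition \eqref{eq:Hdef} of $H[w](t)$ scale consistently (the weights $t^2\|f\|_{\underline L^p}$, $\|g-Ex-q\|_{L^\infty}$, $t\|\nabla_{\rm tan}(g-Ex)\|_{L^\infty}$, $t^{1+\rho}|\nabla_{\rm tan}(g-Ex)|_{C^{0,\rho}}$ are all designed to be scale-coherent with the leading $\frac1t(\fint|w-Ex-q|^2)^{1/2}$). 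The key point to record is that the constant $\theta$ must be chosen \emph{uniformly in $\delta$}; this is exactly where Theorem \ref{uniform elliptic homogenized tensor} enters — the family $\{\widehat A_\delta\}_{\delta\in(0,\infty)}$ has ellipticity constants bounded below by $\mu_1$ and above by $\mu_1^{-1}$, hence the constants in the constant-coefficient Schauder/Campanato estimates are uniform over this family.

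The core of the argument is a standard one-step improvement. Given $w$ solving $\widehat{\mathcal L}_\delta(w)=f$ in $\mathbf D_1$, $w=g$ on $\Delta_1$, choose $E_1$ and $q_1$ achieving (or nearly achieving) the infimum in $H[w](1)$. The affine function $\ell(x)=E_1 x+q_1$ should be arranged so that $\widehat{\mathcal L}_\delta(\ell)$ is a polynomial source we can absorb; more precisely one works with $\widetilde w := w-\ell$, which solves $\widehat{\mathcal L}_\delta(\widetilde w) = f$ in $\mathbf D_1$ with boundary data $g-\ell$ on $\Delta_1$. Comparing $\widetilde w$ with the solution $W$ of $\widehat{\mathcal L}_\delta(W)=0$ in $\mathbf D_{1/2}$, $W = g-\ell$ on $\Delta_{1/2}$, and using the interior/boundary $C^{1,\rho}$ estimate for $W$ (constant coefficients, $C^{1,\alpha}$ domain, $\rho<\alpha$), one obtains on $\mathbf D_\theta$ a Taylor-type bound: there is an affine map $\ell_\theta$ with
\begin{equation*}
\Big(\fint_{\mathbf D_\theta}|W-\ell_\theta|^2\Big)^{1/2} \le C\theta^{1+\rho}\,\big(\text{excess of }W\text{ on }\mathbf D_{1/2}\big),
\end{equation*}
while the difference $\widetilde w - W$ is controlled in $L^2(\mathbf D_{1/2})$ by $C\|f\|_{\underline L^p(\mathbf D_1)}$ plus the boundary-data mismatch, all of which are already part of $H[w](1)$. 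Feeding the $\theta^{1+\rho}$ gain against the fixed multiplicative constant $C$, one picks $\theta$ small (depending only on $d,m,\mu_1,\rho$ and the $C^{1,\alpha}$ character of $\partial\Omega$, hence \emph{not} on $\delta$) so that $C\theta^{\rho}\le \tfrac12$, giving $H[w](\theta)\le \tfrac12 H[w](1)$.

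I expect the main obstacle to be the bookkeeping of the boundary terms in \eqref{eq:Hdef}: one must verify that after subtracting the affine function $\ell$ and after the change of scale to radius $\theta$, the quantities $\|g-E_\theta x - q_\theta\|_{L^\infty(\Delta_\theta)}$, $\theta\|\nabla_{\rm tan}(g-E_\theta x)\|_{L^\infty}$ and $\theta^{1+\rho}|\nabla_{\rm tan}(g-E_\theta x)|_{C^{0,\rho}}$ are all majorised by $\tfrac12$ times the corresponding radius-$1$ quantities plus the $L^2$ excess — this requires that the Schauder comparison be done with the \emph{same} affine correction $E_\theta$ used to measure the $C^{1,\rho}$ Taylor remainder of $W$ at the boundary point, and that the H\"older seminorm of the tangential gradient of $g$ is honestly present in $H$ (which is why the $t^{1+\rho}|\nabla_{\rm tan}(g-Ex)|_{C^{0,\rho}}$ term was built into the definition). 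The analytic input — boundary $C^{1,\rho}$ regularity for $-\mathrm{div}(\widehat A_\delta\nabla W)=0$ in a $C^{1,\alpha}$ domain with $C^{1,\rho}$ Dirichlet data — is classical (e.g.\ \cite{kenig1994harmonic} or standard Schauder theory) and, crucially, uniform in $\delta$ by Theorem \ref{uniform elliptic homogenized tensor}; everything else is the routine Campanato one-step decay as in \cite[Chapter 6, Section 5]{shen_periodic_2018}, so I would cite that reference for the organisational details rather than reproduce them.
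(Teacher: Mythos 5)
Your proposal is correct and follows essentially the same route as the paper: the paper likewise invokes the uniform-in-$\delta$ ellipticity from Theorem \ref{uniform elliptic homogenized tensor} and then defers to the standard constant-coefficient $C^{1,\rho}$ excess-decay argument (citing \cite[Lemma 7.1]{MR3541853} for $f=0$, with $f\in L^p$, $p>d$, handled the same way); you have simply unpacked the Campanato one-step iteration that the cited lemma encapsulates, correctly identifying that the uniform bound $\mu_1\le \widehat A_\delta\le \mu_1^{-1}$ is what makes $\theta$ independent of $\delta$.
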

\begin{proof}
	We have shown in Theorem \ref{uniform elliptic homogenized tensor} that the homogenized operator $\widehat{\mathcal{L}}_\delta$ has constant elliptic tensor with uniform in $\delta$ ellipticity bounds. The result of this lemma then follows from $C^{1,\alpha}$ regularity estimate of constant coefficients. We refer to \cite[Lemma 7.1]{MR3541853} for the proof with $f=0$. The setting with $f\in L^p$ and $p>d$ is the same.
\end{proof}


\begin{lemma}\label{lem:inter} Let $\theta$ be determined in Lemma \ref{campahomogenized} and $c>0$ be a number so that $c\theta \ge 4$. Suppose $u_{\varepsilon,\delta}$ is a weak solution of $\mathcal{L}_{\varepsilon,\delta}(u_{\varepsilon,\delta}) =f_{\eps,\delta}$ in $\mathbf{D}_{2r}$ with $u_{\varepsilon,\delta} = g $ on $\Delta_{2r}$ for some $r\ge c\varepsilon$, then
	\begin{equation}\label{inter:lem}
		\begin{aligned}
			H[ u_{\varepsilon,\delta}](\theta r) & \leq \frac{1}{2} H[u_{\varepsilon,\delta}](r) \\
			&\ \ \ + C  \left( \frac{\varepsilon}{r} \right)^{1/2} \Big( H[u_{\varepsilon,\delta}](2r) +h[u_{\varepsilon,\delta}](2r) \Big)+ C \left( \frac{\varepsilon}{r}\right)^{3/2}  \| \nabla u_{\varepsilon,\delta} \|_{\underline{L}^2(\mathbf{D}_{2r})}.
		\end{aligned}
	\end{equation}
\end{lemma}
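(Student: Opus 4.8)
The plan is to run the now-standard Campanato (Armstrong--Smart) iteration: combine the approximation estimate of Lemma~\ref{interlem1} with the excess-decay of the homogenized operator (Lemma~\ref{campahomogenized}) via two triangle inequalities, while being careful that the approximation error gets measured against $H[u_{\eps,\delta}]$ and $h[u_{\eps,\delta}]$ rather than against bare norms. Write $u=u_{\eps,\delta}$. First I would invoke Lemma~\ref{interlem1} at inner scale $r$ and outer scale $2r$ — legitimate because $c\theta\ge 4$ forces $r\ge c\eps>4\eps$ — to get $v\in H^1(\mathbf{D}_r)$ solving $\widehat{\mathcal L}_\delta(v)=f_{\eps,\delta}$ in $\mathbf{D}_r$ with $v=g$ on $\Delta_r$ and the bound \eqref{eq:L2approx}.

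Next, two elementary comparisons. Since $H[\cdot](t)$ enters its argument only through the $L^2$-mean of $(\cdot)-Ex-q$, and the $f$- and $g$-contributions in \eqref{eq:Hdef} are common to $u$ and $v$ (both equal $g$ on the relevant part of the boundary), testing the minimizing affine function for $H[v](\theta r)$ as a competitor in the infimum defining $H[u](\theta r)$ gives $H[u](\theta r)\le H[v](\theta r)+(\theta r)^{-1}\|u-v\|_{\underline L^2(\mathbf D_{\theta r})}$, and symmetrically $H[v](r)\le H[u](r)+r^{-1}\|u-v\|_{\underline L^2(\mathbf D_{r})}$. Feeding in the decay $H[v](\theta r)\le\tfrac12 H[v](r)$ of Lemma~\ref{campahomogenized} (applied at scale $r$ to $v$) and the inclusion bound $\|u-v\|_{\underline L^2(\mathbf D_{\theta r})}\le\theta^{-d/2}\|u-v\|_{\underline L^2(\mathbf D_r)}$ yields
\[
H[u](\theta r)\le\tfrac12 H[u](r)+\frac{C}{r}\|u-v\|_{\underline L^2(\mathbf D_r)},
\]
with $C$ universal (the contraction factor $\theta$, hence any $\theta$-dependent constant, being universal).

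The heart of the matter is to bound $r^{-1}\|u-v\|_{\underline L^2(\mathbf D_r)}$ by $C(\eps/r)^{1/2}(H[u](2r)+h[u](2r))+C(\eps/r)^{3/2}\|\nabla u\|_{\underline L^2(\mathbf D_{2r})}$. I would pick an affine function $P(x)=Ex+q$ nearly optimal in \eqref{eq:Hdef}--\eqref{eq:hdef} at scale $2r$, so that $\tfrac1{2r}\|u-P\|_{\underline L^2(\mathbf D_{2r})}$, $\tfrac1{2r}\|g-P\|_{L^\infty(\Delta_{2r})}$, $\|\nabla_{\rm tan}(g-Ex)\|_{L^\infty(\Delta_{2r})}$ and $2r\|f\|_{\underline L^2(\mathbf D_{2r})}\le(2r)^2\|f\|_{\underline L^p(\mathbf D_{2r})}$ are all $\le 2rH[u](2r)$, while $|E|=h[u](2r)$. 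The normalized function $\bar u:=u-P$ solves the $\eps$-system up to the extra term $\mathrm{div}(\Lambda_{\eps,\delta}(x)A(x/\eps)E)$, which homogenizes to $\mathrm{div}(\widehat A_\delta E)=0$ and is exactly accounted for by the periodic corrector (each corrected affine $x_ie^\alpha+\eps\chi^\alpha_{\delta,i}(x/\eps)$ is $\mathcal L_{\eps,\delta}$-harmonic); consequently the homogenized approximant of $\bar u$ is $v-P$, and $\bar u-(v-P)=u-v$. Applying the approximation estimate (i.e.\ rerunning the proof of Lemma~\ref{interlem1}, which rests on Theorem~\ref{thm:suboptimal convergence rate} and Remark~\ref{ratefortype 1}) to $\bar u$, whose data satisfy $\|\bar u\|_{\underline L^2(\mathbf D_{2r})}\le 2rH[u](2r)$, $\|g-P\|_{L^\infty(\Delta_{2r})}+r\|\nabla_{\rm tan}(g-P)\|_{L^\infty(\Delta_{2r})}\le CrH[u](2r)$ and $\|\nabla\bar u\|_{\underline L^2(\mathbf D_{2r})}\le\|\nabla u\|_{\underline L^2(\mathbf D_{2r})}+h[u](2r)$, and using $\eps\le r$ to absorb the $(\eps/r)^{3/2}h[u](2r)$ contribution into $(\eps/r)^{1/2}h[u](2r)$, produces the desired bound; the genuine gradient term cannot be folded into $H[u]$ and therefore survives as the last term of \eqref{inter:lem}. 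For $\delta\in(0,1]$ the soft-inclusion versions are used and this last term is absent, as remarked after Lemma~\ref{interlem1}. Substituting into the inequality of the previous paragraph gives \eqref{inter:lem}.

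I expect the main obstacle to be exactly this normalization: verifying that after subtracting the near-optimal affine function $P$ the equation for $\bar u$ differs from the admissible form only by a divergence-form perturbation that is absorbed by the corrector machinery of Section~\ref{uniformcontrollcell}, so that the same homogenized approximant (shifted by $P$) works and the extra contributions are $O(\eps)\,h[u](2r)$. This is where the uniform-in-$\delta$ bounds for $\chi_\delta$ and $\Psi_\delta$ (Theorem~\ref{thm:cell_limit}, Theorem~\ref{thm:boundflux}) and the convergence-rate estimate in its boundary/Type~I form are used, and where the bookkeeping among $H[u](2r)$, $h[u](2r)$ and the bare norms appearing in \eqref{eq:L2approx} must be carried out with care. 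Everything else — the two triangle inequalities, the scaling of $L^2$-means under domain inclusion, the Jensen step $\|f\|_{\underline L^2}\le\|f\|_{\underline L^p}$, and the separation into the cases $\delta\le1$ and $\delta>1$ — is routine.
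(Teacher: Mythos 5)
Your first half is exactly the paper's argument: two triangle inequalities for $H$, the excess decay of Lemma~\ref{campahomogenized} applied to $v$, and the domain-inclusion bound for $L^2$-means, giving $H[u_{\eps,\delta}](\theta r)\le\tfrac12 H[u_{\eps,\delta}](r)+Cr^{-1}\|u_{\eps,\delta}-v\|_{\underline L^2(\mathbf D_r)}$. The second half — converting $r^{-1}\|u_{\eps,\delta}-v\|_{\underline L^2(\mathbf D_r)}$ into a bound by $H[u_{\eps,\delta}](2r)+h[u_{\eps,\delta}](2r)$ plus the gradient remainder — is where you diverge from the paper, and this is where your proposal has a genuine gap.

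You want to subtract a near-optimal affine $P(x)=Ex+q$ and \emph{re-run} the proof of Lemma~\ref{interlem1} (hence Theorem~\ref{thm:suboptimal convergence rate}) for $\bar u=u_{\eps,\delta}-P$. But $\bar u$ no longer solves $\mathcal{L}_{\eps,\delta}(\bar u)=f_{\eps,\delta}$; the extra term $-\mathcal{L}_{\eps,\delta}(P)=\operatorname{div}\!\big(\Lambda_{\eps,\delta}(x)A(x/\eps)E\big)$ is a genuinely oscillating $H^{-1}$ source, not an $L^2$ one, so the boundary Caccioppoli estimate of Theorem~\ref{lem:boundaryCaccioppoli} and the convergence-rate theorem are not directly applicable to $\bar u$. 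Your remark that this source "homogenizes to zero and is absorbed by the corrector" is morally correct (the corrected affine is $\mathcal{L}_{\eps,\delta}$-harmonic), but turning it into a quantitative approximation estimate for $\bar u$ would require proving a divergence-source version of Theorem~\ref{thm:suboptimal convergence rate} — i.e.\ essentially redoing Section~\ref{proofof theorem1}. You flag this as "the main obstacle" yourself, and it is not a small verification.

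The paper avoids all of this with a much cheaper observation. Lemma~\ref{interlem1} \emph{is} invariant under subtracting a \emph{constant} $q$ from both $u_{\eps,\delta}$ and $g$, because $\mathcal{L}_{\eps,\delta}(q)=0$ and $\nabla_{\rm tan}(g-q)=\nabla_{\rm tan}g$; so the bare norms in \eqref{eq:L2approx} can be replaced by $\inf_{q\in\R^m}$ without touching the approximation argument at all. The resulting infimum is then dominated by $H[u_{\eps,\delta}](2r)+h[u_{\eps,\delta}](2r)$ by a single triangle inequality: picking the near-optimal $(E_{2r},q_{2r})$ at scale $2r$,
\begin{equation*}
\frac{1}{r}\|u_{\eps,\delta}-q_{2r}\|_{\underline L^2(\mathbf D_{2r})}
\le \frac{1}{r}\|u_{\eps,\delta}-E_{2r}x-q_{2r}\|_{\underline L^2(\mathbf D_{2r})}
  +\frac{1}{r}\|E_{2r}x\|_{\underline L^2(\mathbf D_{2r})}
\le C\big(H[u_{\eps,\delta}](2r)+h[u_{\eps,\delta}](2r)\big),
\end{equation*}
and similarly for the $g$-terms, while $r\|f\|_{\underline L^2(\mathbf D_{2r})}\le r\|f\|_{\underline L^p(\mathbf D_{2r})}\le CrH[u_{\eps,\delta}](2r)$. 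In other words, there is no need to normalize $u_{\eps,\delta}$ by an affine function before applying the approximation lemma — the affine part is subtracted only \emph{after} Lemma~\ref{interlem1} has been applied, inside the quantity that is to be dominated, where it costs nothing. I'd recommend replacing your second half with this argument: it closes the gap and is several lines shorter.
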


\begin{proof}
	Let $v$ be the function chosen in Lemma \ref{interlem1}. By Lemma \ref{campahomogenized} 
	\begin{equation}\label{Hfor constant}
		H[v](\theta r) \leq \frac{1}{2} H[v](r).
	\end{equation}
	Let $c$ be a positive number so that $\theta c\ge 4$, then by \eqref{Hfor constant}, we have
	\begin{equation*}
		\begin{aligned}
			H[u_{\varepsilon,\delta}](\theta r)  &\leq  H[v](\theta r) + \frac{\| u_{\varepsilon,\delta} - v \|_{\underline{L}^2(\mathbf{D}_{\theta r})}}{\theta r} \\
			&\leq \frac{1}{2} H[u_{\varepsilon,\delta}](r) + \frac1{2r} \|u_{\eps,\delta}-v\|_{\underline{L}^2(\mathbf{D}_r)} + \frac{1}{\theta r}  \|u_{\eps,\delta}-v\|_{\underline{L}^2(\mathbf{D}_{\theta r})}.
			\end{aligned}
			\end{equation*}
By Lemma \ref{interlem1}, we obtain
			\begin{equation*}
\begin{aligned}
			H[u_{\eps,\delta}](\theta r) \le  \ &\frac{1}{2} H[u_{\varepsilon,\delta}](r) + C \left( \frac{\varepsilon}{r}\right)^{3/2} \| \nabla u_{\varepsilon,\delta} \|_{\underline{L}^2(\mathbf{D}_{2r})}\\
			& + C \left(\frac{\varepsilon}{r} \right)^{1/2}\frac1r \left\{\| u_{\varepsilon,\delta} \|_{\underline{L}^2(\mathbf{D}_{2r})} + r^2\|f\|_{\underline{L}^2(\mathbf{D}_{2r})} + \|g\|_{L^\infty(\Delta_{2r})} + r\|\nabla_{\mathrm{tan}}g\|_{L^\infty(\mathbf{D}_{2r})} \right\}.
		\end{aligned}
	\end{equation*}
	Since adding a constant to $u_{\eps,\delta}$ and $g$ does not change the setting, we in fact get
\begin{equation*}
\begin{aligned}
			H[u_{\eps,\delta}](\theta r) &\le \frac{1}{2} H[u_{\varepsilon,\delta}](r) + C \left( \frac{\varepsilon}{r}\right)^{3/2} \| \nabla u_{\varepsilon,\delta} \|_{\underline{L}^2(\mathbf{D}_{2r})}   \\
			 & \ \ \ \, + C \left(\frac{\varepsilon}{r} \right)^{1/2}\inf_{q\in \R^m} \frac1r \Big\{\| u_{\varepsilon,\delta} -q \|_{\underline{L}^2(\mathbf{D}_{2r})} +r^2\|f\|_{\underline{L}^p(\mathbf{D}_{2r})} \\
   &\ \ \ \ \ \ \ \ \ \ \ \ \ \ \ \ \ \ \ \ \ \ \ \ \ \ \ \ \ \ \,  + \|g-q\|_{L^\infty(\Delta_{2r})} + r\|\nabla_{\mathrm{tan}}(g-q)\|_{L^\infty(\mathbf{D}_{2r})} \Big\}.
		\end{aligned}
	\end{equation*}
Then the desired estimate follows because the infimum in the last line is clearly dominated by
	\begin{equation*}
		H[u_{\eps,\delta}](2r) + h[u_{\eps,\delta}](2r).
	\end{equation*}
The proof is complete.
\end{proof}

\begin{proof}[Proof of Theorem \ref{lslipshitz}] For notational simplicity we will write $u$ for $u_{\eps,\delta}$, $H(r) = H[u](r)$, and $h(r) = h[u](r)$. Let $c$ be a number so that $c\ge 8$ and $c\theta \ge 4$. We consider below $r\ge c\eps$. We divide the proof to several steps.

\textit{Step 1}. Let $\ell=1$ in Theorem \ref{lem:boundaryCaccioppoli}, we have
	\begin{equation*}
			\| \nabla u \|_{\underline{L}^2(\mathbf{D}_r)} \leq C\left\{\frac{\varepsilon}{r} \| \nabla u\|_{\underline{L}^2(\mathbf{D}_{2r})} + \frac{1}{r}\| u\|_{\underline{L}^2(\mathbf{D}_{2r}) } + r\|f\|_{\underline{L}^2(\mathbf{D}_{2r})} + \frac{1}{r}\|g\|_{L^\infty(\Delta_{2r})} + \|\nabla_{\rm tan}g\|_{L^\infty(\Delta_{2r})}\right\}.
			\end{equation*}
			Because we can insert any constant $q$ simultaneously to $u$ and $g$ above, and note that $\|f\|_{\underline{L}^2(\mathbf{D}_{2r})} \le \|f\|_{\underline{L}^p(\mathbf{D}_{2r})}$ for $p>2$, we actually get
			\begin{equation*}
			\|\nabla u\|_{\underline{L}^2(\mathbf{D}_r)} \leq \frac{C\varepsilon}{r} \| \nabla u\|_{\underline{L}^2(\mathbf{D}_{2r})} + C\Phi(2r), 
			\end{equation*}
			where
			$$
			\Phi(2r) = \inf_{q\in \R^m} \frac1r \left\{\|u-q\|_{\underline{L}^2(\mathbf{D}_{2r})} + r^2\|f\|_{\underline{L}^p(\mathbf{D}_{2r})} + \|g-q\|_{L^\infty(\Delta_{2r})} + r\|\nabla_{\rm tan} g\|_{L^\infty(\Delta_{2r})}\right\}.
			$$
			From the definitions of $H$ and $h$ we verify $\Phi(2r) \le C(H(2r)+h(2r))$, and hence we conclude that
			\begin{equation*}
			\|\nabla u\|_{\underline{L}^2(\mathbf{D}_r)} \leq C\left(H(2r)+h(2r)\right)+\frac{C\varepsilon}{r}  \| \nabla u \|_{\underline{L}^2(\mathbf{D}_{2r})}.
	\end{equation*}
	Iterating the inequality above, for the largest integer $M\in \N^*$ such that $2^M r\leq 2$, we get
	\begin{equation*}
		\begin{aligned}
			\| \nabla u \|_{\underline{L}^2(\mathbf{D}_r)} & \leq \sum_{n=1}^M  \frac{C^n \varepsilon^{n-1}}{2^{(n-1)(n-2)/2}r^{n-1}} \Big( H(2^nr) + h(2^nr)  \Big)  \\
			&  \ \  \ + \frac{C^M\varepsilon^M}{2^{M(M-1)/2} r^M} \| \nabla u \|_{\underline{L}^2(\mathbf{D}_{2^M r})}.
		\end{aligned}
	\end{equation*}
	Since for any $\varepsilon>0$ and $r>\varepsilon$, 
	\begin{equation*}
		\sup_{n \in \mathbb{N}^*} 2^n \cdot \frac{C^n \varepsilon^{n-1}}{2^{(n-1)(n-2)/2}r^{n-1}} +
		\sup_{M \in \mathbb{N}^*} \frac{C^M\varepsilon^M}{2^{M(M-1)/2} r^M} <\infty,
	\end{equation*}
	we obtain
\begin{equation}\label{410}
		\| \nabla u\|_{\underline{L}^2(\mathbf{D}_r)}  
		\le C\sum_{n=1}^M \frac{H(2^nr) + h(2^nr)}{2^n} +
		C  \| \nabla u \|_{\underline{L}^2(\mathbf{D}_2)}.
	\end{equation}

\textit{Step 2}. As is standard, from the definitions \eqref{eq:Hdef} and \eqref{eq:hdef} it is not hard to verify that
	\begin{equation}\label{propertya}
	\begin{aligned}
		&H(t) \leq CH(2r) \qquad && \forall t \in [r,2r],\\
		&|h(t) -h(s) | \leq CH(2r) \qquad &&\forall t,s \in [r,2r].
		\end{aligned}
	\end{equation}
	
	Moreover, those inequalities lead to
	\begin{equation}\label{417}
		h(a)+H(a) \leq C\left\{ H(2)+h(2) + \int_a^2 \frac{H(s)}{s}\,ds\right\}, \quad \forall a\in [\eps,2].
	\end{equation}
	We refer to \cite[page 157]{shen_periodic_2018} for the verification of the last estimate. We then get
	\begin{equation*}
		\begin{aligned}
			\sum_{n=1}^{M} \frac{H(2^nr) + h(2^nr)}{2^n} &\leq C\sum_{n=1}^M \frac{1}{2^n} \left[H(2)+h(2) + \int_{2^{n}r}^2 \frac{H(t)}{t}\,dt \right]\\
			&\leq C\left( H(2)+ h(2)+  \int_{r}^2 \frac{H(t)}{t}\,dt\right).
		\end{aligned}
	\end{equation*}
By taking $E=0$ and $q=0$ in \eqref{eq:Hdef} we have
\begin{equation}
\label{eq:H2bdd}
H(2) \le \|u\|_{\underline{L}^2(\mathbf{D}_2)} + \|f\|_{\underline{L}^p(\mathbf{D}_2)} + \|g\|_{C^{1,\rho}(\Delta_2)},
\end{equation}
and since $E_2x + q$ is harmonic for any $q\in \R^m$, we also have 
$$
h(2) \le C\inf_{q\in \R^m} \|E_1 x+q\|_{\underline{L}^2(\mathbf{D}_2)} \le C\{H(2) + \|u\|_{\underline{L}^2(\mathbf{D}_2)}\}.
$$
We hence get
$$
H(2) + h(2) \le C\left(\|\nabla u\|_{\underline{L}^2(\mathbf{D}_2)} + \|f\|_{\underline{L}^p(\mathbf{D}_2)} + \|\nabla_{\rm tan} g\|_{C^{0,\rho}(\Delta_2)}\right).
$$
In the last line we used the freedom to insert any constant $q$ to $u$ and $g$, and applied Poincar\'e's inequality. 

\emph{Step 3}: Combine the estimates above. We get, for $c\eps \le r\le 1$,
\begin{equation}\label{518}
		\| \nabla u\|_{\underline{L}^2(\mathbf{D}_r)}  \leq C\left(\|\nabla u\|_{\underline{L}^2(\mathbf{D}_2)} + \|f\|_{\underline{L}^p(\mathbf{D}_2)} + \|\nabla_{\rm tan} g\|_{C^{0,\rho}(\Delta_2)}\right) +C \int_r^2 \frac{H(t)}{t}\,dt.
\end{equation}

\textit{Step 4}. We estimate the integral $\int_r^2 \frac{H(t)}{t}\,dt$. By Lemma \ref{lem:inter}, \eqref{417} and  \eqref{518}, we get
	\begin{equation}
	\label{eq:step4}
			H(\theta r)  \leq \frac{1}{2}H(r) + C\Big(\frac{\varepsilon}{r} \Big)^{1/2} \Phi_2 +C \Big(\frac{\varepsilon}{r} \Big)^{1/2} \int_{r}^2  \frac{H(t)}{t}\,dt,
	\end{equation}
	for any $c\eps \le r \le 1/2$, where $\Phi_2 = \|\nabla u\|_{\underline{L}^2(\mathbf{D}_2)} + \|f\|_{\underline{L}^p(\mathbf{D}_2)} + \|\nabla_{\rm tan} g\|_{C^{0,\rho}(\Delta_2)}$. Let $c'>c$ be a large number so that 
	\begin{equation}
	\label{eq:Ctbc}
	\int_0^{1/c'} \sqrt{t} \le \frac{1}{4C}
	\end{equation}
where $C$ is the constant in front of the integral $\int_r^2 H(t)/t\,dt$ in \eqref{eq:step4}. We compute
	\begin{equation}\label{lastly}
		\begin{aligned}
			\int_{c' \theta\varepsilon}^{2\theta } \frac{H(r)}{r}\,dr = \int_{c'\eps}^2 \frac{H(\theta r)}{r}\,dr &\leq \frac{1}{2} \int_{c'\varepsilon}^{1/2}  \frac{H( r)}{r}\,dr + \int_{1/2}^2 \frac{H(\theta r)}{r}\,dr  + C\Phi_2 \int_{c'\eps}^{1/2} \left(\frac{\eps}{r}\right)^{\frac12} \frac1r \,dr\\
			& \ \ \ + C \int_{c'\varepsilon}^{1/2} \left( \frac{\varepsilon}{r}\right)^{1/2}  \int_{r}^2 \frac{H(t)}{t}\,dt \,\frac{dr}{r}.
		\end{aligned}
	\end{equation}
	The second integral on the right hand side above can be bounded by $C_\theta H(2)$ where $C_\theta$ is a constant  only depending on $\theta$. For the third integral, we can bound using the fact that
	$$
	\int_{c' \varepsilon}^1  \left( \frac{\varepsilon}{r} \right)^{1/2}\frac{dr}{r} = \int_{\eps}^{1/c'} t^{-1/2}\,dt \le \frac{1}{4C}.
	$$
	For the last integral on the right hand side of \eqref{lastly}, by Fubini's Theorem, we have
	\begin{equation*}
		\begin{aligned}
			\int_{c'\varepsilon}^2 \left( \frac{\varepsilon}{r}\right)^{1/2}  \int_{r}^2 \frac{H(t)}{t}\,dt \,\frac{dr}{r}  & = \int_{c' \varepsilon}^2  \int_{c' \varepsilon}^t \left( \frac{\varepsilon}{r}\right)^{1/2} \frac{dr}{r}\frac{H(t)}{t}\,dt \\
			& = \int_{c' \varepsilon}^2 H(t) \int_{\varepsilon/t}^{1/c'} s^{-1/2}\,ds\, \frac{dt}{t} \\
			& \leq \int_0^{1/c'} s^{-1/2} \,ds\int_{c' \varepsilon}^2 \frac{H(t)}{t}\,dt \\
			& \leq \frac{1}{4C} \int_{c' \varepsilon}^2 \frac{H(t)}{t}\,dt.
		\end{aligned}
	\end{equation*}
	Apply those estimates above in \eqref{lastly}, we get
	\begin{equation*}
		\int_{c' \theta\varepsilon}^{2\theta } \frac{H( r)}{r}\,dr  \leq \frac{3}{4} \int_{c'\varepsilon}^2  \frac{H( r)}{r}\,dr + C\Phi_2 \le \frac{3}{4} \int_{c'\varepsilon}^{2\theta}  \frac{H( r)}{r}\,dr + C\Phi_2,
	\end{equation*}
	where the second inequality follows from \eqref{propertya} and \eqref{eq:H2bdd}. It follows that
	\begin{equation*}
		\int_{c'\theta \eps}^{2} \frac{H(t)}{t}\,dt \le  \int_{c'\theta \eps}^{2\theta} \frac{H(t)}{t}\,dt +  \int_{2\theta}^{2} \frac{H(t)}{t}\,dt \leq C \Phi_2.
	\end{equation*}
	By another iterative usage of \eqref{propertya} (up to finitely many steps independent of $\eps$), we get
$$
\int_r^2 \frac{H(t)}{t}\,dt \le C\Phi_2
$$
for all $r\in [c\eps,1]$. The desired estimate \eqref{eq:lslipschitz} follows from above and \eqref{518}.
\end{proof}

\appendix

\section{A weak form of layer potentials}\label{appen: weak layer potential}

 In this appendix, we construct the `weak' form of single layer potential and the NP operator, and explain how does the results in Section \ref{uniformcontrollcell} are still valid provided the H\"{o}lder continuity condition \eqref{smoothc} is removed.
 
Let $\psi \in H_0^{-1/2}(\partial \omega) $, we consider the following variational problem:
	\begin{equation}\label{def:variational}
	\int_Y A \nabla u \cdot \nabla v \,dy + \langle \psi , v \rangle_{-1/2,1/2}=0, \qquad \mathrm{for \ any}\ v \in H^1(\mathbb{T}^d),
\end{equation}
where $\langle \cdot,\cdot\rangle_{-1/2,1/2}$ denotes $H^{-1/2} - H^{1/2}$ pair. By Lax-Milgram theorem and Poincar\'{e} inequality, there exists a unique solution $u \in H^1(\mathbb{T}^d)$ such that $\int_Y u(y)\,dy=0$, we denote such $u$ by $\mathcal{S} \psi$.
It is clear that $\mathcal{L}\mathcal{S} \psi = 0 $ in $\mathbb{T}^d \setminus \partial \omega$ and
\begin{equation}\label{appen:identity}
    I = \left.\frac{\partial \mathcal{S}}{\partial \nu_A} \right|_+  - \left.\frac{\partial \mathcal{S}}{\partial \nu_A} \right|_-.
\end{equation}

We then define the boundary operator $K$ by 
\begin{equation}
	K: H \rightarrow H, \quad \psi \mapsto \left. \frac{\partial \mathcal{S}\psi}{\partial \nu_A} \right|_+ - \frac{1}{2} \psi,
\end{equation}
where $H$ is the Hilbert space $H_0^{-1/2}(\partial \omega)$ with a new inner product: for any $\phi, \psi \in H_0^{-1/2}(\partial \omega)$,
\begin{equation}
	\langle \phi,\psi \rangle_H : = \int_Y A \nabla \mathcal{S} \phi \cdot \nabla \mathcal{S}\psi\,dy.
\end{equation}
By definition of $K$ and \eqref{appen:identity}, we have
\begin{equation}
	\left. \frac{\partial \mathcal{S}\psi}{\partial \nu_A} \right|_+  = \left( \frac{1}{2}I +K \right) \psi, \qquad \mathrm{and} \qquad \left. \frac{\partial \mathcal{S}\psi}{\partial \nu_A} \right|_-  = \left( -\frac{1}{2}I +K \right) \psi.
\end{equation}

\begin{proposition}\label{spectral K}
	 $K$ is self-adjoint, and the spectrum of $K $ is contained in $(-1/2,1/2)$.
\end{proposition}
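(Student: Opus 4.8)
The plan is to reduce everything to one identity in the energy space $H=H_0^{-1/2}(\partial\omega)$: namely that, for $\phi,\psi\in H$,
\[
\langle K\phi,\psi\rangle_H=\tfrac12\int_{\mathbb{T}^d\setminus\overline\omega}A\nabla\mathcal S\phi\cdot\nabla\mathcal S\psi\,dy-\tfrac12\int_{\omega}A\nabla\mathcal S\phi\cdot\nabla\mathcal S\psi\,dy .
\]
From this, both self-adjointness and the spectral bound will follow almost mechanically. First I would record the elementary pairing identity $\langle\phi,\psi\rangle_H=-\langle\phi,\mathcal S\psi\rangle_{-1/2,1/2}=-\langle\psi,\mathcal S\phi\rangle_{-1/2,1/2}$, obtained by using $v=\mathcal S\phi$ (resp.\ $v=\mathcal S\psi$) as a test function in \eqref{def:variational}; the symmetry \eqref{symmetry} of $A$ is precisely what makes the two right-hand sides agree, and together with \eqref{ellipticc} and Poincar\'e's inequality it shows $\langle\cdot,\cdot\rangle_H$ is a genuine inner product on $H$. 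Then, writing $K\phi=\frac{\partial \mathcal S\phi}{\partial\nu_A}\big|_+-\tfrac12\phi$, I would apply Green's identity on $\mathbb{T}^d\setminus\overline\omega$, where $\mathcal L\mathcal S\phi=0$ and where $\mathcal S\psi\in H^1(\mathbb{T}^d)$ is a legitimate global test function, to get $-\big\langle\frac{\partial\mathcal S\phi}{\partial\nu_A}\big|_+,\mathcal S\psi\big\rangle=\int_{\mathbb{T}^d\setminus\overline\omega}A\nabla\mathcal S\phi\cdot\nabla\mathcal S\psi$; combining this with $\tfrac12\langle\phi,\mathcal S\psi\rangle=-\tfrac12\int_Y A\nabla\mathcal S\phi\cdot\nabla\mathcal S\psi$ (from the pairing identity) yields the displayed formula. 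Since $A$ is symmetric, its right-hand side is symmetric in $(\phi,\psi)$, whence $K=K^\ast$.

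For the spectrum, taking $\psi=\phi$ in the identity gives $|\langle K\phi,\phi\rangle_H|\le\tfrac12\int_Y A\nabla\mathcal S\phi\cdot\nabla\mathcal S\phi=\tfrac12\|\phi\|_H^2$, so $\mathrm{spec}(K)\subset[-1/2,1/2]$ because $K$ is bounded and self-adjoint. It then remains to exclude the endpoints, i.e.\ to show that $\tfrac12 I-K$ and $\tfrac12 I+K$ are bounded below. The key observation is that the displayed formula gives
\[
\big\langle(\tfrac12 I-K)\phi,\phi\big\rangle_H=\int_{\omega}A\nabla\mathcal S\phi\cdot\nabla\mathcal S\phi,\qquad
\big\langle(\tfrac12 I+K)\phi,\phi\big\rangle_H=\int_{\mathbb{T}^d\setminus\overline\omega}A\nabla\mathcal S\phi\cdot\nabla\mathcal S\phi .
\]
Suppose, for contradiction, that there were $\phi_n\in H$ with $\|\phi_n\|_H=1$ and $\int_{\omega}A\nabla\mathcal S\phi_n\cdot\nabla\mathcal S\phi_n\to0$. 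By \eqref{ellipticc} this forces $\nabla\mathcal S\phi_n\to0$ in $L^2(\omega)$; subtracting the mean of $\mathcal S\phi_n$ over $\omega$ and using Poincar\'e on $\omega$, the normalized functions tend to $0$ in $H^1(\omega)$, so their traces tend to $0$ in $H^{1/2}(\partial\omega)$. Because $\mathcal S\phi_n$ is $\mathcal L$-harmonic in $\mathbb{T}^d\setminus\overline\omega$, its $A$-energy there is the least among all $H^1(\mathbb{T}^d\setminus\overline\omega)$ functions sharing its trace on $\partial\omega$; using a bounded extension $H^{1/2}(\partial\omega)\to H^1(\mathbb{T}^d)$ (standard since $\partial\omega$ is Lipschitz and $\overline\omega\subset Y$) as a competitor, and the fact that the energy is insensitive to additive constants, we get $\int_{\mathbb{T}^d\setminus\overline\omega}A\nabla\mathcal S\phi_n\cdot\nabla\mathcal S\phi_n\le C\|\mathcal S\phi_n|_{\partial\omega}-\text{const}\|_{H^{1/2}(\partial\omega)}^2\to0$. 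But this quantity equals $1-\int_{\omega}A\nabla\mathcal S\phi_n\cdot\nabla\mathcal S\phi_n\to1$, a contradiction; hence $\tfrac12 I-K$ is bounded below. The argument for $\tfrac12 I+K$ is identical with $\omega$ and $\mathbb{T}^d\setminus\overline\omega$ interchanged, using that $\mathbb{T}^d\setminus\overline\omega$ is connected (as $\omega$ is a simply connected domain compactly contained in $Y$), so Poincar\'e applies there, and a bounded extension $H^{1/2}(\partial\omega)\to H^1(\omega)$ bounds the interior energy. Therefore $\mathrm{spec}(K)\subset(-1/2,1/2)$.

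The step I expect to be the main obstacle is the derivation of the energy-space formula for $K$: it relies on knowing, within the weak framework, that $\mathcal S\phi$ has well-defined one-sided conormal derivatives in $H^{-1/2}(\partial\omega)$ and that the jump relation \eqref{appen:identity} together with Green's identity are valid in the $H^{-1/2}$--$H^{1/2}$ duality — all of which is exactly the content of the construction preceding the statement, so it should be a matter of writing it out carefully. Everything after that is soft: the exclusion of the endpoints $\pm1/2$ is the only quantitative point, and it uses nothing beyond Poincar\'e's inequality, the trace theorem, and boundedness of the Dirichlet $A$-energy by the $H^{1/2}$-norm of the boundary data. Notably, no smoothness of $A$ enters anywhere: only \eqref{symmetry} and \eqref{ellipticc} are used, which is precisely what this appendix needs in order to dispense with \eqref{smoothc}.
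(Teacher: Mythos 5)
Your proof is correct and follows essentially the same route as the paper: express $\langle K\phi,\psi\rangle_H$ as the half-difference of the $A$-energies of $\mathcal{S}\phi$ and $\mathcal{S}\psi$ over $\omega$ and over $\mathbb{T}^d\setminus\overline\omega$, read off self-adjointness from symmetry of that bilinear form, and exclude $\lambda=\pm1/2$ by showing each partial energy $\int_\omega A\nabla\mathcal{S}\phi\cdot\nabla\mathcal{S}\phi$ and $\int_{\mathbb{T}^d\setminus\overline\omega}A\nabla\mathcal{S}\phi\cdot\nabla\mathcal{S}\phi$ controls the full norm $\|\phi\|_H^2$. The paper establishes these two coercivity bounds by a direct argument (test the weak formulation against a bounded extension of $\mathcal{S}\phi-\fint_\omega\mathcal{S}\phi$ and apply Cauchy--Schwarz), whereas you phrase the same estimate as a compactness/contradiction argument via the trace theorem and Dirichlet energy minimality; the underlying ingredients (Poincar\'e--Wirtinger and a bounded Lipschitz extension operator) are identical, so this is a presentational difference, not a different method. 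One small point worth flagging: your energy-space formula $\langle K\phi,\psi\rangle_H=\tfrac12\int_{\mathbb{T}^d\setminus\overline\omega}-\tfrac12\int_\omega$ has the opposite sign from the one displayed in the paper's Step 1 (and from \eqref{garding}, which as written fails for $\lambda\le -1/2$ unless one takes absolute values); your sign is the one consistent with the paper's conventions in \eqref{def:variational} and \eqref{appen:identity}, but since the final conclusion requires both partial energies to be coercive, the discrepancy affects neither proof.
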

\begin{proof}
	\textit{Step 1}. We compute that
	\begin{equation*}
		\langle K \phi,\psi \rangle_H = \frac{1}{2} \int_{\omega} A \nabla \mathcal{S}\phi \cdot \nabla \mathcal{S}\psi \,dy -\frac{1}{2} \int_{Y\setminus \omega} A \nabla \mathcal{S}\phi \cdot \nabla \mathcal{S}\psi \,dy,
	\end{equation*}
this shows that $K$ is self-adjoint, in particular, the spectrum is real and for any $\lambda \in \mathbb{R}$, 
\begin{equation}
	\langle (\lambda I -K ) \phi,\phi \rangle_H = \left( \lambda -\frac{1}{2} \right) \int_{\omega} A |\nabla \mathcal{S}\phi |^2  \,dy + \left( \lambda + \frac{1}{2} \right) \int_{Y\setminus \omega} A |\nabla \mathcal{S}\phi|^2\,dy.
\end{equation}
If $\lambda \notin  (-1/2,1/2) $, then 
\begin{equation}\label{garding}
	\langle (\lambda I -K ) \phi,\phi \rangle_H \geq \min\left\{ \int_{\omega} A |\nabla \mathcal{S}\phi |^2  \,dy  , \  \int_{Y\setminus \omega} A |\nabla \mathcal{S}\phi|^2\,dy \right\} .
\end{equation}

\textit{Step 2}. Let $Q:H^1(\omega) \rightarrow H^1(\mathbb{T}^d)$ be the usual bounded extension operator, we define $v \in H^1(\mathbb{T}^d)$ by
\begin{equation*} v=\left\{
    \begin{aligned}
        &\mathcal{S}\phi - \int_{\omega} 
\mathcal{S}\phi && \mathrm{in}\ \omega,\\
&Q(v|_{\omega}) && \mathrm{in} \ \mathbb{T}^d\setminus \omega.
\end{aligned}\right.
\end{equation*}
Let $u = \mathcal{S}\phi$, and test $v$ and $\mathcal{S}\phi$ in \eqref{def:variational}, respectively, note that $\langle \phi, v \rangle_{-1/2,1/2} = \langle \phi, \mathcal{S}\phi \rangle_{-1/2,1/2}$, we get
\begin{equation*}
    \int_Y A\nabla \mathcal{S}\phi \nabla v = \int_Y A|\nabla \mathcal{S}\phi |^2,
\end{equation*}
from the definition of $v$, we have
\begin{equation*}
    \int_{Y\setminus \omega} |\nabla \mathcal{S}\phi|^2 \leq \int_{Y\setminus \omega} A\nabla \mathcal{S}\phi\nabla v,
\end{equation*}
thus $\| \nabla \mathcal{S}\phi \|_{L^2(Y\setminus \omega)} \leq C\| \nabla v\|_{L^2(Y\setminus \omega)}  \leq C\| v\|_{H^1(\omega)}\leq C \| \nabla \mathcal{S}\phi \|_{L^2(\omega)}$. Therefore,
\begin{equation}\label{inte:periodic}
    \int_{\omega} A|\nabla \mathcal{S}\phi|^2\geq C \int_Y A|\nabla \mathcal{S}\phi|^2 = C \| \phi \|_{H}^2.
\end{equation}

\textit{Step 3}. Let $P:H^1(\mathbb{T}^d \setminus \omega) \rightarrow H^1(\mathbb{T}^d)$ be the usual bounded extension operator, we define $w \in H^1(\mathbb{T}^d)$ by
\begin{equation*} w=\left\{
    \begin{aligned}
        &P(w|_{\mathbb{T}^d\setminus \omega})&& \mathrm{in}\ \omega,\\
&\mathcal{S}\phi - \int_{Y\setminus \omega} 
\mathcal{S}\phi  && \mathrm{in} \ \mathbb{T}^d\setminus \omega.
\end{aligned}\right.
\end{equation*}
Let $u = \mathcal{S}\phi$, and test $w$ and $\mathcal{S}\phi$ in \eqref{def:variational}, respectively, note that $\langle \phi, w\rangle_{-1/2,1/2} = \langle \phi, \mathcal{S}\phi \rangle_{-1/2,1/2}$, we get
\begin{equation*}
    \int_Y A\nabla \mathcal{S}\phi \nabla w = \int_Y A|\nabla \mathcal{S}\phi |^2,
\end{equation*}
from the definition of $w$, we have
\begin{equation*}
    \int_{\omega} |\nabla \mathcal{S}\phi|^2 \leq \int_{\omega} A\nabla \mathcal{S}\phi\nabla w,
\end{equation*}
thus $\| \nabla \mathcal{S}\phi \|_{L^2( \omega)} \leq C\| \nabla w\|_{L^2( \omega)}  \leq C\| w\|_{H^1(Y\setminus \omega)}\leq C \| \nabla \mathcal{S}\phi \|_{L^2(Y\setminus \omega)}$. Therefore,
\begin{equation}\label{exte:periodic}
    \int_{Y\setminus \omega} A|\nabla \mathcal{S}\phi|^2\geq C \int_Y A|\nabla \mathcal{S}\phi|^2 = C \| \phi \|_{H}^2.
\end{equation}

\textit{Step 4}. Combine 
\eqref{garding}, \eqref{inte:periodic}, \eqref{exte:periodic}, we obtain the coercive inequality: for $\lambda \notin (-1/2,1/2)$, $\langle (\lambda I -K ) \phi,\phi \rangle_H  \geq C\|\phi\|_H^2$, this implies that $\lambda I-K$ is bijective, and hence the spectrum of $K$ is in $(-1/2,1/2)$. 
\end{proof}

\noindent \textit{Proof in words of results of Section \ref{uniformcontrollcell} without condition \eqref{smoothc}.} 
In the proof of results in Section \ref{uniformcontrollcell}, all properties of layer potentials that we need to use  are listed in Section \ref{sec:periodiclayerpotential}, by replacing single layer potential there by the weak single layer potential, and NP operator $\mathcal{K}^*$ there by $K$, respectively, we obtain the similar properties:
\begin{itemize}
		\item[(a)] For $\psi \in H_0^{-1/2}(\partial \omega)$, $\mathcal{S}\psi \in H^1(\mathbb{T}^d)$.
		\item[(b)] For $\psi \in H_0^{-1/2}(\partial \omega)$, jump relation holds, i.e.,
			\begin{equation*}
			\frac{\partial}{\partial \nu_A} \big(\mathcal{S}\psi \big) \Big|_{\omega\pm} = \Big(\pm \frac{1}{2}I + K \Big)\psi.
		\end{equation*}
	\item[(c)] For $\psi \in H^{-1/2}_0(\partial \omega)$, $\mathcal{L}(\mathcal{S}\psi) =0$ in $\mathbb{T}^d \setminus \partial \omega$.
	\item[(d)] The spectrum of $K$ on $H^{-1/2}_0(\partial \omega)$ is contained in $(-1/2,1/2)$.
	\end{itemize}
Therefore, all arguments in the proof of results in Section \ref{uniformcontrollcell} are valid by using these weak layer potentials, the conclusion still holds even if \eqref{smoothc} is removed.
\hfill $\square$

	
\bibliographystyle{siam}
\bibliography{mybib}
	
\end{document}